\def\l@subsection{\@tocline{1}{0pt}{3pc}{2pc}{}} 
\DeclareRobustCommand{\SkipTocEntry}[5]{} 
\newtheorem{lettertheorem}{Theorem}
\newtheorem{question}{}
\newtheorem{lettertheoremprime}{Theorem}
\newtheorem{theorem}{Theorem}[section]
\newtheorem{lemma}[theorem]{Lemma}
\newtheorem{proposition}[theorem]{Proposition}
\newtheorem{corollary}[theorem]{Corollary}
\newtheorem{definition}[theorem]{Definition}
\theoremstyle{definition} 
\newtheorem{remark}[theorem]{Remark}
\numberwithin{equation}{section}
\DeclareMathOperator{\nR}{\mathbb{R}}
\DeclareMathOperator{\nN}{\mathbb{N}}
\DeclareMathOperator{\nZ}{\mathbb{Z}}
\DeclareMathOperator{\nS}{\mathbb{S}}
\DeclareMathOperator{\GL}{GL}
\DeclareMathOperator{\tr}{tr}
\DeclareMathOperator{\Ker}{Ker}
\DeclareMathOperator{\supp}{supp}
\DeclareMathOperator{\Id}{Id}
\DeclareMathOperator{\Ric}{Ric}
\DeclareMathOperator{\R}{R}
\DeclareMathOperator{\vol}{vol}
\DeclareMathOperator{\BO}{\mathrm{O}}
\DeclareMathOperator{\G}{G}
\DeclareMathOperator{\Lg}{\mathscr L_{\textit{g}}} 
\DeclareMathOperator{\LL}{\mathscr L} 
\DeclareMathOperator{\p}{\mathrm{p}}
\DeclareMathOperator{\q}{\mathrm{q}}
\DeclareMathOperator{\h}{\mathrm{h}}
\DeclareMathOperator{\m}{\mathrm{m}}
\newtheorem*{rep@theorem}{\rep@title}
\newcommand{\newreptheorem}[2]{%
\newenvironment{rep#1}[1]{%
 \def\rep@title{#2 \ref{##1}}%
 \begin{rep@theorem}}%
 {\end{rep@theorem}}}
\title[]{Conformal Green functions and Yamabe metrics \\ of Sobolev regularity}
\author{Rodrigo Avalos} 
\author{Albachiara Cogo}
\author{Andoni Royo Abrego}
\address{Eberhard Karls Universit\"at T\"ubingen, Fachbereich Mathematik, Auf der Morgenstelle 10, 72076 T\"{u}bingen, Germany}
\email{rodrigo.avalos@mnf.uni-tuebingen.de}
\email{albachiara.cogo@uni-tuebingen.de}
\email{andoni.royo-abrego@uni-tuebingen.de}
\begin{document}

\begin{abstract}
    We provide a full resolution of the Yamabe problem on closed 3-manifolds for Riemannian metrics of Sobolev class $W^{2,q}$ with $q > 3$. This requires developing an elliptic theory for the conformal Laplacian for rough metrics and establishing existence, regularity and a delicate blow-up analysis for its Green function. Most of the analytical work is carried out in dimensions $n \geq 3$ and for $W^{2,q}$ Riemannian metrics with $q>\tfrac{n}{2}$ and should be of independent interest. 
\end{abstract}

\maketitle

\vspace{0.3cm}

{\setstretch{0.8}
\tableofcontents
}
\hfill

\section[{\textbf{Introduction}}]{Introduction}
\label{Introduction}
The study of scalar curvature for rough Riemannian metrics has attracted considerable attention in recent years, partly motivated by Gromov's program concerning singular spaces with positive scalar curvature \cite{GromovC0Scalar,GromovLectures,GromovConvexPolytopes} (and the \textit{Dihedral Rigidity Conjecture}, see \cite{ChaoLiNPrismDihedralRigidity,ChaoLiPolyhedraDihedralRigidity,BrendleDihedralRigidity} and references therein), partly due to Huisken's isoperimetric notion of mass \cite{HuiskenMass1,HuiskenMass3,ChodoshEichmairShiYu,JauregiLee} and partly due to Ricci flow smoothing \cite{BamlerGromovv,Burkhardt-GuimADM,ChuLee}. A common goal of all these approaches is to understand $C^0$ aspects of scalar curvature. As one of the central problems in scalar curvature geometry, it is thus natural to study the Yamabe problem for rough Riemannian metrics. In the present work, we are concerned with the following:

\begin{question} \label{main question}
    Given a $W^{2,q}$-Riemannian metric $g$ for some $q > \tfrac{n}{2}$ on a smooth closed manifold $M$ of dimension $n \geq 3$, is there a metric in the conformal class
\begin{align*}
    [\,g\,]_{W^{2,q}} \doteq \Bigl\{u^{\frac{4}{n-2}}g \: : \: u\in W^{2,q}(M)\,, \: u>0 \Bigr\}
\end{align*}
of constant scalar curvature?
\end{question}
\noindent
Note that the space $W^{2,q}(M)$ is an algebra under multiplication, satisfies the embedding $W^{2,q}(M) \hookrightarrow C^{0,2-\frac{n}{q}}(M)$ for $q > \tfrac{n}{2}$ and provides a well-defined conformal class. The above problem is equivalent to finding positive $W^{2,q}(M)$ solutions to the semilinear elliptic equation with critical exponent
\begin{equation}
    \label{eq: Yamabe equation}
    \Lg u \doteq -a_n \Delta_g u + \R_g u = \lambda \, u^{2^*-1} \,, \qquad a_n \doteq 4 \, \frac{n-1}{n-2} \,,
\end{equation}
where $\Delta_g$ and $\R_g$ are the Laplace--Beltrami operator and the scalar curvature of $g$, $\lambda$ is a real number and $2^* \doteq \tfrac{2n}{n-2}$ is the 
limiting Sobolev exponent for the embedding $W^{1,2}(M) \hookrightarrow L^{2^*}(M)$. Solutions to \eqref{eq: Yamabe equation} are critical points of the functional 
\begin{equation}
    Q_g(u) \doteq \frac{\int_M \bigl(a_n|\nabla u|^2_g + \R_g u^2\bigr) \, d\mu_g}{\left(\int_M|u|^{2^*} d\mu_g\right)^{\frac{2}{2^*}}}
\end{equation}
and the associated conformally invariant infimum
\begin{equation}
    \lambda(M, g) \doteq \inf \Big\{Q_g(u) \; : \; u \in W^{1,2}(M) \,, \; u \neq 0\Big\} 
\end{equation}
is the so-called \textit{Yamabe invariant}\footnote{Beware of the convention of some authors to use the term \textit{Yamabe invariant} to refer to the topological invariant obtained from $\lambda(M, g)$ by taking the supremum over all conformal classes.}. If $g$ is smooth, the combination of the celebrated work of H. Yamabe \cite{Yamabe}, N. Trudinger \cite{Trudinger}, T. Aubin \cite{Aubin1} and R. Schoen \cite{SchoenYamabe} asserts that such a conformal metric always exists and is smooth --see also \cite{Lee-Parker,SchoenYauBook,AubinBook} for a panoramic treatise of the Yamabe problem--. 
In particular, such a metric is given by 
$u^{\frac{4}{n-2}}g$
for $u$ solving \eqref{eq: Yamabe equation} and its  constant scalar curvature is equal to 
$ \lambda = u^{1-2^*} \Lg u $ 
and has the same sign of $\lambda(M, g)$.
When $\lambda(M,g) \leq 0$ (Yamabe non-positive), solving the above problem follows from 
rather standard 
variational techniques, whereas the $\lambda(M,g) > 0$ (Yamabe positive) case, in addition, requires understanding the blow-up behavior of the Green function of the operator $\Lg$, which finds deep connections with conformal geometry and general relativity. Beyond the classical setting, equation \eqref{eq: Yamabe equation} has extensively been studied in the context of the \textit{Brezis--Niremberg problem}, where one considers the equation in a domain of Euclidean space and more general potentials --see \cite{BrezisNiremberg, BrezisWillem, BrezizPeletier, DruetLaurain, KonigLaurain} and references therein--, and for smooth metrics with singular sets
--this is the so-called \textit{singular Yamabe problem}, see \cite{SchoenSingularYamabeSphere,  LoewnerNirenberg, ShoenYauSingular, MazzeoPollackUhlenbeck, CaffarelliGidasSpruck, HanLiLi, MarquesSingular, MazzeoPacard, XiongZhang} and references therein--.

Complementary to the singular Yamabe problem, we are interested in \eqref{eq: Yamabe equation} for metrics which can be rough everywhere, although with less severe singularities. The relevance of Sobolev-type metrics to model physical phenomena has been highlighted in \cite{Maxwell0,Maxwell1,MaxwellFarCMC,Holst1,Holst-LichCompactWithBoundary} in the context of the \textit{conformal method} of solving the Einstein constraint equations. This problem  
determines the admissible initial data sets for the evolution problem in general relativity, where realistic matter distributions are not smooth. Analytically, it consists in finding solutions to the generalized Yamabe-type equation called \emph{Lichnerowicz equation} for rough metrics, which requires some a priory understanding of the Yamabe problem.

In the aforementioned work \cite[Proposition 3.3 and Proposition 4.3]{Maxwell1}, D. Maxwell solves the Yamabe problem in the non-positive case for $H^{s}$-metrics with $s>\frac{n}{2}$ via a monotone iteration scheme using barriers. This threshold has recently been lowered to $W^{1,p}$-metrics with $p>n$ by H. Zhang \cite{ZhangW1pAubin}, via extending to this class \emph{Aubin's theorem} \cite{Aubin1}, which asserts the existence of a constant scalar curvature metric provided that $\lambda(M,g) < \lambda(\nS^n)$ --see \cref{The Aubin--Trudinger--Yamabe Theorem} for details--. In contrast, the positive case appears as a much harder problem, fully open even for higher regularity classes, such as $C^{1,\alpha}$-metrics with extra Sobolev control. This last case is the main object of interest in this work.

\addtocontents{toc}{\SkipTocEntry}
\subsection{Main results}
In this work we provide an existence, regularity and blow-up analysis of the conformal Green function for metrics of Sobolev regularity (\cref{theorem C}) and use it, among other tools, to give a positive answer to the above \cref{main question} in the case of $3$-dimensional manifolds, provided that $q > 3$. Moreover, if the original metric is more regular, so is the conformally deformed one:
\begin{lettertheorem}
    \label{theorem A}
    Let $M$ be an orientable, smooth, closed 3-manifold and consider a $W^{k,q}$-Riemannian metric $g$ with $k \geq 2$ and $q > 3$. Then, there exists a $W^{k,q}$-Riemannian metric conformal to $g$ of constant scalar curvature.
\end{lettertheorem}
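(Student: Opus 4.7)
The plan is to split the problem by the sign of the Yamabe invariant $\lambda(M,g)$ and, in each case, produce a positive $W^{2,q}$-solution of \eqref{eq: Yamabe equation}, which is then bootstrapped to $W^{k,q}$. When $\lambda(M,g) \leq 0$, we use that $W^{2,q}(M) \hookrightarrow W^{1,\infty}(M)$ for $q > 3 = n$, so $g$ lies in particular in $C^{1,\alpha}$ with $\alpha = 1 - 3/q$; existence of a positive $W^{2,q}$-solution in this regime then follows by invoking the result of \cite{ZhangW1pAubin} (or Maxwell's monotone iteration in \cite{Maxwell1}), whose hypotheses on $g$ are comfortably satisfied. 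The real work is thus the Yamabe positive case $\lambda(M,g) > 0$.

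For $\lambda(M,g) > 0$, I would follow Schoen's strategy \cite{SchoenYamabe} adapted to our rough setting. By \cref{The Aubin--Trudinger--Yamabe Theorem}, it suffices to prove the strict inequality $\lambda(M,g) < \lambda(\nS^3)$ whenever $(M,g)$ is not conformally equivalent to the round $3$-sphere. Fix a point $p \in M$. By \cref{theorem C}, the conformal Green function $G_p$ of $\Lg$ at $p$ exists, is positive, and admits a sharp pointwise expansion of the form
\begin{equation*}
    G_p(x) = \frac{c_3}{d_g(x,p)} + A(p) + o(1), \qquad x \to p,
\end{equation*}
in suitable coordinates centered at $p$, for a universal positive constant $c_3$ and a real number $A(p)$ depending on $(M,g,p)$. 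This allows one to view $(M \setminus \{p\}, G_p^{4} g)$ as an asymptotically flat $3$-manifold of non-negative scalar curvature, whose ADM-type mass is proportional to $A(p)$. A positive mass theorem valid in our $W^{2,q}$ regularity class then forces $A(p) > 0$ unless $(M,g)$ is conformally round. Plugging the truncated test function $u_\varepsilon = \eta_\varepsilon G_p$ into $Q_g$ and carrying out Schoen's asymptotic expansion produces a negative correction proportional to $-A(p)$, yielding $Q_g(u_\varepsilon) < \lambda(\nS^3)$ for $\varepsilon > 0$ small. \cref{The Aubin--Trudinger--Yamabe Theorem} then provides a positive $W^{1,2}$-minimizer $u$ solving \eqref{eq: Yamabe equation} weakly.

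It remains to upgrade the regularity of $u$ from $W^{1,2}$ to $W^{k,q}$. A Moser--Harnack iteration for the rough operator $\Lg$ (part of the elliptic theory developed in the paper) gives $u \in L^\infty(M)$ with $u > 0$. A standard bootstrap then proceeds: at each step, once $u \in W^{j,q}(M)$ for some $2 \leq j \leq k$, the non-linearity $u^{2^*-1}$ also lies in $W^{j,q}$ since this space is a multiplicative algebra for $q > \tfrac{n}{2}$, so elliptic regularity for $\Lg$ with $g \in W^{j,q}$ yields $u \in W^{j+1,q}$; the initial step $u \in L^\infty \Rightarrow u \in W^{2,q}$ follows from the same theory applied with bounded right-hand side.

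The delicate step, unsurprisingly, is the Yamabe positive case: the sharp expansion of $G_p$ up to the mass-type constant $A(p)$ at the $W^{2,q}$-regularity of $g$, and the ensuing positive mass statement for $A(p)$, are the main technical ingredients powering the strict inequality $\lambda(M,g) < \lambda(\nS^3)$. At this low regularity the classical smooth approach is not directly available: the Green function identity $\Lg G_p = \delta_p$ holds only distributionally, the scalar curvature $\R_g$ is merely $L^q$, and conformal normal coordinates in the smooth sense do not exist, so both the expansion and the test-function computation must rely squarely on the refined control supplied by \cref{theorem C}.
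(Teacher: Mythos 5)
Your high-level strategy matches the paper's: reduce to the strict inequality $\lambda(M,g)<\lambda(\nS^3)$ and invoke the Aubin--Trudinger--Yamabe theorem, build the conformal Green function via \cref{theorem C}, decompactify $(M\setminus\{p\}, G_p^4 g)$ into an AE 3-manifold, invoke a low-regularity PMT to sign the constant $A$, and bootstrap the conformal factor to $W^{k,q}$ by induction on $k$ exactly as in the paper's closing argument. However, there is a genuine gap in the heart of the positive case: the test function $u_\varepsilon = \eta_\varepsilon G_p$ cannot work. In dimension $3$, $G_p(x)\sim |x|^{-1}$ near $p$, so $\nabla G_p\sim |x|^{-2}\notin L^2$ and $G_p\notin L^{2^*}$ near the pole; cutting off at scale $\varepsilon$ produces a gradient energy of order $\varepsilon^{-1}$, and, more to the point, a truncated Green function never comes close to saturating the Sobolev inequality, so its Rayleigh quotient does not converge to $\lambda(\nS^3)$. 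Schoen's construction glues a \emph{scaled Aubin bubble} near $p$ to a small multiple of $G_p$ away from $p$: it is the bubble piece that reproduces $\lambda(\nS^3)$ at leading order, while the Green-function tail supplies the $-A$ correction. The paper implements this more cleanly by testing $Q_{\hat g}$ on the AE manifold $(\hat M,\hat g)$ with a truncated Aubin bubble $\psi_a$ (\cref{SectionProofMainThm}); translating back to $M$ via the conformal factor $G_p$, the resulting test function is \emph{bounded} near $p$ because the bubble cancels the Green-function singularity---precisely the feature $\eta_\varepsilon G_p$ lacks.

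Two secondary points. When $A=0$ it is not enough to say the PMT forces $(M,g)$ to be conformally round: one needs the isometry supplied by the rigidity part of \cite{LeeLeF} to have $W^{3,q}_{loc}$ regularity so that the conformal diffeomorphism to $\nS^3$ lands the conformal factor in $W^{2,q}$; the paper devotes \cref{PropMassRigidity.1} and \cref{thm: A=0 case} to this. And in the regularity bootstrap, passing from $u\in W^{j,q}$ to $u\in W^{j+1,q}$ requires $g\in W^{j+1,q}$, not $g\in W^{j,q}$ as you wrote; this is harmless under the hypothesis $g\in W^{k,q}$ but the roles should not be conflated. Finally, invoking \cite{ZhangW1pAubin} in the nonpositive case only yields a $W^{1,p}$ conformal factor that must still be bootstrapped through the paper's elliptic theory; the paper instead proves \cref{thm: Aubin-Trudinger-Yamabe} directly, which already delivers $W^{2,q}$.
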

\noindent
We remark that the orientability assumption is due to the usage of a version of the positive mass theorem (PMT) for rough metrics due to D. Lee and P. LeFloch \cite{LeeLeF}, which works with asymptotically Euclidean (AE) spin manifolds with one end --see \cref{The Positive Mass Theorem} for details--. Should their result hold true for either non-spin manifolds or manifolds with several ends\footnote{This is by virtue of a simple argument working with the orientable double cover.}, one could immediately drop the orientability assumption in \cref{theorem A}. It is also worth mentioning that orientability, as well as the dimension restriction and the $q > 3$ hypothesis, are exclusive to the positive Yamabe case, due to its much more intricate nature. In fact, in the non-positive Yamabe case we establish: 
\begin{lettertheoremprime}
    \label{theorem B}
     Let $M$ be a smooth, closed manifold of dimension $n \geq 3$ and consider a $W^{k,q}$-Riemannian metric $g$ with $k \geq 2$ and $q > \tfrac{n}{2}$. Suppose that $\lambda(M,g) \leq 0$. Then, there exists a $W^{k,q}$-Riemannian metric conformal to $g$ of constant scalar curvature.
\end{lettertheoremprime}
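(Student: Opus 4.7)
The plan is to split the proof according to the sign of $\lambda(M,g)$ and, in each case, first produce a positive weak solution $u$ of \eqref{eq: Yamabe equation}, then run a linear elliptic bootstrap for $\Lg$ to promote $u$ from $W^{2,q}$ to $W^{k,q}$.

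When $\lambda(M,g)<0$ one has $\lambda(M,g)<\lambda(\nS^n)$ automatically, so the analytic input is the extension of Aubin's theorem to $W^{2,q}$ metrics carried out in \cref{The Aubin--Trudinger--Yamabe Theorem}. That result provides a positive minimizer $u\in W^{1,2}(M)$ of $Q_g$ satisfying $\Lg u = \lambda(M,g)\,u^{2^*-1}$, so $u^{\frac{4}{n-2}}g$ lies in the conformal class and has constant scalar curvature $\lambda(M,g)$.

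When $\lambda(M,g)=0$ direct minimization does not in general produce a non-trivial solution, and I would instead appeal to the spectral theory for $\Lg$ developed for rough metrics earlier in the paper. The first eigenvalue $\mu_1(\Lg)$ is characterized by a Rayleigh quotient which differs from $Q_g$ only by having an $L^2$-constraint in place of an $L^{2^*}$-constraint in the denominator; since the numerators coincide, $\mu_1(\Lg)$ shares the sign of $\lambda(M,g)$, hence vanishes in this case. A Krein--Rutman argument applied to the compact resolvent of a suitable shift $\Lg + c\,\Id$, combined with the strong maximum principle for $W^{2,q}$ metrics, then yields a strictly positive eigenfunction $u\in W^{2,q}(M)$ with $\Lg u = 0$, so that $u^{\frac{4}{n-2}}g$ is scalar-flat.

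It remains to promote $u\in W^{2,q}(M)$ to $u\in W^{k,q}(M)$ when $k\geq 3$. Since $u$ is continuous, strictly positive and bounded, composition with the smooth map $s\mapsto s^{2^*-1}$ on $[\inf u,\sup u]$ preserves the Sobolev regularity of $u$; in particular $u^{2^*-1}\in W^{2,q}(M)$. Viewing \eqref{eq: Yamabe equation} as the linear equation $\Lg u = \lambda\, u^{2^*-1}$ with leading coefficients in $W^{k,q}$ and right-hand side at least as regular as $u$, the rough-metric elliptic regularity theory for the conformal Laplacian --developed in the paper-- gains one additional derivative per iteration, reaching $u\in W^{k,q}(M)$ after finitely many steps. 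The main obstacle I foresee is technical rather than conceptual: one has to verify that both the elliptic regularity estimates and the spectral theory for $\Lg$ truly extend to coefficients in $W^{k,q}$ for all $k\geq 2$, and that composition with the non-integer power $s\mapsto s^{2^*-1}$ does not drop regularity on the range of $u$; both should follow from the machinery already in place in the paper.
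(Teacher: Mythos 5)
Your proof is correct, but it splits into cases where the paper does not need to. The Aubin--Trudinger--Yamabe theorem of \cref{thm: Aubin-Trudinger-Yamabe} is established for all $\lambda(M,g) < \lambda(\nS^n)$, and since $\lambda(\nS^n) > 0$ it covers $\lambda(M,g) < 0$ and $\lambda(M,g) = 0$ uniformly. The existence mechanism there is subcritical approximation (\cref{prop: subcritical existence}) together with the uniform bound of \cref{prop: uniform Lr bound}: the subcritical minimizers $u_s$ exist for all $s < 2^*$ regardless of the sign of $\lambda(M,g)$, and the limit argument in Step~2 of that proof --- in particular the lower bound $\|u\|_{L^{2^*}(M,d\mu_g)}\geq 1$ extracted from $\|u_{s_j}\|_{L^{s_j}}=1$ --- does not degenerate at $\bar\lambda = 0$. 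So your worry that ``direct minimization does not in general produce a non-trivial solution'' at the zero threshold, while a reasonable caution about critical-exponent minimization in the abstract, is moot here. Your alternative route for $\lambda(M,g) = 0$ --- extracting a positive first eigenfunction of $\Lg$ --- is nonetheless valid and is essentially what \cref{cor: Yamabe classification} already establishes via \cref{prop: subcritical existence} at $s = 2$, \cref{prop: subcritical regularity}, and Trudinger's Harnack inequality, rather than Krein--Rutman on a shifted compact resolvent; both methods produce the same positive $W^{2,q}$ eigenfunction, so this is a stylistic rather than substantive difference. The $k$-induction bootstrap you describe matches the paper's use of \cref{thm: main global elliptic regularity}, and your caveat about composing $u$ with the non-integer power $s\mapsto s^{2^*-1}$ for $n\geq 5$ is well-taken: it is handled because $u$ is $W^{k,q}$, continuous, and bounded away from zero, and $W^{k,q}(M)$ is an algebra for $k\geq 2 > n/q$, so composition with a smooth function on a compact interval containing the range of $u$ preserves $W^{k,q}$ regularity.
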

\noindent
Despite the existence of a $W^{1,q}$-metric with $q > n$ of constant scalar curvature in a non-positive Yamabe class is implied by \cite{ZhangW1pAubin}, we are able to bootstrap the regularity of the conformal metric to that of the the original one. Moreover, our proof of Aubin's theorem becomes remarkably simpler by considering a slightly more regular class of metrics (which avoids the need to treat the scalar curvature distributionally) and by exploiting the conformal nature of the problem in recognizing a good \textit{conformal gauge}. More precisely, we move to a conformal metric of continuous scalar curvature by solving the eigenvalue problem for $\Lg$. For details, see the Yamabe classification of \cref{cor: Yamabe classification} based on \cite[Proposition 3.3]{Maxwell1} and \cref{rmk: conformal gauge}.

The strategy to prove \cref{theorem A} roughly follows the approach in \cite{Lee-Parker} for smooth metrics: Aubin's theorem reduces the problem to constructing a positive test function $\psi \in W^{1, 2}(M)$ such that $Q_g(\psi) < \lambda(\nS^3)$, which is achieved by decompactifying the original manifold into an asymptotically Euclidean one via the Green function associated with $\Lg$ and using the so-called \textit{Aubin bubbles}. The PMT then plays a key role in order to obtain a desirable sign on the leading non-singular term of the conformal Green function whenever $(M,g)$ is not conformally diffeomorphic to $\nS^3$. 

Let us briefly highlight the main difficulties one encounters in the case of rough metrics. First, the operator $\Lg$ has rough coefficients and consequently, we need to develop an elliptic theory suited to our problem.  
Second, standard versions of the PMT are not suited for our setting and we will instead have to work with a finer version of D. Lee and P. LeFloch \cite{LeeLeF}.
We remark that we need to track down the regularity of the isometry between the AE manifold and $\mathbb{E}^3$ involved in the
rigidity claim of this PMT more carefully than what directly comes from \cite[Theorem 1.1]{LeeLeF} --see \cref{The Positive Mass Theorem} for more details--.
Third, regarding the actual applicability of such PMT, it is crucial to understand the blow-up behavior of the conformal Green function near the pole and the consequent control on the decay of the metric for the decompactified manifold, performed with respect to an appropriate choice of coordinates. 
Among other 
complications, 
it is not possible to employ the powerful properties of  \textit{conformal normal coordinates} to get a refined control of the blow-up behavior of the conformal Green function, as in the smooth setting \cite[Section 6]{Lee-Parker}. The construction of such coordinates, as in \cite[Section 5]{Lee-Parker}, consists in considering normal coordinates for some conformal metric selected to cancel successive higher-order covariant derivatives of the Ricci tensor
at the selected point, which are pointwise not well-defined in our rough setting. 
In addition, even standard normal coordinates constructed via the exponential map require $C^{1,1}$ regularity of the metric, from which one should anticipate that the analysis of the conformal Green function will face important conceptual and technical difficulties.

Relying, among others, on the elliptic theory for $\Lg$ with rough coefficients and on the construction of harmonic and normal coordinates for metrics of low-regularity, 
the following theorem addresses the existence, regularity and blow-up analysis of a conformal Green function, which may be of interest elsewhere, as it extends previous results on the Green function of Schrödinger-type operators.
\begin{lettertheorem}
    \label{theorem C}
    Let $M$ be a smooth, closed manifold of dimension $n \geq 3$ and consider a $W^{2,q}$-Riemannian metric $g$ with $q > \tfrac{n}{2}$ and $\lambda(M, g) > 0$. Then, for each point $p \in M$, the conformal Laplacian $\Lg$ admits a unique, positive Green function $G_{p} \in W_{loc}^{2, q}\bigl(M\setminus\{p\}\bigr)$ centered at $p$. Moreover, there exists a conformal harmonic chart $(U,x^i)$ around $p$ such that
    \begin{equation}
        \label{CGF_Expansion}
         G_{p}(x) = \frac{B}{|x|^{n-2}}  + h(x) 
    \end{equation}
    for some $B>0$ and a function $h \in W^{2,r}(U)$ satisfying
   \begin{equation}
        \label{h_decayIntro}
        h(x) = A + \BO\bigl(|x|^{2-\frac{n}{r}}\bigr) \quad \text{as} \quad |x| \to 0
    \end{equation}
    for each $1 \leq r < \tfrac{nq}{q(n-2) + n}$ and some $A \in \nR$. If $q > n$, there additionally holds
    \begin{equation}
        \label{h_decay_qnIntro}
        h(x) = A + \BO_1\bigl(|x|^{2-\frac{n}{r}}\bigr) \quad \text{as} \quad |x| \to 0
    \end{equation}
    for each $1 \leq  r < \tfrac{nq}{q(n-2) + n}$ and harmonic coordinates can be substituted by normal coordinates.
\end{lettertheorem}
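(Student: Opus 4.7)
The plan is to construct $G_p$ via a parametrix-plus-correction scheme adapted to the rough setting, and then extract its blow-up at $p$ in conformal harmonic coordinates. By the elliptic theory for $\Lg$ developed earlier in the paper, $\Lg : W^{2,q}(M) \to L^q(M)$ is Fredholm of index zero for $q > \tfrac{n}{2}$, and the hypothesis $\lambda(M,g) > 0$ rules out nontrivial solutions of $\Lg u = 0$ (via the variational characterization combined with the strong maximum principle), so $\Lg$ is an isomorphism. Fix a conformal harmonic chart $(U, x^i)$ around $p$ with $g_{ij}(p) = \delta_{ij}$---arranged via the harmonic coordinate construction for $W^{2,q}$ metrics together with the conformal gauge choice of \cref{rmk: conformal gauge} and a linear change of coordinates---and choose $\chi \in C^\infty_c(U)$ with $\chi \equiv 1$ on a smaller ball. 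Define the parametrix
\begin{equation*}
    P(x) \doteq B\,\chi(x)\,|x|^{2-n}, \qquad B \doteq \frac{1}{a_n(n-2)\omega_{n-1}}.
\end{equation*}
In harmonic coordinates $\Delta_g = g^{ij}\partial_i\partial_j$, so combining the Euclidean identity $\Delta_{\mathrm{eucl}}|x|^{2-n} = -(n-2)\omega_{n-1}\delta_0$ with a distributional integration by parts on $\{|x|>\varepsilon\}$ (boundary terms at $|x|=\varepsilon$ vanish thanks to $g_{ij}(p)=\delta_{ij}$) yields $\Lg P = \delta_p + f$ with $f$ a genuine function on $M$. Setting $v \doteq -\Lg^{-1} f$, the Green function is $G_p \doteq P + v$; uniqueness follows from a removable-singularity argument combined with invertibility of $\Lg$, and positivity from a testing argument using the Yamabe positivity to propagate the sign of $f$ to that of $v$, refined by Harnack.

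To obtain the expansion \eqref{CGF_Expansion}--\eqref{h_decayIntro} I estimate $f$ precisely. On the support of $\chi$ the dominant contribution is $-a_n B(g^{ij}(x) - \delta^{ij})\partial_i\partial_j|x|^{2-n}$, since the Euclidean Laplacian of $|x|^{2-n}$ vanishes off the origin. The embedding $W^{2,q} \hookrightarrow C^{0, 2 - n/q}$ gives $|g^{ij}(x) - \delta^{ij}| \leq C|x|^{2-n/q}$, while $|\partial_i\partial_j|x|^{2-n}| \leq C|x|^{-n}$, so this term is bounded by $C|x|^{2-n/q - n}$ and lies in $L^r_{\mathrm{loc}}$ precisely when $r(n-2+n/q) < n$, that is $r < \tfrac{nq}{q(n-2)+n}$. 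The remaining contribution $R_g\,B|x|^{2-n}$ is handled by H\"older's inequality applied to $R_g \in L^q$ against $|x|^{2-n} \in L^s$ with $s < \tfrac{n}{n-2}$, yielding the same admissible range. Elliptic regularity for $\Lg$ on $L^r$ (inherited from the $L^q$ theory via a Fredholm extension to $r \leq q$, with trivial kernel preserved) upgrades $v$ to $W^{2,r}(M)$, so that $h \doteq G_p - B|x|^{2-n}$, coinciding with $v$ where $\chi \equiv 1$, lies in $W^{2,r}_{\mathrm{loc}}$ near $p$. The Morrey embedding $W^{2,r} \hookrightarrow C^{0, 2 - n/r}$ (valid for $r$ in the relevant range, which underlies the dimension/exponent restrictions in the subsequent applications) produces $A \doteq v(p)$ and the H\"older bound $h(x) = A + \BO(|x|^{2-n/r})$ of \eqref{h_decayIntro}.

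For the refinement when $q > n$, I substitute geodesic normal coordinates for the conformal harmonic ones: $g \in C^{1,\alpha}$ with $\alpha = 1 - n/q > 0$ gives $C^{0,\alpha}$ Christoffel symbols, so the geodesic ODE is well-posed and the exponential map is a valid chart. In normal coordinates one has the stronger expansion $g_{ij}(x) - \delta_{ij} = \BO(|x|^{1+\alpha})$ with an analogous control on $\partial g_{ij}$, and accordingly $\nabla f \in L^r$ for the same range of $r$. A bootstrap using the extra derivative of the coefficients lifts $v$ to $W^{3,r}_{\mathrm{loc}}$, which produces the $\BO_1$ refinement \eqref{h_decay_qnIntro}. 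The hardest step of the whole argument is the distributional calculus at the singular point: extracting the correct coefficient of the Dirac part of $\Lg(B|x|^{2-n})$ requires the harmonic coordinate identity $\partial_j(\sqrt{|g|}g^{ij}) = 0$ with only $W^{2,q}$ regularity, a delicate cancellation of boundary terms as $|x| \to 0$ in the iterated integration by parts, and a tight balance between the $|x|^{2-n}$ singularity and the H\"older decay of $g^{ij}(x) - \delta^{ij}$ in order to land exactly on the critical exponent $\tfrac{nq}{q(n-2)+n}$ without forfeiting the $O(1)$ piece that identifies the constant $A$.
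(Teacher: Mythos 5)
Your overall skeleton — parametrix $P = B\chi(x)|x|^{2-n}$ in conformal harmonic coordinates, correction $v = -\Lg^{-1}f$ with $f = \BO(|x|^{-\gamma})$, $\gamma = n - 2 + \tfrac{n}{q}$, and the identification of the admissible range $r < \tfrac{nq}{q(n-2)+n}$ for $f \in L^r$ — matches the paper's Step 1. However, there are three genuine gaps in the parts that the paper treats as the real content of the theorem.

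\textbf{Positivity.} Your claim that one can ``propagate the sign of $f$ to that of $v$, refined by Harnack'' does not work: $f$ is a combination of $(g^{ij}-\delta^{ij})\partial_i\partial_j|x|^{2-n}$, $\R_g\eta|x|^{2-n}$ and cutoff terms, and has no definite sign. The paper's argument is of a different nature: one first establishes the expansion, which shows $G_p > 1$ on a small punctured ball around $p$; if $G_p$ attained a non-positive minimum $\m$ at some $\q$ away from $p$, the strong minimum principle for $\Lg G_p = 0$ with $\R_g > 0$ (the conformal gauge) forces $G_p \equiv \m$ on a neighborhood, hence everywhere away from the ball, contradicting $G_p|_{\partial B_\varepsilon(p)} > 1$.

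\textbf{The H\"older expansion.} You obtain $h = A + \BO(|x|^{2-n/r})$ by invoking $W^{2,r}\hookrightarrow C^{0,2-n/r}$. This embedding requires $r > n/2$. But the theorem asserts the expansion for \emph{all} $1 \leq r < \tfrac{nq}{q(n-2)+n}$, and the upper endpoint of this range exceeds $n/2$ only when $n=3$ and $q > 3$. For $n \geq 4$ (any $q$) and for $n = 3$ with $q \leq 3$, every admissible $r$ satisfies $r \leq n/2$, $h$ need not be bounded, and Morrey's lemma gives nothing. The paper closes this gap with the quantitative removable-singularity result (\cref{prop: blow-up behaviour}, item $(i)$): a rescaling argument that upgrades $u \in L^p$ plus the elliptic estimate of \cref{lemma: GT 9.11 adaptation} to the pointwise bound $\BO(|x|^{-n/p})$. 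That scaling mechanism is the key ingredient you are missing.

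\textbf{The $\BO_1$ refinement.} Your route through $\nabla f \in L^r$ and a bootstrap to $W^{3,r}_{loc}$ has two failures. First, even in normal coordinates with $g \in C^{1,\alpha}$, $\alpha = 1-\tfrac{n}{q}$, one has $\partial g = \BO(|x|^{\alpha})$, so $\nabla f = \BO(|x|^{1-n/q-n})$, which lies in $L^r$ only for $r < \tfrac{nq}{q(n-1)+n}$ — a strictly smaller range than $r < \tfrac{nq}{q(n-2)+n}$, so the $\BO_1$ estimate for the full stated range is lost. Second, even granting $\nabla f \in L^r$, $f$ is still unbounded at $p$, so $\Lg v = -f$ does not place $v$ in $W^{3,r}$ across the pole; there is no extra derivative available at the singularity. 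The paper does not go through $W^{3,r}$ at all: the $\BO_1$ control comes from the same scaling machine (\cref{prop: blow-up behaviour}), whose proof rescales the equation to unit scale, applies the $W^{2,q}$ interior estimate there, and exploits the $W^{2,q}\hookrightarrow C^{1,1-n/q}$ embedding when $q > n$ to read off $\BO_1$ decay — without ever needing more than two derivatives of $v$.

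In short: the parametrix ansatz, the exponent bookkeeping for $f\in L^r$, and the reduction to invertibility of $\Lg$ are all sound and coincide with the paper. What is missing is the rescaling blow-up analysis that converts $L^r$/$W^{2,q}_{loc}$ information into pointwise and $C^1$ decay rates for all admissible $r$; that is the step the paper isolates as \cref{prop: blow-up behaviour}, and without it neither the H\"older expansion in the subcritical integrability regime nor the $\BO_1$ refinement can be obtained.
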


\noindent
The \textit{conformal harmonic chart} of \cref{theorem C} refers to harmonic coordinates of the aforementioned good conformal gauge, that is, a metric conformal to $g$ chosen so that the scalar curvature is continuous and positive. Normal coordinates refer, on the other hand, to coordinates explicitly constructed such that $g_{ij}(p) = 0$ and $\partial_kg_{ij}(p) = 0$, not via the exponential map --see \cref{subsec: normal coordinates} for details--.

We remark that, while in the smooth setting usual conformal normal coordinates provide an expansion $h(x) = A + \BO_2(| x |)$, our control 
in \eqref{h_decayIntro} and \eqref{h_decay_qnIntro} becomes much more subtle. 
Note that it is only when $r>\frac{n}{2}$ the function $h$ becomes continuous, $\BO(|x|^{2-\frac{n}{r}})=o(1)$ and $A=h(p)$. In all the other cases, there is no actual meaning for the constant $A$ and one may directly write, $h(x)=\BO\bigl(|x|^{2-\frac{n}{r}}\bigr)$ and $h(x)=\BO_1\bigl(|x|^{2-\frac{n}{r}}\bigr)$, respectively. One may check that $r>\frac{n}{2}$ can only occur when $n=3$ and $q>3$; in this case one also gains the $C^{1,2-\frac{n}{r}}$-control of \eqref{h_decay_qnIntro}, which ensures suitable regularity and decay of the metric of the AE decompactified manifold for the applicability of the PMT of \cite{LeeLeF}. Moreover, such first-order control allows us to easily read off the ADM mass of the AE manifold, being it proportional to the constant $A$. The restriction to dimension $n=3$ and $q > 3$ in the hypotheses of \cref{theorem A} stems exactly from here.

Comparing \cref{theorem C} to other results in the literature, let us comment that in \cite[Appendix A]{DruetHebeyRobert} the authors deal with the Laplace–Beltrami operator of a smooth metric with a Hölder continuous potential, whereas in \cite[Lemma 4.2]{DruetLaurain} with the Euclidean Laplacian on a domain of $\nR^3$ and a potential function in $L^q$ with $q > 3$. Above we are considering an analogous problem to \cite[Lemma 4.2]{DruetLaurain} on a closed manifold, but for the geometric Laplacian with rough coeﬀicients and the scalar curvature as potential. Moreover, we deal with any dimension and with a larger interval of $q$, although It should be noted that we strongly exploit the conformal properties of $\Lg$. Finally, comparing with asymptotic expansions of \cite[Chapter 3, Section 5]{TaylorToolsForPDEs}, in \eqref{h_decay_qnIntro} we are incorporating an expansion valid for metrics below the $C^1$-threshold.

\addtocontents{toc}{\SkipTocEntry}
\subsection{Upcoming works and comments}

The results of this paper are part of an ongoing project on regularity problems in conformal geometry that seeks regular representatives of a-priori rough conformal classes. The resolution of the Yamabe problem for rough metrics unraveled in the present work is the first crucial step in this program, the second part of which will soon appear in a separate manuscript.

We also wish to briefly comment on the hypotheses of \cref{theorem A} and the open questions arising therein, motivating related problems. Extending this result to higher dimensions is non-trivial, even for  $n = 4, 5$, where in the smooth setting the problem can be treated quite uniformly. 
On the other hand, investigating the Yamabe problem below the $q > n$ threshold, even for $3$-manifolds, presents major complications. Both these research directions require a refinement of
the blow-up analysis of the conformal Green function in \cref{theorem C}, which appears to be a subtle and interesting analytic problem on its own. Additionally, even if such improvements were to be obtained, extracting a sign on the mass of the conformal Green function would rely on a PMT for very rough metrics. Moreover, in dimensions $n \geq 4$, the spin assumption 
in \cite{LeeLeF} 
becomes more restrictive than just orientability. 
These issues motivate the investigation on more general low regularity PMTs, covering the $C^0$ case and without spin assumption. In turn, this rouses the research on diverse $C^0$-masses beyond the one in \cite{LeeLeF}, such as Huisken’s isoperimetric mass.

\addtocontents{toc}{\SkipTocEntry}
\subsection{Outline of the paper}
In \cref{Preliminaries} we set our conventions for Sobolev spaces on closed and asymptotically Euclidean manifolds and show some fundamental properties for later use. We also include a brief introduction to the Yamabe problem, focusing on the key aspects for rough metrics and, most importantly, we present a Yamabe classification (\cref{cor: Yamabe classification}). In \cref{Analytical Results} we develop the core analytical tools of the paper. We study the conformal Laplacian in detail for rough Riemannian metrics, including its Fredholm, invertibility, and regularity properties. We also introduce normal and harmonic coordinates in this setting. Some of the results of this section might be of interest elsewhere. \cref{The Conformal Green's function} is devoted to the study of the conformal Green function, where a delicate blow-up analysis and the proof of \cref{theorem C} are carried out. Finally, the Yamabe problem is treated in \cref{The Yamabe Problem}. Here, \cref{theorem A} and \cref{theorem B} are proven following the classical steps and using the results developed in the previous sections.

\subsubsection*{Acknowledgements}
The authors are grateful to Gerhard Huisken and Bruno Premoselli for their interest in this work and for providing useful references. Rodrigo Avalos is thankful to the \emph{Alexander von Humboldt
Foundation} for partial financial support during the writing of this paper.
This project began at the \textit{Simons Center for Geometry and Physics} in Stony Brook on the occasion of the "Mass, the Einstein Constraint Equations, and the Penrose Inequality Conjecture" workshop. The authors thank the institute for the hospitality.

\vspace{0.5cm}
\section{Preliminaries}\label{Preliminaries}

In this section we will collect some analytic definitions and results relevant for the core of the paper. These are mainly related to Sobolev spaces and low regularity geometric structures.
Moreover, in this section we will review some key properties from the Yamabe problem for low regularity metrics. 

\subsection{Sobolev spaces on closed manifolds}
\label{SectionSobolevSpaces}

Given an arbitrary domain $\Omega \subset \nR^n$ we denote by $L^p(\Omega)$, $W^{k,p}(\Omega)$ and $C^{k,\alpha}(\Omega)$ the usual Lebesgue, Sobolev and Hölder spaces and use the notation $W^{k,p}_0(\Omega)$ and $C^{k,\alpha}_0(\Omega)$ for the traceless and compactly supported spaces, respectively. We also say that $u \in W^{k,p}_{loc}(\Omega)$ if $u \in W^{k,p}(K)$ for all 
smooth\footnote{Throughout the paper, we say that a domain is \emph{smooth} if it has smooth boundary.} $K \subset\subset \Omega$. If $E \to \Omega$ is a vector bundle, we write $W^{k,p}(E)$ for the space of $W^{k,p}$ sections of $E$, and likewise for the other spaces. For the precise definitions and properties about these Banach spaces we refer the reader to \cite{GT,AdamsFournierBook} and references therein. 

It will be fundamental for our analysis to be able to preserve Sobolev regularity of functions under rough coordinate transformations. When the coordinate transformation is a $C^k$-diffeomorphism, this is standard \cite[Theorem 3.41]{AdamsFournierBook}, but in the present work we will be concerned with coordinate transformations that are less regular. We call a $W^{k,q}$-diffeomorphism to a $W^{k,q}$-map that is also a $C^1$-diffeomorphism. By an implicit function theorem argument, one can in fact show that the inverse is a $W^{k,q}_{loc}$-map as well, provided that $k \geq 2$ and $q > \tfrac{n}{2}$ --see \cref{IFTSobolevAppendix} for details--. 
\begin{lemma}
    \label{lemma: Adams diffeo lemma}
    Fix $k \geq 2$, $q > \tfrac{n}{2}$ and let $\phi : \Omega \to \Omega'$ be a $W^{k+1,q}$-diffeomorphism between two bounded smooth domains in $\nR^n$.
    If $u \in W^{l,p}(\Omega)$, then $Au\doteq u \circ \phi^{-1} \in W_{loc}^{l,p}(\Omega')$ provided that $l \leq k+1$ and $1\leq p \leq q$. Moreover, \begin{align}
        \label{ChangeOfCoordsMapSob.1}
        \begin{split}
            A:W_{loc}^{l,p}(\Omega)&\to W_{loc}^{l,p}(\Omega')
        \end{split}
    \end{align}    
    is bounded with bounded inverse, meaning that, given any smooth domain $K \subset\subset \Omega$, there exists a constant $C =C(n,p,q,l,K,\phi)$ such that
    \begin{align}
    \label{ChangeOfCoordsMapSob.2}
        \begin{split}
            \Vert Au \Vert_{W^{l,p}(\phi(K))}&\leq C\Vert u \Vert_{W^{l,p}(K)} \,,
            \\
            \Vert A^{-1}v \Vert_{W^{l,p}(K)}&\leq C\Vert v \Vert_{W^{l,p}(K)}
        \end{split}
    \end{align}
    hold for all $u \in W^{l,p}_{loc}(\Omega)$ and $v \in W^{l,p}_{loc}(\Omega')$.
\end{lemma}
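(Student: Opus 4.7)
The plan is to proceed by induction on the order of differentiation $l$, using Faà di Bruno's formula together with the multiplicative properties of Sobolev spaces in the range $q > n/2$, $k \geq 2$. First, I note that by the implicit function theorem argument referenced in the text (the appendix \cref{IFTSobolevAppendix}), the hypothesis that $\phi$ is a $W^{k+1,q}$-diffeomorphism with $k \geq 2$ and $q > n/2$ guarantees that $\phi^{-1} \in W_{loc}^{k+1,q}(\Omega')$. In particular, both $\phi$ and $\phi^{-1}$ are genuine $C^1$-diffeomorphisms with Jacobians continuous and bounded away from zero on any compact subset. This already takes care of the inverse direction of the claim: once the forward bound is established, the same argument applied to $\phi^{-1}$ yields the reverse bound.

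For the base case $l = 0$, given a smooth compact $K \subset\subset \Omega$, the standard change of variables gives
\[
\int_{\phi(K)} |u \circ \phi^{-1}|^p\,dy = \int_K |u|^p\,|\det D\phi|\,dx \leq C(\phi,K)\,\|u\|_{L^p(K)}^p,
\]
with $C(\phi, K)$ finite because $|\det D\phi|$ is continuous on $\overline{K}$. For the inductive step, I would use Faà di Bruno to expand
\[
D^l(u \circ \phi^{-1}) = \sum c_{\alpha,m}\bigl((D^m u) \circ \phi^{-1}\bigr)\cdot \prod_{i=1}^{m} D^{\alpha_i}\phi^{-1},
\]
where the sum runs over $1 \leq m \leq l$ and multi-indices with $|\alpha_1| + \cdots + |\alpha_m| = l$. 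Each factor $(D^m u)\circ \phi^{-1}$ lies in $L^p(\phi(K))$ by the base case applied to $D^m u \in L^p(K)$, and each factor $D^{\alpha_i}\phi^{-1}$ lies in $W^{k+1-|\alpha_i|,q}_{loc}$; in particular, when $|\alpha_i| \leq k-1$ one has $D^{\alpha_i}\phi^{-1} \in W^{2,q}_{loc} \hookrightarrow C^0_{loc}$, so those factors are bounded on $\phi(K)$.

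The delicate terms are the top-order ones, namely $m=1$ with $|\alpha_1|=l$, which produces $(Du)\circ\phi^{-1} \cdot D^{l}\phi^{-1}$, and $m = l$ with all $|\alpha_i|=1$, which produces $(D^l u)\circ\phi^{-1}\cdot(D\phi^{-1})^{\otimes l}$. For the latter, since $D\phi^{-1}$ is continuous and bounded on compacts, the product is directly in $L^p$. For the former, when $l \leq k$ the factor $D^l\phi^{-1} \in W^{1,q}_{loc}$ and $(Du)\circ\phi^{-1} \in W^{l-1,p}_{loc}$, so closure follows from the multiplier property $W^{k,q}(K')\cdot W^{l,p}(K')\subset W^{l,p}(K')$ valid in the regime $q>n/2$, $l \leq k$, $p \leq q$ (a consequence of the algebra property $W^{2,q}\hookrightarrow C^0$). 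When $l = k+1$, the top-order factor $D^{k+1}\phi^{-1}\in L^q$ is paired with $(Du)\circ\phi^{-1}$, which, applying the base case to $Du\in W^{k,p}(K)\hookrightarrow L^{p^*}(K)$ for a suitable $p^*\geq p$, lies in $L^{p^*}(\phi(K))$; then Hölder's inequality with $p\leq q$ closes the estimate.

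The main obstacle I anticipate is the bookkeeping at this top order $l = k+1$: one has to check that the intermediate Faà di Bruno terms, which are products of several mid-order derivatives of $\phi^{-1}$ with mid-order derivatives of $u$ composed with $\phi^{-1}$, all land in $L^p$ locally. This is a matter of repeatedly invoking the Sobolev embedding $W^{k,q}\hookrightarrow C^0$ (available since $q>n/2$ and $k\geq 2$) for all but the two extreme terms discussed above, so that every mid-order factor from $\phi^{-1}$ is in fact bounded. The constants in \eqref{ChangeOfCoordsMapSob.2} then depend continuously on $\|\phi\|_{W^{k+1,q}}$ and on the geometry of $K \subset\subset \Omega$, and the inverse estimate follows by applying the forward estimate to the $W^{k+1,q}$-diffeomorphism $\phi^{-1}$.
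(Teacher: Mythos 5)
Your Faà di Bruno expansion is essentially the same chain-rule computation the paper carries out explicitly after reducing to $k=2$, and the base case, the inductive structure, and the reduction of the inverse estimate to the forward one are all fine. However, there is a genuine gap in your treatment of the non-extreme terms at top order $l = k+1$. You claim that for all Faà di Bruno terms other than the two extremes ($m=1$ with $|\alpha_1|=l$, and $m=l$ with all $|\alpha_i|=1$), the factors $D^{\alpha_i}\phi^{-1}$ are bounded via $W^{k,q}\hookrightarrow C^0$. This fails in the regime $\tfrac{n}{2} < q \leq n$: the $m=2$ term with $(|\alpha_1|,|\alpha_2|)=(k,1)$ produces $(D^2 u)\circ\phi^{-1}\cdot D^k\phi^{-1}\cdot D\phi^{-1}$, where $D^k\phi^{-1}\in W^{k+1-k,q}_{loc}=W^{1,q}_{loc}$, and $W^{1,q}$ embeds in $C^0$ only when $q > n$. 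This term is neither of your two extreme cases, so your argument simply does not address it.

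The gap is fixable by the same Hölder machinery you use for the $m=1$ extreme term, applied with more care: from the base case applied to the Sobolev embedding $D^2 u\in W^{k-1,p}(K)\hookrightarrow L^r(K)$, $\tfrac{1}{r}=\tfrac{1}{p}-\tfrac{k-1}{n}$, you get $(D^2u)\circ\phi^{-1}\in L^r_{loc}$, while $D^k\phi^{-1}\in W^{1,q}_{loc}\hookrightarrow L^s_{loc}$ with $\tfrac{1}{s}=\tfrac{1}{q}-\tfrac{1}{n}$; then $\tfrac{1}{r}+\tfrac{1}{s}=\tfrac{1}{p}+\tfrac{1}{q}-\tfrac{k}{n}\leq\tfrac{1}{p}$ because $q>\tfrac{n}{2}\geq\tfrac{n}{k}$ for $k\geq 2$. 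Equivalently, one invokes the Sobolev multiplication $W^{k-1,p}\otimes W^{1,q}\hookrightarrow L^p$, which is exactly the second embedding in \eqref{ChangeCoordMult1} that the paper's proof uses for the middle term of \eqref{eq: chain rule 3}. As written, your proposal replaces this necessary multiplication estimate with a boundedness claim that is false in part of the stated range, so the proof is incomplete.
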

\begin{proof}
    We name $K' = \phi(K) \subset\subset \Omega'$ for simplicity. By \cref{LemmaInvFunctThmSobReg} we know that $\phi^{-1} \in W^{k+1,q}_{loc}(\Omega')$ and therefore $\phi^{-1} \in W^{k+1,q}(K')$. The Sobolev embeddings 
    \begin{align*}
        W^{k+1,q}(K) \hookrightarrow C^{k-1}(K) \,, \qquad W^{k+1,q}(K') \hookrightarrow C^{k-1}(K')
    \end{align*}
    imply that $\phi: K \to K'$ is a $C^{k-1}$-diffeomorphism, so it follows from \cite[Theorem 3.41]{AdamsFournierBook} that $Au \in W^{l,p}(K')$ for all $u \in W^{l,p}_{loc}(\Omega)$ and the continuity estimates \eqref{ChangeOfCoordsMapSob.2} for $A$ and $A^{-1}$ hold for each $l \leq k-1$.

    Now, if $l = k$ or $l = k+1$, we need to show that $\phi$ preserves, respectively, one or two more levels of Sobolev regularity. We assume that $k = 2$, as the $k > 2$ case follows from the same computations applied to the $(k-2)$-th derivatives. Let $x = (x^1, \ldots, x^n)$ be a point in $K$ and denote by $y = (\phi^1(x), \ldots, \phi^n(x))$ its image in $K'$. The chain rule gives
    \begin{align}
        \label{eq: chain rule 2}
        \begin{split}
        \frac{\partial^2(u \circ \phi^{-1})}{\partial y^i \partial y^j}(y) = &\frac{\partial^2 u}{\partial x^a \partial x^b}(\phi^{-1}(y)) \frac{\partial (\phi^{-1})^a}{\partial y^i}(y) \frac{\partial (\phi^{-1})^b}{\partial y^j}(y)
        \\
        & + \frac{\partial u}{\partial x^a}(\phi^{-1}(y)) 
        \frac{\partial^2(\phi^{-1})^a}{\partial y^i \partial y^j}(y) \,,
        \end{split}
    \end{align}
    where $(\phi^{-1})^a\doteq x^a\circ\phi^{-1}$. Since $q > \tfrac{n}{2}$, $\phi$ is a $C^1$-diffeomorphism and by \cite[Theorem 3.41]{AdamsFournierBook} one has 
    \begin{equation}
        \label{cond: Lp du}
        \frac{\partial^2 u}{\partial x^a \partial x^b} \circ \phi^{-1} \in L^p(K') \qquad \text{and} \qquad \frac{\partial u}{\partial x^a} \circ \phi^{-1} \in W^{1,p}(K') \,,
    \end{equation}
    with estimates
    \begin{align*}
        \Big\Vert \frac{\partial^2 u}{\partial x^a \partial x^b} \circ \phi^{-1}\Big\Vert_{L^p(K')}&\leq  \Vert \phi\Vert^{\frac{1}{p}}_{C^1(K)}\Big\Vert \frac{\partial^2 u}{\partial x^a \partial x^b} \Big\Vert_{L^p(K)} \,,
        \\
        \Big\Vert \frac{\partial u}{\partial x^a } \circ \phi^{-1}\Big\Vert_{W^{1,p}(K')}&\leq  C\bigl(n,p,K,K',\Vert \phi\Vert_{C^1(K)},\Vert \phi^{-1}\Vert_{C^1(K')}\bigr)\:\Big\Vert \frac{\partial u}{\partial x^a} \Big\Vert_{W^{1,p}(K)} \,.
    \end{align*}
    In view of \eqref{eq: chain rule 2}, since by \cref{LemmaInvFunctThmSobReg} we know that $\phi^{-1}\in W^{3,q}(K')$, we conclude that $u \circ \phi^{-1} \in W^{2,p}(K')$ provided that the Sobolev multiplications
    \begin{align}
        \label{ChangeCoordMult1}
        \begin{split}
        L^p(K') \otimes W^{2,q}(K') \otimes W^{2,q}(K') \hookrightarrow L^p(K') \,, 
        \\
        \quad W^{1,p}(K') \otimes W^{1,q}(K') \hookrightarrow L^p(K')
        \end{split}
    \end{align}
    hold. Appealing to Theorem \ref{SobolevMultLocal}, the first of these embeddings is always true for $q > \tfrac{n}{2}$, whereas the second one holds for all $\tfrac{1}{p} > \tfrac{1}{q} - \tfrac{1}{n}$ and hence in particular if $p \leq q$. Therefore we find that $Au = u\circ\phi^{-1}\in W^{2,p}(K')$ with an estimate of the form
    \begin{align}
        \label{ChangeCoordEstimate2}
        \Vert Au\Vert_{W^{2,p}(K')}\leq C\bigl(n,p,K,K',\Vert \phi\Vert_{C^1(K)},\Vert \phi^{-1}\Vert_{C^1(K')}\bigr)\:\Vert u\Vert_{W^{2,p}(K)},
    \end{align}
    and thus for $l = k = 2$, we are done. Let us now proceed with the case $l = k +1 = 3$, for which we compute 
    \begin{align}
        \label{eq: chain rule 3}
        \frac{\partial^3(u \circ \phi^{-1})}{\partial y^i \partial y^j \partial y^k} (y) &= \frac{\partial^3 u}{\partial x^a \partial x^b \partial x^c}(\phi^{-1}(y)) \, \frac{\partial (\phi^{-1})^a}{\partial y^i}(y) \frac{\partial (\phi^{-1})^b}{\partial y^j}(y)  \frac{\partial (\phi^{-1})^c}{\partial y^k}(y) \nonumber
        \\
        &\quad + 3\frac{\partial^2 u}{\partial x^a \partial x^b}(\phi^{-1}(y)) 
        \frac{\partial^2 (\phi^{-1})^a}{\partial y^i \partial y^j}(y) \frac{\partial (\phi^{-1})^b}{\partial y^k}(y) 
        \\
        \nonumber
        &\quad + \frac{\partial u}{\partial x^a}(\phi^{-1}(y)) 
        \frac{\partial^3(\phi^{-1})^a}{\partial y^i \partial y^j \partial y^k}(y) \,.
    \end{align}
    As a consequence of the case $l = k = 2$ case, we now know that $\phi : K \to K'$ preserves $W^{2,p}$-regularity. In combination with the hypothesis $u \in W^{3, p}(K)$, we can upgrade the conditions \eqref{cond: Lp du} to
    \begin{align*}
        \frac{\partial^3 u}{\partial x^a \partial x^b \partial x^b} \circ \phi^{-1} \in L^p(K')\,, \quad \frac{\partial^2 u}{\partial x^a \partial x^b} \circ \phi^{-1} \in W^{1,p}(K')\,, \quad \frac{\partial u}{\partial x^a} \circ \phi^{-1} \in W^{2,p}(K') \,.
    \end{align*}

    \noindent
    Again, we conclude that $Au = u \circ \phi^{-1} \in W^{3,p}(K')$ provided that the following Sobolev 

    \noindent
    multiplications hold
    \begin{align*}
        &L^p(K') \otimes W^{2,q}(K') \otimes W^{2,q}(K')\otimes W^{2,q}(K') \hookrightarrow L^p(K'), 
        \\
        & W^{1,p}(K') \otimes W^{1,q}(K')\otimes W^{2,q}(K') \hookrightarrow L^p(K'), 
        \\
        & W^{2,p}(K') \otimes L^{q}(K') \hookrightarrow L^p(K').
    \end{align*}
    The first one holds by the same arguments as those granting the first embedding in \eqref{ChangeCoordMult1}. The second one also follows by the same arguments as the second one in \eqref{ChangeCoordMult1} since $W^{1,q}(K')\otimes W^{2,q}(K') \hookrightarrow W^{1,q}(K')$. Finally, the last one follows again by \cref{SobolevMultLocal} as long as $p\leq q$. Moreover, going back to \eqref{eq: chain rule 3}, using \eqref{ChangeCoordEstimate2} and the continuity of the above multiplication properties, we also obtain the estimate
    \begin{align}
        \label{ChangeCoordEstimate3}
        \Vert Au\Vert_{W^{3,p}(K')}\leq C\bigl(n,p,K,K',\Vert \phi\Vert_{C^1(K)},\Vert \phi^{-1}\Vert_{W^{3,q}(K')}\bigr)\Vert u\Vert_{W^{3,p}(K)} \,.
    \end{align}
    Since $K$ (and thus $K'$) was arbitrary, we conclude in any case that $Au \in W^{l,p}_{loc}(\Omega')$. Additionally, we may interchange the roles of $\phi$ and $\phi^{-1}$ to obtain the second estimate in \eqref{ChangeOfCoordsMapSob.2}.
\end{proof}
A similar statement to \cref{lemma: Adams diffeo lemma} holds true for tensor fields. The key difference is that coordinate transformations of tensors involve the Jacobian of the coordinate change, and therefore, one level less of regularity is preserved. We denote by $T_s\Omega$ the bundle of $s$-covariant tensor fields over $\Omega$ and by $W^{l,p}(T_s\Omega)$ the sections of $T_s\Omega$ with coefficients in $W^{l,p}(\Omega,\nR^s)$.
\begin{lemma}
    \label{prop: tensor_regularity}
    Fix $k \geq 2$, $q > \tfrac{n}{2}$ and let $\phi : \Omega \to \Omega'$ be a $W^{k+1,q}$-diffeomorphism between two bounded smooth domains in $\nR^n$.
    If $u \in W^{l,p}(T_s\Omega)$, then $Au \doteq \phi_*u \in W^{l,p}_{loc}(T_s\Omega')$ provided that $l \leq k$ and $p \leq q$. Moreover, 
    \begin{align}
        A:W_{loc}^{l,p}(T_s\Omega)&\to W_{loc}^{l,p}(T_s\Omega')
    \end{align}    
    is bounded with bounded inverse, meaning that, given any smooth domain $K \subset\subset \Omega$, there exists a constant $C =C(n,p,q,l,K,\phi)$ such that
    \begin{align*}
        \Vert Au \Vert_{W^{l,p}(\phi(K))}&\leq C\Vert u \Vert_{W^{l,p}(K)} \,,
        \\
        \Vert A^{-1}v \Vert_{W^{l,p}(K)}&\leq C\Vert v \Vert_{W^{l,p}(K')}
    \end{align*}
    hold for all $u \in W^{l,p}_{loc}(T_s\Omega)$ and $v \in W^{l,p}_{loc}(T_s\Omega')$.
\end{lemma}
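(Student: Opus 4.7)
The plan is to reduce the tensorial statement to the scalar statement of \cref{lemma: Adams diffeo lemma} combined with a Sobolev multiplication argument, with the loss of one derivative accounted for by the Jacobian factors appearing in the pushforward formula. Writing out the pushforward in coordinates, the components of $Au = \phi_*u$ take the form
\begin{equation*}
    (Au)_{i_1 \cdots i_s}(y) = \sum_{a_1, \ldots, a_s} u_{a_1 \cdots a_s}\bigl(\phi^{-1}(y)\bigr) \, \frac{\partial (\phi^{-1})^{a_1}}{\partial y^{i_1}}(y) \cdots \frac{\partial (\phi^{-1})^{a_s}}{\partial y^{i_s}}(y),
\end{equation*}
so that schematically $Au = J \cdot (u \circ \phi^{-1})$, where $J$ is a polynomial in the first partial derivatives of $\phi^{-1}$. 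Since $\phi^{-1} \in W^{k+1,q}_{loc}(\Omega')$ by \cref{LemmaInvFunctThmSobReg}, each factor $\partial(\phi^{-1})^a/\partial y^i$ lies in $W^{k,q}_{loc}(\Omega')$; as $k \geq 2$ and $q > \tfrac{n}{2}$, the space $W^{k,q}$ is a Banach algebra, so $J \in W^{k,q}_{loc}(\Omega')$ as well.

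The next step is to invoke \cref{lemma: Adams diffeo lemma} to obtain $u \circ \phi^{-1} \in W^{l,p}_{loc}(\Omega')$ with the estimate \eqref{ChangeOfCoordsMapSob.2}, which is legitimate since the hypothesis $l \leq k$ is stricter than the one required there. It then remains to verify the Sobolev multiplication
\begin{equation*}
    W^{k,q}(K') \otimes W^{l,p}(K') \hookrightarrow W^{l,p}(K')
\end{equation*}
for any smooth $K' \subset\subset \Omega'$, which is a direct consequence of \cref{SobolevMultLocal} under the assumptions $l \leq k$, $p \leq q$ and $q > \tfrac{n}{2}$. Expanding an $l$-th derivative of $Au$ via the Leibniz rule and bounding each resulting product term by this multiplication estimate together with the estimate for $u \circ \phi^{-1}$ yields
\begin{equation*}
    \Vert Au \Vert_{W^{l,p}(K')} \leq C\bigl(n,p,q,l,K,\phi\bigr) \Vert u \Vert_{W^{l,p}(K)} \,.
\end{equation*}

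Finally, for the bounded inverse, the same argument is applied with the roles of $\phi$ and $\phi^{-1}$ interchanged, using once more \cref{LemmaInvFunctThmSobReg} to ensure that $\phi^{-1}$ enjoys the same Sobolev regularity as $\phi$. The only genuinely subtle point is the bookkeeping of regularity: the single derivative hitting $\phi^{-1}$ in the transformation law is what forces the restriction $l \leq k$, as opposed to $l \leq k+1$ in \cref{lemma: Adams diffeo lemma}; everything else reduces to algebra and the Sobolev multiplication theorem, so no new analytic ingredient beyond those already assembled is required.
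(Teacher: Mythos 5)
Your proof is correct and follows essentially the same route as the paper's: write the transformation law in coordinates, apply \cref{lemma: Adams diffeo lemma} to the scalar composition $u_{i_1\cdots i_s}\circ\phi^{-1}$, use \cref{LemmaInvFunctThmSobReg} to put the Jacobian factors in $W^{k,q}_{loc}$, and close with the Sobolev multiplication from \cref{SobolevMultLocal}. The only cosmetic differences are that the paper carries out the computation explicitly for $s=2$ and remarks that other $s$ are analogous, and that it invokes the multi-factor multiplication $W^{k,q}\otimes W^{k,q}\otimes W^{l,p}\hookrightarrow W^{l,p}$ directly rather than first packaging the Jacobian factors into a single $W^{k,q}$ object $J$ via the algebra property.
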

\begin{proof}
    Due to the prevalence of 2-tensors throughout the paper, we carry out the proof for $s =2$, but other cases are completely analogous. For each point $x = (x^1, \ldots, x^n)$ in $K$, the tensor field $u$ may be expressed by
    \begin{align*}
        u_x = u_{ij}(x) \, dx^i \otimes dx^j \qquad \text{with} \qquad u_{ij}(x)=u_x\bigl(\partial_{x^i},\partial_{x^j}\bigr) \,.
    \end{align*}
    Letting $y = (\phi^1(x), \ldots, \phi^n(x)) \in K'$, where $K' \doteq \phi(K)$, standard transformation rules read
    \begin{align*}
        (Au)_y(\partial_{y^a},\partial_{y^b}) & = u_{\phi^{-1}(y)}\left(d(\phi^{-1})_y(\partial_{y^a}), d(\phi^{-1})_y(\partial_{y^b})\right) \\
        & = \partial_{y^a}(\phi^{-1})^i \partial_{y^b} (\phi^{-1})^j u_{\phi^{-1}(y)}\left(\partial_{x^i}|_{\phi^{-1}(y)}, \partial_{x^j}|_{\phi^{-1}(y)}\right) \\
        & = \partial_{y^a}(\phi^{-1})^i \partial_{y^b} (\phi^{-1})^j (u_{ij}  \circ \phi^{-1}(y)).
    \end{align*}
    Since $u \in W^{l,p}(T_2\Omega)$ implies that $u_{ij} \in W^{l,p}(K)$, then it follows from \cref{lemma: Adams diffeo lemma} that
    \begin{align*}
        (u_{ij}  \circ \phi^{-1}) \in W^{l,p}(K')
    \end{align*}
    provided that $l \leq k+1$ and $p \leq q$, which holds true by hypothesis. Since we also know that $\phi^{-1}\in W^{k+1,q}(K')$ by \cref{LemmaInvFunctThmSobReg}, then the Sobolev multiplication properties of \cref{SobolevMultLocal} imply
    \begin{align*}
        W^{k,q}(K') \otimes W^{k, q}(K') \otimes W^{l,p}(K') \hookrightarrow W^{l,p}(K')
    \end{align*}
    under our hypotheses. Thus $(Au)_{ab} \in W^{l,p}(K')$ and therefore $Au \in W^{l,p}(T_sK')$ together with an estimate
    \begin{align*}
        \Vert Au \Vert_{W^{l,p}(K')}&\leq C\Vert u \Vert_{W^{l,p}(K)} \,,
    \end{align*}
    where $C=C(n,p,q,l,K,\phi)$ is provided by the above multiplication constants and the estimates of \cref{lemma: Adams diffeo lemma}. Since $K$ (and thus $K'$) was arbitrary, we conclude that $Au \in W^{l,p}_{loc}(T_s\Omega')$ and by interchanging the role of $\phi$ and $\phi^{-1}$ we obtain the second estimate of the Lemma.
\end{proof}
There are many distinct but equivalent ways of defining Sobolev spaces on smooth, closed manifolds. One common approach --see e.g. \cite{HebeyBook}-- is to use a smooth background Riemannian metric $g$, which induces a smooth measure $\mu_{g}$ and a covariant derivative $\nabla$, which in turn can be used to define Sobolev norms
\begin{equation}
    \label{eq: SobolevNorm backgroundmetric}
    \|u\|_{W^{k,p}(M,d\mu_{g})} = \sum_{i=0}^k\left(\int_M|\nabla^iu|^p \, d\mu_g\right)^{\frac{1}{p}} \,.
\end{equation}
In this work, however, it will be convenient to adopt a \textit{local} approach. We denote by $\Gamma(T_sM)$ the space of measurable sections of the tensor bundle $T_sM$.
\begin{definition}
    \label{SobSpacesManifoldDefnTensors.1}
    Let $M$ be a smooth, closed manifold and $s\in \mathbb{N}_0$. For $k \in \mathbb{N}_0$ and $1\leq p< \infty$, we define the Sobolev spaces
    \begin{align*}
        W^{k,p}(T_sM) \doteq \bigl\{u \in \Gamma(T_sM) \; : \; \varphi_* u \in W^{k,p}_{loc}\bigl(T_s(\varphi(U)\bigr) \; \text{for all charts} \; (U, \varphi)\bigr\} \,.
    \end{align*}
    Given a finite atlas $\mathcal A = \{(U_\alpha, \varphi_\alpha)\}_{\alpha=1}^N$ and a subordinate partition of unity $\{\eta_\alpha\}_{\alpha=1}^N$, we consider the norms\footnote{The Sobolev norm of a tensor field $T = T_{i_1, \ldots, i_s}(x) dx^{i_1} \otimes \ldots \otimes dx^{i_s}$ in Euclidean space is just $\|T\|_{W^{k,p}} = \sum_{i_1=1}^n \cdots \sum_{i_s=1}^n  \|T_{i_1 \ldots i_s}\|_{W^{k,p}}$.}
    \begin{align}
        \label{SobolevNormTensorFields}
        \|u\|_{W^{k,p}(M,\mathcal A)} \doteq \sum_{\alpha=1}^N \|(\varphi_\alpha)_*(\eta_\alpha u)\|_{W^{k,p}(\varphi_\alpha(U_\alpha))} \,,
    \end{align}
    which make the above space Banach.
\end{definition}
\begin{remark}
    Notice that by Lemma \ref{prop: tensor_regularity} applied to the case of smooth diffeomorphisms, distinct choices of atlases and subordinate partitions of unity yield equivalent norms for (\ref{SobolevNormTensorFields}). In fact, we will not make reference to the chosen atlas and simply denote the norms by $\|\cdot\|_{W^{k,p}(M)}$. One can check, for instance, from the proof of the second half of \cite[Theorem 2.20]{AubinBook}, that the norms \eqref{eq: SobolevNorm backgroundmetric} and \eqref{SobolevNormTensorFields} are equivalent. 
\end{remark}
Using that the map
\begin{align*}
    W^{l,p}(M)&\to W^{l,p}(\varphi_{1}(U_{1}))\times \cdots\times W^{l,p}(\varphi_{N}(U_{N})),
    \\
    u&\mapsto \left((\varphi_1)_{*}(\eta_1 u),\cdots,(\varphi_N)_{*}(\eta_N u))\right) \,.
\end{align*}
is an isometric isomorphism, it is immediate to see that the usual properties of local Sobolev spaces (such as embeddings, density theorems or multiplication properties) transfer to the global ones. We will also need to work with Sobolev spaces of \textit{negative} regularity, which locally are defined simply by duality
\begin{align}
    \label{NegativeSobolevLocal}
    W^{-k,p'}(\Omega) \doteq \bigl(W_0^{k,p}(\Omega)\bigr)'.
\end{align}
Properties of these spaces can be found, for instance, in \cite[Chapter 3]{AdamsFournierBook}. Given a smooth closed manifold $M$, one can extend this local definition to that of $W^{-k,p'}(M)$, similarly to the case of $W^{k,p}(M)$ with $k\in \mathbb{N}$.  
To accurately introduce the associated definitions, let us denote by $\mathscr{D}(M)$ the space of test functions on $M$, which is $C^{\infty}(M)$ equipped with the usual inductive limit topology. Since $M$ is compact, this space is Fréchet. Then, the space of distributions $\mathscr{D}'(M)$ on $M $ is defined 
as the topological dual to $\mathscr{D}(M)$. In order to have a local characterization of elements in $\mathscr{D}'(M)$, consider a local chart $(U_{\alpha},\varphi_{\alpha})$ in $M$ and notice that by, for instance, 
\cite[Theorem 46]{HolstBehzadanSobSpaces} the map
\begin{align}
    \label{CoordRepD}
    \begin{split}
    T_{\alpha}:\mathscr{D}(U)&\to \mathscr{D}(\varphi_{\alpha}(U)),
    \\
    v&\mapsto v\circ\varphi_{\alpha}^{-1} 
    \end{split}
\end{align} 
is a topological isomorphism. 
The dual map $T_{\alpha}^{*}:\mathscr{D}'(\varphi_{\alpha}(U_{\alpha}))\to \mathscr{D}'(U_{\alpha})$ then reads 
\begin{align*}
    (T^{*}_{\alpha}u)(v)=u(T_{\alpha}v)=u(v\circ\varphi_{\alpha}^{-1}) \,, 
\end{align*} 
for $u\in \mathscr{D}'(\varphi_{\alpha}(U_{\alpha}))$ and $v\in \mathscr{D}(U_{\alpha})$ is again a linear topological isomorphism. We denote $\hat{T}_{\alpha}\doteq (T_{\alpha}^{*})^{-1}$ and notice that this map assigns to each distribution on $U_{\alpha}\subset M$ a distribution on $\varphi_{\alpha}(U_{\alpha})\subset\mathbb{R}^n$. Then, given $u\in \mathscr{D}'(U_{\alpha})$ one regards $\hat{T}_{\alpha}u\in \mathscr{D}'(\varphi_{\alpha}(U_{\alpha}))$ as its \emph{local} coordinate
representation, in the same way that  
$T_{\alpha}v$ 
is the local coordinate representation of $v$ in (\ref{CoordRepD}). Observe that whenever $M$ is equipped with a continuous Riemannian metric $g$ and $u \in L^1(M,d\mu_g)$ is regarded as a regular distribution, then $\hat{T}_{\alpha}u = u \circ \varphi_\alpha^{-1}$: letting $v\in \mathscr{D}(\varphi_{\alpha}(U_{\alpha}))$ and denoting by $\{x^i\}_{i=1}^n$ the associated coordinates, we have
\begin{align}
    \label{RegDistribM}
    \begin{split}
        (\hat{T}_{\alpha}u)(v)&=u(T_{\alpha}^{-1}v)=\int_{M}u(v\circ\varphi_{\alpha}) d\mu_g = \int_{\varphi_{\alpha}(U_{\alpha})}(u\circ\varphi_{\alpha}^{-1})(x)v(x)d\mu_g(x)
        \\
        &=(u\circ\varphi_{\alpha}^{-1})(v),
    \end{split}
\end{align}
where each identity in the first line follows by definition, and the meaning of the second line is the usual action of $u\circ\varphi_{\alpha}^{-1}\in L^1_{loc}(\varphi_{\alpha}(U_{\alpha}))$ as an element of $\mathscr{D}'(\varphi_{\alpha}(U_{\alpha}))$ on $v\in \mathscr{D}(\varphi_{\alpha}(U_{\alpha}))$. The above construction extends this to the general case of elements of $\mathscr{D}'(M)$ and allows for the following definition.
\begin{definition}
    \label{def: NegativeSobolevGLobal}
    Let $M$ be a smooth, closed manifold. For $k\in \nN$ and $1<p<\infty$ we define 
    \begin{align*}
        W^{-k,p'}(M) \doteq \bigl\{ u \in \mathscr D'(M) \; : \; \hat{T}_{\alpha}u \in W^{-k,p}(\varphi_{\alpha}(U_{\alpha})) \; \text{for all charts} \; (U_{\alpha}, \varphi_{\alpha})\bigr\} \,.
    \end{align*}
    Given a finite atlas $\mathcal{A}=\{(U_{\alpha},\varphi_{\alpha})\}_{\alpha=1}^N$ and a subordinate partition of unity $\{\eta_{\alpha}\}_{\alpha}^N$, we consider the norms
    \begin{align*}
        \|u\|_{W^{-k,p}(M,\mathcal A)} \doteq \sum_{\alpha=1}^N \|\hat{T}_{\alpha}(\eta_\alpha u)\|_{W^{-k,p'}(\varphi_\alpha(U_\alpha))} \,,
    \end{align*}
    which make the above space Banach.
\end{definition}
It is easy to see by the local duality properties and \cite[Theorem 3.41]{AdamsFournierBook} for smooth diffeomorphisms that distinct choices of atlas or partition of unity yield equivalent norms (an explicit proof of the local statement can be found in \cite[Theorem 75]{HolstBehzadanSobSpaces}). By the local nature of \cref{def: NegativeSobolevGLobal}, these spaces are easily seen to satisfy most of the properties of their local counterparts $W^{-k,p'}(\Omega)$; in particular, they are reflexive and smooth functions are dense. This allows to relate $W^{-k,p'}(M)$ and $(W^{k,p}(M))'$:
\begin{theorem}
    \label{SobolevDualIso}
    Let $M$ be a smooth, closed manifold and fix $k\in \mathbb{N}$ and $1<p<\infty$. Given a continuous Riemannian metric $g$ on $M$, the bilinear form
    \begin{align*}
        C^{\infty}(M) \times C^{\infty}(M) \to \nR \,, \qquad (u,v) \mapsto \langle u,v\rangle_{L^2(M,d\mu_g)}
    \end{align*}
    extends uniquely by continuity to a continuous bilinear form
    \begin{align}
        \label{SobPairing.2}
        W^{-k,p'}(M) \times W^{k,p}(M) \to \mathbb{R} \,, \qquad (u,v) \mapsto \langle u,v\rangle_{(M,g)} \,.
    \end{align}
    Moreover, the $S_{k,p}:W^{-k,p'}(M)\to (W^{k,p}(M))'$ mapping defined by $(S_{k,p}u)(v)=\langle u,v\rangle_{(M,g)}$ for all $v\in W^{k,p}(M)$ is a linear topological isomorphism.
\end{theorem}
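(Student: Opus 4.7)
The plan is to reduce the theorem to the classical local duality $(W^{k,p}_0(\Omega))' \cong W^{-k,p'}(\Omega)$ by means of the same partition of unity used to define the global spaces, and then to establish continuity, injectivity and surjectivity of $S_{k,p}$ separately.

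First I would build the pairing. Fix the atlas $\mathcal A=\{(U_\alpha,\varphi_\alpha)\}$ and partition of unity $\{\eta_\alpha\}$ from \cref{SobSpacesManifoldDefnTensors.1,def: NegativeSobolevGLobal}. For smooth $u,v$, the partition of unity together with \eqref{RegDistribM} yields
\begin{align*}
\langle u,v\rangle_{L^2(M,d\mu_g)}=\sum_{\alpha=1}^N\int_{M}\eta_\alpha u\,v\,d\mu_g = \sum_{\alpha=1}^N\bigl(\hat T_\alpha(\eta_\alpha u)\bigr)\bigl(\chi_\alpha T_\alpha v\bigr),
\end{align*}
where $\chi_\alpha\in C^\infty_c(\varphi_\alpha(U_\alpha))$ is a fixed cutoff equal to $1$ on $\varphi_\alpha(\mathrm{supp}\,\eta_\alpha)$, so that $\chi_\alpha T_\alpha v\in W^{k,p}_0(\varphi_\alpha(U_\alpha))$ and $\hat T_\alpha(\eta_\alpha u)$ sees only its values on $\mathrm{supp}\,\eta_\alpha$. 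Applying the local duality bound to each summand and summing gives
\begin{align*}
\bigl|\langle u,v\rangle_{L^2(M,d\mu_g)}\bigr|\leq C\sum_{\alpha=1}^N\|\hat T_\alpha(\eta_\alpha u)\|_{W^{-k,p'}(\varphi_\alpha(U_\alpha))}\,\|T_\alpha v\|_{W^{k,p}(\varphi_\alpha(U_\alpha))}\leq C'\|u\|_{W^{-k,p'}(M)}\|v\|_{W^{k,p}(M)}.
\end{align*}
Since $C^\infty(M)$ is dense in both $W^{k,p}(M)$ and $W^{-k,p'}(M)$ (by mollification in charts, glued via the partition of unity), the pairing extends uniquely and continuously to \eqref{SobPairing.2}.

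The properties of $S_{k,p}$ then follow. Continuity is immediate. For injectivity, if $(S_{k,p}u)(v)=0$ for every $v\in W^{k,p}(M)$, then $u$ annihilates $\mathscr{D}(M)$, hence $u=0$ in $\mathscr{D}'(M)$ and \emph{a fortiori} in $W^{-k,p'}(M)$. For surjectivity, given $L\in (W^{k,p}(M))'$ I would define $u\in\mathscr{D}'(M)$ by $u(\phi)\doteq L(\phi)$, which is well-posed since $\mathscr{D}(M)\hookrightarrow W^{k,p}(M)$ continuously. To verify $u\in W^{-k,p'}(M)$, I test each local representative against $\psi\in C^\infty_c(\varphi_\alpha(U_\alpha))$: the function $\psi\circ\varphi_\alpha\in\mathscr{D}(M)$ has support in $U_\alpha$, so by the very definition of the global $W^{k,p}$-norm applied to a compactly supported function,
\begin{align*}
\bigl|(\hat T_\alpha u)(\psi)\bigr|=|L(\psi\circ\varphi_\alpha)|\leq \|L\|\,\|\psi\circ\varphi_\alpha\|_{W^{k,p}(M)}\leq C\|L\|\,\|\psi\|_{W^{k,p}(\varphi_\alpha(U_\alpha))}.
\end{align*}
Density of $C^\infty_c$ in $W^{k,p}_0(\varphi_\alpha(U_\alpha))$ promotes this to $\hat T_\alpha u\in W^{-k,p'}(\varphi_\alpha(U_\alpha))$, whence $u\in W^{-k,p'}(M)$. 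Finally, $S_{k,p}u$ and $L$ agree on the dense subspace $\mathscr{D}(M)\subset W^{k,p}(M)$, so they coincide everywhere.

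The main subtlety I anticipate is the bookkeeping between the measure $d\mu_g$ defining the pairing \eqref{SobPairing.2} and the Lebesgue measure implicit in the local duality. Since $g$ is only continuous, the Jacobian factor $\sqrt{|g|}$ arising from the change of variables in \eqref{RegDistribM} is continuous and bounded away from zero on each compact coordinate patch; multiplication by it therefore preserves $L^p$ (and by duality $W^{-k,p'}$) with controlled constants, so it can be absorbed consistently into the identification of $\hat T_\alpha$ without affecting any of the estimates above. Once this is done, all remaining steps reduce to standard facts about the local spaces $W^{k,p}(\Omega)$ and $W^{-k,p'}(\Omega)$.
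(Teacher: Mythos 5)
Your surjectivity argument takes a genuinely different and in some ways cleaner route than the paper's. Where the paper approximates a given $L\in (W^{k,p}(M))'$ by functionals $L_{u_j}$ with $u_j\in C^\infty(M)$, verifies that $\{L_{u_j}\}$ is Cauchy in $W^{-k,p'}(M)$, and extracts the limit, you simply restrict $L$ to $\mathscr D(M)$ to obtain a distribution and then verify chart-by-chart (using the bound $\|\eta_\alpha\cdot(\psi\circ\varphi_\alpha)\|_{W^{k,p}(M)}\lesssim\|\psi\|_{W^{k,p}(\varphi_\alpha(U_\alpha))}$) that it lies in $W^{-k,p'}(M)$. That is a valid and more direct argument. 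The continuity estimate at the start is essentially the same as the paper's.

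There is, however, a genuine gap that appears twice. Both your injectivity step (``$u$ annihilates $\mathscr D(M)$, hence $u=0$ in $\mathscr D'(M)$'') and your final surjectivity step (``$S_{k,p}u$ and $L$ agree on $\mathscr D(M)$'') silently use the compatibility
\begin{equation*}
\langle u,\phi\rangle_{(M,g)} = u(\phi)\qquad\text{for all }u\in W^{-k,p'}(M),\ \phi\in\mathscr D(M),
\end{equation*}
where the left side is the extension-by-continuity pairing and the right side is the distributional action. This is not automatic: $\langle u,\phi\rangle_{(M,g)}$ is defined as $\lim_j\int u_j\phi\,d\mu_g$ for smooth $u_j\to u$ in $W^{-k,p'}(M)$, and one must know that convergence in $W^{-k,p'}(M)$ implies convergence in $\mathscr D'(M)$. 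Since the global spaces $W^{-k,p'}(M)$ are defined through their local representatives $\hat T_\alpha u$, this amounts to passing through the local embeddings $W^{-k,p'}(\varphi_\alpha(U_\alpha))\hookrightarrow\mathscr D'(\varphi_\alpha(U_\alpha))$ and the isomorphisms $\hat T_\alpha$ — exactly the chain of equivalences the paper spells out inside its injectivity proof. You should make that verification explicit; once it is in place, both your injectivity and the conclusion of your surjectivity go through as stated.
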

\begin{proof}
    The first assertion follows directly from the straightforward inequality
    \begin{align*}
        |\langle u,v\rangle_{L^2(M,d\mu_g)}|\leq C(M,g)\Vert u\Vert_{W^{-k,p'}(M)}\Vert v\Vert_{W^{k,p}(M)} \,,
    \end{align*}
    so let us show the isomorphism. First, notice that the map $S_{k,p}$ is linear and well-defined and that for any continuous Riemannian metric $g$ one has $L^{p'}(M,d\mu_g) \cong (L^p(M,d\mu_g))'$. One may then parallel the discussion of \cite[Chapter 3, Section 3.13]{AdamsFournierBook} closely observing that, for each $u\in L^{p'}(M,d\mu_g)$, the functionals $L_u:W^{k,p}(M)\to \mathbb{R}$ given by $L_u(v)\doteq \langle u,v\rangle_{L^2(M,d\mu_g)}$, define elements in $(W^{k,p}(M))'$ and, moreover, that the set $\{L_u \: :\: u\in L^{p'}(M,d\mu_g)\} \subset (W^{k,p}(M))'$ is dense. Since $C^{\infty}(M)$ is dense in $L^{p'}(M,d\mu_g)$ as well, we see that for any $u\in (W^{k,p}(M))'$ there is a sequence $\{u_j\}_{j=1}^{\infty}\subset C^{\infty}(M)$ such that
    \begin{align}
        \label{DualPairing.1}
        u(v)=\lim_{j\to \infty}\langle u_j,v\rangle_{L^2(M,d\mu_g)}
    \end{align}
    for all $v \in W^{k,p}(M)$. In other words, $\Vert u-L_{u_j}\Vert_{(W^{k,p}(M))'} \to 0$ as $j\to\infty$, where each $L_{u_j}\in W^{-k,p'}(M)$ by (\ref{RegDistribM}). On the other hand, simply using the definition of the global norms,
    \begin{align*}
        \Vert L_{u_j}-L_{u_k}\Vert_{W^{-k,p'}(M)}&=\sum_{\alpha}\Vert \hat{T}_{\alpha}(\eta_{\alpha}(L_{u_j}-L_{u_k}))\Vert_{W^{-k,p'}(\varphi_{\alpha}(U_{\alpha}))} \,,
    \end{align*}
    and noticing that since $\eta_{\alpha}(L_{u_j}-L_{u_k})\in L^{p'}(M,d\mu_g)$ are regular distributions, we have
    \begin{align*}
        \Vert \hat{T}_{\alpha}(\eta_{\alpha}(L_{u_j}&-L_{u_k}))\Vert_{W^{-k,p'}(\varphi_{\alpha}(U_{\alpha}))}
        \\
        &=\sup_{v\in W_0^{k,p}(\varphi_{\alpha}(U_{\alpha}))}\frac{|\langle (\eta_{\alpha}(u_j-u_k)\circ\varphi^{-1}_{\alpha}),v\rangle_{L^2(\varphi_{\alpha}(U_{\alpha}),d\mu_g)}|}{\Vert v\Vert_{W^{k,p}(\varphi_{\alpha}(U_{\alpha}))}}
        \\
        &\leq C_{\alpha}\sup_{v\in W_0^{k,p}(\varphi_{\alpha}(U_{\alpha}))}\frac{|\langle u_j-u_k,v\circ\varphi_{\alpha})\rangle_{L^2(U_{\alpha},d\mu_g)}|}{\Vert v\circ\varphi_{\alpha}\Vert_{W^{k,p}(U_{\alpha})}}
        \\
        &\leq C_{\alpha}\sup_{v\in W^{k,p}(M)}\frac{|\langle u_j-u_k,v\rangle_{L^2(M,d\mu_g)}|}{\Vert v\Vert_{W^{k,p}(M)}} = C_\alpha\Vert L_{u_j} - L_{u_k}\Vert_{(W^{k,p}(M))'} \to 0 
    \end{align*}
    as $j,k\to\infty$, where the second inequality follows from $\varphi^{*}(W_0^{k,p}(\varphi_{\alpha}(U_{\alpha})))\subset W^{k,p}(M)$ and the last limit holds as by hypothesis $\{L_{u_j}\}_{j=1}^{\infty}$ is a Cauchy sequence in $(W^{k,p}(M))'$. This implies $\{L_{u_j}\}_{j=1}^{\infty}$ is a Cauchy sequence in $W^{-k,p'}(M)$ as well and, by completeness, there is some $\hat{u}\in W^{-k,p'}(M)$ such that $L_{u_j}\to\hat{u}$ in $W^{-k,p'}(M)$. But then, by the definition of the bilinear form \eqref{SobPairing.2} we have that
    \begin{align*}
        u(v)=\lim_{j\to \infty}\langle u_j,v\rangle_{L^2(M,dV_g)}=\langle \hat{u},v\rangle_{(M,g)}
    \end{align*}
    for all $v\in W^{k,p}(M)$. Therefore, $S_{k,p}$ is surjective. 
    
    Concerning injectivity, assume that $(S_{k,p}\hat{u})(v)=0$ for all $v\in W^{k,p}(M)$. In particular,
    \begin{align*}
        0 = \langle \hat{u},v\rangle_{(M,g)}=\lim_{j\to\infty}L_{u_j}(v) \,,
    \end{align*}
    where $L_{u_j}\to \hat{u}$ in $W^{-k,p'}(M)$. We claim that the action of $\hat{u}$ on $\mathscr{D}(M)$ can be computed in terms of $\langle \hat{u},v\rangle_{(M,g)}$ restricted to $v\in \mathscr{D}(M)$. For this, letting $(U_{\alpha},\varphi_{\alpha})$ be an arbitrary chart, we see that
    \begin{align*}
        L_{u_j}(v)\to \hat{u}(v) \: \forall\: v\in \mathscr{D}(M) &\Longleftrightarrow L_{u_j}(v)\to \hat{u}(v) \: \forall\: v\in \mathscr{D}(U_{\alpha}) \; \forall \, U_\alpha
        \\
        &\Longleftrightarrow (\hat{T}_{\alpha}L_{u_j})(T_{\alpha}v)\to (\hat{T}_{\alpha}\hat{u})(T_{\alpha}v) \: \forall\: v\in \mathscr{D}(U_{\alpha}) \; \forall \, U_\alpha
        \\
        &\Longleftrightarrow (\hat{T}_{\alpha}L_{u_j})(\tilde{v})\to (\hat{T}_{\alpha}\hat{u})(\tilde{v}) \: \forall\: \tilde{v}\in \mathscr{D}(\varphi_{\alpha}(U_{\alpha})) \; \forall \, U_\alpha
    \end{align*}
    where we have used that $T_\alpha$ and $\hat T_\alpha$ are topological isomorphisms. But then, since $\hat{T}_{\alpha}L_{u_j}\to\hat{T}_{\alpha}\hat{u}$ in $W^{-k,p'}(\varphi_{\alpha}(U_{\alpha}))$ and $W^{-k,p'}(\varphi_{\alpha}(U_{\alpha}))\hookrightarrow \mathscr{D}'(\varphi_{\alpha}(U_{\alpha}))$,\footnote{Since $\varphi_{\alpha}(U_{\alpha})$ is a domain in $\mathbb{R}^n$, then $\mathscr{D}(\varphi_{\alpha}(U_{\alpha}))\overset{\iota}{\hookrightarrow} W_{0}^{k,p}(\varphi_{\alpha}(U_{\alpha}))$ and thus $W^{-k,p'}(\varphi_{\alpha}(U_{\alpha}))\overset{\iota^{*}}{\hookrightarrow} \mathscr{D}'(\varphi_{\alpha}(U_{\alpha}))$.} we have that $\hat{T}_{\alpha}L_{u_j}\to\hat{T}_{\alpha}\hat{u}$ in $\mathscr{D}'(\varphi_{\alpha}(U_{\alpha}))$ as well, implying that indeed 
    \begin{align*}
        (\hat{T}_{\alpha}L_{u_j})(\tilde{v})\to (\hat{T}_{\alpha}\hat{u})(\tilde{v})
    \end{align*}
    for all $\tilde{v}\in \mathscr{D}(\varphi_{\alpha}(U_{\alpha}))$. Therefore, we conclude that $L_{u_j}(v)\to \hat{u}(v)$ and consequently
    \begin{align*}
        \langle \hat{u},v\rangle_{(M,g)}=\hat{u}(v)
    \end{align*}
    holds for all $v\in \mathscr{D}(M)$, showing that $\hat{u}=0$ as an element of $\mathscr{D}'(M)$. This establishes injectivity. We have therefore shown that $S_{k,p}$ is a bounded isomorphism between Banach spaces and thus it is topological isomorphism since the inverse must then be bounded by the open mapping theorem.
\end{proof}
Let us finally briefly discuss \emph{interpolating Sobolev spaces}, which will be useful on a few occasions, although
in this paper we will work mostly with Sobolev spaces with integer degree of regularity.
A family of Banach spaces interpolating between two selected Banach spaces can be obtained via \emph{complex interpolation theory} --for a detailed description 
we refer the reader to \cite{Calderon1,Triebel1}, despite most of the properties we shall use being contained more concisely in \cite[Chapter 4, Section 2]{Taylor1}--. 
In particular, following \cite[Chapter VII, Section 7.66]{Adams1st}, given a smooth bounded domain $\Omega \subset \nR^n$, $1<p<\infty$, $\theta\in (0,1)$ and $k\in \mathbb{N}_0$, we define
\begin{align}
    \label{BesselPotentials.3}
    H^{s,p}(\Omega)\doteq [L^p(\Omega),W^{k,p}(\Omega)]_{\theta}, \text{ for } s=\theta k,
\end{align}
denoting the family of spaces interpolating between $L^p(\Omega)$ and $W^{k,p}(\Omega)$, parametrized by $\theta\in (0,1)$.
We then denote by $H^{s,p}_0(\Omega)$ the closure of $C^{\infty}_0(\Omega)$ in $H^{s,p}(\Omega)$, and for $1<p<\infty$ and $s<0$ a real number,
\begin{align}
    \label{BesselPotentials.4}
    H^{s,p'}(\Omega)\doteq [H_0^{-s,p}(\Omega)]'.
\end{align}
If $\Omega$ is substituted by $\nR^n$, these spaces coincide with the usual \emph{Bessel potentials} defined via the Fourier transform --see \cite[Proposition 6.2]{Taylor3}--.

\subsection{Weighted Sobolev spaces and AE manifolds}
\label{AESection}
The classical strategy to solve the Yamabe problem on closed manifolds in low dimensions relies on a tight relation with the positive mass theorem of general relativity. We then review some standard terminology and results associated with asymptotically Euclidean manifolds. Following the conventions in \cite{Bartnik86}, we use the norms 
\begin{align}\label{WeightedSobolevNormsDefs}
\begin{split}
    \|u\|_{W^{k,p}_{\delta}(\mathbb{R}^n)} &\doteq \sum_{|\beta|\leq k} \|\sigma^{-\delta-\frac{n}{p}+|\beta|}\partial^{\beta}u\|_{L^p(\nR^n)}
    \\
    \|u\|_{C^{k,\alpha}_{\delta}(\nR^n)} &\doteq \sum_{|\beta|<k}\|\sigma^{-\delta+|\beta|}\partial^\beta u\|_{C^0(\nR^n)} + \sup_{|\beta|=k} \|\sigma^{-\delta+k+\alpha}\partial^\beta u\|_{C^{0,\alpha}(\nR^n)}
\end{split}
\end{align}
with weight $\sigma(x)\doteq (1+|x|^2)^{\frac{1}{2}}$ in Euclidean space. Most properties of the associated weighted Sobolev spaces can be found in \cite{Bartnik86,NirenbergWalker,McOwen,Lockhart,CB-C}.
\begin{definition}
    \label{WeightedSobSpacesManifold}
    Let $M$ be a smooth $n$-manifold and $\Phi$ a Euclidean structure at infinity, that is, a smooth diffeomorphism
    \begin{equation*}
        \Phi : M \setminus K \to \nR^n \setminus \overline{B_R}
    \end{equation*}
    for some compact set $K \subset M$ and radius $R > 0$. For each $s \in \nN_0$, $k \in \nN_0$, $1\leq p< \infty$ and $\delta \in \nR$, we define the weighted Sobolev spaces 
    \begin{equation*}
        W^{k,p}_{\delta}(T_sM,\Phi) \doteq \Bigl\{u \in W^{k,p}_{loc}(T_sM) \;:\; \|\Phi_*u\|_{W^{k,p}_{\delta}(\nR^n\setminus\overline{B_R})} < \infty\Bigr\} \,.
    \end{equation*}
    Given a finite coordinate cover $\mathcal A = \{(U_\alpha,\varphi_\alpha)\}_{\alpha=1}^{N-1}$ of $K$ and a partition of unity $\{\eta_\alpha\}_{\alpha=1}^{N}$ subordinate to $\mathcal A \cup (M \setminus K,\Phi)$, we consider the norms
    \begin{equation*}
        \|u\|_{W^{k,p}_\delta(T_sM,\Phi)} = \sum_{\alpha=1}^{N-1} \|(\varphi_\alpha)_*(\eta_\alpha u)\|_{W^{k,p}(\varphi_\alpha(U_\alpha))} + \|\Phi_*(\eta_{N} u)\|_{W^{k,p}_\delta(\nR^n \setminus \overline{B_R})}
    \end{equation*}
    which make the above spaces Banach. We will omit, if no confusion arises, the reference to the structure at infinity $\Phi$.
\end{definition}
Analogously, one can define weighted Hölder spaces on manifolds admitting a Euclidean structure at infinity. For the convenience of the reader, we summarize here the properties of these weighted spaces most relevant for us:
\begin{lemma}
    \label{AEWeightedEmbeedings}
    Let $M$ and $\Phi$ be as in \cref{WeightedSobSpacesManifold} and fix $s \in \nN_0$, $k,l \in \nN_0$, $1 \leq p,q < \infty$ and $\delta \in \nR$. Then, the following continuous embeddings hold:
    \begin{enumerate}
        \item[(i)] If $1< p\leq q<\infty $ and $\delta_2<\delta_1$, then $L^{q}_{\delta_2}(T_sM) \hookrightarrow L^p_{\delta_1}$.
        \item[(ii)] If $kp<n$, then $W^{k,p}_{\delta}(T_sM) \hookrightarrow L^{q}_{\delta}(T_sM)$ for all $p\leq q\leq \frac{np}{n-kp}$.
        \item[(iii)] If $kp=n$, then $W^{k,p}_{\delta}(T_sM) \hookrightarrow L^q_{\delta}(T_sM)$ for all $p\leq q<\infty$.
        \item[(iv)] If $kp>n$, then $W^{k+l,p}_{\delta}(T_sM) \hookrightarrow C^l_{\delta}(T_sM)$.
        \item[(v)] If $1<p\leq q<\infty $ and $k_1+k_2>\frac{n}{q}+k$ where $k_1,k_2\geq k$ are non-negative integers, then, we have a continuous multiplication property $W^{k_1,p}_{\delta_1}(T_sM)\otimes W^{k_2,q}_{\delta_2}(T_sM) \hookrightarrow W^{k,p}_{\delta}(T_sM)$ for any $\delta>\delta_1+\delta_2$. In particular, $W^{k,p}_{\delta}$ is an algebra under multiplication for $k>\frac{n}{p}$ and $\delta<0$.
    \end{enumerate}
\end{lemma}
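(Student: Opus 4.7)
The plan is to prove each embedding by decomposing the norm on $M$ into a compact part supported in $K$ and a weighted part supported on the end $M\setminus K\cong \nR^n\setminus\overline{B_R}$, as dictated by the definition of the norm in \cref{WeightedSobSpacesManifold}. On the compact part, each assertion reduces to the classical unweighted Sobolev/multiplication results applied locally in the charts $\{(U_\alpha,\varphi_\alpha)\}_{\alpha=1}^{N-1}$, since there $\sigma$ is bounded above and below and plays no role. The crux is therefore to establish the analogous weighted embeddings on $\nR^n\setminus\overline{B_R}$ for the norms \eqref{WeightedSobolevNormsDefs}, all of which are contained in \cite{Bartnik86, NirenbergWalker, McOwen, CB-C}, but which I would re-derive by a standard dyadic scaling argument.

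First I would decompose the end into dyadic annuli $A_j\doteq\{2^jR\leq|x|<2^{j+1}R\}$ for $j\geq 0$, on which $\sigma(x)\sim 2^j R$. Each annulus rescales via $x=2^jR\,y$ to the fixed reference annulus $A\doteq\{1\leq|y|<2\}$. The exponents $\sigma^{-\delta-\frac{n}{p}+|\beta|}$ in \eqref{WeightedSobolevNormsDefs} are precisely tailored so that for $\tilde u(y)\doteq u(2^jRy)$, the rescaled weighted $W^{k,p}_\delta$-seminorm of $u$ on $A_j$ equals, up to constants depending only on $\delta, k, p, R$, a factor of $2^{-j\delta}$ times the standard unweighted $W^{k,p}(A)$-norm of $\tilde u$. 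Hence (ii)--(iv) follow by applying the classical Sobolev embedding on $A$ to $\tilde u$ and then summing the resulting estimates over $j\geq 0$, noting that in (ii)--(iv) the weight exponent $\delta$ on both sides is the same, so the $2^{-j\delta}$ factor cancels and no convergence issue arises; one obtains an $\ell^p$-type summation bound directly.

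For (i), applying Hölder's inequality on each annulus $A_j$ with exponents $q/p$ and its conjugate gives
\[
\|\sigma^{-\delta_1-\frac{n}{p}}u\|_{L^p(A_j)}\leq C\,2^{j(\delta_2-\delta_1)}\,\|\sigma^{-\delta_2-\frac{n}{q}}u\|_{L^q(A_j)} \,,
\]
where the constant $C$ is independent of $j$, and taking the $\ell^p$-sum over $j$ converges by a geometric series since $\delta_2<\delta_1$. For (v), the argument is similar: I would rescale each annulus to $A$, apply the unweighted local multiplication property \cref{SobolevMultLocal} to the rescaled functions (whose validity requires the hypothesis $k_1+k_2>\frac{n}{q}+k$), and then track the powers of $2^j$ produced by the three weights $\sigma^{-\delta}, \sigma^{-\delta_1}, \sigma^{-\delta_2}$. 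The strict inequality $\delta>\delta_1+\delta_2$ is precisely what ensures that the resulting geometric series in $j$ converges, producing an overall finite bound.

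The main obstacle is purely organizational: carefully verifying that the rescaling intertwines the weighted norms with the unweighted ones up to a uniform constant, and checking that the exponents of $2^j$ in each step assemble into a convergent series under the stated hypotheses. No essentially new analytic input is required beyond the classical local Sobolev theory already invoked in \cref{SectionSobolevSpaces}, combined with the scaling structure of $\sigma$ on dyadic annuli; the transfer from the Euclidean model to the manifold $M$ is then immediate from \cref{WeightedSobSpacesManifold}.
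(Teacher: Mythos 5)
The paper does not prove this lemma; it states it as a catalogue of standard weighted Sobolev facts and refers the reader to \cite{Bartnik86,NirenbergWalker,McOwen,Lockhart,CB-C}. Your dyadic-scaling argument is precisely the standard proof found in those references (it is essentially Bartnik's approach), so there is no discrepancy to report, and the core scaling computation you rely on is correct: rescaling $x = 2^jR\,y$ sends the $W^{k,p}_\delta$-seminorm on the annulus $A_j$ to $(2^jR)^{-\delta}$ times the unweighted $W^{k,p}(A)$-norm, uniformly in the derivative order $|\beta|$, and the Hölder step for $(i)$ produces exactly the exponent $\delta_2-\delta_1$.

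Two small points worth tightening. In $(ii)$--$(iii)$ the "$\ell^p$-type summation bound" needs to be stated carefully: the annulus-by-annulus estimate gives $a_j \leq C b_j$ with $a_j$ an $L^q(A_j)$-norm and $b_j$ a $W^{k,p}_\delta(A_j)$-norm, and one then uses the sequence-space embedding $\ell^p \hookrightarrow \ell^q$ for $p \leq q$ to pass from $\bigl(\sum_j a_j^q\bigr)^{1/q}$ to $\bigl(\sum_j b_j^p\bigr)^{1/p}$. In $(v)$ your explanation of the role of $\delta > \delta_1 + \delta_2$ is a bit misleading: the sum over $j$ does not need to be a convergent geometric series. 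Once one bounds the $W^{k_2,q}_{\delta_2}(A_j)$-factor by its $\sup_j$ (which is dominated by the global $W^{k_2,q}_{\delta_2}$-norm) and observes that the factor $(2^jR)^{p(\delta_1+\delta_2-\delta)}$ is merely bounded for $j\geq 0$, the remaining sum over $j$ is exactly $\|u\|^p_{W^{k_1,p}_{\delta_1}}$ with no series to evaluate; the estimate in fact works with $\delta \geq \delta_1+\delta_2$, so the strict inequality is stated conservatively rather than being forced by your argument. Neither issue affects the validity of the proof.
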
 
 Using the above notion of weighted Sobolev spaces, we now introduce 
 AE manifolds, namely, non-compact manifolds endowed with a metric decaying  
to the Euclidean one along an end.
\begin{definition}
    \label{AE-Manifolds}
	Let $(M,g)$ be a Riemannian $n$-manifold and $\Phi$ a Euclidean structure at infinity. Then, $(M,g)$ is called $W^{k,p}_{\delta}$-asymptotically Euclidean with respect to $\Phi$ if
    \begin{enumerate}
        \item[(i)] $g \in W^{k,p}_{loc}(T_2M)$ for some $k>\frac{n}{p}$,
        \item[(ii)] $g-\Phi^{*}g_{\nR^n}\in W^{k,p}_{\delta}(T_2M)$ for some $\delta<0$.
    \end{enumerate}
\end{definition}
AE manifolds arise as models for isolated gravitational systems and have been intensively studied in the last decades. Conservation principles give rise to associated geometric invariants, such as the so called ADM energy, which captures the total energy of the system. For our discussion, we restrict ourselves to 3 dimensions:
\begin{definition}
    Let $(M,g)$ be a $C^{1}_{-\tau}$-AE 3-manifold with respect to a structure at infinity $\Phi_x$ with asymptotic coordinates $\{x^i\}_{i=1}^3$ for some $\tau>0$. Then, its ADM energy is defined by
    \begin{align}
        \label{ADMEnergy}
        E_{ADM}(g)\doteq \frac{1}{16\pi}\lim_{r\to\infty}\int_{S_r}\left( \partial_{i}g_{ij}-\partial_{j}g_{ii} \right)\frac{x^i}{|x|}d\omega_r,
    \end{align}
    whenever the limit exists, and where $S_r$ denotes a topological sphere of radius $r$ contained in the end of $M^3$ and $d\omega_r$ its induced Euclidean volume form.
\end{definition}
The above definition has been presented in a rather broad context due to its use in this paper,
but it leaves open the question of when it is indeed well-defined. It is known since the work of R. Bartnik (\cite[Section 4]{Bartnik86}) that the ADM energy is finite and independent of the asymptotic coordinates or sequence of topological spheres,
provided that $(M,g)$ is $W^{2,q}_{-\tau}$-AE with $\tau > \tfrac{1}{2}$, $q > 3$ and $\R_g \in L^1(M,d\mu_g)$. Its computation becomes particularly simple when the manifold is asymptotic to the spacial Schwarzschild manifold, which will be exploited in the resolution of the Yamabe problem in \cref{The Yamabe Problem}.
\begin{definition}
    Let $(M,g)$ be a $C^k_{-\tau}$-AE 3-manifold with respect to a structure at infinity $\Phi_x$ with coordinates $\{x^i\}_{i=1}^3$ for some $k\in \mathbb{N}_0$ and $\tau>0$. We say that $(M,g)$ is asymptotically Schwarzschildian of order $k$ with respect to $\Phi_x$ if 
    \begin{align}
        \label{ASDefn}
        g_{ij}(x) = \left(1+\frac{4m}{|x|}\right)\delta_{ij} + O_k(|x|^{-1-\varepsilon}),
    \end{align}
    for some constant $m$ and some $\varepsilon>0$. In such case, we say that $g\in AS_{\varepsilon}(k)$.
\end{definition} 
Notice that for any $AS_{\varepsilon}(1)$ metric $g$, direct computations give that $E_{ADM}=m$.

\subsection{A Yamabe classification}\label{The Yamabe classification}

Let $g \in W^{2,q}(T_2M)$ be a Riemannian metric on a smooth, closed manifold $M$ of dimension $n \geq 3$ with $q > \tfrac{n}{2}$. It is in particular Hölder continuous. Given a positive function $u \in W^{2,q}(M)$, a standard computation shows that the conformal metric $\tilde g \doteq u^{\frac{4}{n-2}} g$ has scalar curvature
\begin{equation}
    \label{eq: ConformalScalar}
    \R_{\tilde g} = u^{-\frac{n+2}{n-2}}\Big(-a_n\Delta_gu + \R_g u\Big) \,, \qquad a_n \doteq 4 \, \frac{n-1}{n-2} \,.
\end{equation}
Notice that $W^{2,q}(M) \otimes W^{2,q} \hookrightarrow W^{2,q}(M)$ by the multiplication properties of \cref{SobolevMultLocal}, implying that $\tilde g \in W^{2,q}(T_2M)$ and consequently $\R_{\tilde g} \in L^q(M)$. In view of \eqref{eq: ConformalScalar}, the metric $\tilde g$ has constant scalar curvature equal to $\lambda$ if and only if $u$ solves the semi-linear elliptic PDE
\begin{equation}
    \label{eq: Yamabe equation 1}
    \Lg u \doteq -a_n \Delta_g u + \R_g u = \lambda \, u^{\frac{n+2}{n-2}} \,,
\end{equation}
which is the Euler--Lagrange equation of
\begin{equation*}
    Q_g(u) \doteq \frac{E_g(u)}{\|u\|_{L^{2^*}(M,d\mu_g)}^2} \,, \qquad E_g(u) \doteq \int_M \bigl(a_n|\nabla u|^2_g + \R_g u^2\bigr) \, d\mu_g \,.
\end{equation*}
Here $2^* \doteq \tfrac{2n}{n-2}$ denotes the optimal Sobolev embedding power of $W^{1,2}(M)$. More generally, one may consider for any $s \in [2, 2^*]$ the Rayleigh quotient
\begin{equation}
    \label{eq: Qs functionals}
    Q_g^s(u) \doteq \frac{E_g(u)}{\|u\|_{L^{s}(M,d\mu_g)}^2}
\end{equation}
with critical points satisfying the Euler--Lagrange equation
\begin{equation}
    \label{eq: sPDE}
    \Lg u = q_s u^{s-1} \,, \qquad q_s \doteq Q_g^s(u) \,.
\end{equation}
It follows from the $W^{1,2}(M) \hookrightarrow L^{2^*}(M)$ embedding that the bilinear functional $A_g : W^{1,2}(M) \times W^{1,2}(M) \to \nR$ defined by
\begin{align}
    \label{YamabeBilinearForm}
    A_g(u, v) \doteq \int_M \Big(a_n \langle\nabla u, \nabla v\rangle_g + \R_g u \, v \Big) \, d\mu_g
\end{align}
is continuous and therefore it is convenient to use $W^{1,2}(M)$ as the space of admissible functions for the variational problem. In the present context, it is worth noticing that since the metric $g$ is continuous, it is easy to check by a standard computation that the $\|\cdot\|_{W^{1,2}(M,d\mu_g)}$ norm defined in \eqref{eq: SobolevNorm backgroundmetric} is equivalent to the one introduced in \cref{SobSpacesManifoldDefnTensors.1} for scalar fields\footnote{For scalar fields $\nabla u$ does not involve Christoffel symbols and, thereby, the norm $\|\cdot\|_{W^{1,2}(M,d\mu_g)}$ is well-defined for $C^0$-Riemannian metrics.}. Below, and whenever necessary, we will make use of this equivalence.
Using multiplication properties on the interpolation Sobolev spaces introduced in Section \ref{SectionSobolevSpaces}, one can show the following:
\begin{proposition}
    \label{lemma: Ru2 interpolation}
    Let $M$ be a smooth, closed manifold of dimension $n \geq 3$ and consider a $W^{2,q}$-Riemannian metric $g$ with $q > \tfrac{n}{2}$. Then, the $W^{1,2}(M) \to \nR$ map
    \begin{align*}
        u \mapsto \int_M \R_g u^2 d\mu_g
    \end{align*}
    is weakly continuous and for every $\varepsilon > 0$ there exists a constant $C_\varepsilon > 0$ such that
    \begin{equation}
        \int_M \R_g u^2 \, d\mu_g \leq \|\R_g\|_{L^q(M,d\mu_g)} \Big(\varepsilon \|u\|^2_{W^{1,2}(M,d\mu_g)} + C_\varepsilon \|u\|^2_{L^2(M,d\mu_g)}\Big) \,.
    \end{equation}
    Moreover, for each $s \in [2, 2^*]$ the functional $Q_g^s$ defined in \eqref{eq: Qs functionals} is bounded from below in $W^{1,2}(M)$.
\end{proposition}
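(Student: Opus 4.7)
The plan is to prove the three statements in order, with the interpolation estimate being the central technical ingredient from which the other two follow almost immediately. The key observation is that since $q > \tfrac{n}{2}$, the Hölder conjugate $q'$ satisfies $2q' < 2^{*}$ strictly, which opens up room for interpolation between $L^2$ and $L^{2^{*}}$.

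First I would establish the interpolation estimate. By Hölder's inequality,
\begin{equation*}
    \int_M \R_g\, u^2 \, d\mu_g \leq \|\R_g\|_{L^q(M,d\mu_g)} \, \|u\|_{L^{2q'}(M,d\mu_g)}^2,
\end{equation*}
where $q' = q/(q-1)$. Since $q > \tfrac{n}{2}$ gives $2q' < 2^{*}$, one can interpolate the $L^{2q'}$ norm between $L^2$ and $L^{2^{*}}$: there exists $\theta \in (0,1)$ such that $\|u\|_{L^{2q'}} \leq \|u\|_{L^2}^{1-\theta}\|u\|_{L^{2^{*}}}^{\theta}$. Squaring and applying Young's inequality with a small parameter then yields, for every $\varepsilon > 0$,
\begin{equation*}
    \|u\|_{L^{2q'}}^2 \leq \varepsilon \|u\|_{L^{2^{*}}}^2 + C_{\varepsilon}\|u\|_{L^2}^2.
\end{equation*}
Finally, the Sobolev embedding $W^{1,2}(M) \hookrightarrow L^{2^{*}}(M)$ (obtained, via \cref{SobSpacesManifoldDefnTensors.1}, from its local Euclidean counterpart) absorbs $\|u\|_{L^{2^{*}}}$ into $\|u\|_{W^{1,2}(M,d\mu_g)}$, and after reabsorbing constants into $\varepsilon$ one recovers the stated bound.

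Next, for the weak continuity, I would let $u_j \rightharpoonup u$ in $W^{1,2}(M)$. Since $2q' < 2^{*}$, the Rellich--Kondrachov theorem (valid in this setting because $g$ is continuous, hence $\mu_g$ is equivalent to a smooth background measure) gives $u_j \to u$ strongly in $L^{2q'}(M)$. Then Hölder's inequality
\begin{equation*}
    \left| \int_M \R_g\bigl(u_j^2 - u^2\bigr) d\mu_g \right| \leq \|\R_g\|_{L^q} \, \|u_j - u\|_{L^{2q'}} \, \|u_j + u\|_{L^{2q'}}
\end{equation*}
shows the right-hand side tends to zero, since $\{u_j\}$ is bounded in $L^{2q'}$ and $u_j \to u$ in $L^{2q'}$.

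For the lower bound on $Q_g^s$, I would apply the interpolation estimate with $\varepsilon > 0$ chosen small enough that $a_n - \varepsilon \|\R_g\|_{L^q} > 0$, producing
\begin{equation*}
    E_g(u) \geq (a_n - \varepsilon\|\R_g\|_{L^q}) \|\nabla u\|_{L^2}^2 - C \|u\|_{L^2}^2 \geq -C\|u\|_{L^2}^2.
\end{equation*}
Since $s \geq 2$ and $M$ has finite $\mu_g$-volume, Hölder gives $\|u\|_{L^2} \leq C\|u\|_{L^s}$, whence $Q_g^s(u) \geq -C$ uniformly on $W^{1,2}(M) \setminus \{0\}$.

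The proof is essentially a chain of standard inequalities and no step presents a genuine obstacle; the only subtlety worth emphasizing is the strict inequality $2q' < 2^{*}$ guaranteed by $q > \tfrac{n}{2}$, which is exactly what allows both the interpolation/Young trick and the compact embedding used for weak continuity. Everything else is routine Hölder and Sobolev embedding.
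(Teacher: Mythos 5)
Your proof is correct and complete. The key observation is exactly the right one: $q > \tfrac{n}{2}$ forces $2q' < 2^*$ strictly, which simultaneously opens up room for the Lebesgue-space interpolation followed by Young (giving the $\varepsilon$-estimate) and makes the embedding $W^{1,2}(M) \hookrightarrow L^{2q'}(M)$ compact (giving weak continuity via Rellich--Kondrachov). The arithmetic in the Hölder applications checks out: $\tfrac{1}{q} + \tfrac{1}{2q'} + \tfrac{1}{2q'} = 1$, and the coercivity argument correctly applies the two-sided form of the $\varepsilon$-bound before invoking $\|u\|_{L^2} \leq C\|u\|_{L^s}$ for $s \geq 2$ on a finite-measure space.

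The paper itself does not reproduce a proof of this proposition; it simply defers to \cite[Lemmas 4.2 and 4.3]{avalos2024sobolev} (which in turn adapt \cite[Lemma 3.1]{Maxwell1}) and notes an alternative in \cite[Proposition 2.3 and Lemma 4.2]{ZhangW1pAubin}. The prose immediately preceding the proposition ("using multiplication properties on the interpolation Sobolev spaces introduced in Section~\ref{SectionSobolevSpaces}\ldots") signals that the cited argument routes through the fractional $H^{s,p}$ scale and Sobolev multiplication theorems. Your proof is genuinely more elementary: it needs only Hölder, the classical Lebesgue interpolation inequality $\|u\|_{L^{2q'}} \leq \|u\|_{L^2}^{1-\theta}\|u\|_{L^{2^*}}^{\theta}$, Young with a small parameter, the integer-order Sobolev embedding on closed manifolds, and Rellich--Kondrachov. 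Nothing in this chain requires the $H^{s,p}$ machinery. The trade-off is that the multiplication-theorem route generalizes more mechanically to situations where $\R_g$ lives only in a negative-order Sobolev space (relevant for even rougher metrics), whereas your argument crucially uses $\R_g \in L^q$ as a genuine function so that pointwise Hölder applies; for the $W^{2,q}$-metrics in the present statement that is always the case, so your simpler route suffices.
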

\begin{proof}
    See \cite[Lemma 4.2 and Lemma 4.3]{avalos2024sobolev}, where the proof basically follows the ideas of \cite[Lemma 3.1]{Maxwell1}). A similar result also appears in \cite[Proposition 2.3 and Lemma 4.2]{ZhangW1pAubin} via different techniques.
\end{proof}
\cref{lemma: Ru2 interpolation} in particular justifies the definition
\begin{equation}
    \lambda_s(M, g) \doteq \inf \Big\{Q_g^s(u) \; : \; u \in W^{1,2}(M) \,, \; u \neq 0\Big\} \,.
\end{equation}
The case $s=2$ of \eqref{eq: sPDE} corresponds to the eigenvalue problem of the operator $\Lg$ and $\lambda_2(M,g)$ is thus its first eigenvalue. The special case $\lambda_{2^*}(M,g)$ is the \textit{Yamabe invariant} of $(M, g)$ and it follows immediately from the fact that
\begin{align*}
    Q_{v^{2^*-2}g}(u) = Q_{g}(vu)
\end{align*}
for any positive $v\in W^{2,q}(M)$, that $\lambda(M,g) \doteq \lambda_{2^*}(M,g)$ is an invariant of the conformal class 
\begin{align*}
    [\,g\,]_{W^{2,q}} = \Bigl\{u^{\frac{4}{n-2}}g \: : \: u\in W^{2,q}(M)\,, \: u>0 \Bigr\} \,.
\end{align*}
As a consequence, the Yamabe invariant of the round sphere $\nS^n$ equals to $\tfrac{a_n}{C_S}$, where $C_S$ is the optimal constant in the $W^{1,2}(\nR^n) \hookrightarrow L^{2^*}(\nR^n)$ Sobolev inequality --see \cite[Theorem 3.3]{Lee-Parker}--. Another consequence of \cref{lemma: Ru2 interpolation}, combined with the Rellich--Kondrachov compactness, is that direct methods allow for minimization of $Q_g^s$ whenever $s$ is subcritical:
\begin{proposition}
    \label{prop: subcritical existence}
    Let $M$ be a smooth, closed manifold of dimension $n \geq 3$ and consider a $W^{2,q}$-Riemannian metric $g$ with $q > \tfrac{n}{2}$. For each $s \in [2, 2^*)$ there exists a nonnegative function $u \in W^{1,2}(M)$ with $\|u\|_{L^s(M,d\mu_g)} = 1$ such that $Q_g^s(u) = \lambda_s(M, g)$.
\end{proposition}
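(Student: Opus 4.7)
The plan is to apply the direct method of the calculus of variations. Fix $s \in [2, 2^*)$ and take a minimizing sequence $\{u_j\}_{j\in\mathbb{N}} \subset W^{1,2}(M) \setminus \{0\}$ for $Q_g^s$, normalized so that $\|u_j\|_{L^s(M,d\mu_g)} = 1$; then $E_g(u_j) = Q_g^s(u_j) \to \lambda_s(M,g)$. Since $|\nabla |u||_g = |\nabla u|_g$ almost everywhere and $u^2 = |u|^2$, replacing $u_j$ by $|u_j|$ leaves both $E_g$ and the normalization unchanged, so I may assume $u_j \geq 0$.

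The first step is to obtain a uniform bound for $\{u_j\}$ in $W^{1,2}(M)$. Since $s \geq 2$ and $M$ is compact, H\"older's inequality yields $\|u_j\|_{L^2(M,d\mu_g)} \leq \mathrm{vol}_g(M)^{\frac{1}{2}-\frac{1}{s}}$, which is uniformly bounded. Applying the interpolation estimate of \cref{lemma: Ru2 interpolation} to the potential $-\R_g \in L^q(M)$ (which has the same $L^q$ norm as $\R_g$), one obtains for each $\varepsilon>0$
\begin{equation*}
a_n \|\nabla u_j\|_{L^2(M,d\mu_g)}^2 = E_g(u_j) - \int_M \R_g u_j^2 \, d\mu_g \leq E_g(u_j) + \|\R_g\|_{L^q}\bigl(\varepsilon \|u_j\|_{W^{1,2}(M,d\mu_g)}^2 + C_\varepsilon \|u_j\|_{L^2(M,d\mu_g)}^2\bigr).
\end{equation*}
Using the equivalence of the $W^{1,2}$ norms noted after \eqref{YamabeBilinearForm}, the boundedness of $E_g(u_j)$, and the bounded $L^2$ norms, one chooses $\varepsilon$ small enough to absorb the $\varepsilon \|u_j\|_{W^{1,2}}^2$ term into the left-hand side and concludes $\|u_j\|_{W^{1,2}(M)} \leq C$.

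Next I would pass to the limit. By reflexivity of $W^{1,2}(M)$, a subsequence of $\{u_j\}$ converges weakly to some $u \in W^{1,2}(M)$ with $u \geq 0$. Since $s < 2^*$, the Rellich--Kondrachov compactness upgrades this to strong convergence in $L^s(M,d\mu_g)$, so $\|u\|_{L^s(M,d\mu_g)} = 1$ and in particular $u \not\equiv 0$. The gradient part $\|\nabla\,\cdot\,\|_{L^2(M,d\mu_g)}^2$ is weakly lower semicontinuous on $W^{1,2}(M)$, while the potential part $\int_M \R_g (\,\cdot\,)^2 \, d\mu_g$ is weakly continuous by \cref{lemma: Ru2 interpolation}. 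Adding these contributions gives $E_g(u) \leq \liminf_j E_g(u_j) = \lambda_s(M,g)$, hence $Q_g^s(u) \leq \lambda_s(M,g)$, and the reverse inequality is automatic, so equality holds.

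The main obstacle is the first step: bounding the minimizing sequence in $W^{1,2}$ when the potential $\R_g$ lies only in $L^q$ with $q > n/2$. A straightforward H\"older estimate combined with the Sobolev embedding $W^{1,2}(M) \hookrightarrow L^{2^*}(M)$ is scale-critical and yields no small parameter available for absorption into the Dirichlet term. The refined inequality of \cref{lemma: Ru2 interpolation}, based on finer Sobolev interpolation, is precisely what produces the small $\varepsilon$ needed for the absorption argument as well as the weak continuity used in the lower semicontinuity step; once this input is in place, the rest is a routine application of the direct method.
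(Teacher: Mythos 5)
Your proof is correct and follows essentially the same route as the paper: normalize and take a nonnegative minimizing sequence, use \cref{lemma: Ru2 interpolation} to absorb the potential term and obtain a uniform $W^{1,2}$ bound, extract a weak $W^{1,2}$ / strong $L^s$ limit via Rellich--Kondrachov, and conclude by weak lower semicontinuity of the Dirichlet energy together with weak continuity of the potential term. The only cosmetic difference is that you invoke weak lower semicontinuity of $\|\nabla\,\cdot\,\|_{L^2}^2$ directly, whereas the paper uses weak lower semicontinuity of the full $W^{1,2}$ norm and then subtracts off the strongly convergent $L^2$ part; these are equivalent.
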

\begin{proof}
    Since $Q_g^s$ is bounded from below by \cref{lemma: Ru2 interpolation}, we consider a sequence $\{u_k\}_{k=1}^{\infty} \subset W^{1,2}(M)$ with $\|u_k\|_{L^s(M,d\mu_g)} = 1$ such that $\lim_{k\to \infty} Q_g^s(u_k) = \lambda_s(M, g)$. Since $Q_g(|u_k|) = Q_g(u_k)$, we may as well assume that $u_k \geq 0$. In addition, using \cref{lemma: Ru2 interpolation}, we estimate
    \begin{align*}
        &\Vert u_k\Vert^2_{W^{1,2}(M,d\mu_g)}=\Vert \nabla u_k\Vert^2_{L^{2}(M,d\mu_g)} + \Vert u_k\Vert^2_{L^{2}(M,d\mu_g)}
        \\
        &\leq \frac{1}{a_n}Q^s_g(u_k) - \frac{1}{a_n}\langle \R_g,u_k^2\rangle_{L^2(M,d\mu_g)} + \Vert u_k\Vert^2_{L^{2}(M,d\mu_g)}
        \\
        &\leq \frac{1}{a_n} Q^s_g(u_k) + \frac{1}{a_n}\Vert \R_g\Vert_{L^q(M,d\mu_g)}\Big(\varepsilon\Vert u_k\Vert^2_{W^{1,2}(M,d\mu_g)} +C_{\varepsilon}\Vert u_k\Vert^2_{L^2(M,d\mu_g)}\Big) + \Vert u_k\Vert^2_{L^{2}(M,d\mu_g)}.
    \end{align*}
    Picking $\varepsilon$ small enough allows us to absorb the second term in the left-hand side and, together the Hölder inequality $\Vert u_k\Vert_{L^2(M,d\mu_g)}\leq C(M,g)\Vert u_k\Vert_{L^s(M,d\mu_g)}=C(M,g)$, conclude that
    \begin{align*}
        \|u_k\|^2_{W^{1,2}(M,d\mu_g)} \leq C_1(M,g)(Q_g^s(u_k)+1) \,.
    \end{align*}
    In particular, $\{u_k\}_{k=1}^{\infty}$ is a bounded sequence in $W^{1,2}(M)$ and Rellich--Kondrachov compactness implies that there exists a subsequence, which we still call $\{u_k\}_{k=1}^{\infty}$, such that $u_k \to u$ in $L^s(M)$. On the other hand, $W^{1,2}(M)$ is weakly sequentially compact and therefore there exists a subsequence, also named $\{u_k\}_{k=1}^{\infty}$, converging $u_k \rightharpoonup \bar u$ weakly in $W^{1,2}(M)$ and thus, in particular, $u_k \rightharpoonup \bar u$ weakly in $L^{2}(M)$. Since $L^s(M)\hookrightarrow L^2(M)$ implies that $u_k\to u$ strongly in $L^2(M)$ and strong convergence implies weak convergence, uniqueness of weak $L^2(M)$ limits gives $\bar u = u \in W^{1,2}(M)$. Strong $L^s(M)$ convergence also implies that $\|u\|_{L^s(M,d\mu_g)} = 1$ and consequently $u \neq 0$. Notice now that weak $W^{1,2}$ convergence of $\{u_k\}_{k=1}^{\infty}$ implies by \cite[Chapter 3, Proposition 3.5(iii)]{BrezisBook} that
    \begin{align}
        \label{CalcVar.1}
        \|u\|_{W^{1,2}(M,d\mu_g)}\leq\liminf_{k\to\infty} \Vert u_k\Vert_{W^{1,2}(M,d\mu_g)} \,,
    \end{align}
    which combined with the fact that $\|u_k\|_{L^2(M)} \to \|u\|_{L^2(M)}$ by the strong $L^2(M)$ convergence, implies that
    \[\int_M |\nabla u|^2_g d\mu_g \leq \liminf_{k \to \infty} \int_M |\nabla u_k|^2_g d\mu_g \,.
    \]
    On the other hand, by \cref{lemma: Ru2 interpolation}
    \[\int_M \R_g u^2 d\mu_g = \lim_{k\to\infty} \int_M \R_g u_k^2 d\mu_g \,,
    \]
    and therefore the limit $u$ attains the minimum.
\end{proof}
\cref{prop: subcritical existence} above gives existence of non-negative weak solutions to \eqref{eq: sPDE}. The regularity theory developed in \cref{The Conformal Laplacian with Sobolev coefficients} together with a Harnack inequality due to Trudinger will allow us to promote them to positive solutions in $W^{2,q}(M)$ --see Corollary \ref{prop: subcritical regularity}--.
An important consequence is the following classification.
The analogue result for metrics of class $W^{s,2}(M)$ with $s > \tfrac{3}{2}$ can be found in \cite[Proposition 3.3]{Maxwell1} and the proof below is a 
combination of \cite[Proposition 3.3]{Maxwell1} and \cite[Theorem 4.8]{avalos2024sobolev}, where  
the latter is a special case of \cref{prop: subcritical regularity}. 
\begin{corollary}[Yamabe classification]
    \label{cor: Yamabe classification}
    Let $M$ be a smooth, closed manifold of dimension $n \geq 3$ and consider a $W^{2,q}$-Riemannian metric $g$ with $q > \tfrac{n}{2}$. 
    \begin{enumerate}
        \item[(i)] $\lambda(M, g) > 0$ if and only if there exists some element in $[\,g\,]_{W^{2,q}}$ with $W^{2, q}$ positive scalar curvature.
        \item[(ii)] $\lambda(M, g) = 0$ if and only if there exists some element in $[\,g\,]_{W^{2,q}}$ with zero scalar curvature.
        \item[(iii)] $\lambda(M, g) < 0$ if and only if there exists some element in $[\,g\,]_{W^{2,q}}$ with $W^{2, q}$ negative scalar curvature.
    \end{enumerate}
\end{corollary}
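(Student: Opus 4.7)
The plan is to use the first eigenfunction of the conformal Laplacian $\Lg$ to gauge the conformal class to a representative with continuous scalar curvature whose sign coincides with that of $\lambda_2(M,g)$, and then to verify that $\lambda_2(M,g)$ and $\lambda(M,g)$ share the same sign. Applying \cref{prop: subcritical existence} with $s=2$ produces a nonnegative minimizer $\phi \in W^{1,2}(M)$ of $Q_g^2$ with $\|\phi\|_{L^2(M,d\mu_g)}=1$, which by a Lagrange-multiplier argument satisfies $\Lg \phi = \lambda_2(M,g)\phi$ weakly. The elliptic theory and Harnack inequality referenced in \cref{prop: subcritical regularity} will upgrade $\phi$ to a strictly positive function in $W^{2,q}(M)$.

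Setting $\tilde g \doteq \phi^{\frac{4}{n-2}} g$, which lies in $[g]_{W^{2,q}}$ by the algebra property of $W^{2,q}(M)$, formula \eqref{eq: ConformalScalar} gives
\begin{equation*}
    \R_{\tilde g} = \phi^{-\frac{n+2}{n-2}} \Lg \phi = \lambda_2(M,g)\, \phi^{-\frac{4}{n-2}},
\end{equation*}
a continuous (indeed $W^{2,q}$) function that is everywhere positive, identically zero, or everywhere negative precisely when $\lambda_2(M,g)$ has the same sign.

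To close the argument, I establish $\operatorname{sign} \lambda(M,g) = \operatorname{sign} \lambda_2(M,g)$. Since $E_g$ is the common numerator of $Q_g$ and $Q_g^2$, the existence of some $u \neq 0$ with $E_g(u) < 0$ is equivalent to both $\lambda_2(M,g) < 0$ and $\lambda(M,g) < 0$. When $\lambda_2(M,g) > 0$, the metric $\tilde g$ above has $\R_{\tilde g} \geq c_0 > 0$ on $M$, so $E_{\tilde g}(u) \geq c_1 \|u\|_{W^{1,2}(M,d\mu_{\tilde g})}^2$; combining with the Sobolev embedding $W^{1,2}(M) \hookrightarrow L^{2^*}(M)$ yields $\lambda(M,\tilde g) > 0$, which transfers to $\lambda(M,g)$ by conformal invariance. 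Finally, $\lambda_2(M,g) = 0$ forces $\lambda(M,g) = 0$: the eigenfunction $\phi$ gives $E_g(\phi) = 0$ and hence $\lambda(M,g) \leq 0$, while $E_g \geq 0$ everywhere yields $\lambda(M,g) \geq 0$.

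The forward implications in (i)--(iii) now follow from the sign correspondence applied to the representative $\tilde g$. The converse implications are immediate: given $g' \in [g]_{W^{2,q}}$ of prescribed scalar curvature sign, one directly reads the sign of $\lambda(M,g) = \lambda(M,g')$ via the same Sobolev-embedding argument in the positive case, by testing with $u \equiv 1$ in the zero case, and from $E_{g'}(1) < 0$ in the negative case. The main technical hurdle in this program is the regularity promotion of the weak eigenfunction $\phi$ from $W^{1,2}(M)$ to a positive element of $W^{2,q}(M)$, which rests on the rough-coefficient elliptic theory for $\Lg$ developed in \cref{Analytical Results}.
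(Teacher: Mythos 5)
Your proposal is correct and follows essentially the same strategy as the paper's proof: solve the first eigenvalue problem for $\Lg$, upgrade the eigenfunction to a positive $W^{2,q}$ function via \cref{prop: subcritical regularity}, and use it as a conformal gauge to produce a representative whose scalar curvature has the sign of $\lambda_2(M,g)$. The one difference is that the paper defers the sign correspondence $\operatorname{sign}\lambda(M,g)=\operatorname{sign}\lambda_2(M,g)$ and the converse implications to the arguments of Maxwell's Proposition 3.3, whereas you make these steps self-contained — in particular, the observation that $\lambda<0$ and $\lambda_2<0$ are each equivalent to the existence of $u$ with $E_g(u)<0$, the coercivity estimate $E_{\tilde g}(u)\geq c_1\|u\|_{W^{1,2}}^2$ paired with the Sobolev embedding when $\R_{\tilde g}\geq c_0>0$, and the exhaustiveness argument for the zero case. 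That explicit treatment is a pleasant improvement in transparency and is entirely sound; it does not change the overall structure of the proof.
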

\begin{proof}
    The first step in this proof is to notice that $\lambda(M,g)>0$ (respectively $=0$ or $<0$) implies  $\lambda_2(M)>0$ (respectively $=0$ or $<0$). This follows from the exact same  arguments as the implication $(2)\Longrightarrow (3)$ in \cite[Proposition 3.3]{Maxwell1}.\footnote{In that paragraph of \cite[Proposition 3.3]{Maxwell1} there appears to be a typo in the estimate $\Vert f\Vert^2_{L^2}\lesssim \Vert f\Vert^2_{L^6}$, which should be $\Vert f\Vert^2_{L^2}\lesssim \Vert f\Vert^2_{L^{2^{*}}}$. This typo appears to reflect the case of $n=3$ where $2^{*}=6$.} Once this is established, it is enough to work with the case of the first conformal eigenvalue $\lambda_2(M,g)$. 
    In all the forward statements, one can appeal to Corollary \ref{prop: subcritical regularity} in the special case of $s=2$ to conclude that the non-negative minimisers $u\in W^{1,2}(M)$ of $Q_g^2$ are actually positive $W^{2,q}$-solutions to the eigenvector equation\footnote{This result, especialised in the case of $s=2$, has already been proved in \cite[Theorem 4.8]{avalos2024sobolev}, which would be be enough to conclude our proof.}
    \begin{align*}
        -a_n\Delta_gu + \R_gu=\lambda_2(M,g)u.
    \end{align*}
    Via the conformal covariance of $\R_g$, this directly implies that $\tilde{g}=u^{\frac{4}{n-2}}g\in W^{2,q}(M)$ satisfies $\R_{\tilde{g}}=\lambda_2(M,g)u^{-\frac{4}{n-2}}\in W^{2,q}(M)$, which finishes the proof of all the forward implications.

    To establish all the backwards implications, one also follows \cite[Proposition 3.3]{Maxwell1} by noticing that if there some $\tilde{g}\in [\,g\,]_{W^{2,q}}$ such that $\R_{\tilde{g}}>0$ (respectively $=0$, $<0$), then the conformal invariance of $\lambda(M,g)$ allows to directly reduce to the case where this holds for $g$ itself. Then, the proof of the implication $(1)\Longrightarrow (2)$ in \cite[Proposition 3.3]{Maxwell1} works in our setting as well to establish that $\lambda(M,g)>0$ (respectively $=0$, $<0$), establishing then all the backwards implications.
\end{proof}


\vspace{0.5cm}
\section{Analytical Results}
\label{Analytical Results}

In this section, we investigate the main analytical properties of the conformal Laplacian $\Lg$ of $W^{2,q}$-Riemannian metrics on closed manifolds of dimension $n \geq 3$ and $q > \tfrac{n}{2}$. We develop a local and global regularity theory and establish isomorphism ranges for positive Yamabe metrics. Although the associated local theory for $W^{1,2}$-weak solutions for operators like $\Lg$ is well-known from standard references such as \cite[Chapter 8]{GT}, we will work with weaker a-priori solutions and therefore outside of the scope of the results therein. Some results closely related to our thresholds can be found in \cite[Chapter 3, Proposition 1.13]{TaylorToolsForPDEs}, which one may for instance compare to \cref{thm: main global elliptic regularity}. It is also worth noting that similar mapping properties of $\mathscr{L}_g$ for rough metrics have been studied in the context of the Einstein constraint equations of general relativity \cite{Maxwell1}. Once again, our results are slightly outside of the scope of these references and hence we need to develop a theory tailored for our applications. 

Along this section we will provide some regularity statements which apply to a set of \emph{extremely weak} solutions to elliptic equations associated with operators with rough coefficients --see \cref{thm: 12 local elliptic regularity} items $(i)-(ii)$ and \cref{thm: conf Laplace elliptic regularity} item $(i)$--. In particular, we will often work with a priori solutions below $W^{1,2}$, where the De Giorgi--Nash theory or related results (such as those developed in \cite{Morrey}) do not apply. We refer the reader to \cite{Brezis1,Serrin} and the introduction to \cite{avalos2024sobolev} for further discussions on this topic.

\subsection{Laplace--Beltrami operator with Sobolev coefficients}
\label{SectionLapceOp}
We begin by studying the Laplace--Beltrami operator
\begin{equation*}
    \Delta_g \doteq \tr_g\nabla^2
\end{equation*}
of metrics of Sobolev regularity. For a Riemannian metric $g \in W^{2,q}(T_2M)$, the operator $\Delta_g : C^{\infty}(M) \to W^{1,q}(M)$ can locally be written in divergence form
\begin{equation*}
    \Delta_gu = \frac{1}{\sqrt{\mathrm{det}(g)}}\partial_{i}(g^{ij}\sqrt{\mathrm{det}(g)}\partial_{j}u)
\end{equation*}
and as a consequence of \cref{SobolevMultLocal}, it extends to a bounded map $\Delta_g:W^{2,p}(M)\to L^{p}(M)$ for all $1< p\leq q$. If $u\in C^{\infty}(M)$ is a regular distribution in $\mathscr{D}'(M)$ and $v\in \mathscr{D}(U_\alpha)$ for a coordinate chart $(U_{\alpha},\varphi_{\alpha})$, then $\Delta_gu\in \mathscr{D}'(M)$ is defined by
\begin{align}
    \label{DistributionalLapGlboal.1}
    (\Delta_gu)(v)=\langle u,\Delta_gv\rangle_{L^2(M,d\mu_g)}=\langle u,\Delta_gv\rangle_{(M,g)} \,.
\end{align}
Thus, using the notation of \cref{SectionSobolevSpaces}, we can directly compute the local representative $\hat{T}_{\alpha}\Delta_gu$ for $u\in C^{\infty}(M)$ on any coordinate chart, given by
\begin{align*}
    (\hat{T}_{\alpha}\Delta_gu)(\tilde{v})&=(\Delta_gu)(\tilde{v}\circ\varphi_{\alpha})=\langle u,\Delta_g\tilde{v}\circ\varphi_{\alpha}\rangle_{L^2(U_{\alpha},d\mu_g)}
    \\
    &=\langle \Delta_g(u\circ\varphi^{-1}_{\alpha}),(\mathrm{det}(g)\circ\varphi_{\alpha}^{-1})^{\frac{1}{2}}\tilde{v}\rangle_{L^2(\varphi_{\alpha}(U_{\alpha}),dx)}
\end{align*}
for all $\tilde v \in \mathscr D(\varphi_\alpha(U_\alpha))$, that is, 
\begin{align}
    \label{DistributionalLapLocal}
    \hat{T}_{\alpha}\Delta_gu=(\mathrm{det}(g)\circ\varphi_{\alpha}^{-1})^{\frac{1}{2}}\Delta_g(u\circ\varphi^{-1}_{\alpha}) \,.
\end{align}
Appealing again to \cref{SobolevMultLocal}, 
it is not hard to see that one may extend $\Delta_g$ by continuity to a bounded map $L^p(M) \to W^{-2,p}(M)$ for all $p \geq q'$ and it is locally represented by \eqref{DistributionalLapLocal}. Moreover, this extension grants that the dual pairing \eqref{DistributionalLapGlboal.1} extends by continuity as well, that is, for all $u\in W^{-2,p}(M)$ and all $v\in W^{2,p'}(M)$ with $p\geq q'$ there holds
\begin{align*}
    \langle \Delta_gu,v\rangle_{(M,g)}&=\lim_{j\to \infty}\langle \Delta_gu_j,v\rangle_{(M,g)}=\lim_{j\to \infty}\langle u_j,\Delta_gv\rangle_{L^2(M,d\mu_g)}=\langle u,\Delta_gv\rangle_{L^2(M,d\mu_g)} \,.
\end{align*}

In a previous paper, the first-named author established
\begin{theorem}[\cite{avalos2024sobolev}, Corollary 4.2 and Corollary 4.4]
    \label{thm: Rodri Global W1p W2p regularity}
    Let $M$ be a smooth, closed manifold of dimension $n \geq 3$ and consider a $W^{2,q}$-Riemannian metric $g$ on $M$ with $q > \tfrac{n}{2}$. 
    \begin{enumerate}
        \item[(i)] If $u \in L^{q'}(M)$ and $\Delta_g u \in W^{-1,p}(M)$ for some $\tfrac{1}{q} - \tfrac{1}{n} \leq \tfrac{1}{p} < \tfrac{1}{q'} + \tfrac{1}{n}$, then it follows that $u \in W^{1,p}(M)$.
        \item[(ii)] If $u \in L^{q'}(M)$ and $\Delta_g u \in L^p(M)$ for some $1 < p \leq q$, then it follows that $u \in W^{2,p}(M)$.
    \end{enumerate}
\end{theorem}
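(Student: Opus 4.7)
The plan is to work locally in coordinate charts via a partition of unity subordinate to an atlas (as in \cref{SobSpacesManifoldDefnTensors.1}), writing $\Delta_g u = a^{ij}\partial_i\partial_j u + b^k\partial_k u$ with $a^{ij}=g^{ij}\in W^{2,q}$ and $b^k$ built from derivatives of $g$ and $\sqrt{\det g}$, hence $b^k\in W^{1,q}$. Since $q>\tfrac{n}{2}$ yields $g\in C^{0,2-n/q}$, a linear change of coordinates together with shrinking the chart arranges $a^{ij}(0)=\delta^{ij}$ with $\|a^{ij}-\delta^{ij}\|_{L^\infty}$ arbitrarily small. Each statement then reduces to a local regularity problem for an $L^\infty$-small perturbation of the flat Laplacian together with a lower-order correction of class $W^{1,q}$.

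The analytic engine I would use is the sharp elliptic theory for the flat Laplacian on small balls: both $\Delta:W^{2,p}_0(\Omega)\to L^p(\Omega)$ and $\Delta:W^{1,p}_0(\Omega)\to W^{-1,p}(\Omega)$ are isomorphisms by Calder\'on--Zygmund theory, and these isomorphism properties persist under sufficiently small $L^\infty$-perturbations of the principal coefficients via a Neumann-series/contraction argument. The lower-order term $b^k\partial_k u$ is then absorbed through the multiplication properties of \cref{SobolevMultLocal} combined with the Sobolev embeddings of $W^{1,q}$; the exponent range $\tfrac{1}{q}-\tfrac{1}{n}\leq\tfrac{1}{p}<\tfrac{1}{q'}+\tfrac{1}{n}$ in part (i) is precisely what is needed so that the relevant product lands back in $W^{-1,p}$ and can be absorbed. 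An analogous but simpler bookkeeping for $L^p$ instead of $W^{-1,p}$ handles part (ii) under the hypothesis $p\leq q$.

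The main obstacle is the initial bootstrap from the extremely weak a priori hypothesis $u\in L^{q'}$ up to a Sobolev space with positive regularity index where the perturbative machinery can actually be deployed: since $u$ lies below $W^{1,2}$, standard weak-solution arguments such as De Giorgi--Nash or direct energy estimates are unavailable, as the authors themselves emphasize. My plan here is a transposition argument that exploits the formal self-adjointness of $\Delta_g$ with respect to $d\mu_g$. Using the perturbative invertibility just described, one solves $\Delta_g\phi=\psi$ (modulo a suitable spectral shift to avoid the kernel) with good $W^{2,p'}_0$ or $W^{1,p'}_0$ estimates on a small chart and, for any test $\psi$ compactly supported therein, computes $\langle u,\psi\rangle_{L^2(M,d\mu_g)}=\langle \Delta_g u,\phi\rangle_{(M,g)}$ using the extended duality pairing of \cref{SobolevDualIso}. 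The hypothesis on $\Delta_g u$ bounds the right-hand side by the appropriate dual norm of $\psi$, identifying $u$ as an element of the target Sobolev space. A finite-step bootstrap via the identity $\Delta u = \Delta_g u - (a^{ij}-\delta^{ij})\partial_i\partial_j u - b^k\partial_k u$, with the perturbation terms absorbed by smallness and \cref{SobolevMultLocal}, then promotes $u$ to the full regularity claimed in each of the two cases.
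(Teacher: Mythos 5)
The result you are proving is not actually demonstrated in this paper: \cref{thm: Rodri Global W1p W2p regularity} is cited as Corollaries 4.2 and 4.4 of \cite{avalos2024sobolev}, the first-named author's prior work. So you are supplying a proof that the paper deliberately outsources. That said, your high-level strategy---localize, freeze coefficients on a small ball exploiting the $C^{0,2-n/q}$ continuity of $g$, absorb lower-order terms via \cref{SobolevMultLocal}, and break into the below-$W^{1,2}$ regime by transposition against a well-posed adjoint problem---is the standard method for very weak solutions and is almost certainly the spirit of the cited proof; the paper's own references to Brezis and Serrin, and its explicit emphasis on working below the De Giorgi--Nash threshold, point in exactly this direction. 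The duality pairing machinery you invoke is also built up in the paper (\cref{SobolevDualIso}, \cref{lemma: conf Lap for Lt}), so your plan is well aligned with the available infrastructure.

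There are, however, two places where the sketch glosses over the real difficulty. First, the transposition step as you state it is circular unless the adjoint solution $\phi$ enjoys a double regularity: to read off $\langle u,\psi\rangle = \langle \Delta_g u,\phi\rangle$ one needs the integration-by-parts identity $\langle u,\Delta_g\phi\rangle=\langle\Delta_g u,\phi\rangle$ to hold for rough $u\in L^{q'}$, which requires $\Delta_g\phi\in L^{q}$, hence $\phi\in W^{2,q}$, \emph{simultaneously} with the quantitative $W^{1,p'}$ (resp.\ $W^{2,p'}$) bound on $\phi$ in terms of $\psi$. It is precisely the compatibility of these two requirements that forces the exponent window $\tfrac{1}{q}-\tfrac{1}{n}\leq\tfrac{1}{p}<\tfrac{1}{q'}+\tfrac{1}{n}$ in part (i) and $p\leq q$ in part (ii); your proposal asserts that the exponent range ``is precisely what is needed'' without verifying this matching, which is the heart of the argument. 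Relatedly, the adjoint solution on a ball lies in $W^{2,p'}\cap W^{1,p'}_0$, not in $W^{2,p'}_0$, so the vanishing of boundary terms must come from the compact support of the localized $u$, not of $\phi$; as written the proposal slips on this. Second, the bootstrap identity $\Delta u=\Delta_g u-(a^{ij}-\delta^{ij})\partial_i\partial_j u-b^k\partial_k u$ is a non-divergence-form rewriting and only makes literal sense once $u$ already has two weak derivatives. For the bootstrap in part (i), where you are climbing up to $W^{1,p}$ and the intermediate iterates only live in $W^{1,r}$, you must work with the divergence-form identity $\Delta u = \Delta_g u - \partial_i\bigl((g^{ij}-\delta^{ij})\partial_j u\bigr) - (\text{first order})$, so that the perturbation is a divergence of a small $L^r$ field and lands in $W^{-1,r}$; the commutator $g(\nabla u,\nabla\eta)$ from the localization is in the same boat and must be interpreted in this distributional sense. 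None of these gaps seems fatal, but they are exactly the technical points the cited corollaries must resolve, and a complete proof would need to spell them out.
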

From here, we obtain the following local version, extending \cite[Corollary 4.5]{avalos2024sobolev}:
\begin{theorem}
    \label{thm: 12 local elliptic regularity}
    Let $\Omega \subset \nR^n$ be an open, bounded domain with smooth boundary and consider a Riemannian metric $g \in W^{2,q}(T_2\Omega)$ with $q > \tfrac{n}{2}$.
    \begin{enumerate}
        \item[(i)] If $u \in L^{q'}(\Omega)$ and $\Delta_gu \in W^{-1,p}(\Omega)$ for some $\tfrac{1}{q} - \tfrac{1}{n} \leq \tfrac{1}{p} < \tfrac{1}{q'} + \tfrac{1}{n}$, then it follows that $u \in W^{1,p}_{loc}(\Omega)$.
        \item[(ii)] If $u \in L^{q'}(\Omega)$ and $\Delta_gu \in L^{p}(\Omega)$ for some $\tfrac{1}{q} \leq \tfrac{1}{p} < \tfrac{1}{q'} + \tfrac{1}{n}$, then it follows that $u \in W^{2,p}_{loc}(\Omega)$.
        \item[(iii)] If $u \in L^{q'}(\Omega) \cap W^{1,p}(\Omega)$ and $\Delta_gu \in L^{p}(\Omega)$ for some $1 < p \leq q$, then it follows that $u \in W^{2,p}_{loc}(\Omega)$.
    \end{enumerate}
\end{theorem}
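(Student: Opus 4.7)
The strategy is to reduce each part to the global theorem \cref{thm: Rodri Global W1p W2p regularity} via a cutoff-extension argument. Given a smooth subdomain $K \subset\subset \Omega$, we fix a cutoff $\eta \in C^{\infty}_0(\Omega)$ with $\eta \equiv 1$ on $K$, embed a neighborhood of $\mathrm{supp}\,\eta$ into a smooth closed manifold $\tilde M$ (a large flat torus suffices), and extend $g$ smoothly across $\partial \mathrm{supp}\,\eta$ to a $W^{2,q}$-metric $\tilde g$ on $\tilde M$. The function $\eta u$, extended by zero to $\tilde M$, is the candidate to which we shall apply \cref{thm: Rodri Global W1p W2p regularity}, once we have verified the appropriate regularity of $\Delta_{\tilde g}(\eta u)$.

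For part (iii), the classical Leibniz rule
\[\Delta_{\tilde g}(\eta u) = \eta \Delta_{\tilde g} u + u \Delta_{\tilde g} \eta + 2\,\tilde g(\nabla \eta, \nabla u)\]
lies termwise in $L^p(\tilde M)$: $\eta \Delta_{\tilde g} u$ by hypothesis; $u \Delta_{\tilde g} \eta$ via the bound $\Delta_{\tilde g}\eta \in W^{1,q}(\tilde M)$ (which follows from $g \in W^{2,q}$) together with Sobolev multiplication (\cref{SobolevMultLocal}) and the threshold $q > \tfrac{n}{2}$; and the cross term because $\nabla u \in L^p$ and $g$ is bounded. \cref{thm: Rodri Global W1p W2p regularity}(ii) then yields $\eta u \in W^{2, p}(\tilde M)$, hence $u \in W^{2, p}_{loc}(\Omega)$. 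Part (ii) follows by concatenation: its range of $p$ is contained in that of part (i) via $L^p(\tilde M) \hookrightarrow W^{-1,p}(\tilde M)$, so (i) first gives $u \in W^{1, p}_{loc}(\Omega)$, and the argument of (iii) applied on a slightly smaller subdomain upgrades $u$ to $W^{2, p}_{loc}(\Omega)$.

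Part (i) is the main case and demands a distributional interpretation of the Leibniz rule, since $\nabla u$ is only a distribution when $u \in L^{q'}$. Using the identity $\tilde g(\nabla \eta, \nabla u) = \mathrm{div}_{\tilde g}(u \nabla \eta) - u \Delta_{\tilde g} \eta$, we rewrite
\[\Delta_{\tilde g}(\eta u) = \eta \Delta_{\tilde g} u - u \Delta_{\tilde g} \eta + 2\,\mathrm{div}_{\tilde g}(u \nabla \eta),\]
so that each summand can be bounded separately: $\eta \Delta_{\tilde g} u \in W^{-1,p}(\tilde M)$ by multiplier properties of smooth compactly supported functions, $\mathrm{div}_{\tilde g}(u\nabla \eta) \in W^{-1, q'}(\tilde M)$ because $u \nabla \eta \in L^{q'}$, and $u \Delta_{\tilde g} \eta$ in an appropriate Lebesgue space via \cref{SobolevMultLocal}. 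These inclusions place $\Delta_{\tilde g}(\eta u)$ in $W^{-1, p_0}(\tilde M)$ for some $p_0$ within the admissible range of \cref{thm: Rodri Global W1p W2p regularity}(i), which then produces $u \in W^{1, p_0}_{loc}(\Omega)$. A bootstrap with a nested family of cutoffs $\eta_k$ iteratively enlarges the exponent: once $u \in W^{1, p_k}_{loc}$, the cross term $\tilde g(\nabla \eta_{k+1}, \nabla u)$ is classically defined and controlled in $L^{p_k}$, enlarging the $W^{-1, p_{k+1}}$-range in which $\Delta_{\tilde g}(\eta_{k+1} u)$ sits. Since each step gains a fixed amount of regularity dictated by the Sobolev-multiplication thresholds and the admissible range of $p$ is bounded above by $\tfrac{1}{q'} + \tfrac{1}{n}$, finitely many iterations reach the full range and give $u \in W^{1, p}_{loc}(\Omega)$. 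The principal obstacle is this initial distributional step and the organization of the bootstrap: we must carefully control $u\Delta_{\tilde g}\eta$ and $\mathrm{div}_{\tilde g}(u\nabla\eta)$ in negative Sobolev spaces on each pass and verify that the nested cutoffs with improving exponents indeed saturate the admissible range.
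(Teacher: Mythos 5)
Your proposal is correct and mirrors the paper's own proof: both extend $(g,\eta u)$ to a flat torus via a cutoff, use the same Leibniz decomposition of $\Delta_{\bar g}(\eta u)$ (with the distributional divergence rewrite of the cross term when $u$ is merely $L^{q'}$) and then bootstrap the exponent through Sobolev embeddings until \cref{thm: Rodri Global W1p W2p regularity} applies at level $p$. The only organizational difference is that you derive part (ii) by chaining parts (i) and (iii), whereas the paper runs a self-contained iteration within the proof of (ii); these are equivalent.
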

\begin{proof}
    Consider a cube $Q = [-L, L]^n \subset \nR^n$ large enough so that $\Omega \subset \subset Q$. Since $\partial\Omega$ is smooth, we might extend $g$ to a $W^{2,q}$-Riemannian metric in a tubular neighbourhood
    \begin{equation*}
        \Omega_\varepsilon \doteq \{x \in \nR^n \;:\; |x - y| < \varepsilon \; \text{for some} \; y \in \Omega\} \subset\subset Q \,.
    \end{equation*}
    Using a smooth cutoff function $\chi : Q \to [0,1]$ such that $\chi \equiv 1$ on $\Omega$ and $\supp \chi \subset\subset \Omega_\varepsilon$ we may consider a new Riemannian metric
    \begin{equation*}
        \bar g_{ij} \coloneqq \chi g_{ij} + (1-\chi) \delta_{ij}
    \end{equation*}
    which extends to the whole cube $Q$ and is equal to the Euclidean metric in $Q \setminus \Omega_\varepsilon$. Hence, the torus $T^n$ obtained by identifying sides of $Q$ can be endowed with a Riemannian metric locally isometric to $\bar{g}$, which we shall still denote by $\bar g$\footnote{This is obtained by considering $\mathbb{Z}^n_{2kL}$ as the group of integers $\mathbb{Z}^n$ acting on $\mathbb{R}^n$ by $\mathbb{Z}^n\times \mathbb{R}^n\to \mathbb{R}^n$, $(k,x)\mapsto (x^1+2k^1L,\cdots,x^n+2k^nL)$. Since $y=Tx$ for any $y\in \mathbb{R}^n$ and a unique $T\in \mathbb{Z}_{2kL}$, $x\in Q$, and $T$ is a local smooth diffeomorphism, one may extend $\bar{g}$ isometrically to $\mathbb{R}^n$ as $(\bar{g}_{\mathbb{R}^n})_y(T_xv,T_xw)\doteq \bar{g}_x(v,w)$. Then, $T^n=\mathbb{R}^n\backslash \mathbb{Z}^n_{2kL}$ and $(\pi^{-1})^{*}\bar{g}_{\mathbb{R}^n}$ is a locally isometric of $\bar{g}_{\mathbb{R}^n}$, where $\pi:\mathbb{R}^n\to T^n$ is the projection map.}. Likewise, for any $\Omega' \subset\subset \Omega$ we might use a cutoff function $\eta \in C^\infty_0(\Omega)$ with $\eta \equiv 1$ on $\Omega'$ to extend $u$ to a function $\bar u \coloneqq \eta u$ on $T^n$ with compact support in $\Omega$. 
    We then compute
    \begin{equation}
        \label{eq: lap bar u bar}
        \Delta_{\bar g} \bar u = \Delta_{g} \bar u = \eta \Delta_{g} u + u \Delta_{g} \eta + 2g(\nabla u, \nabla \eta) \,.
    \end{equation}
    
    (\textit{i}) If $\Delta_gu \in W^{-1,p}(\Omega)$ then $\eta \Delta_gu \in W^{-1,p}(T^n)$ as a consequence of
    \begin{align*}
        C^\infty_0(\Omega) \otimes W^{-1,p}(\Omega) \hookrightarrow W^{-1,p}(T^n) \,.
    \end{align*}
    This follows from the localization provided by $\eta$: for any $\varphi \in W^{1,p'}(T^n)$ one can easily check that $(\eta \Delta_g u) (\varphi) = \Delta_g u (\eta \varphi)$ is well-defined and bounded, as $\eta \varphi \in W^{1,p'}_0(\Omega)$. By the same argument, $u \in L^{q'}(\Omega)$ implies that $g(\nabla u, \nabla \eta) \in W^{-1,q'}(T^n)$ and thus the right-hand-side of \eqref{eq: lap bar u bar} is in $W^{-1,\min\{p,q'\}}(T^n)$. We claim that $g(\nabla u, \nabla\eta)$, and in turn $\Delta_{\bar g}\bar u$, is indeed in $W^{-1,p}(T^n)$. If $p \leq q'$, it is immediate. On the other hand, if $p > q'$, one can check that $\tfrac{1}{q} - \tfrac{1}{n} \leq  \tfrac{1}{q'} < \tfrac{1}{q'} + \tfrac{1}{n}$ due to $q > \tfrac{n}{2} \geq \tfrac{2n}{n + 1}$ for $n \geq 3$. Hence, applying \cref{thm: Rodri Global W1p W2p regularity} we obtain $\bar u \in W^{1, q'}(T^n)$. Therefore,
    \begin{equation*}
        g(\nabla u, \nabla \eta ) \in L^{q'}(T^n) \hookrightarrow W^{-1, p_0}(T^n)
    \end{equation*}
    with $\tfrac{1}{p_0} \doteq \tfrac{1}{q'} - \tfrac{1}{n} $ due to the Sobolev embedding $W^{1, p_0'}(T^n) \hookrightarrow L^{q}(T^n)$. Notice that due to $q > \tfrac{n}{2}$, there holds $\tfrac{1}{q'} - \tfrac{1}{n} > 1 - \tfrac{2}{n} - \tfrac{1}{n} \geq 0$ for $n \geq 3$. Now, if $p \leq p_0$, we are done. Instead, if $p > p_0$, in particular $\tfrac{1}{q} - \tfrac{1}{n} \leq \frac{1}{p}<\tfrac{1}{p_0} < \tfrac{1}{q'} + \tfrac{1}{n}$ holds, where the first inequality follows by our hypotheses on $p$, and we may apply \cref{thm: Rodri Global W1p W2p regularity} to get $\bar u \in W^{1,p_0}(T^n)$. If $p_0 \geq n$, then $ g(\nabla u, \nabla \eta ) \in L^{p_0}(T^n) \hookrightarrow W^{-1, l}(T^n)$ for every $l < \infty$, in particular for $l = p$ and we are done. If not, that is if $p_0<\min\{n,p\}$, fixing $\tfrac{1}{p_1} \doteq \tfrac{1}{p_0} - \tfrac{1}{n}$ one has $L^{p_0}(T^n) \hookrightarrow W^{-1, p_1}(T^n)$. This follows since 
    \begin{align*}
        L^{p_0}(T^n) \hookrightarrow W^{-1, p_1}(T^n)\Longleftrightarrow W^{1, p'_1}(T^n)\hookrightarrow L^{p'_0}(T^n).
    \end{align*}
    By definition of $p_0$, one has $\frac{1}{p'_0}=\frac{1}{q}+\frac{1}{n}>\frac{1}{n}$ and thus $p'_0<n$. Since $\frac{1}{p'_1}=\frac{1}{p'_0}+\frac{1}{n}$, this in turn implies $p'_1<p'_0<n$ and therefore $W^{1, p'_1}(T^n)\hookrightarrow L^{\frac{np'_1}{n-p'_1}}(T^n)=L^{p'_0}(T^n)$, due to $\frac{1}{p'_1}-\frac{1}{n}=\frac{1}{p'_0}$. Having then $L^{p_0}(T^n) \hookrightarrow W^{-1, p_1}(T^n)$ established, if $\min\{p,n\} \leq p_1$, we conclude as before. Otherwise, we can iterate the above argument with a sequence $\tfrac{1}{p_{i+1}} \doteq \tfrac{1}{p_i} - \tfrac{1}{n}$ for $i \in \mathbb{N}$ until $p_i \geq \min\{p, n\}$. Once the right-hand side is in $W^{-1,p}(T^n)$ it follows from \cref{thm: Rodri Global W1p W2p regularity} that $\bar u \in W^{1,p}(T^n)$ and consequently, $u \in W^{1,p}(\Omega')$. Since $\Omega' \subset\subset \Omega$ was arbitrary, the claim follows.

    (\textit{ii}) If $\Delta_g u \in L^p(\Omega)$ by hypothesis for $\tfrac{1}{p} < \tfrac{1}{q'} + \tfrac{1}{n}$, then $\Delta_gu \in W^{-1,q'}(\Omega)$ and the same localization argument as above implies that the right-hand-side of \eqref{eq: lap bar u bar} is in $W^{-1,q'}(T^n)$. Just as above, one can also check that $q'$ satisfies the hypothesis of \cref{thm: Rodri Global W1p W2p regularity} part (\textit{i}) and hence it follows that $\bar u \in W^{1,q'}(T^n)$. Now if $p \leq q'$, then $\Delta_{\bar g}\bar u \in L^p(T^n)$ by \eqref{eq: lap bar u bar} and \cref{thm: Rodri Global W1p W2p regularity} part (\textit{ii}) shows that $\bar u \in W^{2,p}(T^n)$. If $q' < p \leq q$ instead, then $\Delta_{\bar g}\bar u \in L^{q'}(T^n)$ and we may apply \cref{thm: Rodri Global W1p W2p regularity} to obtain $\bar u \in W^{2,q'}(T^n)$. Observe that $q > \tfrac{n}{2}$ implies that $q' < \tfrac{n}{n-2} \leq n$, so we may use the Sobolev embedding $W^{2,q'}(T^n) \hookrightarrow W^{1,p_0}(T^n)$ with $\tfrac{1}{p_0} \doteq \tfrac{1}{q'} - \tfrac{1}{n}$ to conclude that $g(\nabla u, \nabla\eta) \in L^{p_0}(T^n)$. If $p_0 \geq p$, then $\Delta_{\bar g}\bar u \in L^p(T^n)$ and we conclude. If $p_0 < p \leq q$ instead, $\Delta_{\bar g}\bar u \in L^{p_0}(T^n)$ and part (\textit{ii}) of \cref{thm: Rodri Global W1p W2p regularity} yields $\bar u \in W^{2,p_0}(T^n)$. If $p_0 \geq \tfrac{pn}{p+n}$, then $W^{2,p_0}(T^n) \hookrightarrow W^{1,p}(T^n)$ and we are done, as $\Delta_{\bar g}\bar u \in L^p(T^n)$. If $p_0 < \tfrac{pn}{p+n} < n$, we use that $W^{2,p_0}(T^n) \hookrightarrow W^{1,p_1}(T^n)$ with $\tfrac{1}{p_1} \doteq \tfrac{1}{p_0} - \tfrac{1}{n}$ to obtain that $\Delta_{\bar g}\bar u \in L^{p_1}(T^n)$. At this point, it is easy to see that we can iterate the process to obtain a finite sequence $q' < p_0 < p_1 < \ldots < p_N$ defined by $\tfrac{1}{p_i} \doteq \tfrac{1}{p_{i-1}} - \tfrac{1}{n}$ with $p_N \geq \tfrac{pn}{p+n}$ for which $\bar u \in W^{2,p_i}(T^n)$. Consequently, $\Delta_{\bar g}\bar u \in L^p(T^n)$ and after using part (\textit{ii}) of \cref{thm: Rodri Global W1p W2p regularity} we obtain that $\bar u \in W^{2,p}(T^n)$. Thus, we conclude as before that $u \in W^{2,p}(\Omega')$ and the claim follows.

    (\textit{iii}) If $u \in L^{q'}(\Omega) \cap W^{1,p}(\Omega)$ and $\Delta_gu \in L^p(\Omega)$, the right-hand-side of \eqref{eq: lap bar u bar} is in $L^p(T^n)$ and we directly get the desired result applying part (\textit{ii}) of \cref{thm: Rodri Global W1p W2p regularity}.
\end{proof}
By an induction argument, we further obtain:
\begin{theorem}
    \label{thm: main local elliptic regularity}
    Let $\Omega \subset \nR^n$ be an open, bounded domain with smooth boundary and consider a Riemannian metric $g \in W^{k,q}(T_2\Omega)$ with $q > \tfrac{n}{2}$ and $k \geq 2$. If $u \in W^{k-2, q'}(\Omega)\, \cap \,W^{k-1, p}(\Omega)$ and $\Delta_g u \in W^{k-2, p}(\Omega)$ for some $1 < p \leq q$, then it follows that $u \in W^{k,p}_{loc}(\Omega)$.
\end{theorem}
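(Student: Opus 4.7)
The plan is to proceed by induction on $k \geq 2$. The base case $k=2$ is precisely the content of \cref{thm: 12 local elliptic regularity}(iii), where under the hypotheses $u \in L^{q'}(\Omega)\cap W^{1,p}(\Omega)$ and $\Delta_g u \in L^p(\Omega)$ one obtains $u \in W^{2,p}_{loc}(\Omega)$; the statement is thus already settled at that level. For the inductive step, fix $k \geq 3$, assume the theorem holds at the level $k-1$, and let $g$, $u$ satisfy the hypotheses at level $k$.

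The strategy is to apply the inductive hypothesis to each coordinate derivative $v \doteq \partial_\ell u$, $\ell \in \{1,\dots,n\}$. The first two hypotheses at level $k-1$ follow at once: from $u \in W^{k-2,q'}(\Omega)$ one gets $v \in W^{k-3,q'}(\Omega) = W^{(k-1)-2,q'}(\Omega)$, and from $u \in W^{k-1,p}(\Omega)$ one gets $v \in W^{k-2,p}(\Omega) = W^{(k-1)-1,p}(\Omega)$. The non-trivial step is to verify $\Delta_g v \in W^{k-3,p}_{loc}(\Omega)$. For this, write the Laplace--Beltrami operator in non-divergence form $\Delta_g u = g^{ij}\partial_i\partial_j u + b^i\partial_i u$, where the first-order coefficient $b^i = \tfrac{1}{\sqrt{\det g}}\,\partial_j\bigl(\sqrt{\det g}\, g^{ij}\bigr)$ inherits the regularity $b^i \in W^{k-1,q}_{loc}(\Omega)$ from $g$. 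Applying $\partial_\ell$ and rearranging yields the commutator identity
\begin{equation*}
    \Delta_g v = \partial_\ell(\Delta_g u) - (\partial_\ell g^{ij})\,\partial_i\partial_j u - (\partial_\ell b^i)\,\partial_i u.
\end{equation*}

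Each term on the right-hand side can then be estimated in $W^{k-3,p}_{loc}(\Omega)$: the first lies in $W^{k-3,p}(\Omega)$ since $\Delta_g u \in W^{k-2,p}(\Omega)$, while the remaining two are products of factors with Sobolev regularities $W^{k-1,q}\cdot W^{k-3,p}$ and $W^{k-2,q}\cdot W^{k-2,p}$ respectively, both of which embed continuously into $W^{k-3,p}_{loc}$ by the multiplication properties of \cref{SobolevMultLocal}. The crucial quantitative input here is $(k-1)q > n$, which holds since $k \geq 3$ and $q > \tfrac{n}{2}$; this ensures $W^{k-1,q} \hookrightarrow C^{0,\alpha}$ for some $\alpha > 0$ and consequently acts as a multiplier on the relevant Sobolev spaces. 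Once $\Delta_g v \in W^{k-3,p}_{loc}(\Omega)$ is established, the inductive hypothesis applied on any smooth subdomain $\Omega' \subset\subset \Omega$ yields $v = \partial_\ell u \in W^{k-1,p}_{loc}(\Omega)$, and since $\ell$ was arbitrary, we conclude $u \in W^{k,p}_{loc}(\Omega)$.

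The main obstacle I anticipate is the careful verification of the Sobolev multiplication estimates, especially in the borderline case $k=3$, where $W^{k-1,q} = W^{2,q}$ sits just at the algebra threshold: the product $W^{1,q}\cdot W^{1,p}$ must be shown to embed into $L^p_{loc}$, and this is where the hypothesis $q > \tfrac{n}{2}$ enters decisively via Hölder's inequality combined with the Sobolev embeddings of $W^{1,q}$ and $W^{1,p}$. A minor technicality is that the commutator identity must be interpreted distributionally in view of the low a priori regularity of $u$; this can be handled by a standard mollification and passage to the limit.
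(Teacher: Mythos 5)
Your proof is correct and follows essentially the same strategy as the paper's: induction on $k$ with base case given by \cref{thm: 12 local elliptic regularity}(iii), differentiation of the equation to obtain a commutator identity for $\Delta_g(\partial_\ell u)$, control of the error terms by the multiplication properties of \cref{SobolevMultLocal} (using $(k-1)q \geq 2q > n$), and application of the inductive hypothesis. The only cosmetic difference is that you package the first-order coefficient into $b^i = \tfrac{1}{\sqrt{\det g}}\partial_j(\sqrt{\det g}\,g^{ij}) = -g^{jk}\Gamma^i_{jk}$, whereas the paper keeps the Christoffel symbols explicit; the resulting terms and their Sobolev regularities are identical.
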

\begin{proof}
    We will proceed by induction on $k \geq 2$. Notice that the base case $k = 2$ has been proven in part (\textit{iii}) of \cref{thm: 12 local elliptic regularity}. Let us then assume that the theorem holds for $k-1 \geq 2$ and show that it holds for $k$. By hypothesis $\Delta_g u = f$ with $f \in W^{k-2, p}(\Omega)$. Hence, differentiating it in direction $x^a$, we get 
    \begin{align*}
        \Delta_g (\partial_a u) = \partial_a f - (\partial_a g^{ij}) \left(\partial_i \partial_j u  - \Gamma_{ij}^l \,\partial_l u \right) + g^{ij} \partial_a \Gamma_{ij}^l \,\partial_l u\,.
    \end{align*}
    Notice that since $k \geq 3$, then $\partial_a g^{ij}$ and  $\Gamma_{ij}^l$ are $W^{k-1, q}(\Omega)$ with $k - 1 \geq 2$ and $q > \tfrac{n}{2}$; in turn, in combination with the hypothesis $u \in W^{k-1, p}(\Omega)$, the second term of the right hand side is in $W^{k-3, p}(\Omega)$ by Theorem \ref{SobolevMultLocal}. On the other hand, also by the multiplication properties of \cref{SobolevMultLocal}, 
    \[\partial_a \Gamma_{ij}^l \,\partial_l u \in W^{k-2, q}(\Omega) \otimes W^{k-2, p}(\Omega) \hookrightarrow W^{k-3, p}(\Omega) \,.
    \]
    Hence, by combining these observation with the hypotheses on $f$ and $u$, the function $\partial_a u \in  W^{k-3, q'}(\Omega)\, \cap \,W^{k-2, p}(\Omega)$ satisfies $\Delta_g (\partial_a u) \in W^{k-3, p}(\Omega)$. By the inductive hypothesis, we can immediately conclude that $\partial_a u \in W^{k-1, p}_{loc}(\Omega)$, from which the desired result follows. 
\end{proof}
We can then extend \cref{thm: main local elliptic regularity} to compact manifolds.
\begin{theorem}
    \label{thm: main global elliptic regularity}
    Let $M$ be a smooth, closed manifold of dimension $n \geq 3$ and consider a $W^{k,q}$-Riemannian metric $g$ on $M$ with $k \geq 2$ and $q > \tfrac{n}{2}$. If $u \in W^{k-2, q'}(M)\, \cap \,W^{k-1, p}(M)$ and $\Delta_g u \in W^{k-2, p}(M)$ for some $1 < p \leq q$, then $u \in W^{k,p}(M)$.
\end{theorem}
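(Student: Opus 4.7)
The strategy is to reduce the global claim to the local version already established in \cref{thm: main local elliptic regularity} by means of a finite atlas and a subordinate partition of unity, exactly in the spirit of \cref{SobSpacesManifoldDefnTensors.1}. Fix a finite atlas $\mathcal{A} = \{(U_\alpha, \varphi_\alpha)\}_{\alpha=1}^{N}$ of $M$ together with a subordinate partition of unity $\{\eta_\alpha\}_{\alpha=1}^{N}$. I will show that the local representatives $u \circ \varphi_\alpha^{-1}$ lie in $W^{k,p}_{loc}(\varphi_\alpha(U_\alpha))$ for every $\alpha$, which by \cref{SobSpacesManifoldDefnTensors.1} suffices to conclude $u \in W^{k,p}(M)$ after combining with the partition of unity.

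The first step is to translate the global hypotheses into local ones. By the very definition of the Sobolev spaces on $M$, the assumptions $u \in W^{k-2,q'}(M) \cap W^{k-1,p}(M)$ imply that $u \circ \varphi_\alpha^{-1} \in W^{k-2,q'}_{loc}(\varphi_\alpha(U_\alpha)) \cap W^{k-1,p}_{loc}(\varphi_\alpha(U_\alpha))$ for each $\alpha$, while the hypothesis $g \in W^{k,q}(T_2 M)$ yields $g_\alpha \doteq (\varphi_\alpha)_* g \in W^{k,q}_{loc}(T_2\varphi_\alpha(U_\alpha))$. For the Laplacian, the local coordinate formula \eqref{DistributionalLapLocal} gives
\begin{equation*}
    \hat{T}_{\alpha}\,\Delta_{g} u \;=\; (\det g \circ \varphi_\alpha^{-1})^{\frac{1}{2}}\,\Delta_{g_\alpha}(u \circ \varphi_\alpha^{-1}),
\end{equation*}
and since $\Delta_g u \in W^{k-2,p}(M)$ translates into $\hat{T}_\alpha \Delta_g u \in W^{k-2,p}_{loc}(\varphi_\alpha(U_\alpha))$, the fact that $(\det g \circ \varphi_\alpha^{-1})^{1/2}$ is continuous, bounded away from zero and in $W^{k,q}_{loc}$ (hence a multiplier on $W^{k-2,p}_{loc}$ by \cref{SobolevMultLocal}) gives $\Delta_{g_\alpha}(u \circ \varphi_\alpha^{-1}) \in W^{k-2,p}_{loc}(\varphi_\alpha(U_\alpha))$.

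At this point, given an arbitrary point $x_0 \in \varphi_\alpha(U_\alpha)$, I would pick a smooth bounded domain $\Omega$ with $x_0 \in \Omega \subset\subset \varphi_\alpha(U_\alpha)$ and apply \cref{thm: main local elliptic regularity} to $u \circ \varphi_\alpha^{-1}$ on $\Omega$ with the metric $g_\alpha|_\Omega$. The hypotheses of that theorem are met, so we obtain $u \circ \varphi_\alpha^{-1} \in W^{k,p}_{loc}(\Omega)$, and hence in a neighbourhood of $x_0$. Since $x_0 \in \varphi_\alpha(U_\alpha)$ was arbitrary, $u \circ \varphi_\alpha^{-1} \in W^{k,p}_{loc}(\varphi_\alpha(U_\alpha))$ for every $\alpha$. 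By the chain and product rules, using that $\eta_\alpha$ is smooth and compactly supported in $U_\alpha$, this yields $(\varphi_\alpha)_*(\eta_\alpha u) \in W^{k,p}_0(\varphi_\alpha(U_\alpha))$ with norm controlled by $\|u \circ \varphi_\alpha^{-1}\|_{W^{k,p}(K_\alpha)}$ for some smooth $K_\alpha$ containing $\supp \eta_\alpha \circ \varphi_\alpha^{-1}$. Summing over $\alpha$ in the norm \eqref{SobolevNormTensorFields} yields $u \in W^{k,p}(M)$.

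The only genuine point to handle carefully is the passage from the distributional identity on $M$ to the local PDE in coordinates, and the verification that all local hypotheses of \cref{thm: main local elliptic regularity} are preserved under the multiplication by $(\det g \circ \varphi_\alpha^{-1})^{1/2}$; this is routine given the multiplication properties of \cref{SobolevMultLocal} and the fact that $W^{k,q}$ with $q > \frac{n}{2}$ embeds into $C^0$. The remainder of the argument is purely bookkeeping with the partition of unity.
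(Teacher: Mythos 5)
Your proposal is correct and follows essentially the same route as the paper: localize via a finite atlas and subordinate partition of unity, translate the hypotheses into coordinates, apply \cref{thm: main local elliptic regularity}, and reassemble. The only cosmetic difference is that the paper applies the local theorem to the compactly supported cutoff $\eta(u\circ\varphi^{-1})$ after expanding $\Delta_g(\eta(u\circ\varphi^{-1}))$ by the Leibniz rule, whereas you apply it directly to $u\circ\varphi_\alpha^{-1}$ on $\Omega \subset\subset \varphi_\alpha(U_\alpha)$ and invoke the transformation formula \eqref{DistributionalLapLocal} to identify the local representative of $\Delta_g u$; both handle the same bookkeeping.
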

\begin{proof}
   Let us consider a chart $(U, \varphi)$ on $M$, so that $\varphi(U) \subset \nR^n$ is an open, bounded domain with smooth boundary. Consider then a cutoff function $\eta \in C^{\infty}_0(\varphi(U))$ and notice that $\eta (u \circ \varphi^{-1}) \in W^{k-2, q'}(\varphi(U)) \cap W^{k-1, p}(\varphi(U))$ by hypothesis. 
    Now, we can compute 
   \[ \Delta_g (\eta (u \circ \varphi^{-1})) = \eta \,\Delta_g (u \circ \varphi^{-1}) + g\left( \nabla\eta,\, \nabla (u \circ \varphi^{-1}) \right) + \Delta_g \eta \, (u \circ \varphi^{-1})\,.\]
   It is easy to see that the right-hand side is in $W^{k-2, p}(\varphi(U))$, in particular relying on $\eta (u \circ \varphi^{-1}) \in W^{k-1, p}(\varphi(U))$.
   Hence, we can apply \cref{thm: main local elliptic regularity} and obtain that $\eta (u \circ \varphi^{-1}) \in W^{k, p}(\varphi(U))$. Since the chart and the partition of unity were arbitrary, the proof concludes.
\end{proof}
\begin{remark}\label{rmk: main global elliptic regularity}
    Notice that if $p = q$, the embedding $W^{k-1, q}(M) \hookrightarrow W^{k-2, q'}(M)$ holds and therefore the condition $u \in W^{k-2, q'}(M)$ in \cref{thm: main global elliptic regularity} is redundant. 
\end{remark}

\subsection{The conformal Laplacian with Sobolev coefficients}
\label{The Conformal Laplacian with Sobolev coefficients}

We now move on to study the \textit{conformal Laplacian}
\begin{align*}
    \Lg = -a_n\Delta_g + \R_g \,.
\end{align*}
As observed in \cref{The Yamabe classification}, it is a very special Schrödinger-type operator that naturally shows up in conformal deformations of the scalar curvature. As such, it is not only invariant under diffeomorphisms, but also covariant under conformal deformations: for a conformal metric $\tilde g = u^{\frac{4}{n-2}} g$, the operator transforms like
\begin{equation}
    \label{eq: conformal covariance Lg}
    \mathscr L_{\bar g} \,v = u^{-\frac{n+2}{n-2}} \Lg(uv) \,.
\end{equation}
This symmetry will be conveniently exploited later on.
\begin{lemma}
    \label{lemma: Lg is Fredholm}
    Let $M$ be a smooth, closed manifold of dimension $n \geq 3$ and consider a $W^{k,q}$-Riemannian metric $g$ on $M$ with $k \geq 2$ and $q > \tfrac{n}{2}$. For each $1 < p \leq q$ the operator 
    \begin{equation*}
        \Lg : W^{k,p}(M) \to W^{k-2,p}(M)
    \end{equation*}
    is Fredholm of index zero and there exists a constant $C = C(M,g,n,p,q)$ such that
    \begin{equation}\label{estimate: semi-Fredholm}
        \|u\|_{W^{k,p}(M)} \leq C \Big(\|\Lg u \|_{W^{k-2,p}(M)} + \|u\|_{W^{k-2,p}(M)}\Big)
    \end{equation}
    holds for all $u \in W^{k,p}(M)$.
\end{lemma}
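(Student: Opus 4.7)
The proof will proceed in four parts: continuity of $\Lg$, the a priori estimate \eqref{estimate: semi-Fredholm}, the semi-Fredholm property, and the index computation. Continuity of $\Lg : W^{k,p}(M) \to W^{k-2,p}(M)$ splits into two ingredients: (i) the principal part $\Delta_g : W^{k,p}(M) \to W^{k-2,p}(M)$ is bounded since $g \in W^{k,q}(M)$ with $q > \tfrac{n}{2}$; (ii) the zeroth-order multiplicative term is controlled by $\R_g \in W^{k-2,q}(M)$ (or $L^q(M)$ when $k=2$) combined with the continuous Sobolev multiplication $W^{k-2,q}(M) \otimes W^{k,p}(M) \hookrightarrow W^{k-2,p}(M)$ valid for $p \leq q$ by \cref{SobolevMultLocal}.

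For the a priori estimate, we rewrite the equation as $-a_n \Delta_g u = \Lg u - \R_g u$ and invoke an elliptic estimate for the Laplace--Beltrami operator
\begin{equation*}
    \|u\|_{W^{k,p}(M)} \leq C \bigl(\|\Delta_g u\|_{W^{k-2,p}(M)} + \|u\|_{W^{k-2,p}(M)}\bigr),
\end{equation*}
extracted from the local arguments behind \cref{thm: 12 local elliptic regularity} and \cref{thm: main local elliptic regularity} and patched globally via partition of unity. This produces the bound $\|u\|_{W^{k,p}(M)} \leq C (\|\Lg u\|_{W^{k-2,p}(M)} + \|\R_g u\|_{W^{k-2,p}(M)} + \|u\|_{W^{k-2,p}(M)})$. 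A careful application of \cref{SobolevMultLocal} gives $\|\R_g u\|_{W^{k-2,p}(M)} \leq C \|u\|_{W^{s,p}(M)}$ for some intermediate index $s$ strictly less than $k$ (for $k=2$ one takes $s = \tfrac{n}{q} < 2$), and standard interpolation in the scale $W^{s,p}$ together with Young's inequality yields $\|\R_g u\|_{W^{k-2,p}(M)} \leq \varepsilon \|u\|_{W^{k,p}(M)} + C_\varepsilon \|u\|_{W^{k-2,p}(M)}$. Absorbing the first term into the left-hand side yields \eqref{estimate: semi-Fredholm}.

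The semi-Fredholm property is then a standard consequence of \eqref{estimate: semi-Fredholm} combined with the compact embedding $W^{k,p}(M) \hookrightarrow W^{k-2,p}(M)$ furnished by Rellich--Kondrachov on a closed manifold: $\ker \Lg$ is finite-dimensional and $\mathrm{Range}(\Lg)$ is closed in $W^{k-2,p}(M)$. To compute the index, we exploit the formal self-adjointness of $\Lg$ for the $L^2(M,d\mu_g)$-pairing. Using the duality $(W^{k-2,p}(M))' \cong W^{-(k-2),p'}(M)$ supplied by \cref{SobolevDualIso}, the Banach adjoint $\Lg^*$ identifies with $\Lg : W^{-(k-2),p'}(M) \to W^{-k,p'}(M)$, whence the closed-range theorem gives $\dim \mathrm{Coker}(\Lg) = \dim \ker \Lg^*$. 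A regularity bootstrap iterating the regularity theorems of Subsection \ref{SectionLapceOp} shows that both $\ker \Lg$ on $W^{k,p}(M)$ and $\ker \Lg^*$ on $W^{-(k-2),p'}(M)$ coincide with the same finite-dimensional subspace of regular solutions of $\Lg u = 0$ in $W^{k,q}(M)$. Therefore $\dim \ker \Lg^* = \dim \ker \Lg$ and $\Lg$ is Fredholm of index zero.

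The main obstacle is the regularity bootstrap for the adjoint kernel in the last step, which starts from a very weak distributional solution $v \in W^{-(k-2),p'}(M)$ of $\Lg v = 0$ and needs to be upgraded through several intermediate Sobolev spaces -- a delicate multi-step iteration of \cref{thm: 12 local elliptic regularity}--\cref{thm: main global elliptic regularity}, especially sensitive in the regimes where $p$ is close to $1$ and the hypotheses of those theorems are only barely met. An alternative route that sidesteps this subtlety is a perturbative argument: one shows that $\Lg + s I : W^{k,p}(M) \to W^{k-2,p}(M)$ is an isomorphism for $s$ sufficiently large -- via coercivity from \cref{lemma: Ru2 interpolation}, Lax--Milgram on $W^{1,2}(M)$ and a regularity upgrade through \cref{thm: main global elliptic regularity} -- and concludes index zero upon noting that $sI : W^{k,p}(M) \to W^{k-2,p}(M)$ is a compact perturbation of an isomorphism by Rellich--Kondrachov.
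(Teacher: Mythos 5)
Your overall structure (continuity, a priori estimate, semi-Fredholmness via Rellich--Kondrachov, index computation) is standard and matches the paper's high-level plan, and your handling of the a priori estimate via Sobolev multiplication and Ehrling interpolation is essentially the same technique the paper employs in its inductive step. The genuine divergence is in the index computation, and this is where your proposal has gaps.

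The paper computes the index by a cleaner route that you did not consider: approximate $g$ by smooth metrics $g_j$ in $W^{k,q}(T_2M)$, observe that $\mathscr{L}_{g_j} \to \mathscr{L}_g$ in operator norm from $W^{k,p}(M)$ to $W^{k-2,p}(M)$, and use local constancy of the Fredholm index on the space of semi-Fredholm operators. Since each $\mathscr{L}_{g_j}$ with smooth coefficients is Fredholm of index zero by classical elliptic theory, and the a priori estimate \eqref{estimate: semi-Fredholm} makes $\mathscr{L}_g$ semi-Fredholm, the index of $\mathscr{L}_g$ must also be zero. This completely sidesteps both the adjoint kernel and the solvability questions that your argument must face.

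Your primary argument has a real gap: you want to bootstrap elements of $\ker \Lg^*$ living in $W^{-(k-2),p'}(M)$ up to $W^{k,q}(M)$, but for $k > 2$ this space has negative regularity, and none of the regularity theorems of \cref{SectionLapceOp} (\cref{thm: 12 local elliptic regularity}, \cref{thm: conf Laplace elliptic regularity}, \cref{thm: main global elliptic regularity}) starts from a priori regularity worse than $L^{q'}(M)$. It is not even clear that $\Lg$ acts naturally on $W^{-(k-2),p'}(M)$, since the distributional extension in \cref{lemma: conf Lap for Lt} is only established on $L^t(M)$ with $t \geq q'$. You correctly flag this as "the main obstacle," but leaving it unresolved means the argument as written does not close. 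Your alternative perturbative route also has a gap: Lax--Milgram on $W^{1,2}(M)$ solves $(\Lg + sI)u = f$ only when $f \in W^{-1,2}(M)$, and for $k = 2$ and $p$ close to $1$ the target space $L^p(M)$ does not embed into $W^{-1,2}(M)$ (this requires $p \geq \tfrac{2n}{n+2}$). So the surjectivity of $\Lg + sI$ on $W^{k,p}(M) \to W^{k-2,p}(M)$ for the full range $1 < p \leq q$ does not follow from the chain of implications you describe. One of these two routes could likely be repaired (for instance by a density argument for the right-hand side, or by proving the kernel regularity for $k=2$ first and then bootstrapping the higher $k$ from it, as the paper does in \cref{lemma: kernel_improvement}), but as stated the index-zero claim is not proved.
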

\begin{proof}  
    First, observe that if a sequence of metrics $\{g_j\}_{j=1}^{\infty} \subset C^{\infty}(T_2M)$ converges to $g$ in $W^{2,q}(T_2M)$, it is not hard to check that $\mathscr{L}_{g_j} \to \Lg$ in operator norm between the Banach spaces $W^{k,p}(M)$ and $W^{k-2,p}(M)$, that is, for any $u \in W^{k,p}(M)$ with $\|u\|_{W^{k,p}(M)} = 1$ there holds
    \begin{equation*}
        \| (\Lg - \mathscr{L}_{g_j})u  \|_{W^{k-2,p}(M)}  \leq C(M, p, q) \, \|g - g_j\|_{W^{k, q}(M)}\,.
    \end{equation*}
    Now, since the conformal Laplacian of a smooth metric is formally self-adjoint, it follows from standard results on linear operators that the operators $\mathscr{L}_{g_j}$ are Fredholm with index zero for all $j$. In addition, it is well-known --see e.g \cite[Theorem 5.16]{Kato}-- that the Fredholm index is locally constant in the space of semi-Fredholm operators acting between two given Banach spaces. Hence, once we show that $\Lg$ is semi-Fredholm, we can immediately conclude that it is Fredholm of index zero. Therefore, it suffices to prove the elliptic estimate \eqref{estimate: semi-Fredholm}.

    The case $k = 2$ of \eqref{estimate: semi-Fredholm} is known to hold, for instance from \cite[Theorem 3.1]{avalos2024sobolev}, so we proceed by induction: assume it holds for some $k \geq 2$ and using the hypothesis $g \in W^{k+1,q}(T_2M)$, let us prove it for $k+1$. Let us suppose first that $u \in W^{k+1,p}_0(U_\alpha)$ for some coordinate ball $(U_\alpha, \varphi_\alpha)$ and name $B_\alpha \doteq B_{r_\alpha}(0) = \varphi_\alpha(U_\alpha)$ and $u_\alpha \doteq u \circ \varphi^{-1}_{\alpha}$. By the induction hypothesis, we know in particular that there exists a positive constant $C_\alpha = C(M,g,n,p,q,r_\alpha)$ such that
    \begin{equation}
        \label{eq: ellipest10}
        \|\partial_au_\alpha\|_{W^{k,p}(B_\alpha)} \leq C_{\alpha} \Big(\|\Lg(\partial_au_{\alpha}) \|_{W^{k-2,p}(B_\alpha)} + \|\partial_au_{\alpha}\|_{W^{k-2,p}(B_\alpha)}\Big) \,.
    \end{equation}
    Below, we will keep using $C_\alpha$ to denote some constant with the same dependence, though its value might change from line to line. Direct computation gives
    \begin{equation}
        \label{eq: ellipest1}
        \Lg(\partial_{a}u_{\alpha}) = \partial_{a} \Lg u_{\alpha} + a_n\partial_{a}g^{ij}\bigl(\partial_{i}\partial_{j}u_{\alpha} - \Gamma^l_{ij}\partial_lu_\alpha\bigr) - \partial_{a}\R_gu_{\alpha} - a_ng^{ij}\partial_{a}\Gamma^l_{ij}\partial_{l}u_{\alpha}
    \end{equation}
    and using the Sobolev multiplications
    \begin{align*}
        W^{k,q}(B_\alpha) \otimes W^{k-2,p}_0(B_\alpha) \hookrightarrow  W^{k-2,p}_0(B_\alpha) \,,
        \\
        W^{k-2,q}(B_\alpha) \otimes W^{k,p}_0(B_\alpha) \hookrightarrow  W^{k-2,p}_0(B_\alpha)
    \end{align*}
    which hold since $p\leq q$ and $k\geq 2>\frac{n}{q}$ by hypothesis (see \cref{SobolevMultLocal}), we obtain for the first four terms of the right-hand-side of \eqref{eq: ellipest1} the inequalities
    \begin{equation*}
        \|\partial_a\Lg u_\alpha\|_{W^{k-2,p}(B_\alpha)} \leq \|\Lg u_\alpha\|_{W^{k-1,p}(B_\alpha)} \,,
    \end{equation*}
    \begin{align*}
         \|\partial_{a}g^{ij} \partial_{i}\partial_{j}u_{\alpha}\|_{W^{k-2,p}(B_\alpha)} \leq C_\alpha\|\partial_ag^{ij}\|_{W^{k,q}(B_\alpha)}\|\partial_i\partial_ju_\alpha\|_{W^{k-2,p}(B_\alpha)} \leq C_\alpha \|u_\alpha\|_{W^{k,p}(B_\alpha)} \,,
    \end{align*}
    \begin{align*}
         \|\partial_{a}g^{ij} \Gamma^l_{ij}\partial_lu_{\alpha}\|_{W^{k-2,p}(B_\alpha)} &\leq C_\alpha\|\partial_ag^{ij}\|_{W^{k,q}(B_\alpha)}\|\Gamma^l_{ij}\|_{W^{k,q}(B_\alpha)}\|\partial_lu_\alpha\|_{W^{k-2,p}(B_\alpha)} 
         \\
         &\leq C_\alpha \|u_\alpha\|_{W^{k,p}(B_\alpha)} \,,
    \end{align*}
    \begin{align*}
         \|\partial_{a}\R_g u_{\alpha}\|_{W^{k-2,p}(B_\alpha)} \leq C_\alpha\|\partial_a\R_g\|_{W^{k-2,q}(B_\alpha)}\|u_\alpha\|_{W^{k,p}(B_\alpha)} \leq C_\alpha \|u_\alpha\|_{W^{k,p}(B_\alpha)} \,.
    \end{align*}
    For the last term, we note that 
    \begin{equation*}
        W^{k-2,q}(B_\alpha) \otimes H^{s,p}(B_\alpha) \hookrightarrow W^{k-2,p}(B_\alpha) 
    \end{equation*}
    holds for all $s\in \mathbb{R}$ as long as $1<p\leq 1$, $k-2\leq s\leq k$, and $s>\frac{n}{q}$. Since $\frac{n}{q}<2\leq k$ by hypothesis, then $\max\{k-2,\tfrac{n}{q}\} < k$ and we can always fix some $s\in (\max\{k-2,\frac{n}{q}\},k)$ depending on $n$, $k$ and $q$ so that
    \begin{align*}
        \|g^{ij} \partial_{a}\Gamma^l_{ij}\partial_{l}u_{\alpha}\|_{W^{k-2,p}(B_\alpha)} &\leq C_\alpha\|g^{ij} \partial_{a}\Gamma^l_{ij}\|_{W^{k,q}(B_\alpha)}\|\partial_lu_\alpha\|_{H^{s,p}(B_\alpha)} \leq C_\alpha\|u_\alpha\|_{H^{s+1,p}(B_\alpha)}
    \end{align*}
    holds. In view of \eqref{eq: ellipest10} and \eqref{eq: ellipest1}, we establish via the triangle inequality the estimate
    \begin{equation}
        \label{eq: ellipest 5}
        \|u_\alpha\|_{W^{k+1,p}(B_\alpha)} \leq C_{\alpha} \Big(\|\Lg u_{\alpha}\|_{W^{k-1,p}(B_\alpha)} + \|u_{\alpha}\|_{H^{s+1,p}(B_\alpha)} + \|u_{\alpha}\|_{W^{k-1,p}(B_\alpha)}\Big) \,.
    \end{equation}
    Now we may apply Ehrling's lemma to the compact-continuous embeddings
    \begin{equation*}
        W^{k+1,p}(B_\alpha) \hookrightarrow H^{s+1,p}(B_\alpha) \hookrightarrow W^{k-1,p}(B_\alpha)
    \end{equation*}
    to get that for each $\varepsilon > 0$ there exists some $C_\varepsilon > 0$ such that
    \begin{equation*}
        \|u_\alpha\|_{H^{s+1,p}(B_\alpha)} \leq \varepsilon \|u_\alpha\|_{W^{k+2,p}(B_\alpha)} + C_\varepsilon \|u_\alpha\|_{W^{k-1,p}(B_\alpha)} \,.
    \end{equation*}
    In combination with \eqref{eq: ellipest 5} and choosing $\varepsilon = \tfrac{1}{2C_\alpha}$, we conclude
    \begin{equation}
        \label{eq: ellipest 7}
        \|u_\alpha\|_{W^{k+1,p}(B_\alpha)} \leq C_{\alpha} \Big(\|\Lg u_{\alpha}\|_{W^{k-1,p}(B_\alpha)} + \|u_{\alpha}\|_{W^{k-1,p}(B_\alpha)}\Big) \,,
    \end{equation}
    as desired.

    For the general case $u\in W^{k+1,p}(M)$, one may take a finite covering $\{(U_{\alpha},\varphi_{\alpha})\}_{\alpha=1}^N$ of $M$ by coordinate balls as above and a subordinate partition of unity $\{\eta_{\alpha}\}_{\alpha=1}^N$ to write $u=\sum_{\alpha=1}^N\eta_{\alpha}u$ with $\eta_{\alpha}u\in W^{k+1,p}(U_{\alpha})$. Then, 
    \begin{align*}
        \|u\|_{W^{k+1,p}(M)} \leq \sum_{\alpha=1}^N\| \eta_{\alpha}u\|_{W^{k+1,p}(M)} \,,
    \end{align*}
    where now \eqref{eq: ellipest 7} can be applied directly to each $u_{\alpha}\doteq (\eta_{\alpha}u)\circ \varphi_{\alpha}^{-1}$ which grants the existence of a positive constant $C_{\alpha}=C(M,p,q,k,g,r_{\alpha})$ such that
    \begin{align}
        \label{FredholmEstimateLocal.1}
        \|\eta_{\alpha}u\|_{W^{k+1,p}(M)} &\leq C_{\alpha}\Big(\Vert \mathscr{L}_g(\eta_{\alpha}u) \Vert_{W^{k-1,p}(M)}  + \Vert \eta_{\alpha}u\Vert_{L^{p}(M)}\Big).
    \end{align}
    One may estimate $\|\eta_{\alpha} u\|_{L^{p}(M)}\leq \|u\|_{L^{p}(M)}$ and after we expanding 
    \begin{align*}
        \Lg(\eta_{\alpha}u) = \eta_{\alpha}\Lg u - 2a_ng(\nabla\eta_{\alpha},\nabla u) - a_n u \Delta_g\eta_\alpha
    \end{align*}
    we also get
    \begin{align}
        \label{FredholmInduction.10}
        \|\Lg(\eta_{\alpha}u)\|_{W^{k-1,p}(M)} \leq C_\alpha \Big(\|\Lg u\|_{W^{k-1,p}(M)} + \|\nabla u\|_{W^{k-1,p}(M)} + \|u\|_{W^{k-1,p}(M)}\Big) \,.
    \end{align}
    In conclusion, we obtain
    \begin{align*}
        \|u\|_{W^{k+1,p}(M)} &\leq \sum_{\alpha=1}^N C_\alpha \Big(\|\Lg u\|_{W^{k-1,p}(M)} + \|u\|_{W^{k,p}(M)}\Big)
        \\
        &\leq C(M,g,n,p,q) \Big(\|\Lg u\|_{W^{k-1,p}(M)} + \|u\|_{W^{k,p}(M)}\Big)
    \end{align*}
    and the assertion follows from the $W^{k+1}(M) \hookrightarrow W^{k,p}(M) \hookrightarrow W^{k-1,p}(M)$ interpolation inequality
    \begin{align*}
        \|u\|_{W^{k,p}(M)} \leq \varepsilon \|u\|_{W^{k+1,p}(M)} + C_{\varepsilon}\|u\|_{W^{k-1,p}(M)}
    \end{align*}
    after choosing $\varepsilon > 0$ small enough.
\end{proof}

The above lemma plays a crucial role in providing, among others, a regularity claim for $\Ker(\Lg|_{W^{2,p}(M)})$, as can be seen in the following statement.
\begin{lemma}
    \label{lemma: kernel_improvement}
    Let $M$ be a smooth, closed manifold of dimension $n \geq 3$ and consider a $W^{k,q}$-Riemannian metric $g$ on $M$ with $k \geq 2$ and $q > \tfrac{n}{2}$. For each $1< p \leq q$ there holds that
    \begin{equation*}
        \Ker(\Lg|_{W^{2,p}(M)}) = \Ker(\Lg|_{W^{2, q}(M)}) \,.
    \end{equation*}
    That is, in the cited range for $p$, $\Ker(\Lg|_{W^{2,p}(M)})$ is independent of $p$.
\end{lemma}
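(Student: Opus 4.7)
The first containment, $\Ker(\Lg|_{W^{2,q}(M)}) \subseteq \Ker(\Lg|_{W^{2,p}(M)})$, is immediate from the Sobolev embedding $W^{2,q}(M)\hookrightarrow W^{2,p}(M)$ on the closed manifold $M$, valid for $p\leq q$. For the reverse containment, the plan is an essentially standard bootstrap based on \cref{thm: Rodri Global W1p W2p regularity}(ii). Given $u\in W^{2,p}(M)$ with $\Lg u=0$, I will rewrite the equation as $\Delta_g u=\tfrac{1}{a_n}\R_g u$ and progressively raise the integrability exponent of $u$ until reaching $W^{2,q}(M)$.

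The driving tool, \cref{thm: Rodri Global W1p W2p regularity}(ii), requires two inputs: $u \in L^{q'}(M)$ and $\Delta_g u\in L^{p_0}(M)$ for some $1<p_0\leq q$, and concludes $u\in W^{2,p_0}(M)$. The first input comes for free throughout the iteration: the hypothesis $q>\tfrac{n}{2}$ gives $\tfrac{1}{q'}+\tfrac{2}{n}>1>\tfrac{1}{p}$, so $W^{2,p}(M)\hookrightarrow L^{q'}(M)$ on the closed manifold $M$ and the same remains true for every $p_k\geq p$ appearing in the scheme. Thus the bootstrap reduces to raising the integrability of the right-hand side $\R_g u$, using $\R_g\in L^q(M)$ (which follows from $g\in W^{2,q}(T_2M)$) together with H\"older's inequality and Sobolev embeddings.

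Concretely, if $p\geq\tfrac{n}{2}$ then $W^{2,p}(M)\hookrightarrow L^\infty(M)$, with a harmless modification at the borderline $p=\tfrac{n}{2}$ handled via $W^{2,n/2}(M)\hookrightarrow L^r(M)$ for arbitrarily large $r$; therefore $\R_g u\in L^q(M)$ and one application of \cref{thm: Rodri Global W1p W2p regularity}(ii) with exponent $q$ concludes. Otherwise $W^{2,p}(M)\hookrightarrow L^{p^{**}}(M)$ with $\tfrac{1}{p^{**}}=\tfrac{1}{p}-\tfrac{2}{n}$, and H\"older gives $\R_g u\in L^{p_1}(M)$ where $\tfrac{1}{p_1}\doteq\tfrac{1}{q}+\tfrac{1}{p}-\tfrac{2}{n}$; one readily verifies $p<p_1\leq q$ and applies \cref{thm: Rodri Global W1p W2p regularity}(ii) to upgrade $u\in W^{2,p_1}(M)$. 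Iterating, the reciprocals satisfy $\tfrac{1}{p_{k+1}}-\tfrac{1}{p_k}=\tfrac{1}{q}-\tfrac{2}{n}<0$, a fixed strictly negative increment, so after finitely many steps $p_K\geq\tfrac{n}{2}$ and the first case concludes with $u\in W^{2,q}(M)$.

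The argument is almost mechanical once the earlier regularity theory is in place, so no step presents a substantive obstacle. The only delicate point worth emphasizing is that the step size $\tfrac{2}{n}-\tfrac{1}{q}$ is strictly positive precisely because $q>\tfrac{n}{2}$; at the critical threshold $q=\tfrac{n}{2}$ the scheme would stall, which is consistent with the fact that \cref{thm: Rodri Global W1p W2p regularity} itself and the multiplication properties underlying it rely on this strict inequality.
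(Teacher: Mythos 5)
Your proof is correct, but it follows a genuinely different route from the paper. Where you run an elliptic bootstrap through the identity $\Delta_g u = \tfrac{1}{a_n}\R_g u$ driven by \cref{thm: Rodri Global W1p W2p regularity}(ii), the paper instead exploits the Fredholm index-zero property established in \cref{lemma: Lg is Fredholm}: chaining $\dim\Ker(\Lg|_{W^{2,q}}) \leq \dim\Ker(\Lg|_{W^{2,p}}) = \dim\Ker(\Lg^*|_{L^{p'}}) \leq \dim\Ker(\Lg^*|_{L^{q'}}) = \dim\Ker(\Lg|_{W^{2,q}})$ via the inclusions $W^{2,q}\subset W^{2,p}$ and $L^{p'}\subset L^{q'}$, which by finite-dimensionality forces all dimensions to coincide and then, combined with $\Ker(\Lg|_{W^{2,q}})\subseteq\Ker(\Lg|_{W^{2,p}})$, gives equality of the kernels. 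Your argument is more elementary (no Fredholm or duality theory required) and, as you notice, the only delicate point is the borderline exponent $p_k = \tfrac{n}{2}$, which you handle correctly by passing through $W^{2,n/2}\hookrightarrow L^r$ for $r$ large and one additional iteration step. It is worth noting that your bootstrap is essentially a reproof of \cref{thm: conf Laplace elliptic regularity}(i), which the paper establishes later by the same mechanism; had that theorem been stated first, the reverse inclusion would be a one-line consequence of $u\in W^{2,p}(M)\hookrightarrow L^{q'}(M)$ and $\Lg u = 0 \in L^q(M)$. What the paper's Fredholm/duality route buys, beyond the lemma itself, is the identity $\Ker(\Lg|_{W^{2,q}(M)}) = \Ker(\Lg^*|_{L^{q'}(M)})$, i.e.\ a form of self-adjointness of the kernel, which is recorded as a by-product; your approach does not produce this identity, though it is not needed for the lemma as stated.
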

\begin{proof}
    Since $q \geq p$ and $q' \leq p'$, we have the inclusions $W^{2,q}(M) \hookrightarrow W^{2,p}(M)$ and $L^{p'}(M) \hookrightarrow L^{ q'}(M)$. By Lemma \ref{lemma: Lg is Fredholm} above, we know that $\Lg|_{W^{2,q}(M)}$ has index zero, implying that
    \begin{align*}
        \dim(\Ker(\Lg|_{W^{2,q}(M)})) &\leq \dim(\Ker(\Lg|_{W^{2,p}(M)})) = \dim(\Ker(\Lg^*|_{L^{ p'}(M)}))
        \\
        &\leq \dim(\Ker(\Lg^*|_{L^{q'}(M)})) = \dim(\Ker(\Lg|_{W^{2, q}(M)})) \,
    \end{align*}
    where the index property was used in each of the identities, while the functional inclusions in the corresponding inequalities. We see that equality must hold all the way through. We further claim that $\Ker(\Lg|_{W^{2,q}(M)}) = \Ker(\Lg^*|_{L^{q'}(M)})$. To see this, consider $v \in W^{2,q}(M)$, which is, by Sobolev embeddings, in $L^r(M)$ for each $r \geq 1$. Then, we may apply the adjoint operator $\Lg^* : L^{q'}(M) \to W^{-2, q'}(M)$ to $v$ and implicitly using \cref{SobolevDualIso} evaluate against an arbitrary element $\phi \in W^{2,q}(M)$
    \begin{equation}
        \label{eq: int by parts}
        (\Lg^*v)(\phi) = v(\Lg\phi) = \int_M v \left(-a_n\Delta_g \phi + \R_g \phi\right) d\mu_g \,.
    \end{equation}
    The $W^{2,q}(M)$ regularity on $v$ allows us to integrate by parts through a rather canonical approximation argument and obtain that
    \begin{align}
        \label{ConfLapIntParts}
        \begin{split}
            \int_M v \left(-a_n\Delta_g \phi + \R_g \phi\right) d\mu_g &= \int_M \bigl(a_ng(\nabla v,\nabla \phi) + \R_g v\phi\bigr) d\mu_g
            \\
            & = \int_M \left(-a_n\Delta_g v + \R_gv\right) \phi \, d\mu_g.
        \end{split}
    \end{align}
    One may prove the above identities first for $u,v\in C^{\infty}(M)$ and a rough metric $g\in W^{2,q}(M)$ by considering the corresponding identities along a sequence of smooth metrics $\{g_j\}_{j=1}^{\infty}$ such that $g_{j} \to g$ in $W^{2,q}(T_2M)$. Once this is established for $u,v\in C^{\infty}(M)$ and $g\in W^{2,q}(M)$, the full claim also follows by approximating $u,v\in W^{2,q}(M)$ with sequences of smooth functions in $W^{2,q}(M)$ norm and noticing that each of the integrals converges to the corresponding limit. Now, if $\Lg v = 0$, the identities \eqref{eq: int by parts} and \eqref{ConfLapIntParts} imply that $\Lg^*v = 0$, whence $\mathrm{Ker}(\mathscr{L}_g|_{W^{2,q}(M)})\subset \Ker(\Lg^*|_{L^{q'}(M)})$. As we have proven these two spaces have the same finite dimension, we conclude $\Ker(\Lg|_{W^{2,q}(M)}) = \Ker(\Lg^*|_{L^{q'}(M)})$ and the proof concludes. 
\end{proof}

Next, we prove the main existence result for $\Lg$. 
Again, the conformal nature of the operator becomes apparent, as the positivity of the Yamabe invariant suffices in order to show invertibility.
For general Schrödinger-type operators, purely analytic arguments suggest imposing positivity of the \textit{potential term} for a maximum principle type argument, or at least a smallness condition on its negative part to grant coercitivity --see, for instance \cite{DruetLaurain} or \cite{DruetHebeyRobert}--. 
\begin{theorem}
    \label{lemma: positive yamabe isomorphism _ final}
    Let $M$ be a smooth, closed manifold of dimension $n \geq 3$ and consider a $W^{k,q}$-Riemannian metric $g$ on $M$ with $k \geq 2$ and $q > \tfrac{n}{2}$. If $\lambda(M, g) > 0$, then the operator 
    \begin{equation*}
        \Lg : W^{k,p}(M) \to W^{k-2, p}(M)
    \end{equation*}
    is an isomorphism for every $1 < p \leq q$ and $k \geq 2$.
\end{theorem}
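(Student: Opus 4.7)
The plan is to reduce the isomorphism statement to triviality of the kernel of $\Lg$ at the single regularity level $W^{2,q}(M)$, and then exploit the variational characterisation of the Yamabe invariant. By \cref{lemma: Lg is Fredholm}, $\Lg : W^{k,p}(M) \to W^{k-2,p}(M)$ is Fredholm of index zero throughout the stated range, so it suffices to prove injectivity. Any $u \in W^{k,p}(M)$ with $\Lg u = 0$ lies in particular in $W^{2,p}(M)$, and by \cref{lemma: kernel_improvement} every element of $\Ker(\Lg|_{W^{2,p}(M)})$ is automatically in $\Ker(\Lg|_{W^{2,q}(M)})$. The problem thus reduces to showing that $\Ker\bigl(\Lg|_{W^{2,q}(M)}\bigr) = \{0\}$.

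Pick $u \in W^{2,q}(M)$ with $\Lg u = 0$. Since $q > \tfrac{n}{2}$ and $M$ is compact, a standard chain of Sobolev embeddings yields $W^{2,q}(M) \hookrightarrow W^{1,2}(M)$, so $u \in W^{1,2}(M)$ and the Yamabe quotient $Q_g(u)$ is well-defined provided $u \neq 0$. Multiplying $\Lg u = 0$ by $u$ and integrating against $d\mu_g$, the integration-by-parts identity \eqref{ConfLapIntParts} already justified in the proof of \cref{lemma: kernel_improvement} produces
\begin{equation*}
    0 \,=\, \int_M u \, \Lg u \, d\mu_g \,=\, E_g(u) \,.
\end{equation*}
If $u \not\equiv 0$, the variational definition of the Yamabe invariant forces
\begin{equation*}
    E_g(u) \,=\, Q_g(u)\,\|u\|_{L^{2^{*}}(M,d\mu_g)}^{2} \,\geq\, \lambda(M,g)\,\|u\|_{L^{2^{*}}(M,d\mu_g)}^{2} \,>\, 0 \,,
\end{equation*}
contradicting the previous display. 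Hence $u \equiv 0$ and the kernel is trivial.

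The conceptual content of the argument is a classical one-line integration-by-parts computation combined with the definition of $\lambda(M,g)$. The only genuinely delicate point is that neither $g$ nor $u$ is $C^2$, so the integration by parts cannot be performed naively; this has, however, already been handled through the density approximation argument recorded in the proof of \cref{lemma: kernel_improvement}, and once that identity is available no further analytic input is required.
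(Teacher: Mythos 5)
Your proof is correct and follows the same route as the paper: reduce to injectivity via the index-zero Fredholm property of \cref{lemma: Lg is Fredholm}, then reduce to the $W^{2,q}$ kernel via \cref{lemma: kernel_improvement}, and finally kill the kernel by pairing $\Lg u = 0$ against $u$ and invoking $\lambda(M,g) > 0$. The only cosmetic difference is that the paper normalizes $\|u\|_{L^{2^*}} = 1$ before pairing, while you carry the norm through the inequality; both are equivalent.
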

\begin{proof}
    We first observe that $\Lg : W^{k,p}(M) \to W^{k-2,p}(M)$ is Fredholm of index zero for $1 < p \leq q$ and $k \geq 2$ by \cref{lemma: Lg is Fredholm}, so it suffices to show injectivity. Since $W^{k,p}(M) \hookrightarrow W^{2, p}(M)$ for all $k \geq 2$, it is enough to prove injectivity of $\Lg : W^{2,p}(M) \to L^{p}(M)$ to conclude, and due to \cref{lemma: kernel_improvement}, we are done once we show that $\Ker\bigl(\Lg |_{W^{2,q}(M)}\bigr) = 0$. To do so, suppose $u \in \Ker(\Lg|_{W^{2,q}(M)})$ is non-trivial and thus can be normalized so that $\|u\|_{L^{2^*}(M,d\mu_g)} = 1$. But then,
    \begin{equation*}
        0 = u\Lg u = a_n \int_M |\nabla u|^2 \, d\mu_g + \int_M \R_g u^2 \, d\mu_g = Q_g(u) \,,
    \end{equation*}
    and since $W^{2,q}(M) \hookrightarrow W^{1,2}(M)$ for $q > \tfrac{n}{2}$, it contradicts the positivity of the Yamabe invariant.
\end{proof}
Having the above existence and uniqueness result available, we now examine regularity properties of solutions to $\mathscr{L}_gu=f$.
These results follow from those already established in \cref{SectionLapceOp} for 
the Laplace--Beltrami operator, which can be written as
\begin{equation}
    \label{eq: Conformal to Beltrami Laplacian}
    \Delta_g u = -a_n^{-1} \bigl(\Lg u + \R_g u \bigr)\,.
\end{equation}
Using that $\R_g \in W^{k-2, q}(M)$ together with \cref{thm: Rodri Global W1p W2p regularity} and \cref{thm: main global elliptic regularity}, we establish the main regularity theorem for the conformal Laplacian.
\begin{theorem}
    \label{thm: conf Laplace elliptic regularity}
    Let $M$ be a smooth, closed manifold of dimension $n \geq 3$ and consider a  $W^{k,q}$-Riemannian metric $g$ on $M$ with $k \geq 2$ and $q > \tfrac{n}{2}$. 
    \begin{enumerate}
        \item[(i)] For $k = 2$, if $u \in L^{q'}(M)$ and $\Lg u \in L^p(M)$ for some $1 < p \leq q$, then it follows that $u \in W^{2,p}(M)$.
        \item[(ii)] For $k \geq 3$, if $u \in W^{k-2,q'}(M) \cap W^{k-1,p}(M)$ and $\Lg u \in W^{k-2,p}(M)$ for some $1 < p \leq q$, then it follows that $u \in W^{k,p}(M)$.
    \end{enumerate}
\end{theorem}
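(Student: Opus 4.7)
The common strategy for both parts is to rewrite the equation as
\[
\Delta_g u = a_n^{-1}\bigl(\R_g u - \Lg u\bigr),
\]
which converts the regularity question for $\Lg$ into one for the Laplace--Beltrami operator already analyzed in \cref{SectionLapceOp}. The plan, then, is to read off the regularity of $\Delta_g u$ from the right-hand side and invoke the appropriate theorem established there.

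For part (ii), I would proceed directly. Since $\R_g \in W^{k-2,q}(M)$ and $u \in W^{k-1,p}(M)$ by hypothesis, the Sobolev multiplication $W^{k-2,q}(M) \otimes W^{k-1,p}(M) \hookrightarrow W^{k-2,p}(M)$, which by the properties of \cref{SobolevMultLocal} requires the condition $k-1 > n/q$, is automatic for $k \geq 3$ and $q > n/2$. This yields $\R_g u \in W^{k-2,p}(M)$ and, combined with $\Lg u \in W^{k-2,p}(M)$, places $\Delta_g u \in W^{k-2,p}(M)$. \cref{thm: main global elliptic regularity} then delivers $u \in W^{k,p}(M)$, since $u \in W^{k-2,q'}(M) \cap W^{k-1,p}(M)$ is precisely the required input.

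For part (i), the difficulty is that a priori only $\R_g u \in L^1(M)$ (by Hölder, using $\R_g \in L^q$ and $u \in L^{q'}$), which is insufficient to place $\Delta_g u$ directly in $L^p(M)$. I would run a finite bootstrap. First, the embedding $L^1(M) \hookrightarrow W^{-1,s}(M)$, valid for $s < n/(n-1)$, gives $\Delta_g u \in W^{-1,s}(M)$; the strict inequality $q > n/2$ guarantees that some $s$ in that range also satisfies $1/q - 1/n \leq 1/s < 1/q' + 1/n$, so that \cref{thm: Rodri Global W1p W2p regularity}(i) applies and yields $u \in W^{1,s}(M)$. By Sobolev embedding, $u \in L^{s^*}(M)$ with $s^* = ns/(n-s) > q'$, so $\R_g u$ now lies in a strictly better $L^r(M)$. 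Iterating, each round strictly improves the integrability of $u$, and in finitely many steps one reaches $\R_g u \in L^p(M)$. At that point $\Delta_g u \in L^p(M)$, and \cref{thm: Rodri Global W1p W2p regularity}(ii) yields $u \in W^{2,p}(M)$.

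The hard part will be controlling the bootstrap in part (i): one must verify that the admissible $s$-range in \cref{thm: Rodri Global W1p W2p regularity}(i) remains non-empty at every iteration and that the scheme terminates in finitely many steps, both of which hinge on the strict inequality $q > n/2$ rather than $q \geq n/2$.
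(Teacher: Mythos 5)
Your part~(ii) is identical to the paper's: both compute $\Delta_g u$ from $\mathscr L_g u$ and $\R_g u$, both invoke the multiplication $W^{k-2,q}(M)\otimes W^{k-1,p}(M)\hookrightarrow W^{k-2,p}(M)$, and both finish with \cref{thm: main global elliptic regularity}. Nothing to add there.

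For part~(i) your first step is also the paper's: place $\R_g u\in L^1(M)\hookrightarrow W^{-1,p_0}(M)$, check that $q>n/2$ makes the admissible range of \cref{thm: Rodri Global W1p W2p regularity}(i) non-empty, and extract $u\in W^{1,p_0}(M)$. Where you diverge is in the iteration. You propose to keep applying part~(i) of \cref{thm: Rodri Global W1p W2p regularity} and you correctly flag the risk: as the integrability of $u$ improves, the exponent may eventually fall below the lower threshold $\tfrac1q-\tfrac1n$ and exit the admissible window. You leave this verification open and call it the hard part. It can be closed — one checks that the last in-range exponent already satisfies $s_k>n$, whence $u\in C^0(M)$, $\R_g u\in L^q(M)$, and one application of part~(ii) finishes — but there is a cleaner route that sidesteps the issue entirely, and it is what the paper does. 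After the first step you know $\R_g u\in L^{p_1}(M)$ and hence $\Delta_g u\in L^{p_1}(M)$, i.e.\ $\Delta_g u$ already lives in a genuine Lebesgue space, not merely in a negative-order Sobolev space. At that point you can invoke \cref{thm: Rodri Global W1p W2p regularity}(ii), whose only constraint $1<p_1\le q$ is automatic in the bootstrap (the iterated exponents stay below $p\le q$ by design), and obtain $u\in W^{2,p_1}(M)$. Then iterate with the multiplication $L^q(M)\otimes W^{2,p_i}(M)\hookrightarrow L^{p_{i+1}}(M)$, $\tfrac1{p_{i+1}}=\tfrac1{p_i}+\tfrac1q-\tfrac2n$, whose fixed positive increment $\tfrac2n-\tfrac1q$ guarantees termination without any range bookkeeping. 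So your plan is sound, your diagnosis of where the difficulty lies is accurate, but switching from part~(i) to part~(ii) of \cref{thm: Rodri Global W1p W2p regularity} after the opening move removes the difficulty rather than merely taming it.
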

\begin{proof}
    (\textit{i}) In order to apply the results in \cref{thm: Rodri Global W1p W2p regularity} to the equation \eqref{eq: Conformal to Beltrami Laplacian}, we need to control the regularity of the term $\R_g u$. Notice that for any $\tfrac{1}{p_0} > 1-\tfrac{1}{n}=\tfrac{1}{n'}$ there holds
    \begin{equation*}
        \R_g u \in L^q(M) \otimes L^{q'}(M) \hookrightarrow L^1(M) \hookrightarrow W^{-1, p_0}(M)
    \end{equation*} 
    and since $1-\tfrac{1}{n} < 1 - \tfrac{1}{q} + \tfrac{1}{n}$ due to $q > \tfrac{n}{2}$, we may fix $p_0 > 1$ such that $1 - \tfrac{1}{n} <  \tfrac{1}{p_0} < \tfrac{1}{q'} + \tfrac{1}{n}$. In particular, $p_0$ satisfies the hypotheses of \cref{thm: Rodri Global W1p W2p regularity} part (\textit{i}). On the other hand, since $p_0' > n$, there holds $L^p(M) \hookrightarrow W^{-1,p_0}(M)$ for any $p \geq 1$ and we get that $\Delta_gu \in W^{-1,p_0}(M)$. It follows from \cref{thm: Rodri Global W1p W2p regularity} part (\textit{i}) that $u \in W^{1, p_0}(M)$. Since $p_0 < \tfrac{n}{n-1} < n$ due to $p_0'>n$, \cref{SobolevMultLocal} yields
    \begin{equation*}
        L^q(M) \otimes W^{1, p_0}(M) \hookrightarrow L^{p_1}(M) \quad \text{with} \quad \frac{1}{p_1} \doteq \frac{1}{p_0} + \frac{1}{q} - \frac{1}{n} \,,
    \end{equation*}
    so $\R_g u \in L^{p_1}(M)$. If $p_1 \geq p$, we immediately deduce that $\Delta_g u \in L^p(M)$ and  part $(ii)$ of \cref{thm: Rodri Global W1p W2p regularity} gives the result. Otherwise, it gives $u \in W^{2,p_1}(M)$. Now we claim that if $p_1 \geq \tfrac{n}{2}$ we are done. In fact, for $p_1 > \tfrac{n}{2}$, 
     we can check that $\R_g u \in L^q(M) \otimes W^{2, p_1}(M) \hookrightarrow L^p(M)$ so that $\Delta_g u \in L^p(M)$ and part $(ii)$ of \cref{thm: Rodri Global W1p W2p regularity} allows to conclude. If instead $p_1 = \tfrac{n}{2}$, we are done with one additional step: we use the embedding
     \begin{equation*}
         L^q(M) \otimes W^{2, \frac{n}{2}}(M) \hookrightarrow L^t(M), \qquad \frac{1}{t} \doteq \frac{1}{2q} + \frac{1}{n}
     \end{equation*}
     and it is easy to see that $\tfrac{n}{2} < t < q$. We then conclude since we fall in the previous case.
     On the other hand, if $p_1 < \tfrac{n}{2}$, we consider the embedding
    \begin{equation*}
        L^q(M) \otimes W^{2, p_1}(M) \hookrightarrow L^{p_2}(M)\,, \qquad \frac{1}{p_2} \doteq \frac{1}{p_1} + \frac{1}{q} - \frac{2}{n} \,,
    \end{equation*}
    and if $p_2 \geq \tfrac{n}{2}$, we immediately conclude as before. Otherwise, we get $u \in W^{2, p_2}(M)$ again by part $(ii)$ of \cref{thm: Rodri Global W1p W2p regularity}. We can then iterate the procedure for an increasing sequence of embeddings
    \begin{align*}
        L^q(M) \otimes W^{2, p_i}(M) \hookrightarrow L^{p_{i+1}}(M)\,, \qquad \frac{1}{p_{i+1}} \doteq \frac{1}{p_i} + \frac{1}{q} - \frac{2}{n}
    \end{align*} 
    for $i > 1$, until $p_{i+1} \geq \tfrac{n}{2}$. 
    
    (\textit{ii}) As before, in order to apply \cref{thm: main global elliptic regularity} to the equation \eqref{eq: Conformal to Beltrami Laplacian}, we need to control the regularity of the term $\R_g u$.
    For $k \geq 3$, provided that $q > \tfrac{n}{2}$, it is easy to check from \cref{SobolevMultLocal} that
    \begin{equation*}
        W^{k-2,q}(M) \otimes W^{k-1,p}(M) \hookrightarrow W^{k-2,p}(M)
    \end{equation*}
    and hence we immediately have that $\Delta_gu \in W^{k-2,p}(M)$. The claim follows from \cref{thm: main global elliptic regularity}.
\end{proof}
A consequence of the previous results is a regularity statement for solutions to all the subcritical equations related to \eqref{eq: sPDE}.
\begin{corollary}
    \label{prop: subcritical regularity}
    Let $M$ be a smooth, closed manifold of dimension $n \geq 3$ and consider a $W^{k,p}$-Riemannian metric $g$ on $M$ with $k \geq 2$ and $q > \tfrac{n}{2}$. Suppose that $u \in W^{1,2}(M)$ is a non-negative weak solution of
    \begin{equation*}
        \Lg u = \lambda u^{s-1}
    \end{equation*}
    for some $s \in [2, 2^*)$ and $\lambda \in \nR$. Then, $u$ is either identically zero or is a strictly positive $W^{2,q}(M)$ function.
\end{corollary}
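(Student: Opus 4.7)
The plan is to first bootstrap the a priori regularity of $u$ from $W^{1,2}(M)$ up to $W^{2,q}(M)$ by iterating Theorem \ref{thm: conf Laplace elliptic regularity}(i), and then combine the resulting continuity with a Harnack-type inequality to obtain the dichotomy between $u \equiv 0$ and $u > 0$. The regularity improvement is the heart of the argument, and the strict subcriticality hypothesis $s < 2^*$ will be crucial to ensure it terminates.

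For the bootstrap, I keep increasing the $L^r$-integrability of $u^{s-1}$ by alternating the elliptic regularity theorem with the standard Sobolev embeddings on $M$. Setting $r_0 \doteq 2^*$, the Sobolev embedding $W^{1,2}(M) \hookrightarrow L^{2^*}(M)$ gives $u \in L^{r_0}(M)$, and since $q > \tfrac{n}{2}$ implies $q' < \tfrac{n}{n-2} < 2^*$, the a priori hypothesis $u \in L^{q'}(M)$ of Theorem \ref{thm: conf Laplace elliptic regularity}(i) is automatically satisfied. At step $k$, assuming $u \in L^{r_k}(M)$, I set $p_k \doteq \min\{q,\, r_k/(s-1)\}$, so that $\Lg u = \lambda u^{s-1} \in L^{p_k}(M)$, and Theorem \ref{thm: conf Laplace elliptic regularity}(i) yields $u \in W^{2,p_k}(M)$. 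If $p_k = q$, the first claim is established. Otherwise $p_k = r_k/(s-1) < q$, and I define $r_{k+1}$ to be the Sobolev embedding exponent of $W^{2,p_k}(M)$, namely $r_{k+1} = np_k/(n-2p_k)$ when $2p_k < n$, and an arbitrarily large finite value once $2p_k \geq n$. A short computation shows that $r_{k+1} > r_k$ is equivalent to $r_k > n(s-2)/2$, which for $r_0 = 2^*$ reduces to $s < 2^*$; iterating the monotonicity, the strictly increasing sequence $\{r_k\}$ enters the regime $r_k \geq q(s-1)$ after finitely many iterations, triggering the stopping case $p_k = q$ and producing $u \in W^{2,q}(M)$.

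With $u \in W^{2,q}(M)$ in hand, the Sobolev embedding $W^{2,q}(M) \hookrightarrow C^0(M)$ for $q > \tfrac{n}{2}$ shows that $u$ is continuous, and consequently the coefficient $\lambda u^{s-2}$ (understood as $0$ on $\{u=0\}$ when $s>2$) lies in $L^\infty(M) \subset L^q(M)$. The equation then takes the linear form
\begin{equation*}
    -a_n \Delta_g u + \bigl(\R_g - \lambda u^{s-2}\bigr)\, u = 0
\end{equation*}
with potential in $L^q(M)$ and $q > \tfrac{n}{2}$, so the strong maximum principle for weak solutions of linear elliptic equations with $L^q$-coefficients (Trudinger's Harnack inequality) applies to the nonnegative function $u$, forcing either $u \equiv 0$ or $u > 0$ pointwise on $M$. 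The main technical obstacle is the termination of the bootstrap at $p_k = q$: as noted above it rests exactly on $s < 2^*$, and the borderline case $s = 2^*$ would stall the iteration, reflecting the well-known loss of compactness in the critical Yamabe exponent.
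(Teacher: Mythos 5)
Your proof follows the paper's argument almost exactly: bootstrap the regularity of $u$ by alternating Theorem~\ref{thm: conf Laplace elliptic regularity}(i) with the Sobolev embeddings (the recurrence $\tfrac{1}{r_{k+1}}=\tfrac{s-1}{r_k}-\tfrac{2}{n}$ starting from $r_0=2^*$ is the same iteration the paper runs), and then obtain the dichotomy $u\equiv 0$ or $u>0$ from Trudinger's Harnack inequality applied to the linearized equation with the bounded effective potential $\R_g-\lambda u^{s-2}$.

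There is, however, a gap in the termination of your bootstrap. You correctly verify that $r_{k+1}>r_k$ whenever $r_k>\tfrac{n(s-2)}{2}$ and that this holds at $k=0$ precisely because $s<2^*$, but the conclusion that ``the strictly increasing sequence $\{r_k\}$ enters the regime $r_k\geq q(s-1)$ after finitely many iterations'' does not follow from monotonicity alone: a strictly increasing sequence may converge to a finite limit short of the target. What you need is the quantitative version of the same computation. Since $r_k\geq 2^*$ for all $k$ and $s<2^*$,
\[
\frac{1}{r_k}-\frac{1}{r_{k+1}}=\frac{2}{n}-\frac{s-2}{r_k}\geq\frac{2}{n}-\frac{s-2}{2^*}=\frac{4-(s-2)(n-2)}{2n}>0\,,
\]
so $\{1/r_k\}$ decreases by a fixed positive amount at every step and must drop below any given positive threshold (in particular $\tfrac{2}{n(s-1)}$, which triggers your $2p_k\geq n$ alternative) in finitely many iterations. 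This is exactly the step the paper makes explicit. With that observation inserted, your proof is complete and coincides with the paper's.
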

\begin{proof}
    We begin by showing that $u \in W^{2,q}(M)$. The Sobolev embedding $W^{1, 2}(M) \hookrightarrow L^{2^*}(M)$ ensures that
    \begin{equation*}
        \Lg u = \lambda u^{s-1} \, \in L^{\frac{2^*}{s-1}}(M) \,.
    \end{equation*}
    The hypothesis $u \in L^{q'}(M)$ of  \cref{thm: conf Laplace elliptic regularity} is satisfied since $L^{2^*}(M) \hookrightarrow L^{q'}(M)$ as $q' \leq 2^*$ or equivalently $q \geq \tfrac{2n}{n+2}$, which is guaranteed by $q > \tfrac{n}{2}$ and $n \geq 3$. If $\tfrac{2^*}{s-1} \geq q$, then $\Lg u \in L^q(M)$ and  the claim follows directly by applying part $(i)$
    of \cref{thm: conf Laplace elliptic regularity}. Otherwise, for $\tfrac{2^*}{s-1} < q$,
    we obtain $u \in W^{2, \frac{2^*}{s-1}}(M)$. Let us then consider the embedding
    \begin{equation*}
        W^{2, \tfrac{2^*}{s-1}}(M) \hookrightarrow L^{r_1}(M) \,.
    \end{equation*}
    If $\tfrac{2^*}{s-1} \geq \frac{n}{2}$ or, equivalently, $s \leq \tfrac{n+2}{n-2}$, then the embedding works in particular for $r_1 = (s-1)q$, so that $\Lg u \in L^q(M)$ and the assertion follows from \cref{thm: conf Laplace elliptic regularity}. If $\tfrac{2^*}{s-1} < \frac{n}{2}$ instead, then the embedding holds for
    $\tfrac{1}{r_1} \doteq \tfrac{s-1}{2^*} - \tfrac{2}{n} \,>\, 0.$
    Observe that
    \begin{equation*}\label{ineq_s_2*}
        \frac{s-1}{2^*}- \frac{2}{n} < \frac{1}{2^*} \, \iff  \, \frac{n-2}{2n} (s-2) < \frac{2}{n}  \, \iff  \, s-2 < \frac{4}{n-2}  \, \iff  \, s < 2^*,
    \end{equation*}
    so it follows that $r_1 > 2^*$. Now, if $\tfrac{r_1}{s-1} \geq q$ we conclude by \cref{thm: conf Laplace elliptic regularity} since $\Lg u \in L^q(M)$. Otherwise,
    $\Lg u \in L^{\tfrac{r_1}{s-1}}(M)$ and \cref{thm: conf Laplace elliptic regularity} implies that $u \in W^{2, \tfrac{r_1}{s-1}}(M)$. As before, the claim follows if $\tfrac{r_1}{s-1} \geq \tfrac{n}{2}$. Otherwise, we iterate the procedure for a sequence 
    \begin{equation*}
        \frac{1}{r_k} \doteq \frac{s-1}{r_{k-1}} - \frac{2}{n}
    \end{equation*}
    until $\tfrac{r_k}{s-1} \geq \tfrac{n}{2}$, which is possible due to the following reasoning. First, by induction we can show that $r_k > 2^*$ for all $k \geq 1$. Since we have already computed $r_1 > 2^*$, let us assume $r_k > 2^*$ so that
     \begin{equation*}
         \frac{1}{r_{k+1}} = \frac{s-1}{r_{k}} - \frac{2}{n} < \frac{s-1}{2^*} - \frac{2}{n} = \frac{1}{r_1} < \frac{1}{2^*}
     \end{equation*}
     and the claim is proven. Consequently, the difference between $r_{k-1}$ and $r_{k}$ is a positive number independent of $k$, as it can be seen from
     \begin{align*}
         \frac{1}{r_{k-1}} - \frac{1}{r_k} &= -\frac{(s-2)}{r_{k-1}} + \frac{2}{n}
         \\
         &\geq -\frac{(s-2)}{2^*} + \frac{2}{n} = \frac{4 - (s-2)(n-2)}{2n} > 0 \,.
     \end{align*} 
     We remark that the strict positivity is ensured exactly because $s < 2^*$. By the reasoning above, we have established that $u \in W^{2,q}(M)$.
    
     Regarding positivity, Trudinger's Harnack inequality \cite[Theorem 5.2]{TrudingerMeasurableCoef} applies to

     \noindent
     our case and implies that if $u(\p) = 0$ at some $\p \in M$, then $u \equiv 0$ in any open coordinate chart around $\p$. In particular, $u^{-1}(0)$ is open. On the other hand, $u \in W^{2,q}(M)$ and hence continuous, implying that $u^{-1}(0)$ is closed. Therefore, it must be that either $u > 0$ or $u \equiv 0$ on $M$.
\end{proof}

\begin{remark}
    \label{rmk: conformal gauge}
    As explained in \cref{The Yamabe classification}, the Yamabe classification in Corollary \ref{cor: Yamabe classification} follows from the eigenvalue case  $(s = 2)$, which is a special case of \cref{prop: subcritical regularity}. In simple terms, this tells us that each $W^{2,q}$-conformal class contains a metric of continuous (in fact, $W^{2,q}$) scalar curvature. Such a representative could be regarded as a good conformal gauge and it will make the analysis in the following sections considerably simpler.
\end{remark}

\subsection{Normal and harmonic coordinates}\label{subsec: normal coordinates}

The standard definition of normal coordinates via the exponential map requires uniqueness of solutions to the geodesic equation, which is only guaranteed for $C^{1,1}$-metrics. Below that threshold, there is a way around provided that $g \in C^1(T_2M)$. Following \cite[Definition 1.24]{AubinBook}, we say that a local coordinate system $\{x^i\}_{i=1}^n$ is a \emph{normal coordinate system} centered at $p \in M$, if $x^i(p) = 0$ and the components of the metric satisfy
\begin{equation}
    \label{conditions: normal_coords}
    g_{ij}(0) = \delta_{ij}, \qquad \partial_{k} g_{ij}(0) = 0 \qquad \text{ for all } \; i, j, k = 1, \dots, n \,.
\end{equation}
In particular, the Christoffel symbols vanish at $p$. The existence of such coordinate systems follows rather explicitly from an inverse function theorem argument, without even mentioning geodesics, see \cite[Proposition 1.25]{AubinBook}. Analyzing the proof, one notices that it only requires the metric to be $C^1$ and that the obtained normal coordinates are smoothly related to the original differential structure of $M$. Hence, we have 
\begin{proposition}[\cite{AubinBook}, Proposition 1.25]
    \label{NormalCoordsC1}
    Let $M$ be a smooth Riemannian manifold of dimension $n \geq 2$ and consider a $C^1$-Riemannian metric $g$ on $M$. Then, for each point $p \in M$ there exists a chart $(U,\varphi)$ with $\varphi(p)=0$ satisfying the following properties:
    \begin{enumerate}
        \item[(i)] the induced coordinates $\{x^i\}_{i=1}^n$ are normal centered at $p$ $($satisfy \eqref{conditions: normal_coords}$)$,
        \item[(ii)] $\varphi \in C^\infty(U,\nR^n)$. 
    \end{enumerate}
\end{proposition}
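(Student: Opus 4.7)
The plan is to start with an arbitrary smooth chart $(V,\psi)$ around $p$ with $\psi(p)=0$ and modify it by two successive smooth composition steps, each designed to cancel one of the two conditions in \eqref{conditions: normal_coords}. The key idea is that even though $g$ is only $C^1$, the coordinate transformations involved are polynomial maps whose coefficients are real numbers determined by the metric at the single point $p$. Consequently, the resulting chart is automatically $C^\infty$-compatible with the smooth structure of $M$.

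First, since the matrix $(g^{\psi}_{ij}(0))$ is symmetric and positive definite, Cholesky factorization (or the spectral theorem) yields $A\in GL(n,\mathbb{R})$ with $A^T g^{\psi}(0) A = I$. The composition $\tilde\psi \doteq A^{-1}\circ\psi$ is then a smooth chart in which the metric components satisfy $\tilde g_{ij}(0)=\delta_{ij}$. Denote by $\tilde\Gamma^i_{jk}(0)\in\mathbb{R}$ the values at $p$ of the Christoffel symbols in these coordinates; they are well-defined numbers because $g\in C^1$ makes the Christoffel symbols continuous. Next, introduce the polynomial map
\begin{equation*}
\Phi\colon \tilde y\mapsto \bigl(\Phi^1(\tilde y),\dots,\Phi^n(\tilde y)\bigr),\qquad \Phi^i(\tilde y)\doteq \tilde y^i+\tfrac{1}{2}\tilde\Gamma^i_{jk}(0)\,\tilde y^j\tilde y^k.
\end{equation*}
Since $\Phi$ is smooth with $d\Phi|_0=I$, the smooth inverse function theorem supplies a neighborhood $W$ of $0$ on which $\Phi$ is a smooth diffeomorphism. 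Setting $U\doteq \tilde\psi^{-1}(W)$ and $\varphi\doteq \Phi\circ\tilde\psi$ produces a smooth chart with $\varphi(p)=0$, establishing (ii).

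It remains to verify (i) in the induced coordinates $\{x^i\}_{i=1}^n$. Because $d\Phi|_0=I$, the tensorial change-of-coordinates rule evaluated at the origin gives immediately $g_{ij}(0)=\tilde g_{ij}(0)=\delta_{ij}$. For the first derivatives, one differentiates the transformation formula once more; the only nontrivial ingredients are $\partial_j\partial_k(\Phi^{-1})^i|_0=-\tilde\Gamma^i_{jk}(0)$ (obtained by differentiating $\Phi\circ\Phi^{-1}=\mathrm{id}$ twice) combined with the classical identity $\partial_k\tilde g_{ij}=\tilde\Gamma_{ijk}+\tilde\Gamma_{jik}$ at the origin, where indices are lowered using $\tilde g(0)=I$. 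These combine to yield $\partial_k g_{ij}(0)=0$, which is exactly the second condition in \eqref{conditions: normal_coords}.

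The main point to highlight is that there is essentially no analytic obstacle in this construction: the usual geodesic-based definition of normal coordinates would require $C^{1,1}$ regularity of $g$ to guarantee well-posedness of the geodesic ODE, but the present argument avoids geodesics entirely. The $C^1$ regularity of $g$ enters only to ensure that the numbers $\tilde\Gamma^i_{jk}(0)$ are meaningful, after which the whole construction is purely algebraic and polynomial, preserving smoothness of the chart with respect to the background differential structure of $M$.
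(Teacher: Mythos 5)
Your proposal is correct and reproduces exactly the argument the paper invokes: it cites Aubin's Proposition 1.25, which is precisely the two-step construction you describe — a linear change to normalize $g_{ij}(0)=\delta_{ij}$ followed by the quadratic polynomial correction $\Phi^i(\tilde y)=\tilde y^i+\tfrac{1}{2}\tilde\Gamma^i_{jk}(0)\tilde y^j\tilde y^k$, with the key observation that the chart is polynomial in the original smooth coordinates (hence $C^\infty$) and that $C^1$ regularity of $g$ is only needed to make $\tilde\Gamma^i_{jk}(0)$ well-defined numbers. The sign checks and the use of $\partial_k\tilde g_{ij}=\tilde\Gamma_{ijk}+\tilde\Gamma_{jik}$ at the origin are exactly the right ingredients, so this is the paper's proof rather than an alternative one.
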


In particular, by Sobolev embeddings, \cref{NormalCoordsC1} allows us to work with normal coordinates around any point, whenever $g \in W^{2,q}(T_2M)$ with $q > n$. To go below this threshold, and in particular to allow for $q > \tfrac{n}{2}$, we will make use of harmonic coordinates. We say that a local coordinate system $\{x^i\}_{i=1}^n$ is a \textit{harmonic} if
\begin{equation}
    \label{conditions: harmonic_coords}
    \Delta_g x^i = 0 \qquad \text{for all } \; i = 1, \ldots, n \,.
\end{equation}
A simple computation shows that this implies that $g^{ij}\Gamma^k_{ij} = 0$. In contrast with \eqref{conditions: normal_coords}, the condition \eqref{conditions: harmonic_coords} is not at a given point, but rather in the whole domain of definition of $\{z^i\}_{i=1}^n$. Despite the construction of harmonic coordinates being much more subtle than that of normal coordinates, it carries over to rougher metrics. In the following proposition, we summarize existence and regularity properties of harmonic coordinates around arbitrary points in $M$. The existence part is a standard construction --see e.g. \cite[Chapter 3, \S 9]{TaylorToolsForPDEs} and references therein--. The regularity part is suited to our hypotheses and makes use of the elliptic regularity theory of \cref{SectionLapceOp}.
\begin{proposition}
    \label{prop: Existence and Regularity of harmonic coordinates}
    Let $M$ be a smooth manifold of dimension $n\geq 3$ and consider a $W^{k,q}$-Riemannia metric $g$ on $M$ with $k \geq 2$ and $q>\frac{n}{2}$. Then, for each $p \in M$ there exists a chart $(V,\psi)$ with $\psi(p)=0$ satisfying the following properties:
    \begin{enumerate}
        \item [(i)] the induced coordinates $\{x^i\}_{i=1}^n$ are harmonic $($satisfy \eqref{conditions: harmonic_coords}$)$,
        \item[(ii)] $g_{ij}(0) = \delta_{ij}$,
        \item[(iii)] $\psi \in W^{k+1,q}(V,\nR^n)$.
    \end{enumerate}
\end{proposition}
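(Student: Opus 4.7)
I would construct the coordinate functions as solutions of a Dirichlet problem for $\Delta_g$ whose boundary data mimics a fixed smooth chart, and then bootstrap their regularity to $W^{k+1,q}$ using the theory of \cref{SectionLapceOp}. Start with any smooth chart $(U_0,\varphi_0)$ around $p$ with $\varphi_0(p) = 0$, and pre-compose with a linear automorphism of $\mathbb{R}^n$ so that $(\varphi_0)_{*}g_{ij}(0) = \delta_{ij}$. Identifying $g$ with its pushforward, fix a small ball $B_r = B_r(0) \subset \varphi_0(U_0)$ and, for each $i = 1, \ldots, n$, let $u^i \in H^1(B_r)$ be the unique weak solution of
\begin{equation*}
    \Delta_g u^i = 0 \; \text{in } B_r \,, \qquad u^i - x^i \in H^1_0(B_r) \,.
\end{equation*}
Existence is standard: the bilinear form $B(u,v) = \int_{B_r} \sqrt{\det g}\, g^{ab}\partial_a u\,\partial_b v\, dx$ is continuous and coercive on $H^1_0(B_r)$ since $g$ is Hölder continuous and uniformly positive definite, so Lax--Milgram applies.

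Next, I would bootstrap $u^i \in W^{k+1,q}_{loc}(B_r)$; dropping the label $i$ for clarity of the regularity argument, the initial improvement from $H^1$ to $W^{2,q}_{loc}$ follows from part (iii) of \cref{thm: 12 local elliptic regularity}, using $H^1 \hookrightarrow L^{q'}$ and $\Delta_g u = 0 \in L^q$. To gain the extra derivative I differentiate the equation: for $v_a \doteq \partial_a u$, direct computation yields
\begin{equation*}
    \Delta_g v_a = -(\partial_a g^{bc})\bigl(\partial_b\partial_c u - \Gamma^d_{bc}\partial_d u\bigr) + g^{bc}(\partial_a \Gamma^d_{bc})\partial_d u \,.
\end{equation*}
For $k \geq 3$, the multiplication properties of \cref{SobolevMultLocal} place the right-hand side in $W^{k-2,q}_{loc}$ (as $W^{k-1,q}$ is an algebra under multiplication in that range), so \cref{thm: main local elliptic regularity} applied to $v_a$ gives $v_a \in W^{k,q}_{loc}$ and hence $u \in W^{k+1,q}_{loc}$. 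For $k = 2$ and $q < n$ the direct embedding $W^{1,q}\otimes L^q\hookrightarrow L^q$ breaks down and I must iterate: placing $\Delta_g v_a$ in $L^{r_1}$ with $1/r_1 = 2/q - 1/n$ via $W^{1,q}\hookrightarrow L^{q^*}$ and Hölder, part (iii) of \cref{thm: 12 local elliptic regularity} gives $u \in W^{3,r_1}_{loc}$; iterating the same procedure produces $u \in W^{3,r_{j+1}}_{loc}$ from $u \in W^{3,r_j}_{loc}$ with $1/r_{j+1} = 1/r_j + (1/q - 2/n)$. Since $1/q - 2/n < 0$ for $q > n/2$, the sequence $\{r_j\}$ increases arithmetically and exceeds $q$ after finitely many steps, yielding $u \in W^{3,q}_{loc}$ as desired.

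Third, I upgrade $\psi_0 \doteq (u^1,\ldots,u^n)$ to a chart. A rescaling argument comparing $\Delta_g$ with $\Delta_E$ and using $g_{ij}(0) = \delta_{ij}$ shows that $u^i - x^i \to 0$ in $C^1(B_r)$ as $r \to 0$, so $d\psi_0(0)$ is close to the identity and invertible for $r$ small enough. By the inverse function theorem for Sobolev maps of \cref{LemmaInvFunctThmSobReg}, $\psi_0$ restricts to a $W^{k+1,q}$-diffeomorphism from a neighborhood $V$ of $p$ onto an open set of $\mathbb{R}^n$, meeting conditions (i) and (iii). Finally, a linear change of coordinates in $\mathbb{R}^n$ that diagonalizes $(\psi_0)_*g$ at the origin, composed with a translation sending $p$ to $0$, yields a map $\psi$ also satisfying (ii), without spoiling harmonicity of the coordinate functions.

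The main obstacle is the regularity bootstrap, in which one has to gain one derivative above the regularity of the metric. For $k \geq 3$ this is a single application of \cref{thm: main local elliptic regularity} to the differentiated equation, but for $k = 2$ and $n/2 < q < n$ the Sobolev multiplications involved are borderline and the one-step argument fails; tracking the arithmetic evolution of the exponents $r_j$ and showing the iteration terminates at $q$ in finitely many steps is the technical heart of the proof.
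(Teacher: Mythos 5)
Your proposal follows essentially the same structure as the paper's proof: solve a Dirichlet problem for each coordinate function, argue the resulting map is a chart, then bootstrap the regularity by differentiating the equation and iterating Sobolev multiplications. The paper subcontracts the first two steps to Taylor's \cite[Proposition 9.1]{TaylorToolsForPDEs} (which already delivers a $C^{1,\alpha}$ harmonic chart for Hölder-continuous metrics), whereas you construct the solutions explicitly via Lax--Milgram and sketch the chart-ness via a rescaling argument and the inverse function theorem; both routes are legitimate, and yours is simply more self-contained. The bootstrap is likewise the same in spirit; the paper works with the divergence form $\partial_i(a^{ij}\partial_j(\partial_m x^l))=\cdots$ while you use the non-divergence form, but the exponent bookkeeping is the same.

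Two technical points deserve caution, both of which the paper is careful about. First, for the initial improvement from $H^1$ to $W^{2,q}_{loc}$ you invoke part (iii) of \cref{thm: 12 local elliptic regularity} ``with $p=q$,'' but that part requires $u \in W^{1,q}$, which is not yet available; you should either use part (ii) (which replaces the $W^{1,p}$ hypothesis with the decay window $1/q \leq 1/p < 1/q' + 1/n$) or iterate part (iii) starting from $p=2$. The paper sidesteps this by noting the solution is already $C^{1,\alpha}$ from Taylor's construction, so $u\in W^{1,p}$ for every $p$. Second, your claim that the one-step product $W^{1,q}\otimes L^q \hookrightarrow L^q$ works whenever $q\geq n$ is false at the endpoint $q=n$, since $W^{1,n}$ does not embed into $L^\infty$. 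The paper treats $q=n$ as a separate sub-case with the intermediate exponent $\tfrac{3n}{4}$; your iteration formula $1/r_1 = 2/q - 1/n$ would give $r_1 = n$ at this endpoint, and the multiplication does not quite reach $L^n$. Both issues are easily repaired and do not affect the overall correctness of your strategy.
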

\begin{proof}
    Starting from an arbitrary chart $(U,\varphi)$ with $\varphi(p) = 0$, with associated coordinates $\{y^i\}_{i=1}^n$ chosen so that $g_{ij}(0) = \delta_{ij}$, the existence of a new chart $(V,\psi)$, $\psi: V \subset U \to B_{2\rho} \doteq B_{2\rho}(0)$, satisfying the harmonic condition $\Delta_{g}\psi^l=0$ for every $l=1,\ldots,n$ is a consequence, for instance, of \cite[Proposition 9.1]{TaylorToolsForPDEs} as long as $g \in C^{0,\alpha}(T_2M)$ for some $\alpha > 0$, where $\rho = \rho(n,\alpha,g)$. Hence, for our case of $g\in W^{2,q}(T_2M) \hookrightarrow C^{0,2-\frac{n}{q}}(T_2M)$ the existence of such a $\psi$ is granted with $\rho=\rho(n,q,g)$. Moreover, since the harmonic condition \eqref{conditions: harmonic_coords} is linear, we assume without loss of generality that 
    in the new coordinates $x^l \doteq \psi^l$ it holds $x^i(0) = 0$ and $g_{ij}(0) = \delta_{ij}$. This shows items ($i$) and ($ii$).

    Following the construction in \cite[Proposition 9.1]{TaylorToolsForPDEs} (see also \cite[Theorem 1.1]{Salo-pharm}), the harmonic coordinates $\{x^l\}_{l=1}^n$ are $W^{1,2}(B_{2\rho}) \, \cap \, C^{1,\alpha}(B_{2\rho})$ weak solutions of the elliptic boundary value problem
    \begin{equation}
        \label{harmonic eq in Brho}
        \begin{cases}
            \partial_{i}\bigl(a^{ij}(y)\partial_{j}x^l(y)\bigr) = 0 \qquad &\text{in} \quad B_{2\rho} \\
            x^l(y) = y^l &\text{on} \quad \partial B_{2\rho}
        \end{cases},
    \end{equation}
    where $a^{ij}(y) \doteq \sqrt{\det g(y)}\,g^{ij}(y) \in W^{k,q}(B_{2\rho}) \subset C^{0,\alpha}(B_{2\rho})$ by hypothesis and $\partial_i$ stands for derivatives with respect to $y^i$. In order to prove ($iii$) we need to bootstrap them to $W^{k+1,q}_{loc}(B_{2\rho})$. If we apply \cref{thm: main local elliptic regularity} with a vanishing right-hand-side we deduce that $z^l \in W^{k,q}_{loc}(B_{2\rho})$; thereby, $\{x^l\}_{l=1}^n$ admit second weak derivatives. Differentiating the equation in direction $y^m$ we obtain
    \begin{align}
        \label{HarmCoordBootstrap}
        \partial_i \bigl(a^{ij}\partial_j (\partial_m x^l)\bigr) = -\partial_i\partial_m a^{ij}\partial_jx^l - \partial_ma^{ij}\partial_i\partial_jx^l \doteq f \,,
    \end{align}
    or equivalently, $\Delta_g (\partial_m x^l) = (\sqrt{\det g})^{-1}\, f$.
    Concerning the regularity of the right-hand side, $(\sqrt{\det g})^{-1} \in W^{k, q}_{loc}(B_{2\rho})$, so we simply need to study the function $f$. Regarding the first term, using that $x^l \in C^{1, \alpha}(\overline{B_{2\rho}}) \cap  W_{loc}^{k, q}(B_{2\rho})$ and $\partial_i\partial_m a^{ij} \in W^{k-2, q}(B_{2\rho})$, together with the multiplication properties 
    \begin{align*}
        L^q(B_{2\rho}) \otimes C^{0, \alpha}(B_{2\rho}) &\hookrightarrow L^q(B_{2\rho}) \\
        W^{k-2, q}(B_{2\rho}) \otimes W_{loc}^{k-1, q}(B_{2\rho}) &\hookrightarrow W^{k-2, q}_{loc}(B_{2\rho}) \qquad \text{ for } k \geq 3 \,,
    \end{align*}
    we conclude that it belongs to $W^{k-2,q}_{loc}(B_{2\rho})$ for any $k \geq 2$. For the second term of the right-hand-side of \eqref{HarmCoordBootstrap}, we have $\partial_ma^{ij} \in W^{k-1,q}(B_{2 \rho})$ and $\partial_i\partial_j x^l \in W^{k-2,q}_{loc}(B_{2\rho})$. In the case $k \geq 3$, we get that  the product is in $W^{k-2, q}_{loc}(B_{2\rho})$ just as before. If $k = 2$ instead, we may use in the case $q > n$ the multiplication
    \begin{equation*}
        W^{1, q}(B_{2\rho}) \otimes L_{loc}^q(B_{2\rho}) \hookrightarrow L_{loc}^q(B_{2\rho})
    \end{equation*}
    to see that the right-hand side is $L^q_{loc}(B_{2\rho}) = W^{k-2, q}_{loc}(B_{2\rho})$. In the case $q = n$, then we can in particular consider the embedding $W^{1,n}(B_{2\rho}) \otimes L^n_{loc}(B_{2\rho}) \hookrightarrow L^{\frac{3n}{4}}_{loc}(B_{2\rho})$ 
    so that the right-hand side of \eqref{HarmCoordBootstrap} is in $ L^{\frac{3n}{4}}_{loc}(B_{2\rho})$ and \cref{thm: main local elliptic regularity} gives $x^l \in W^{3,\frac{3n}{4}}_{loc}(B_{2\rho})$. This implies, in turn, that the second term of $f$ is now in 
    \begin{equation*}
        W^{1, n}(B_{2\rho}) \otimes W^{1, \frac{3n}{4}}_{loc}(B_{2\rho}) \hookrightarrow L^n_{loc}(B_{2\rho})
    \end{equation*}
    and the right-hand side of \eqref{HarmCoordBootstrap} is $L^n_{loc}(B_{2\rho}) =L^q_{loc}(B_{2\rho}) = W^{k-2, q}_{loc}(B_{2\rho})$. Last, in the case $q < n$, we use the multiplication property
    \begin{equation*}
        W^{1,q}(B_{2\rho}) \otimes L_{loc}^q(B_{2\rho}) \hookrightarrow L_{loc}^{p_0}(B_{2\rho}) \quad \text{with} \quad \frac{1}{p_0} \doteq \frac{2}{q} - \frac{1}{n} \,,
    \end{equation*}
    so that the right-hand side of \eqref{HarmCoordBootstrap} is in $ L^{p_0}_{loc}(B_{2\rho})$ and \cref{thm: main local elliptic regularity} implies that $x^l \in W^{3,p_0}_{loc}(B_{2\rho})$. This gives, in turn, that the second term of $f$ is now in
    \begin{equation*}
        W^{1,q}(B_{2\rho}) \otimes W^{1,p_0}_{loc}(B_{2\rho}) \hookrightarrow L^{p_1}_{loc}(B_{2\rho}) \quad \text{with} \quad \frac{1}{p_{1}} \doteq \frac{1}{q} - \frac{2}{n} + \frac{1}{p_0} \,.
    \end{equation*}
    Observe that thanks to our hypothesis $q > \tfrac{n}{2}$,
    \begin{equation*}
        \frac{1}{p_0} - \frac{1}{p_1} = \frac{2}{n} - \frac{1}{q} > 0 \,.
    \end{equation*}
    Repeating the process iteratively, we find a non-accumulating sequence given by $\tfrac{1}{p_{i+1}} \doteq \tfrac{1}{q} - \tfrac{2}{n} + \tfrac{1}{p_i}$ and thus find a large enough $i \in \nN$ such that $p_i \geq q$. 
    
    We conclude that $f \in W^{k-2, q}(B_{3\rho/2})$ in all cases, so we can use \cref{thm: main local elliptic regularity} to obtain that $x^l \in W^{k+1,q}(B_\rho)$, as desired.
\end{proof}
We remark that, unlike normal coordinates, harmonic coordinates are not smoothly related to the original differential structure of $M$, as they are only $W^{k+1,q}$-compatible. This suggests that if we were to consider a harmonic chart as part of the differentiable structure of $M$, this would only be a differentiable structure of limited Sobolev regularity. On the other hand, \cref{lemma: Adams diffeo lemma} immediately implies that if a function $u$ is $W^{l,p}$-regular in harmonic coordinates, then it is also $W^{l,p}$-regular in the original differentiable structure provided that $l \leq k+1$ and $1 \leq p \leq q$. Likewise for any tensor field due to \cref{prop: tensor_regularity}. More precisely, we have
\begin{corollary}
    \label{cor: reg change harmonic to smooth}
    Let $M$ be a smooth manifold of dimension $n \geq 3$ and consider a $W^{k,q}$-Riemannian metric $g$ on $M$ with $k \geq 2$ and $q > \tfrac{n}{2}$. Let $(V,\psi)$ be a harmonic chart on $M$ like that constructed in \cref{prop: Existence and Regularity of harmonic coordinates}.
    \begin{enumerate}
        \item[(i)] For $l \leq k+1$ and $1 \leq p \leq q$, there holds that $u \circ \psi^{-1} \in W^{l,p}_{loc}(\psi(V))$ if and only if $u \in W^{l,p}(V)$.
        \item[(ii)] For $l \leq k$ and $1 \leq p \leq q$, there holds that $\psi_*u \in W^{l,p}_{loc}(T_s\psi(V))$ if and only if $u \in W^{l,p}(T_sV)$.
    \end{enumerate}
\end{corollary}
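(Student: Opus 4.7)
The plan is to reduce both statements to \cref{lemma: Adams diffeo lemma} and \cref{prop: tensor_regularity}, applied to the transition map between $\psi$ and any smooth chart on $M$. The preparatory step is to confirm that this transition map qualifies as a $W^{k+1,q}$-diffeomorphism in the sense required by those lemmas.

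Concretely, I would cover $V$ by smooth coordinate balls $\{(U_\alpha, \varphi_\alpha)\}_\alpha$ from the smooth atlas of $M$ and, on each, consider
\[
    \phi_\alpha \doteq \psi \circ \varphi_\alpha^{-1} : \varphi_\alpha(U_\alpha) \to \psi(U_\alpha).
\]
Since $\varphi_\alpha$ is smooth and $\psi \in W^{k+1,q}$ by \cref{prop: Existence and Regularity of harmonic coordinates}, one has $\phi_\alpha \in W^{k+1,q}$, and since both $\varphi_\alpha$ and $\psi$ are $C^1$-diffeomorphisms onto their images, so is $\phi_\alpha$. \cref{LemmaInvFunctThmSobReg} (the Sobolev implicit function theorem cited before \cref{lemma: Adams diffeo lemma}) then yields $\phi_\alpha^{-1} = \varphi_\alpha \circ \psi^{-1} \in W^{k+1,q}_{loc}$, so $\phi_\alpha$ is a $W^{k+1,q}$-diffeomorphism in the precise sense demanded by \cref{lemma: Adams diffeo lemma} and \cref{prop: tensor_regularity}.

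For item $(i)$, the identity $u \circ \psi^{-1} = (u \circ \varphi_\alpha^{-1}) \circ \phi_\alpha^{-1}$ holds on $\psi(U_\alpha)$. By \cref{SobSpacesManifoldDefnTensors.1}, $u \in W^{l,p}(V)$ is equivalent to $u \circ \varphi_\alpha^{-1} \in W^{l,p}_{loc}(\varphi_\alpha(U_\alpha))$ for every $\alpha$, so \cref{lemma: Adams diffeo lemma} applied to $\phi_\alpha$ (respectively $\phi_\alpha^{-1}$) delivers the forward (respectively reverse) implication, within the range $l \leq k+1$ and $1 \leq p \leq q$. A standard partition-of-unity argument subordinated to $\{U_\alpha\}$ patches the local statements into the desired global equivalence on $\psi(V)$. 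Item $(ii)$ follows by the same argument with \cref{prop: tensor_regularity} in place of \cref{lemma: Adams diffeo lemma}; the tensor transformation law involves the Jacobian of $\phi_\alpha$ (an object of regularity $W^{k,q}$, one level below $\phi_\alpha$ itself), so one level of Sobolev regularity is lost, whence the restriction to $l \leq k$.

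The only genuine point to verify is that $\phi_\alpha$ qualifies as a $W^{k+1,q}$-diffeomorphism; once that is in hand, the corollary is essentially a bookkeeping exercise combining \cref{SobSpacesManifoldDefnTensors.1}, \cref{lemma: Adams diffeo lemma}, and \cref{prop: tensor_regularity}, and I anticipate no further technical difficulties.
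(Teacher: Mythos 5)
Your proposal is correct and takes the same route as the paper, which presents the corollary as an immediate consequence of \cref{lemma: Adams diffeo lemma} and \cref{prop: tensor_regularity} applied to the (smooth)-to-(harmonic) transition map, exactly as you do. One minor remark: the explicit invocation of \cref{LemmaInvFunctThmSobReg} to establish $\phi_\alpha^{-1}\in W^{k+1,q}_{loc}$ is redundant, since the paper's notion of a $W^{k+1,q}$-diffeomorphism asks only that the map itself be $W^{k+1,q}$ and a $C^1$-diffeomorphism (the inverse regularity is derived inside \cref{lemma: Adams diffeo lemma}); likewise, no partition of unity is needed, as $W^{l,p}_{loc}$ is checked chart by chart.
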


\vspace{0.5cm}
\section{The conformal Green function}
\label{The Conformal Green's function}
Before presenting the main result of this section, let us introduce some definitions and subtleties related to the conformal Green function 
for rough metrics. Recall that if $g \in C^\infty(T_2M)$, then $\mathscr{L}_g$ acts by duality as a map $\mathscr{D}'(M)\to \mathscr{D}'(M)$ and a Green function $G_p \in \mathscr D'(M)$ of $\Lg$ centered at $p \in M$ is a solution to
\begin{equation*}
    \Lg G_p = \delta_p \,,
\end{equation*}
where $\delta_p \in \mathscr D'(M)$ is defined by
\begin{equation}
    \label{eq: delta_p def smooth}
    \delta_p(\phi) = \phi(p) \quad \text{for all } \: \phi \in \mathscr D(M) \,.
\end{equation}
In contrast, if $g$ is not smooth, then $\mathscr{L}_g$ cannot be made to act naturally in this way on all of $\mathscr{D}'(M)$. However, using multiplication properties of \cref{SobolevMultLocal} one gets the following.
\begin{lemma}
    \label{lemma: conf Lap for Lt}
    Let $M$ be a smooth, closed manifold of dimension $n \geq 3$ and consider a $W^{2,q}$-Riemannian metric $g$ with $q > \tfrac{n}{2}$. Then, the operator $\Lg : C^\infty(M) \to L^q(M)$ extends by duality to a bounded map
    \begin{equation*}
        \Lg : L^t(M) \to W^{-2,t}(M)
    \end{equation*}
    provided that $t \geq q'$. Moreover, 
    \begin{align}
        \langle \mathscr{L}_gu,\phi \rangle_{(M,g)}=\langle u,\mathscr{L}_g\phi \rangle_{L^2(M,d\mu_g)}
    \end{align}
    holds for any $u \in L^t(M)$ and $\phi \in W^{2,t'}(M)$.
\end{lemma}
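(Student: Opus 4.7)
The plan is to define the extension by duality: for $u \in L^t(M)$ with $t \geq q'$, set
\begin{equation*}
    (\Lg u)(\phi) \doteq \langle u, \Lg \phi \rangle_{L^2(M,d\mu_g)} = \int_M u \, \Lg\phi \, d\mu_g
\end{equation*}
for every $\phi \in W^{2,t'}(M)$, where $t' \leq q$ is the Hölder conjugate. If I can show that this prescription produces a bounded linear functional on $W^{2,t'}(M)$ with norm controlled by $\|u\|_{L^t(M)}$, then via \cref{SobolevDualIso} I obtain $\Lg u \in W^{-2,t}(M)$ with the desired operator bound, and the \emph{moreover} statement is built into the construction.

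The analytical core is thus to establish the estimate $\|\Lg \phi\|_{L^{t'}(M)} \leq C \|\phi\|_{W^{2,t'}(M)}$, since by Hölder this immediately yields $|(\Lg u)(\phi)| \leq \|u\|_{L^t}\|\Lg \phi\|_{L^{t'}} \leq C\|u\|_{L^t}\|\phi\|_{W^{2,t'}}$. I would split $\Lg \phi = -a_n \Delta_g \phi + \R_g \phi$. For the first term, since $g \in W^{2,q}(T_2M)$ and $t' \leq q$, the Sobolev multiplication properties of \cref{SobolevMultLocal} give that $\Delta_g : W^{2,t'}(M) \to L^{t'}(M)$ is bounded, as already observed at the beginning of \cref{SectionLapceOp}. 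For the zero-order term, Hölder's inequality yields $\|\R_g \phi\|_{L^{t'}} \leq \|\R_g\|_{L^q} \|\phi\|_{L^s}$ with $1/s = 1/t' - 1/q$, so it remains to verify the Sobolev embedding $W^{2,t'}(M) \hookrightarrow L^s(M)$. I would check the three regimes: if $2t' > n$, then $W^{2,t'} \hookrightarrow L^\infty$ and the claim is trivial; if $2t' = n$, then $W^{2,t'}$ embeds into every $L^r$ with $r < \infty$; if $2t' < n$, then the sharp embedding $W^{2,t'} \hookrightarrow L^{nt'/(n-2t')}$ suffices precisely because the inequality $qt'/(q-t') \leq nt'/(n-2t')$ reduces algebraically to $n \leq 2q$, which holds by assumption. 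In particular, the boundary case $t = q'$ reduces to $W^{2,q} \hookrightarrow L^\infty$, valid because $q > n/2$.

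Once the key estimate is established the construction gives a bounded extension $\Lg : L^t(M) \to W^{-2,t}(M)$ for all $t \geq q'$, and the duality identity $\langle \Lg u, \phi\rangle_{(M,g)} = \langle u, \Lg \phi\rangle_{L^2(M,d\mu_g)}$ is true by definition. To close the argument I only need to check that this extension is compatible with the original action of $\Lg$ on $C^\infty(M) \hookrightarrow L^t(M)$, i.e.\ that $\int_M (\Lg u)\phi \, d\mu_g = \int_M u \, \Lg\phi \, d\mu_g$ holds for smooth $u$ and $\phi \in W^{2,t'}(M)$. This is a standard integration-by-parts identity for a rough metric which I would justify by approximating $g$ by smooth Riemannian metrics $g_j \to g$ in $W^{2,q}(T_2M)$ and passing to the limit, exactly as carried out in the derivation of \eqref{ConfLapIntParts} inside the proof of \cref{lemma: kernel_improvement}. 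I do not expect any substantive obstacle beyond bookkeeping of Sobolev exponents in the $L^s$-embedding step.
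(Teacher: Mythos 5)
Your proof is correct and in spirit matches the paper's: where the paper delegates the boundedness of $\Lg : L^t(M) \to W^{-2,t}(M)$ to an external result (\cite{avalos2024sobolev}, Lemma 2.2), you unpack the same Sobolev-multiplication and embedding bookkeeping explicitly, and for the duality identity both you and the paper reduce to an approximation argument with smooth data. Your organization, namely defining the extension directly by the duality formula $(\Lg u)(\phi) \doteq \langle u, \Lg\phi\rangle_{L^2(M,d\mu_g)}$ and then checking compatibility with the classical $\Lg$ on $C^\infty(M)$, is logically equivalent to the paper's extension-by-continuity viewpoint, and your exponent check (the embedding $W^{2,t'}(M) \hookrightarrow L^s(M)$ with $1/s = 1/t' - 1/q$, reducing to $2q \geq n$) is sound.
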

\begin{proof}
    The first part of the statement is a special case of \cite[Lemma 2.2]{avalos2024sobolev}. For the self-adjointness, consider sequences of functions $\{u_j\}_{j=1}^{\infty}\subset C^{\infty}(M)$ and metrics $\{g_j\}_{j=1}^\infty \subset C^\infty(T_2M)$ converging $u_j \to u$ in $L^t(M,d\mu_g)$ and $g_j \to g$ in $W^{2,q}(T_2M)$ as $j \to \infty$ and notice that
    \begin{align*}
        \langle \mathscr{L}_gu,\phi \rangle_{(M,g)} &= \lim_{j\to\infty}\langle \mathscr{L}_{g_j}u_j,\phi \rangle_{L^2(M,d\mu_g)} = \lim_{j\to\infty}\langle \mathscr{L}_{g_j}u_j,\phi \rangle_{L^2(M,d\mu_{g_j})} 
        \\
        &= \lim_{j\to\infty}\langle u_j,\mathscr{L}_{g_j}\phi \rangle_{L^2(M,d\mu_{g_j})} = \langle u,\mathscr{L}_g\phi \rangle_{L^2(M,d\mu_g)} \,,
    \end{align*}
    where the last identity is a direct consequence of Hölder's inequality.
\end{proof}
Another observation is that \eqref{eq: delta_p def smooth} makes sense as long as $\phi$ is continuous and consequently, in view of \cref{SobolevDualIso}, we can define the distribution $\delta_p \in W^{-2,t}(M)$ by
\begin{equation}
    \label{eq: delta_p def rough}
    \langle\delta_p, \phi\rangle_{(M,g)} = \phi(p) \quad \text{for all } \: \phi \in W^{2,t'}(M)
\end{equation}
provided that $t' > \tfrac{n}{2}$, or equivalently, $t < \tfrac{n}{n-2}$. Since $q > \tfrac{n}{2}$ is equivalent to $q' < \tfrac{n}{n-2}$, we are lead to
\begin{definition}
    \label{GreenFunctionDefn}
    Let $M$ be a smooth, closed manifold of dimension $n \geq 3$ and consider a $W^{2,q}$-Riemannian metric $g$ with $q > \frac{n}{2}$. For $q' \leq t < \tfrac{n}{n-2}$ we say that $G_p\in L^t(M)$ is a Green function for $\Lg$ with a pole at $p \in M$ if it satisfies
    \begin{equation}
        \langle\Lg G_p, \phi\rangle_{(M,g)} = \phi(p)
    \end{equation}
    for all $\phi \in W^{2,t'}(M)$.
\end{definition}
An important direct consequence of \cref{GreenFunctionDefn} and \cref{lemma: conf Lap for Lt}, which mimics the known results for smooth metrics, is the following conformal transformation property for $G_p$.
\begin{proposition}
    \label{prop: conformal Green transformation}
    Let $M$ be a smooth, closed manifold of dimension $n \geq 3$ and let $g$ be a $W^{2,q}$-Riemannian metric with $q > \tfrac{n}{2}$. Consider a positive function $u \in W^{2,q}(M)$ and the conformal metric $\tilde g \doteq u^{\frac{4}{n-2}}g$. If $G_p$ is a Green function for $\Lg$ centered at $p \in M$, then
    \begin{equation}
        \label{eq: Green transformation rule}
        \widetilde G_p \doteq u^{-1}(p)u^{-1}G_p
    \end{equation}
    is a Green function for $\mathscr{L}_{\bar g}$ centered at $p$.
\end{proposition}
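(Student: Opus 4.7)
The plan is to verify the defining property of a Green function for $\widetilde G_p$ by reducing everything to the corresponding property of $G_p$ via the conformal covariance \eqref{eq: conformal covariance Lg}, the transformation of the volume form $d\mu_{\tilde g}=u^{\frac{2n}{n-2}}d\mu_g$, and the self-adjointness of \cref{lemma: conf Lap for Lt}. First I would settle the basic admissibility: since $u\in W^{2,q}(M)\hookrightarrow C^0(M)$ is strictly positive, $u$ and $u^{-1}$ are bounded, so $\widetilde G_p=u^{-1}(p)u^{-1}G_p\in L^t(M)$ for the same range $q'\leq t<\tfrac{n}{n-2}$ as $G_p$. Equivalence of the measures $d\mu_g$ and $d\mu_{\tilde g}$ (bounded ratio) and the fact that the Sobolev norms on $M$ are defined chart-wise show that $L^t(M)$ and $W^{2,t'}(M)$ are unchanged when passing from $g$ to $\tilde g$; in particular the dual pairing $\langle\,\cdot\,,\,\cdot\,\rangle_{(M,\tilde g)}$ is well-defined on $W^{-2,t}(M)\times W^{2,t'}(M)$.

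Next I would fix a test function $\phi\in W^{2,t'}(M)$ and use \cref{lemma: conf Lap for Lt} (applied to $\tilde g$) to rewrite
\begin{equation*}
\langle \mathscr L_{\tilde g}\widetilde G_p,\phi\rangle_{(M,\tilde g)}=\langle \widetilde G_p,\mathscr L_{\tilde g}\phi\rangle_{L^2(M,d\mu_{\tilde g})}.
\end{equation*}
Invoking the conformal covariance \eqref{eq: conformal covariance Lg}, the integrand expands as
\begin{equation*}
\widetilde G_p\cdot \mathscr L_{\tilde g}\phi\cdot d\mu_{\tilde g}=u^{-1}(p)\,u^{-1}G_p\cdot u^{-\frac{n+2}{n-2}}\mathscr L_g(u\phi)\cdot u^{\frac{2n}{n-2}}d\mu_g.
\end{equation*}
The exponents cancel exactly, $-1-\tfrac{n+2}{n-2}+\tfrac{2n}{n-2}=0$, so the right-hand side simplifies to $u^{-1}(p)\,G_p\,\mathscr L_g(u\phi)\,d\mu_g$.

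To close the argument I would check that $u\phi\in W^{2,t'}(M)$ is a legal test function for $G_p$. By \cref{SobolevMultLocal} one has $W^{2,q}(M)\otimes W^{2,t'}(M)\hookrightarrow W^{2,t'}(M)$ as soon as $t'\leq q$ and $2>\tfrac{n}{q}$, both of which hold under our hypotheses $q>\tfrac{n}{2}$ and $t\geq q'$. Applying self-adjointness once more and the Green function property of $G_p$ then yields
\begin{equation*}
\langle \widetilde G_p,\mathscr L_{\tilde g}\phi\rangle_{L^2(M,d\mu_{\tilde g})}=u^{-1}(p)\,\langle G_p,\mathscr L_g(u\phi)\rangle_{L^2(M,d\mu_g)}=u^{-1}(p)\,\langle \mathscr L_g G_p,u\phi\rangle_{(M,g)}=u^{-1}(p)\,(u\phi)(p)=\phi(p),
\end{equation*}
which is the required identity. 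The only mildly delicate point, and the step one has to get right, is the bookkeeping of Sobolev multiplication to ensure $u\phi\in W^{2,t'}(M)$ and the corresponding chain of self-adjoint pairings; once that is secured the computation is an exact algebraic cancellation of the conformal weights.
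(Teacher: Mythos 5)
Your proof is correct and follows essentially the same route as the paper's: both invoke the self-adjointness pairing of \cref{lemma: conf Lap for Lt}, insert the conformal covariance \eqref{eq: conformal covariance Lg} together with the volume-form transformation so that the $u$-powers cancel, verify that $u\phi\in W^{2,t'}(M)$ via $t'\leq q$, and finish with the defining property of $G_p$. You are simply more explicit about the exponent arithmetic and the Sobolev multiplication, which the paper compresses into a single line.
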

\begin{proof}
    First, due the hypotheses on $u$ and $q$, we know that $u^{-1}$ is a positive continuous function on $M$, which implies that $\widetilde{G}_{p}\in L^t(M)$. Then, for arbitrary $\phi \in W^{2,t'}(M)$ \cref{lemma: conf Lap for Lt} gives
    \begin{align*}
        \bigl\langle \mathscr L_{\tilde{g}} \widetilde{G}_{p}, \phi \bigr\rangle_{(M, \tilde{g})} &= \int_{M} \widetilde{G}_{p} \, \mathscr L_{\widetilde{g}} \,\phi \,\, d\mu_{\tilde{g}} = u(p)^{-1} \int_M G_{p} \Lg(u \phi) \,\, d\mu_g \\
        &= u(p)^{-1} \bigl\langle \Lg G_{p}, u \phi \bigr\rangle_{(M,g)} 
        = \phi(p) \,,
    \end{align*}
    where we have used that $u \phi\in W^{2,t'}(M)$ due to $t'\leq q$. 
\end{proof}
We can now tackle the main result of the section.
\begin{reptheorem}{theorem C}
    \label{thm: greens existence}
    \emph{
    Let $M$ be a smooth, closed manifold of dimension $n \geq 3$ and consider a $W^{2,q}$-Riemannian metric $g$ with $q > \tfrac{n}{2}$ and $\lambda(M, g) > 0$. Then, for each point $p \in M$, the conformal Laplacian $\Lg$ admits a unique, positive Green function $G_{p} \in W_{loc}^{2, q}\bigl(M\setminus\{p\}\bigr)$ centered at $p$. Moreover, there exists a conformal harmonic chart\footnote{This \textit{conformal harmonic chart} refers to harmonic coordinates of a conformal metric $\tilde{g} \in [\,g\,]_{W^{2,q}}$ chosen so that $\R_{\tilde{g}}$ is continuous and positive. The existence of such metric is ensured by \cref{cor: Yamabe classification}.} $(U,x^i)$ around $p$ such that
    \begin{equation}
        \label{CGF_Expansion}
         G_{p}(x) = \frac{B}{|x|^{n-2}}  + h(x) 
    \end{equation}
    for some $B>0$ and a function $h \in W^{2,r}(U)$ satisfying
   \begin{equation}
        \label{h_decay}
        h(x) = A + \BO\bigl(|x|^{2-\frac{n}{r}}\bigr) \quad \text{as} \quad |x| \to 0
    \end{equation}
    for each $1 \leq r < \tfrac{nq}{q(n-2) + n}$ and some $A \in \nR$. If $q > n$, there additionally holds
    \begin{equation}
        \label{h_decay_qn}
        h(x) = A + \BO_1\bigl(|x|^{2-\frac{n}{r}}\bigr) \quad \text{as} \quad |x| \to 0
    \end{equation}
    for each $1 \leq  r < \tfrac{nq}{q(n-2) + n}$ and harmonic coordinates can be substituted by normal coordinates.}
\end{reptheorem}
\begin{proof}
    First, we claim that due to \cref{prop: conformal Green transformation}, it suffices to show the theorem for some element in the conformal class $[\,g\,]_{W^{2,q}}$. Then, existence, uniqueness, regularity and positivity of any other conformal metric follow directly from the transformation rule \eqref{eq: Green transformation rule} of the Green function under conformal changes. For the expansion near the pole, observe that \cref{cor: reg change harmonic to smooth} implies that if $u \in W^{2,q}(M)$, then also $u(x) \in W^{2,q}(\psi(V))$ in a $g$-harmonic coordinate chart $(V,\psi)$ centered at $p$ with coordinates $\{x^i\}_{i=1}^n$. In particular, $u(x)$ is Hölder continuous and therefore 
    \begin{equation*}
        u(x) = u(0) + O\bigl(|x|^{2-\frac{n}{q}}\bigr) \quad \text{as} \quad |x| \to 0 \,.
    \end{equation*}
    Consequently, using \eqref{eq: Green transformation rule}, \eqref{CGF_Expansion} and \eqref{h_decay} we obtain that the Green function $\widetilde{G}_p$ associated to the conformal metric $\tilde g \doteq u^{\frac{4}{n-2}}$ satisfies
    \begin{align*}
        \widetilde{G}_{0}(x) & = \frac{1}{u(0)} \frac{1}{u(0) + \BO\bigl(|x|^{2- \frac{n}{q}}\bigr)} \left( \frac{B}{|x|^{n-2}} + A + \BO\bigl(|x|^{2-\frac{n}{r}}\bigr)\right) 
        \\
        & = \frac{1}{u(0)^2}  \, \left( \frac{B}{|x|^{n-2}} + A + \BO\bigl(|x|^{2-\frac{n}{r}}\bigr) \right) + \BO\bigl(|x|^{2-\frac{n}{q}}\bigr) + \BO\bigl(|x|^{4-\frac{n}{q} -n}\bigr) + \BO\bigl(|x|^{4-\frac{n}{q} - \frac{n}{r}}\bigr) 
        \\
        & = \frac{1}{u(0)^2}  \, \left(\frac{B}{|x|^{n-2}} + A + \BO\bigl(|x|^{2-\frac{n}{r}}\bigr) \right)
    \end{align*}
    as $|x| \to 0$, where the last identity holds since for all $n \geq 3$,  $q > \tfrac{n}{2}$ and $1 \leq r < \tfrac{nq}{q(n-2) + n}$ we have
    \begin{equation*}
        2 - \frac{n}{r} < 4-\frac{n}{q} - n \leq 4- \frac{n}{q} - \frac{n}{r} \,, \qquad 2 - \frac{n}{r} < 2 - \frac{n}{q} \,.
    \end{equation*}
    If $q > n$, notice that $u$ is furthermore $C^{1,2-\frac{n}{q}}$-regular over the pole in harmonic coordinates and consequently it admits the expansion
    \begin{equation*}
        u(x) = u(0) + \BO_1\bigl(|x|^{2-\frac{n}{q}}\bigr) \quad \text{as} \quad |x| \to 0 \,.
    \end{equation*}
    The same expansion holds in $g$-normal coordinates due to item \textit{(ii)} of \cref{NormalCoordsC1}. In any case, we have that
    \begin{align*}
        \widetilde{G}_{p}(x) & = \frac{1}{u(0)^2}  \, \left(\frac{B}{|x|^{n-2}} + A + \BO_1\bigl(|x|^{2-\frac{n}{r}}\bigr) \right)
    \end{align*}
    and the claim is proven.

    In light of \cref{cor: Yamabe classification}, the above claim allows us to assume that $g$ has positive scalar curvature $\R_g \in W^{2,q}(M)$.

    \textit{Step 1: Existence.} Consider a harmonic coordinate chart $(V,\psi)$ centered at $p$ with coordinates $\{x^i\}_{i=1}^n$, which exists by \cref{prop: Existence and Regularity of harmonic coordinates} and satisfy
    \begin{equation}
        \label{eq: harmonic properties 2}
        \psi(p) = 0 \,, \quad g_{ij}(0) = \delta_{ij} \,, \quad g^{ij}\Gamma^k_{ij} \equiv 0 \,.
    \end{equation}
    Let $U \subset\subset W \subset\subset V$ be smooth neighborhoods of $p$ and consider a cut-off function $\eta\in C^{\infty}_0(W)$ such that $\eta\equiv 1$ in $U$. We define
    \begin{align}
        \label{GreenFunct0}
        u(x) \doteq 
        \begin{dcases}
            b_n\eta(x) |x|^{2-n} &\text{in} \; V
            \\
            0  &\text{in} \; M \setminus V
        \end{dcases}
        ,
    \end{align}
    with $b_n \doteq \frac{1}{4(n-1)|\nS^{n-1}|}$. Since the function $|x|^{2-n}$ is in $L^t$ for all $t < \tfrac{n}{n-2}$ in bounded regions around the origin, it is not hard to check that \cref{cor: reg change harmonic to smooth} then implies that $u \in L^t(M)$ for all $t < \tfrac{n}{n-2}$. As $q' < \tfrac{n}{n-2}$ due to $q > \tfrac{n}{2}$, the function $\Lg u$ is well-defined in regard of \cref{lemma: conf Lap for Lt}. In order to compute $\Lg u$, consider some $\phi \in W^{2,t'}(M)$ and suppose first that $\supp\phi \subset\subset V$. Using \cref{lemma: conf Lap for Lt},
    \begin{align*}
        \langle &\Lg u, \phi\rangle_{(M,g)} = \int_{V}u\Lg\phi \, d\mu_g = \int_{\psi(V)} u(x) (\Lg\phi)(x) \sqrt{\det g(x)} \, dx
        \\
        &
        = -a_n\int_{\psi(V)} u(x) \partial_i\bigl(\sqrt{\det g(x)}g^{ij}(x)\partial_j\phi(x)\bigr) dx + \int_{\psi(V)} u(x) \R_g(x)\phi(x) \sqrt{\det g(x)} \, dx
        \\
        &
        = \Bigl\langle u(x), -a_n\sqrt{\det g(x)}\,\Delta_{g(x)}\phi(x) + \R_g(x)\phi(x) \sqrt{\det g(x)}\Bigr\rangle_{\psi(V)}
        \\
        &
        = \Bigl\langle\mathscr L_{g(x)} u(x), \sqrt{\det g(x)} \phi(x)\Bigr\rangle_{\psi(V)} \,,
    \end{align*}
    where $\langle\cdot,\cdot\rangle_\Omega :W^{-k,p'}(\Omega) \times W^{k,p}_0(\Omega) \to \nR$ for $\Omega\subset\nR^n$ stands for the standard duality pairing. It should be mentioned that in the second equality one would, by definition, write the integral in a smooth chart of $M$. Since harmonic coordinates are $W^{3,q}$-compatible with the smooth differentiable structure of $M$, however, the change to harmonic coordinates is justified. For general $\phi \in W^{2,t'}(M)$, we may consider a smooth partition of unity $\{\chi_\alpha\}_{\alpha=1}^N$ such that $\chi_1 \equiv 1$ on $W$ and $\supp\chi_1 \subset\subset V$. Then, since $\supp u \cap \supp\chi_\alpha$ is empty for all $\alpha \neq 1$,
    \begin{align}
        \label{eq: eq: Lg dual local action}
        \langle\Lg u, \phi\rangle_{(M,g)} &= \sum_{\alpha=1}^N \langle\Lg u, \chi_\alpha\phi\rangle_{(M,g)} = \langle\Lg u, \chi_1\phi\rangle_{(M,g)} + \sum_{\alpha=2}^N \int_M u \Lg(\chi_\alpha\phi) d\mu_g \nonumber
        \\
        &= \Bigl\langle\mathscr L_{g(x)} u(x), \sqrt{\det g(x)} \,\chi_1(x)\phi(x)\Bigr\rangle_{\psi(V)} \,.
    \end{align}
    Now, using the properties \eqref{eq: harmonic properties 2} of harmonic coordinates (and simplifying notation), we formally compute
    \begin{align*}
        \Delta_g\bigl(\eta|x|^{2-n}\bigr) &= g^{ij}\,\partial_i\partial_j\bigl(\eta|x|^{2-n}\bigr) 
        \\
        & = \eta \, g^{ij} \partial_i \partial_j |x|^{2-n} +  g^{ij} \partial_i \partial_j\eta \, |x|^{2-n} + 2 g^{ij} \partial_i \eta \, \partial_j |x|^{2-n}
        \\
        & = \eta \Delta |x|^{2-n} + \eta \, \bigl(g^{ij} - \delta^{ij}\bigr)  \partial_i \partial_j |x|^{2-n} +  g^{ij} \Big( \partial_i \partial_j\eta \, |x|^{2-n} + 2 \partial_i \eta \, \partial_j |x|^{2-n} \Big) \,.
    \end{align*}
    Since the last two terms involve derivatives of $\eta$, which are supported away from $\psi(U)$, we have that
    \begin{equation*}
        f_1 \doteq g^{ij} \Big( \partial_i \partial_j\eta \, |x|^{2-n} + 2 \partial_i \eta \, \partial_j |x|^{2-n} \Big) \in W^{2,q}(\psi(V)) \subset C^{0, 2 - \frac{n}{q}}(\psi(V)) \,.
    \end{equation*}
    Here we are using that $g_{ij}(x) \in W^{2,q}(\psi(V))$ by \cref{cor: reg change harmonic to smooth}. On the other hand, a simple computation gives
    \begin{align*}
        \partial_{i}\partial_j|x|^{2-n} = (2-n)\left(\frac{\delta_{ij}}{|x|^n} - n \frac{x^ix^j}{|x|^{n+2}} \right) = \BO(|x|^{-n}) \quad \text{as} \quad |x| \to 0 \,,
    \end{align*}
    while the Hölder continuity of the metric components, together with \eqref{eq: harmonic properties 2}, implies
    \begin{equation*}
        g^{ij}-\delta^{ij} = \BO\bigl(|x|^{2-\frac{n}{q}}\bigr) \quad \text{as} \quad |x| \to 0 \,.
    \end{equation*}
    It follows that
    \begin{equation*}
        f_2 \doteq \eta \, (g^{ij} - \delta^{ij})  \partial_i \partial_j |x|^{2-n} = \BO(|x|^{-\gamma}) \quad \text{as} \quad |x| \to 0
    \end{equation*}
    for $\gamma \doteq n + \tfrac{n}{q} - 2$. Finally, we use that $\R_g \in W^{2,q}(M)$ in our good conformal gauge
    
    \noindent and, by \cref{cor: reg change harmonic to smooth}, $\R_g$ is continuous in harmonic coordinates too, implying that
    \begin{equation*}
        f_3 \doteq \R_g \eta \, |x|^{2-n} = \BO\bigl(|x|^{2-n}\bigr) \quad \text{as} \quad |x| \to 0 \,.
    \end{equation*}
    Combining the above, we obtain
    \begin{equation*}
        \begin{split}
            \Lg\bigl(\eta \, |x|^{2-n}\bigr) &= -a_n \Delta_g \bigl(\eta \, |x|^{2-n}\bigr) + \R_g \eta \, |x|^{2-n}
            \\
            & = -a_n \eta\Delta\bigl(|x|^{2-n}\bigr) - a_n \, f_1(x) - a_n \, f_2(x) + f_3(x) 
            \\
            &\doteq -a_n \eta\Delta\bigl(|x|^{2-n}\bigr) + f(x) \,,
        \end{split}
    \end{equation*}
    with $f(x) = \BO(|x|^{-\gamma})$ as $|x| \to 0$.\footnote{If one works with normal coordinates in the $q > n$ case, an extra $g^{ij}\Gamma^k_{ij}\partial_k\bigl(\eta |x|^{2-n}\bigr)$ term shows up in the definition of $f$. However, since $\Gamma^k_{ij} \in W^{1,q}(V) \subset C^{0,1-\frac{n}{q}}$ with $\Gamma^k_{ij}(0) = 0$, it follows that $\eta g^{ij}\Gamma^k_{ij}\partial_k|x|^{2-n} = \BO\bigl(|x|^{1-\frac{n}{q}}\bigr)\BO\bigl(|x|^{1-n}\bigr) = \BO\bigl(|x|^{-\gamma}\bigr)$ and the qualitative behavior of $f$ is unaffected.} Observe that $|x|^{-\gamma} \in L^r_{loc}(\mathbb{R}^n)$ as long as $n - \gamma r > 0$, or equivalently, 
    \begin{equation*}
        r < \frac{n}{\gamma} = \frac{nq}{q(n-2) + n} \,,
    \end{equation*}
    so there holds that $f(x)  \in L^r(\psi(V))$ for all $r < \tfrac{nq}{q(n-2) + n}$. We remark that $\frac{nq}{q(n-2) + n} > 1$ is guaranteed by $q > \tfrac{n}{2}$. From \cref{cor: reg change harmonic to smooth} we also get that $f \in L^r(V)$, which we extend by zero to the whole manifold and still call it $f$. In light of \eqref{eq: eq: Lg dual local action}, we establish that for any $\phi \in W^{2,t'}(M)$ with $t < \tfrac{n}{n-2}$ there holds
    \begin{align*}
        \langle \Lg u, \phi\rangle_{(M,g)} &= \Bigl\langle-a_n \eta\Delta\bigl(|x|^{2-n}\bigr) + f(x), \sqrt{\det g(x)}\,\chi_1(x) \phi(x)\Bigr\rangle_{\psi(V)}
        \\
        &= \frac{-1}{(n-2)|\nS^{n-1}|}\int_{\psi(V)}|x|^{2-n} \Delta\Bigl(\eta(x) \chi_1(x) \phi(x) \sqrt{\det g(x)}\Bigr) dx 
        \\
        &\hspace{3cm} + b_n\int_{\psi(V)} f(x)\chi_1(x)\phi(x) d\mu_g(x)
        \\
        &= \phi(p) + b_n\int_M f \phi \,d\mu_g \,.
    \end{align*}
    Since $f \in L^r(M)$ for all $r < \tfrac{nq}{q(n-2) + n} \leq q$, we may apply \cref{lemma: positive yamabe isomorphism _ final} to obtain a unique solution $h \in W^{2,r}(M)$ to the equation $\Lg h = -b_n f$. We claim that
    \begin{equation}
        \label{eq: global Gp}
        G_{p} \doteq u + h 
    \end{equation}
    is a Green function of $\Lg$ with pole $p$. If $r$ can be chosen greater or equal than $\tfrac{n}{2}$, then $h \in L^t(M)$ for all $t<\infty$ and consequently $G_p \in L^t(M)$ for all $t < \tfrac{n}{n-2}$. If $r<\frac{n}{2}$ one has $W^{2,r}(M)\hookrightarrow L^s(M)$ for all $s$ such that 
    \begin{align*}
        \frac{1}{s} = \frac{1}{r}-\frac{2}{n}>\frac{n-2}{n}+\frac{1}{q} - \frac{2}{n}=1 + \frac{1}{q}-\frac{4}{n}.
    \end{align*}  
    That is, $h \in L^s(M)$ for some $q'\leq s$ if and only if
    \begin{align*}
        \frac{1}{q'}=1-\frac{1}{q}>1 + \frac{1}{q}-\frac{4}{n}\Longleftrightarrow \frac{2}{n}>\frac{1}{q},
    \end{align*}
    which is satisfied by hypothesis. We conclude that $G_p\in L^t(M)$ for some $q'\leq t< \frac{n}{n-2}$. Moreover, for all $\phi \in W^{2,t'}(M)$ there holds
    \begin{align*}
        \langle \Lg G_p, \phi\rangle_{(M,g)} &= \langle \Lg u, \phi\rangle_{(M,g)} + \langle \Lg h, \phi\rangle_{(M,g)}
        \\
        &= \phi(p) + b_n \int_M f \phi \, d\mu_g - b_n\langle f, \phi\rangle_{(M,g)} = \phi(p) 
    \end{align*}
    as desired.
    
    \medskip
    \textit{Step 2: Uniqueness.} Suppose now there exists another Green function $\widetilde G_p \in L^t(M)$, $t \geq q'$, and let $H = G_p - \widetilde G_p$. Then, by definition,
    \begin{align*}
        \bigl\langle\Lg H, \phi\bigr\rangle_{(M,g)} = \bigl\langle\Lg G_p, \phi\bigr\rangle_{(M,g)} - \bigl\langle\Lg \widetilde G_p, \phi\bigr\rangle_{(M,g)} = 0
    \end{align*}
    holds for all $\phi \in W^{2,t'}(M)$ and \cref{SobolevDualIso} implies that $\Lg H = 0$. Since $H \in L^{q'}(M)$ in particular, \cref{thm: conf Laplace elliptic regularity} implies that $H \in W^{2,q}(M)$, but in view of the hypothesis $\lambda(M,g) > 0$ \cref{lemma: positive yamabe isomorphism _ final} implies that $H = 0$. Hence $G_p = \widetilde G_p$.

    \medskip
    \textit{Step 3: Regularity.} Let us now study the regularity of $G_{p}$ away from the singular point. By the definition \eqref{GreenFunct0}, we know that $u$ is smooth away from $p$ in harmonic coordinates, so \cref{cor: reg change harmonic to smooth} implies that $u \in W^{3,q}(M \setminus B_{\varepsilon}(p))$  for any $\varepsilon > 0$. Fix some small $\varepsilon > 0$ and consider a smooth cut-off function $\xi$ supported in $M\setminus B_{\varepsilon}(p)$ and $\xi \equiv 1$ on $M\setminus B_{2\varepsilon}(p)$. Then,  $\xi G_{p} \in W^{2,r}(M)$ due to the regularity of $h$ and it satisfies
    \begin{align*}
        \Lg \bigl(\xi \, G_{p} \bigr) &= -a_n \Delta_g \bigl(\xi \, G_{p}\bigr) + \R_g \xi \, G_{\p} 
        \\
        & = - \xi \Lg G_{p} - 2a_n \,g(\nabla \xi, \nabla G_{p}) - a_n \,\Delta_g \xi \,G_{p} + \R_g \xi \, G_{p} 
        \\
        & =  -2a_n \,g(\nabla \xi, \nabla G_{p}) - a_n \, \Delta_g\xi \,G_{p} + \R_g \xi \, G_{p} 
    \end{align*}
    From our analysis above, we know that $W^{2, r}(M) \hookrightarrow L^{q'}(M)$ and thus $\xi G_{\p} \in L^{q'}(M)$. In addition, since $r < \tfrac{nq}{q(n-2)+n} < n$, we may use the embedding
    \begin{equation*}
        \xi G_{\p} \in W^{2, r}(M) \hookrightarrow W^{1,r_1}(M) \qquad \text{with} \qquad \frac{1}{r_1} \doteq \frac{1}{r} - \frac{1}{n} \,,
    \end{equation*}
    to get that  $g(\nabla \xi, \nabla G_{p}) \in L^{r_1}(M)$ and consequently $\Lg\bigl(\xi\,G_{p}\bigr) \in L^{r_1}(M)$ (recall that $\R_g \in W^{2,q}(M)$). If $r_1 \geq q$ \cref{thm: conf Laplace elliptic regularity} immediately implies that $\xi G_{p} \in W^{2,q}(M)$. If not, it allows us to conclude that  $\xi G_{p} \in W^{2,r_1}(M)$. If $r_1 \geq \tfrac{qn}{q+n}$, then $W^{2,r_1}(M) \hookrightarrow W^{1,q}(M)$ and we are done by the same reasoning. If $r_1 \leq \tfrac{qn}{q+n} < q$, we have that
    \begin{equation*}
        \xi G_{p} \in W^{2,r_1}(M) \hookrightarrow W^{1,r_2}(M) \qquad \text{with} \qquad \frac{1}{r_2} \doteq \frac{1}{r_1} - \frac{1}{n}
    \end{equation*}
    and we can repeat the reasoning. In fact, we can iterate the argument considering for each integer $k \geq 1$ the Sobolev embeddings
    \begin{equation*}
        W^{1, r_k}(M) \hookrightarrow L^{r_{k+1}}(M) \qquad \text{with} \qquad \frac{1}{r_{k+1}} \doteq \frac{1}{r_k} - \frac{1}{n}
    \end{equation*} 
    and using \cref{thm: conf Laplace elliptic regularity} to conclude that $\xi G_{p} \in W^{2, r_{k+1}}(M)$. Since $\tfrac{1}{r_{k}} - \tfrac{1}{r_{k+1}} = \tfrac{1}{n}$, it follows that $r_k$ reaches $q$ in finitely many steps, showing that $\xi G_{p} \in W^{2,q}(M)$. As $\varepsilon > 0$ was arbitrary, we establish that $G_{p} \in W^{2,q}_{loc}\bigl(M\setminus\{p\}\bigr)$.

    \medskip
    \textit{Step 4: Positivity and expansion.} We claim that the positivity of $G_{p}$ follows from the expansion \eqref{h_decay} of $h$. In fact, in view of \eqref{eq: global Gp}, if $h$ blows up strictly slower than $|x|^{2-n}$ in the harmonic coordinates $\{ x^i\}_{i=1}^n$, then there is some $\varepsilon > 0$ small enough such that $G_{p} > 1$ on $\overline{B_\varepsilon(p) }\setminus \{p\}$. Suppose by contradiction that there is some $\q \in M \setminus \overline{B_\varepsilon(p) }$ such that 
    \begin{equation*}
        \inf_{M \setminus B_\varepsilon(p)} G_{p} = G_{p}(\q) \doteq \m \leq 0 \,.
    \end{equation*}
    Then, on any coordinate chart $(W, y^i)$ around $\q$ with $W \cap B_\varepsilon(p) = \emptyset$ there holds
    \begin{equation*}
        \sqrt{\det g} \, \Lg G_{p} = -a_n\partial_{i}\Big(\sqrt{\det g}\, g^{ij}\partial_{j}G_{p}\Big) + \sqrt{\det g}\R_g G_{p} = 0
    \end{equation*}
    with $\R_g > 0$ by hypothesis, so the strong maximum principle \cite[Theorem 8.19]{GT} implies that $G_{p} \equiv \m \leq 0$ on $W$. This shows that $G_{p}^{-1}(\m)$ is open in $M\backslash\overline{B_\varepsilon(p)}$. On the other hand, it is also closed due to the continuity of $G_{p}$ and therefore equal to $M \setminus\overline{B_\varepsilon(p) }$. However, this contradicts $G_{p}|_{\partial B_\varepsilon(p)} > 1$ and it follows that $G_{p}$ is positive provided that \eqref{h_decay} holds.

    Let us concentrate on the expansion \eqref{h_decay} of $h$ in harmonic coordinates. From \emph{Step 1} and \emph{Step 3} we know that $h \in W^{2,q}_{loc}\bigl(M\setminus\{\p\}\bigr) \cap W^{2,r}(M)$ satisfies
    \begin{equation*}
        \Lg h = -b_nf
    \end{equation*}
    on $M$ and in light of \cref{cor: reg change harmonic to smooth} and \eqref{eq: harmonic properties 2}, $h(x) \in W^{2,q}_{loc}\bigl(\psi(V)\setminus\{0\}\bigr) \cap W^{2,r}_{loc}(\psi(V))$ satisfies
    \begin{equation}
        \label{eq: PDE h locally}
        -a_n g^{ij}(x)\partial_{i}\partial_{j} h(x) + \R_g(x) h(x) = -b_nf(x)
    \end{equation}
    in the harmonic chart $(V,\psi)$. The principal part coefficients $g^{ij}(x)$ as well as the scalar curvature $\R_g(x)$ are in $W^{2,q}_{loc}(\psi(V))$ by the same reasoning as in \emph{Step 1}. Moreover, we also know that $f  = \BO\bigl(|x|^{-\gamma}\bigr)$ with $\gamma = n + \frac{n}{q} - 2$, so the hypotheses of \cref{prop: blow-up behaviour} below are full-filled. Notice that if we prove that the expansion \eqref{h_decay} hold for a sufficiently large $r < \tfrac{nq}{n + (n-2)q}$, then it holds, in particular, for any smaller $r$.
    
    If $\tfrac{n}{2} < \tfrac{nq}{n + (n-2)q}$, then $h(x) \in C^{0,2-\frac{n}{r}}(\psi(U))$ for all $ \tfrac{n}{2} < r < \tfrac{nq}{n + (n-2)q}$ and thus it reads
    \begin{equation*}
        h(x) = h(0) + \BO\bigl(|x|^{2-\frac{n}{r}}\bigr) \quad \text{as} \quad |x| \to 0 \,.
    \end{equation*}
    In particular, $h$ is bounded in $U$ and the positivity of $G_{p}$ follows. Additionally, since
    \begin{equation*}
        \gamma \leq \frac{n}{r}  \quad \iff \quad r \leq \frac{nq}{n+(n-2)q} \,,
    \end{equation*}
    item $(ii)$ in \cref{prop: blow-up behaviour} with $\beta = 2 - \tfrac{n}{r}$ implies the first order decay
    \begin{equation*}
        h(x) = h(0) + \BO_1\bigl(|x|^{2-\frac{n}{r}}\bigr) \quad \text{as} \quad |x| \to 0
    \end{equation*}
    whenever $q > n$. 

    If $\tfrac{n}{2} \geq \tfrac{nq}{n+(n-2)q} > r$, the function $h$ need not be bounded in $U$ and we need to work with part $(i)$ of \cref{prop: blow-up behaviour}.
    We can consider the optimal Sobolev embedding $W^{2,r}(U) \hookrightarrow L^{\frac{rn}{n-2r}}(U)$ and notice that $r$ can be chosen so that  $\tfrac{2q}{2q - 1} < \tfrac{rn}{n-2r}$ as in the hypothesis of \cref{prop: blow-up behaviour}. In fact, combining this with the upper bound on $r$, it is equivalent to require $1-\tfrac{1}{2q} > \tfrac{1}{r} - \tfrac{2}{n} > 1 + \tfrac{1}{q} - \tfrac{4}{n}$ and we can check that $\tfrac{1}{2q} < \tfrac{4}{n} - \tfrac{1}{q}$ is
    ensured by $q > \tfrac{n}{2}$.
     The second condition of item $(i)$ in \cref{prop: blow-up behaviour} follows directly from
    \begin{align*}
        \frac{n}{r} - 2 > n + \frac{n}{q} - 4 = \gamma - 2 \,.
    \end{align*}
    At this point, the claim is implied by part $(i)$ of \cref{prop: blow-up behaviour}.
\end{proof} 

\subsection{Blow-up analysis}
\label{Blow-up analysis}
In the remainder of the section, we show a \emph{quantitative removable singularity} result for linear elliptic equations with rough coefficients, which is key in the last step of the proof of the previous \cref{theorem C}. The statement is rather general and it applies to a large family of linear elliptic operators: given a smooth, 
bounded domain $\Omega \subset \nR^n$ around the origin, we consider a linear operator
\begin{equation}
    \label{eq: Linear Elliptic Operator}
    \LL \doteq a^{ij}(x) \partial_{i} \partial_{j}  + b^i (x) \partial_{i} + c(x) \,,
\end{equation}
where the coefficients $a^{ij} \in W^{2,q}(\Omega)$ and $b^i, c \in L^{\infty}(\Omega)$ with $q > \tfrac{n}{2}$ satisfy
\begin{align}
    \label{coeff conditions 1}
    \inf_{\Omega} a^{ij}\xi_i \xi_j \geq \lambda |\xi|^2 & \qquad \text{for all} \; \xi \in \nR^n \,, 
    \\
    \label{coeff conditions 2}
    \|a^{ij}\|_{C^{0,2 - \frac{n}{q}}(\Omega)} \,, 
    \quad
    \sum_{k = 1}^n \|\partial_{k} a^{ij}\|_{L^{2q}(\Omega)} & \,, \quad
    \sup_{\Omega}|b^i|\,, \quad \sup_{\Omega} |c| \; \leq \; \Lambda 
\end{align}
for some positive constants $\lambda$ and $\Lambda$. Notice that the above norms are well defined due to the Sobolev embeddings $W^{2, q}(\Omega) \hookrightarrow C^{0, 2 - \frac{n}{q}}(\Omega)$ and $W^{1, q}(\Omega) \hookrightarrow L^{2q}(\Omega)$; the latter is clear if $q \geq n$, while it is ensured for $q < n$  by the inequality $2q < \tfrac{nq}{n-q}$ for $q > \tfrac{n}{2}$.

\begin{theorem}
    \label{prop: blow-up behaviour}
    Let $\Omega \subset \nR^n$ be a smooth, bounded domain around the origin and let $\LL$ be an operator of the form of \eqref{eq: Linear Elliptic Operator} with coefficients satisfying \eqref{coeff conditions 1} and \eqref{coeff conditions 2}.  Let $u \in W^{2,q}_{loc}\bigl(\Omega \setminus \{0\}\bigr)$ be a solution of $\LL u = f$ in $\Omega$ with $f \in L^q(\Omega)$ and suppose that $f = \BO(|x|^{-\gamma})$ as $|x| \to 0$ for some $\gamma \geq 0$. 
    \begin{enumerate}
        \item[(i)] If $u \in  L^p(\Omega)$ for
        $p > \tfrac{2q}{2q-1}$
        and $\gamma \leq 2 + \tfrac{n}{p}$, then
        \begin{equation*}
            u = \BO\bigl(|x|^{-\frac{n}{p}}\bigr) \quad \text{as} \quad |x| \to 0 \,.
        \end{equation*}
        Moreover, if $q > n$, then
        \begin{equation*}
            u = \BO_1\bigl(|x|^{-\frac{n}{p}}\bigr) \quad \text{as} \quad |x| \to 0 \,.
        \end{equation*}
        \item[(ii)] If $u \in C^{0,\beta}(\Omega)$ for $0 < \beta < 1$, $\gamma \leq 2 - \beta$ and $q > n$, then
        \begin{equation*}
            u = u(0) + \BO_1\bigl(|x|^\beta\bigr) \quad \text{as} \quad |x| \to 0 \,.
        \end{equation*}
    \end{enumerate}
\end{theorem}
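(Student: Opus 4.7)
Both parts follow a rescaling strategy exploiting the dilation invariance of the class of operators of the form \eqref{eq: Linear Elliptic Operator}. For each $r\in (0,r_0)$ with $r_0$ small, introduce the annuli $A\doteq\{1/2\leq|y|\leq 2\}$ and $A'\subset\subset A$, and consider on $A$ the rescaled function $u_r(y)\doteq u(ry)$. A direct computation shows that $u_r$ satisfies
\begin{equation*}
    \LL_r u_r \doteq a^{ij}(ry)\partial_i\partial_j u_r + r b^i(ry)\partial_i u_r + r^2 c(ry) u_r = r^2 f(ry)
\end{equation*}
on $A$. The key observation is that the coefficients of $\LL_r$ are uniformly controlled in $r\in (0,1)$: the $C^{0,2-\frac{n}{q}}$-seminorm of $a^{ij}(r\cdot)$ scales as $r^{2-\frac{n}{q}}[a^{ij}]_{C^{0,2-\frac{n}{q}}(\Omega)}$ and the $L^{2q}$-norm of its gradient scales as $r^{1-\frac{n}{2q}}\|\partial a^{ij}\|_{L^{2q}(\Omega)}$, both of which are bounded for $r\leq 1$ since $q>\tfrac{n}{2}$; ellipticity is preserved by \eqref{coeff conditions 1}, and the lower-order terms only improve.

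\emph{Part (i).} Because $u\in W^{2,q}_{\mathrm{loc}}(\Omega\setminus\{0\})$, we have $u_r\in W^{2,q}(A)$ for each $r$, so the non-divergence $W^{2,p}$ Calder\'on--Zygmund theory applied to $\LL_r$ yields
\begin{equation*}
    \|u_r\|_{W^{2,p}(A')} \leq C\bigl(\|r^2 f(r\cdot)\|_{L^p(A)} + \|u_r\|_{L^p(A)}\bigr)
\end{equation*}
with $C$ independent of $r$ in the admissible range $\frac{2q}{2q-1}<p\leq q$ (the lower bound guaranteeing that the error terms arising from the non-smooth coefficients through the product of $\partial a^{ij}\in L^{2q}$ with derivatives of $u$ fall into the scope of the estimate). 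The hypothesis $f=\BO(|x|^{-\gamma})$ gives $\|r^2f(r\cdot)\|_{L^p(A)}\leq C r^{2-\gamma}$, while scaling yields $\|u_r\|_{L^p(A)} = r^{-\frac{n}{p}}\|u\|_{L^p(rA)}\leq C r^{-\frac{n}{p}}$. The condition $\gamma\leq 2+\tfrac{n}{p}$ makes the second term dominate, so
\begin{equation*}
    \|u_r\|_{W^{2,p}(A')}\leq C r^{-\frac{n}{p}}.
\end{equation*}
If $p>\tfrac{n}{2}$, then $W^{2,p}(A')\hookrightarrow L^\infty(A')$ and undoing the rescaling gives $|u(x)|\leq C|x|^{-\frac{n}{p}}$. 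Otherwise, a finite bootstrap based on the Sobolev embeddings $W^{2,p_k}\hookrightarrow L^{p_{k+1}}$ with $\tfrac{1}{p_{k+1}}=\tfrac{1}{p_k}-\tfrac{2}{n}$ applied to $u_r$ on shrinking subannuli, combined with the same $W^{2,p_k}$-estimate at each step (all bounds remaining $Cr^{-\frac{n}{p}}$ since the decay exponent is determined by $u_r$ through the initial $L^p$-bound), reduces to the previous case. When $q>n$, one uses $W^{2,q}(A')\hookrightarrow C^1(A')$ at the last step to upgrade to the $\BO_1$-control.

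\emph{Part (ii).} Since $u\in C^{0,\beta}(\Omega)$, define instead
\begin{equation*}
    v_r(y) \doteq \frac{u(ry)-u(0)}{r^\beta}, \qquad y\in A,
\end{equation*}
which is uniformly bounded in $C^{0,\beta}(A)$ by the Hölder seminorm of $u$. Computing as above, one gets
\begin{equation*}
    \LL_r v_r = r^{2-\beta}\bigl(f(r\cdot) - c(r\cdot)\,u(0)\bigr),
\end{equation*}
and the hypothesis $\gamma\leq 2-\beta$ together with the boundedness of $c$ gives a uniform $L^q(A)$ bound on this right-hand side. The $W^{2,q}$-estimate yields $\|v_r\|_{W^{2,q}(A')}\leq C$ uniformly in $r$, and the embedding $W^{2,q}\hookrightarrow C^1$ (available because $q>n$) produces a uniform $C^1(A')$ bound. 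Translating back, $|u(ry)-u(0)|\leq C r^\beta$ and $|\nabla u(ry)|\leq Cr^{\beta-1}$ on $A'$, which is the claimed $\BO_1\bigl(|x|^\beta\bigr)$-expansion with $x=ry$.

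\emph{Main obstacle.} The delicate point is the uniform-in-$r$ $W^{2,p}$ elliptic estimate used in part (i): one must verify that the Calder\'on--Zygmund constant does not degenerate as $r\to 0$ and that the lower-order error terms produced by the Hölder principal part (whose gradient only lies in $L^{2q}$) can be absorbed, which is precisely where the restriction $p>\tfrac{2q}{2q-1}$ enters. A convenient way to handle this is to freeze the leading coefficients at a reference point of $A$ and treat the oscillation of $a^{ij}(r\cdot)$ together with the lower-order terms perturbatively, using the uniform smallness of the rescaled Hölder seminorm for small $r$ and a Sobolev multiplication argument to bound the correction in $L^p$.
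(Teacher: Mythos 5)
Your proposal is correct and follows the same rescaling strategy as the paper, but with two genuine differences worth noting.

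First, the localization: you rescale around the origin and work on fixed annuli $A$, whereas the paper rescales around a point $x_0\in\partial B_{2\rho}(0)$ and works on the unit ball $B_1(x_0)$. These are equivalent, just cosmetically different.

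Second, and more substantively, the choice of elliptic estimate. You invoke the standard non-divergence $W^{2,p}$ Calder\'on--Zygmund theory (\cite[Theorem 9.11]{GT}), which gives $\|u_r\|_{W^{2,p}(A')}\leq C(\|u_r\|_{L^p}+\|\LL_r u_r\|_{L^p})$, and then bootstrap inline through the chain $W^{2,p_k}\hookrightarrow L^{p_{k+1}}$ until you reach $W^{2,q}$. The paper instead isolates a standalone lemma (\cref{lemma: GT 9.11 adaptation}) asserting the stronger bound $\|u\|_{W^{2,q}(\Omega'')}\leq C(\|u\|_{L^p(\Omega)}+\|f\|_{L^q(\Omega)})$ directly, so the bootstrap is encapsulated once at the level of the lemma. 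Both routes reach the same conclusion, and your inline bootstrap is valid (the dominant decay $r^{-n/p}$ propagates at each step since $\gamma\leq 2+\tfrac{n}{p}$, and the iteration terminates in finitely many steps once $p_k\geq q$, at which point the $C^0$ or $C^1$ Sobolev embedding applies).

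Where your write-up goes wrong is in the attribution of the restriction $p>\tfrac{2q}{2q-1}$. You claim this lower bound is needed so that the error terms involving $\partial a^{ij}\in L^{2q}$ ``fall into the scope of the estimate,'' but no such terms appear when one uses GT Theorem 9.11: that theorem is proved by freezing the leading coefficients and absorbing the oscillation $|a^{ij}(x)-a^{ij}(x_0)|$ via the modulus of continuity, never touching derivatives of $a^{ij}$. The constant depends only on the ellipticity, $L^\infty$ bounds, and a bound on the modulus of continuity (which, as you correctly observe, improves under rescaling since the H\"older seminorm of $a^{ij}(r\cdot)$ scales as $r^{2-\frac{n}{q}}$), and this holds for all $1<p<\infty$. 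The restriction $p>\tfrac{2q}{2q-1}$ is genuinely present in the \emph{paper's} argument because \cref{lemma: GT 9.11 adaptation} is proved via the $W^{1,p}$ interior estimate of \cref{lemma: Avalos 3.2 adaptation}, in which integration by parts does produce $\partial a^{ij}\,\partial u$ terms that must be controlled through $\|\partial a^{ij}\|_{L^{2q}}$; the upper bound $\tfrac{1}{p}<\tfrac{1}{q'}+\tfrac{1}{2q}$ there is exactly what makes those products estimable. Your argument as written does not encounter that constraint at all, so your ``Main obstacle'' paragraph points to the wrong place; if anything, your route would prove the statement for a wider range of $p$ than the theorem claims, and you should say so rather than invent a need for the restriction.

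Part (ii) is correct as written: the normalized increment $v_r=(u(r\cdot)-u(0))/r^\beta$ is uniformly bounded in $C^{0,\beta}(A)$, the source term $r^{2-\beta}(f(r\cdot)-c(r\cdot)u(0))$ is uniformly bounded in $L^q(A)$ under $\gamma\leq 2-\beta$, and GT Theorem 9.11 plus $W^{2,q}\hookrightarrow C^1$ (for $q>n$) yields the claim.
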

In order to prove \cref{prop: blow-up behaviour}, we first need the following two technical lemmas. The first one is an adaptation of \cite[Lemma 3.2]{avalos2024sobolev} to our context, where the dependence of the constant appearing in the elliptic estimate is carefully tracked down.
\begin{lemma}
    \label{lemma: Avalos 3.2 adaptation}
    Let $\Omega \subset \nR^n$ be a smooth, bounded domain around the origin and $\LL$ be an operator of the form of \eqref{eq: Linear Elliptic Operator} with coefficients satisfying \eqref{coeff conditions 1} and \eqref{coeff conditions 2}. For every smooth bounded domains $\Omega'' \subset \subset \Omega' \subset \subset \Omega$ and $1 < p < \infty$ satisfying $\tfrac{1}{q} - \tfrac{1}{n} \leq \tfrac{1}{p} < \tfrac{1}{q'} + \tfrac{1}{2q}$,\footnote{
    The upper bound $\tfrac{1}{p} < \tfrac{1}{q'} + \tfrac{1}{2q}$ is strictly below the usual 
    bound $\tfrac{1}{q'} + \tfrac{1}{n}$. 
    It is equivalent to $\tfrac{1}{2q} < \tfrac{1}{p'}$ and allows ensuring that the constant $C$ will depend, among others, on $\| \partial_k a^{ij}\|_{L^{2q}}$. This bound is not restrictive for our application to the Green function expansion in \cref{theorem C}.
    Dealing with 
    $\tfrac{1}{p'} \leq \tfrac{1}{2q}$ is possible, but 
    requires control over a different norm and hence to distinguish the two cases. } there exists a constant $C = C(n, p, q, \lambda, \Lambda, \Omega', \Omega'') > 0$ such that
    \[\|u\|_{W^{1, p}(\Omega'')} \leq C \left(\| \LL u \|_{W^{-1, p}(\Omega')} + \| u \|_{L^p(\Omega')}\right)
    \]
    holds for all $u \in W^{1, p}(\Omega)$.
\end{lemma}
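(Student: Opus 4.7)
The plan is to adapt the proof of \cite[Lemma 3.2]{avalos2024sobolev} via a freezing-of-coefficients argument combined with a finite covering, carefully tracking the dependence of the final constant on the coefficient norms in \eqref{coeff conditions 2}.

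As a preliminary step, I would rewrite $\LL$ in divergence form
\[
\LL u = \partial_i\bigl(a^{ij}\partial_j u\bigr) + \tilde{b}^i\partial_i u + cu, \qquad \tilde{b}^i \doteq b^i - \partial_j a^{ij}\in L^{2q}(\Omega),
\]
and observe that for $u\in W^{1,p}(\Omega)$ this yields a natural element of $W^{-1,p}(\Omega)$. The pairing of the lower-order term $\tilde{b}^i\partial_i u$ against $\phi\in W^{1,p'}_0$ is controlled by
\[
\Bigl|\int_\Omega \tilde{b}^i\partial_i u\,\phi\,dx\Bigr| \leq \|\tilde{b}\|_{L^{2q}(\Omega)}\|\nabla u\|_{L^p(\Omega)}\|\phi\|_{L^{s'}(\Omega)}, \qquad \tfrac{1}{s'} \doteq 1 - \tfrac{1}{2q} - \tfrac{1}{p},
\]
and the Sobolev embedding $W^{1,p'}_0 \hookrightarrow L^{s'}$. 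The hypothesis $\tfrac{1}{p} < \tfrac{1}{q'} + \tfrac{1}{2q}$ is exactly what ensures $p' < 2q$ and thus $s' > 0$, while the remaining embedding requirement reduces to $q > \tfrac{n}{2}$.

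The core of the proof is a local estimate via freezing of coefficients. For each $x_0\in\overline{\Omega''}$, let $\bar{\LL}u\doteq \partial_i\bigl(a^{ij}(x_0)\partial_j u\bigr)$; a constant linear change of variables depending only on $\lambda, \Lambda$ conjugates $\bar\LL$ to the standard Laplacian on a ball $B_r(x_0)$. The classical Calder\'on--Zygmund $W^{1,p}$ estimate applied to $\eta u$, with $\eta\in C^\infty_c(B_r(x_0))$ and $\eta\equiv 1$ on $B_{r/2}(x_0)$, yields
\[
\|u\|_{W^{1,p}(B_{r/2}(x_0))} \leq C_0 \Bigl(\bigl\|\bar{\LL}(\eta u)\bigr\|_{W^{-1,p}(B_r(x_0))} + \|u\|_{L^p(B_r(x_0))}\Bigr),
\]
with $C_0 = C_0(n,p,\lambda,\Lambda)$. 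Substituting
\[
\bar{\LL}(\eta u) = \LL(\eta u) - \partial_i\bigl((a^{ij}-a^{ij}(x_0))\partial_j(\eta u)\bigr) - \tilde{b}^i\partial_i(\eta u) - c\,\eta u,
\]
the H\"older continuity $\|a^{ij}-a^{ij}(x_0)\|_{L^\infty(B_r)} \leq \Lambda r^{2-\frac{n}{q}}$ gives
\[
\bigl\|\partial_i\bigl((a^{ij}-a^{ij}(x_0))\partial_j(\eta u)\bigr)\bigr\|_{W^{-1,p}(B_r)} \leq \Lambda r^{2-\frac{n}{q}}\,\|\eta u\|_{W^{1,p}(B_r)} \,,
\]
which is absorbed on the left by choosing $r=r(n,p,q,\lambda,\Lambda)$ so that $2C_0\Lambda r^{2-n/q}\leq 1$. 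The lower-order contributions $\tilde{b}^i\partial_i(\eta u)$ and $c\,\eta u$, together with commutator terms coming from $\eta$, yield bounds of the form $C\|u\|_{W^{1,p}(B_r)}$ with $C$ depending on the norms in \eqref{coeff conditions 2}; these are split via an Ehrling-type interpolation $W^{1,p}\hookrightarrow\hookrightarrow L^p$ into an arbitrarily small fraction of $\|u\|_{W^{1,p}}$ (again absorbable) plus a large multiple of $\|u\|_{L^p}$.

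Finally, $\Omega''$ is covered by finitely many balls $B_{r/2}(x_k)$ with $B_r(x_k)\subset \Omega'$, and summing the local estimates (with a standard absorption of the overlapping $W^{1,p}$ contributions) produces the global bound. The main obstacle is the bookkeeping at the freezing step: one must verify that the radius $r$ at which the perturbation is absorbed depends only on the structural constants $n,p,q,\lambda,\Lambda$ and not on $u$, which is achieved via the uniform bound on $\|a^{ij}\|_{C^{0,2-n/q}}$ in \eqref{coeff conditions 2}. Tracking the dependence of the final constant on $\|\partial_k a^{ij}\|_{L^{2q}}$ transparently, rather than absorbing it into a generic constant as in the smoother setting, is the key refinement of \cite[Lemma 3.2]{avalos2024sobolev} and is precisely what requires the condition $p' < 2q$.
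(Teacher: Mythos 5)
Your plan follows the same overall architecture as the paper's proof (freeze coefficients on a small ball, absorb the error terms, then cover $\Omega''$), and you correctly identify both the Hölder-continuity absorption for the second-order error and the role of $\tfrac{1}{p}<\tfrac{1}{q'}+\tfrac{1}{2q}$. However, there is a genuine gap in how you handle the lower-order error terms in the divergence-form version.

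After passing to divergence form, the first-order error is $\tilde b^i\partial_i(\eta u)$ with $\tilde b \in L^{2q}$ only, and its $W^{-1,p}$ norm controls $\|\tilde b\|_{L^{2q}}\,\|\nabla(\eta u)\|_{L^p}$ -- a genuine first-order contribution in $u$. You propose to absorb this via ``Ehrling-type interpolation $W^{1,p}\hookrightarrow\hookrightarrow L^p$'', but Ehrling's lemma interpolates the \emph{middle} norm in a chain $X\hookrightarrow\hookrightarrow Y\hookrightarrow Z$; it cannot write $\|u\|_{W^{1,p}}\leq \varepsilon\|u\|_{W^{1,p}}+C_\varepsilon\|u\|_{L^p}$ in a useful way, and since $\tilde b$ is not differentiable you cannot integrate by parts to trade $\|\nabla u\|_{L^p}$ for an intermediate $\|u\|_{L^s}$. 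The only legitimate smallness mechanisms are (a) to use $\|\tilde b\|_{L^{2q}(B_r)}\to 0$ as $r\to 0$, which is not uniform over all fields with $\|\tilde b\|_{L^{2q}(\Omega)}\leq\Lambda$ and so would ruin the claimed dependence $C=C(n,p,q,\lambda,\Lambda,\Omega',\Omega'')$, or (b) the approach the paper actually uses: keep $b^i$ in $L^\infty$ non-divergence form, integrate by parts only inside the error pairing so the term $\partial_i a^{ij}$ appears, and then apply Hölder with a judiciously chosen intermediate exponent $s$ satisfying $\max\{\tfrac{1}{p'}-\tfrac{1}{n},0\}<\tfrac{1}{s}<\tfrac{1}{p'}-\tfrac{1}{2q}$, which splits off an explicit factor $|B_r|^{1/l}$ with $\tfrac{1}{l}=\tfrac{1}{p'}-\tfrac{1}{2q}-\tfrac{1}{s}>0$. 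It is this $r^{n/l}$ (and the analogous $r^{n/p'-n/s}$ for the $b^i$ and $c$ terms) that makes the absorption uniform in the structural constants. Your proposal should replace the Ehrling step with this intermediate-exponent Hölder trick (or, if you insist on divergence form, apply the same trick to $\tilde b^i\partial_i(\eta u)$: split $\|\phi\|_{L^{s'}(B_r)}\leq|B_r|^{1/s'-1/s}\|\phi\|_{L^{s}}$ for an $s$ in the same range, which again requires precisely $\tfrac{1}{2q}<\tfrac{1}{p'}$).
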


\begin{proof} 
  The proof consists of two steps:

    \textit{Step 1.}
    We first assume that $u$ is compactly supported in some small ball $B_r(x_0) \subset \subset \Omega'$ centered at $x_0 \in \Omega''$. Since the estimate holds for constant coefficients operators (see e.g. \cite[Lemma 2.19]{MaxHolTso2}), we freeze the coefficients at $x_0$ to obtain
    \begin{align*}
        \|u\|_{W^{1, p}(\Omega')} & \leq C_{0} \Big(\|a^{ij}(x_0)\partial_i\partial_j u\|_{W^{-1, p}(\Omega')} + \|u\|_{L^p(\Omega')}\Big) 
        \\
        & \leq C_{0} \, \Big(\|\LL u\|_{W^{-1, p}(\Omega')} +  \|u\|_{L^p(\Omega')}  
        \\
        & + \|(a^{ij} - a^{ij}(x_0)) \,\partial_i \partial_j u\|_{W^{-1, p}(\Omega')} +\|b^k \,\partial_k u\|_{W^{-1, p}(\Omega'))} + \|c\, u\|_{W^{-1, p}(\Omega'))} \Big) \,.
    \end{align*}
    The constant $C_0$ depends only on the principal symbol of the frozen coefficient operator, which is controlled by the ellipticity constant $\lambda$ and $\Lambda$; in particular,  it is independent of $r$. Being $u$ supported in a small ball will allow us to estimate the error terms as follows: consider a test function $\phi \in W_0^{1, p'}(\Omega')$ and observe that 
    \begin{align*}
        (a^{ij} - a^{ij}(x_0)) \, \phi \in W^{2, q} (\Omega) \otimes W^{1, p'}_0 (\Omega') \hookrightarrow  W^{1, p'}_0 (\Omega')
    \end{align*}
    as $\tfrac{1}{p} \leq \tfrac{1}{q'} + \tfrac{1}{n}$. Hence, we can estimate
    \begin{align}
        \label{eq: term1}
        \big|\langle (a^{ij} - a^{ij}(x_0))   \,\partial_i \partial_j u, \,\phi \rangle \big| & = \left|\langle \partial_i \partial_j u, \, (a^{ij} - a^{ij}(x_0)) \, \phi \rangle \right| 
        \nonumber
        \\
        & = \left|\bigl\langle \partial_j u, \, \partial_i \bigl(( a^{ij} - a^{ij}(x_0)) \, \phi\bigr) \bigr\rangle \right| 
        \nonumber
        \\
        &\leq \int_{\Omega'} |\partial_j u \, \partial_i a^{ij} \, \phi| \, +  \int_{\Omega'} | a^{ij} - a^{ij}(x_0)| \, |\partial_j u \, \partial_i\phi| 
        \nonumber
        \\
        & = \int_{B_r(x_0)} |\partial_j u \, \partial_i a^{ij} \, \phi| \, +  \int_{B_r(x_0)} | a^{ij} - a^{ij}(x_0)| \, |\partial_j u \, \partial_i\phi| \,.
    \end{align}
    To get a bound on the first term, fix some $s$ such that
    \begin{equation}
        \label{eq: s range}
        \max\left\{\frac{1}{p'} - \frac{1}{n}, \; 0\right\} < \frac{1}{s} < \frac{1}{p'} - \frac{1}{2q}
    \end{equation}
    and observe that $W^{1,p'}_{loc}(\Omega') \hookrightarrow L^s(\Omega)$ always holds. Notice that such $s$ can always be chosen due to $q > \tfrac{n}{2}$ and $\tfrac{1}{p} < \tfrac{1}{q'} + \tfrac{1}{2q} = 1 - \tfrac{1}{2q}$. Then, defining $\tfrac{1}{l} \doteq \tfrac{1}{p'} - \tfrac{1}{2q} - \tfrac{1}{s}$, we notice that $1 < l < \infty$ and we may estimate
    \begin{align*}
        \nonumber
        \int_{B_{r}(x_0)} |\partial_j u \, \partial_i a^{ij} \,\phi| &\leq \|\partial_j u\|_{L^p(B_{r}(x_0))} \, \|\partial_i a^{ij} \, \phi \|_{L^{p'}(B_{r}(x_0))} 
        \\ 
        \nonumber
        & \leq \|\partial_j u\|_{L^p(\Omega')} \,|B_r|^{\frac{1}{l}}\, \|\partial_i a^{ij}\|_{L^{2q}(B_{r}(x_0))} \, \|\phi\|_{L^{s}(\Omega')} 
        \\
        &\leq C(n, p, q, \Omega') \, \Lambda \, r^{\frac{n}{l}}\, \|u\|_{W^{1,p}(\Omega')} \, \|\phi\|_{W^{1,p'}(\Omega')}  \,.
     \end{align*}
     On the other hand, 
     the second term of \eqref{eq: term1}  can be estimated by 
     using the Hölder continuity of $a^{ij}$ as 
     \begin{align*}
         \int_{B_{r}(x_0)} | a^{ij} - a^{ij}(x_0)| \,|\partial_j u \, \partial_i\phi| &\leq \sup_{x \in B_r\setminus\{x_0\}}\frac{\bigl|a^{ij}(x) - a^{ij}(x_0)\bigr|}{|x-x_0|^{2 - \frac{n}{q}}} \, r^{2-\frac{n}{q}} \int_{B_{r}(x_0)} |\partial_j u \, \partial_i\phi|
         \\
         & \leq \Lambda  \, r^{2-\frac{n}{q}} \,\|\partial_j u\|_{L^p(B_{r}(x_0))} \,\|\partial_i \phi\|_{L^{p'}(B_{r}(x_0))} 
         \\
         & \leq \Lambda  \, r^{2-\frac{n}{q}} \,\|u\|_{W^{1, p}(\Omega')} \,\|\phi\|_{W^{1, p'}(\Omega')} \,.
     \end{align*}
     Similarly, one can estimate the lower-order error terms: with the same $s$ as before, satisfying \eqref{eq: s range}, we get
     \begin{align*}
         \left|\langle b^i \partial_i u, \, \phi \rangle \right| & = \int_{B_{r}(x_0)} |b^i \, \partial_i u \, \phi| \leq \sup_{\Omega} |b^i| \int_{B_{r}(x_0)} |\partial_i u \, \phi| 
         \\
         &\leq \Lambda \,|B_r|^{\frac{1}{p'}-\frac{1}{s}} \,\|\partial u\|_{L^p(B_r(x_0))} \|\phi\|_{L^s(B_r(x_0))} 
         \\
         & \leq C(n, p, q, \Lambda,\Omega') \,  r^{\frac{n}{p'} - \frac{n}{s}} \, \|u\|_{W^{1,p}(\Omega')} \, \| \phi \|_{W^{1, p'}(\Omega')} 
     \end{align*}
     as well as
     \begin{align*}
         \left|\langle c \, u, \, \phi \rangle \right| & = \int_{B_{r}(x_0)} |c \, u \, \phi| \leq \sup_{\Omega}|c| \int_{B_r} |u \, \phi|
         \\
         & \leq \Lambda \, |B_r|^{\frac{1}{p'}-\frac{1}{s}} \, \|u\|_{L^{p}(B_{r}(x_0))} \, \| \phi \|_{L^{s}(B_{r}(x_0))} 
         \\
         & \leq C(n, p, q, \Lambda, \Omega') \, r^{\frac{n}{p'} - \frac{n}{s}} \, \|u\|_{W^{1, p}(\Omega')} \, \| \phi \|_{W^{1, p'}(\Omega')} \,.
     \end{align*}
     Combining the estimates on all the error terms and assuming without loss of generality that $r < 1$, we obtain that
     \begin{equation*}
         \|u\|_{W^{1,p}(\Omega')} \leq C_0 \Big(\|\LL u\|_{W^{-1,p}(\Omega')} + \|u\|_{L^p(\Omega')}\Big) + C(n, p, q, \Lambda, \Omega') \, r^\sigma \, \|u\|_{W^{1,p}(\Omega')} \,,
     \end{equation*}
     where
     \begin{equation*}
         \sigma \doteq \min \left\{2-\frac{n}{q}, \frac{n}{l}, \frac{n}{p'} - \frac{n}{s}\right\} > 0 \,.
     \end{equation*}
     Thereby, taking $r > 0$ small enough depending on $n$, $p$, $q$, $\Lambda$ and $\Omega'$, the error term can be absorbed in the left-hand-side, yielding the estimate
    \begin{equation}\label{eq: localized_estimate}
        \|u\|_{W^{1,p}(\Omega')} \leq C\Big(\|\LL u\|_{W^{-1,p}(\Omega')} + \|u\|_{L^p(\Omega')}\Big) \,.
    \end{equation}
    for a constant $C=C(n,p,q,\lambda,\Lambda,\Omega')>0$.

    \vspace{0.3cm}
    \textit{Step 2.}  To get the global estimate, given $x\in \Omega''$, we first notice that \eqref{eq: localized_estimate} holds for all $u\in W_0^{1,p}(B_{r_x}(x))$ and some $r_x>0$, which we may take small enough so that $B_{r_x}(x)\subset\subset \Omega'$. Due to the compactness of $\overline{\Omega''}$, there exists a finite amount of such points $\{x_k\}_{k=1}^{K}$ such that $\Omega'' \subset \cup_{k=1}^K B^k$, where $B^k \doteq B_{r_k}(x_k)\subset\subset \Omega'$ and now \eqref{eq: localized_estimate} holds on each $B^k$ for all $u\in W^{1,p}_0(B^k)$ with a constant $C_k=C_k(n,p,q,\lambda,\Lambda,\Omega')$. Let us consider a smooth partition of unity $\{\eta_k\}_{k = 1}^K$ subordinate to $\{B^k\}_{k = 1}^K$ and estimate
    \begin{align*}
            \|u\|_{W^{1,p}(\Omega'')} &\leq \sum_{k=1}^K \|\eta_k u\|_{W^{1,p}(\Omega')}
            \\
            &\leq C \, \sum_{k=1}^K \Big(\|\eta_k \, \LL u\|_{W^{-1,p}(\Omega')} + \|u\|_{L^p(\Omega')}\Big)
            \\
            &\hspace{-1.5cm} + C \, \sum_{k=1}^K \Big(\bigl\|\bigl(a^{ij}\partial_i\partial_j \eta_k + b^i\partial_i\eta_k\bigr)u\bigr\|_{W^{-1,p}(\Omega')} + \bigl\|a^{ij} \bigl(\partial_i\eta_k\partial_j u + \partial_j\eta_k\partial_i u\bigr)\bigr\|_{W^{-1,p}(\Omega')} \Big),
    \end{align*}
    for a constant $C=C(n,p,q,\lambda,\Lambda,\Omega',\Omega'')$ which can be taken to be $C=\max\{C_k, \: k=1,\cdots,N\}$. It is easy to see that for any test function $\phi \in W_0^{1,p'}(\Omega')$, there holds
    \begin{equation*}
        \left|\langle  a^{ij} \partial_i \partial_j \eta_k \, u + b^i \partial_i \eta_k \, u ,\, \phi \rangle \right| \leq C(\eta_k, \Omega') \, \Lambda \, \| u \|_{L^p(\Omega')} \, \| \phi \|_{W^{1, p'}(\Omega')}
    \end{equation*}
    while the last term can be estimated as
    \begin{align*}
        \left|\langle  a^{ij} \partial_i \eta_k \, \partial_ju  ,\, \phi \rangle \right| & =  \left|\langle u,\, \partial_j(a^{ij} \partial_i \eta_k \,\phi) \rangle \right| 
        \\
        &\leq C(\eta_k, \Omega') \, \int_{\Omega'} |u \,( \partial_ja^{ij} \,\phi + a^{ij}(\partial_j\phi + \phi))| 
        \\
        &\leq C(\eta_k, \Omega') \, \left( \|u\|_{L^p(\Omega')}\,\|\partial_ja^{ij} \,\phi\|_{L^{p'}(\Omega')} + \Lambda  \, \| u\|_{L^p(\Omega')} \,\|\phi\|_{
        W^{1,p'}(\Omega')}\right) \\
        &\leq C'(\eta_k, \Omega') \, \|u\|_{L^p(\Omega')}\, \left( \|\partial_ja^{ij}\|_{L^{2q}(\Omega')} \, \|\phi\|_{L^{s}(\Omega')} + \Lambda \, \|\phi\|_{W^{1, p'}(\Omega')}\right)  
        \\
        & \leq C''(\eta_k, \Omega') \, \Lambda \, \|u\|_{L^p(\Omega')}\,\|\phi\|_{W^{1,p'}(\Omega')}\,,
    \end{align*}
    where, following a similar strategy to the one above, we have chosen $\tfrac{1}{s} \doteq \tfrac{1}{p'} - \tfrac{1}{2q} > \tfrac{1}{p'} - \tfrac{1}{n}$ to ensure the last Sobolev inequality and is well-defined due to $\tfrac{1}{p} < \tfrac{1}{q'} + \tfrac{1}{2q}$ equivalent to $\tfrac{1}{2q} < \tfrac{1}{p'}$. The assertion then follows by combining the three estimates above.
\end{proof}
The above lemma allows us to prove 
a refined version of \cite[Theorem 9.11]{GT}:
\begin{lemma}
    \label{lemma: GT 9.11 adaptation}
    Let $\Omega\subset \mathbb{R}^n$ be a smooth, bounded domain around the origin and let $\LL$ be an operator of the form of \eqref{eq: Linear Elliptic Operator} with coefficients satisfying \eqref{coeff conditions 1}-\eqref{coeff conditions 2}. Let $u \in W^{2,q}(\Omega)$ be a solution of $\LL u = f$ in $\Omega$ with $f \in L^q(\Omega)$. Then, for every smooth, bounded $\Omega'' \subset \subset \Omega$ and every 
    $p > \tfrac{2q}{2q-1}$, there exists a positive constant $C = C\bigl(n, q, p, \lambda, \Lambda, \Omega'', \Omega\bigr)$ such that
    \begin{equation*}
        \|u\|_{W^{2, q}(\Omega'')} \leq C \Big(\|u\|_{L^{p}(\Omega)} + \| f\|_{L^q(\Omega)}\Big) \,.
    \end{equation*}
\end{lemma}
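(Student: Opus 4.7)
The plan is to prove the estimate by combining an interior $W^{2,q}$ elliptic estimate with an interpolation argument that trades the $L^q$-control of $u$ for an $L^p$-control. The starting point is a local interior $W^{2,q}$ estimate
\begin{equation*}
\|u\|_{W^{2,q}(\Omega_1)} \leq C \bigl(\|u\|_{L^q(\Omega_2)} + \|f\|_{L^q(\Omega_2)}\bigr)
\end{equation*}
for arbitrary smooth nested domains $\Omega_1 \subset\subset \Omega_2 \subset\subset \Omega$, which is the $W^{2,q}$-analogue of \cref{lemma: Avalos 3.2 adaptation}. It can be established following the same freezing-of-coefficients strategy adapted to the $W^{2,q}$-level: after cutting off $u$ on small balls, one exploits the constant-coefficient $W^{2,q}$ estimate for $a^{ij}(x_0)\partial_i\partial_j$ and controls the perturbation of the principal symbol as well as the lower-order terms via the Sobolev multiplication properties of $W^{2,q}$, which are at our disposal since $q > \tfrac{n}{2}$.

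Since $q > \tfrac{n}{2}$, the Sobolev embedding $W^{2,q}(\Omega_3) \hookrightarrow L^\infty(\Omega_2)$ holds for any $\Omega_2 \subset\subset \Omega_3 \subset\subset \Omega$, and for $p \leq q$ (the case $p \geq q$ being immediate via Hölder on the bounded domain $\Omega$) Hölder's and Young's inequalities yield
\begin{equation*}
\|u\|_{L^q(\Omega_2)} \leq \|u\|_{L^\infty(\Omega_2)}^{1 - p/q} \|u\|_{L^p(\Omega_2)}^{p/q} \leq \varepsilon\, C_S\, \|u\|_{W^{2,q}(\Omega_3)} + C_\varepsilon \|u\|_{L^p(\Omega)}
\end{equation*}
for every $\varepsilon > 0$. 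Substituting into the interior estimate above produces
\begin{equation*}
\|u\|_{W^{2,q}(\Omega_1)} \leq CC_S\varepsilon\, \|u\|_{W^{2,q}(\Omega_3)} + C\bigl(C_\varepsilon \|u\|_{L^p(\Omega)} + \|f\|_{L^q(\Omega)}\bigr).
\end{equation*}

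To close the argument, one parameterizes the tubular neighborhoods $\Omega_\rho \doteq \{x \in \Omega \: :\: d(x, \Omega'') < \rho\}$ on an interval $[\rho_0, \rho_1]$ with $\Omega_{\rho_0} = \Omega''$ and $\Omega_{\rho_1} \subset\subset \Omega$. The previous inequality then translates, for $\rho_0 \leq \rho < R \leq \rho_1$, into $h(\rho) \leq \theta\, h(R) + A$, where $h(\rho) \doteq \|u\|_{W^{2,q}(\Omega_\rho)}$, the factor $\theta < 1$ is achieved by taking $\varepsilon$ sufficiently small, and $A \doteq C'\bigl(\|u\|_{L^p(\Omega)} + \|f\|_{L^q(\Omega)}\bigr)$ is independent of $\rho$. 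A standard iteration lemma, as in \cite[Lemma~4.3]{GT}, then yields $h(\rho_0) \leq C'' A$, which is precisely the desired estimate.

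The main difficulty lies in carrying out the interior $W^{2,q}$ estimate rigorously: one must reproduce, at the $W^{2,q}$-level, the careful perturbation arguments of \cref{lemma: Avalos 3.2 adaptation}, handling correctly the cross-terms involving derivatives of $a^{ij}$ and of $u$ and the $W^{k,q}$-algebra thresholds. The interpolation and iteration steps that follow are essentially routine once the interior estimate is in place.
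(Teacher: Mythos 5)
Your proof is correct and achieves the same conclusion, but it takes a genuinely different route from the paper's.

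First, a point of relief: the ``main difficulty'' you identify does not in fact exist. The interior $W^{2,q}$ estimate
\[
\|u\|_{W^{2,q}(\Omega_1)} \leq C\bigl(\|u\|_{L^q(\Omega_2)} + \|f\|_{L^q(\Omega_2)}\bigr)
\]
is precisely \cite[Theorem 9.11]{GT}. It needs only $a^{ij}\in C^0$, $b^i,c\in L^\infty$; the modulus of continuity of $a^{ij}$ is controlled by its $C^{0,2-n/q}$ norm, hence by $\Lambda$. No Sobolev multiplication at the $W^{2,q}$ level is required, and there is no ``$W^{2,q}$-analogue of \cref{lemma: Avalos 3.2 adaptation}'' to be built from scratch. (The delicacy of \cref{lemma: Avalos 3.2 adaptation} is specific to the $W^{1,p}$ level, where one needs the $L^{2q}$ control on $\partial_k a^{ij}$.) The paper's proof opens exactly by citing GT 9.11.

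Beyond that, the two routes diverge. The paper trades the $L^q(\Omega_0)$ norm of $u$ down to $L^p(\Omega)$ through a \emph{finite} chain of first-order estimates: one picks a fixed decreasing sequence $q = p_0 > p_1 > \cdots > p_N \leq p$ (with step size $\delta$ depending only on $n,q,p$), and iterates \cref{lemma: Avalos 3.2 adaptation} together with $W^{1,p_{k+1}}\hookrightarrow L^{p_k}$ on a fixed nested chain of domains. Since the chain is finite and the domains fixed once and for all, no iteration lemma is needed. This is what imposes the hypothesis $p>\tfrac{2q}{2q-1}$: it is inherited from the admissible range for \cref{lemma: Avalos 3.2 adaptation}.

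Your route instead uses the Lebesgue interpolation $\|u\|_{L^q}\leq\|u\|_{L^\infty}^{1-p/q}\|u\|_{L^p}^{p/q}$, the embedding $W^{2,q}\hookrightarrow L^\infty$ (valid since $q>\tfrac n2$), Young's inequality, and the hole-filling iteration lemma \cite[Lemma 4.3]{GT}. This is more elementary and in fact works for \emph{every} $1 \leq p < q$, not only $p>\tfrac{2q}{2q-1}$; the restriction in the statement is an artefact of the paper's reliance on \cref{lemma: Avalos 3.2 adaptation} and is superfluous for your argument (the lemma is only ever applied in the paper for $p>\tfrac{2q}{2q-1}$, so either version suffices). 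The one point you should spell out more carefully is the absorption: to apply \cite[Lemma 4.3]{GT} one must choose $\varepsilon$ so that the coefficient of $h(R)$ is a fixed $\theta<1$ independent of the pair $(\rho,R)$, which forces $\varepsilon\sim C_0(\rho,R)^{-1}$ and hence requires knowing that the GT 9.11 constant grows at most polynomially in $(R-\rho)^{-1}$. This is true (it comes from the cutoff in the proof of GT 9.11, scaling like $(R-\rho)^{-2}$ once the balls used in freezing coefficients are small enough that the Hölder continuity of $a^{ij}$, controlled by $\Lambda$, guarantees a fixed smallness threshold), but it is a dependence that the statement of GT 9.11 does not explicitly record and is worth flagging, since without it the iteration lemma would not close.
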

\begin{proof}
    Fix $d \coloneqq \text{dist}(\Omega'', \partial\Omega)$ and for each $k = 0, 1, 2, \ldots$ define
    \begin{equation*}
        \Omega_k \coloneqq \Big\{x \in \Omega \; : \; \operatorname{dist}(x, \partial\Omega) > \tfrac{d}{2+k+k_0}\Big\} \,,
    \end{equation*}
    so that $\Omega'' \subset \subset \Omega_0 \subset \subset \Omega_1 \subset \subset \dots \subset \subset \Omega$ and $k_0>0$ is chosen large enough so that $\partial\Omega_k$ are smooth for all $k$. From \cite[Theorem 9.11]{GT} there exists a positive constant $C_0 = C_0\bigl(n, q, \lambda, \Lambda, \Omega'', \Omega\bigr)$ such that
    \begin{equation}
        \label{eq: GT 9.11 estimate}
        \|u\|_{W^{2, q}(\Omega'')} \leq C_0 \Big(\|u\|_{L^{q}(\Omega_0)} + \| f\|_{L^q(\Omega)}\Big) \,.
    \end{equation}
    If $p \geq q$ the statement follows by H\"older inequality, so let us consider $p < q$. Notice that the lower bound on $p$ corresponds to $\tfrac{1}{p} < \tfrac{1}{q'} + \tfrac{1}{2q}$.
    Define then the number
    \begin{equation*}
        \delta \doteq \min\left\{\frac{1}{n}, \; \frac{1}{q'} + \frac{1}{2q} - \frac{1}{p}\right\} > 0\,,
    \end{equation*}
    and set  
    \begin{equation*}
        p_0 = q \,, \qquad \frac{1}{p_{k}} \doteq \frac{1}{p_{k-1}} + \delta = \frac{1}{q} + k\delta
    \end{equation*}
    for integers $k \geq 1$. Let $N \in \nN$ be the smallest integer such that $p_N \leq p$. Observe that all $p_0 > p_1 > \ldots > p_{N}$ satisfy $W^{1,p_{k+1}}(\Omega_k) \hookrightarrow L^{p_k}(\Omega_k)$ and $\tfrac{1}{q} \leq \tfrac{1}{p_k} < \tfrac{1}{q'} + \tfrac{1}{2q} $; the strict inequality is clear for $k \leq N-1$ due to $\tfrac{1}{p_{N-1}} < \tfrac{1}{p} < \tfrac{1}{q'}+\tfrac{1}{2q}$ by hypothesis, but it also holds true for $p_N$, due to the step size $\delta \leq \tfrac{1}{q'} + \tfrac{1}{2q} - \tfrac{1}{p}$. Therefore, we can apply \cref{lemma: Avalos 3.2 adaptation} to get
    \begin{align*}
        \|u\|_{L^{p_k}(\Omega_k)} &\leq C(n, q, p, \Omega_k) \, \|u\|_{W^{1, p_{k+1}}(\Omega_k)}
        \\
        &\leq C(n, q, p, \Omega_k, \lambda, \Lambda, \Omega_{k+1}) \Big(\|u\|_{L^{p_{k+1}}(\Omega_{k+1})} + \|f\|_{W^{-1, p_{k+1}}(\Omega_{k+1})}\Big) 
        \\
        &\leq C(n, q, p,\Omega_k, \lambda, \Lambda,\Omega_{k+1}) \Big(\|u\|_{L^{p_{k+1}}(\Omega_{k+1})} + \|f\|_{L^q(\Omega_{k+1}})\Big)
        \\
         &\leq C(n, q, p, \Omega_k, \lambda, \Lambda, \Omega_{k+1}) \Big(\|u\|_{L^{p_{k+1}}(\Omega_{k+1})} + \|f\|_{L^q(\Omega})\Big)
    \end{align*}
     where the third inequality is given by the fact that 
     \begin{equation*}
         L^q(\Omega_{k+1}) \hookrightarrow W^{-1, p_{k+1}}(\Omega_{k+1}) \quad \iff \,\,\, W^{1, p_{k+1}'}_0(\Omega_{k+1}) \hookrightarrow L^{q'}(\Omega_{k+1}) \,,
     \end{equation*}
     which is guaranteed by $\tfrac{1}{p_{k+1}} \geq \tfrac{1}{q} - \tfrac{1}{n}$. By concatenating the estimates and noticing that all the domains $\Omega_k$ are determined by $\Omega$, $\Omega''$, $n$, $q$ and $p$, we establish
     \begin{align*}
         \|u\|_{L^q(\Omega_0)} &\leq C(n, q, p, \lambda, \Lambda, \Omega_0, \ldots \Omega_N) \Big(\|u\|_{L^{p_{N}}(\Omega)} + \| f\|_{L^q(\Omega)}\Big) 
         \\
         &\leq C(n, q, p, \lambda, \Lambda,\Omega'', \Omega) \Big(\|u\|_{L^{p}(\Omega)} + \| f\|_{L^q(\Omega)}\Big) \,.
     \end{align*}
     The assertion follows by combining it with \eqref{eq: GT 9.11 estimate}.
\end{proof}
Finally, we are ready to prove \cref{prop: blow-up behaviour}. The proof is a scaling argument inspired by \cite[Proposition 9.1]{TaylorToolsForPDEs}.
\begin{proof}[Proof of \cref{prop: blow-up behaviour}]
    Since by hypothesis $f = O\bigl(|x|^{-\gamma}\bigr)$ as $|x| \to 0$, there exists a fixed $\rho_0\in (0,1)$ and a constant $C_{\rho_0}>0$ such that if $\rho < \rho_0$
    \begin{align}
        \label{RadChoice.1}
        \sup_{x\in B_{\rho}(0)}|f(x)||x|^{\gamma}\leq C_{\rho_0}.
    \end{align}
    Having such $\rho_0$ fixed, consider $\rho>0$ such that $0 < 4\rho\leq \rho_0 < 1$ and $B_{4\rho}(0) \subset\subset \Omega$. Let us then consider some fixed $x_0 \in \partial B_{2\rho}(0)$.  Define $u_{\rho}(x) \doteq u(\rho(x-x_0) + x_0)$ and observe that it satisfies
    \begin{equation*}
        \LL_{\rho} u_\rho = a_{\rho}^{ij}\partial_i\partial_j u_{\rho} + b^i_{\rho}\partial_iu + c_\rho u_{\rho} = \rho^2 f_\rho \qquad \text{in} \quad B_1(x_0) \,,
    \end{equation*}
    where
    \begin{align*}
        &a_{\rho}^{ij}(x) = a^{ij}(\rho(x-x_0) + x_0) \,, &&b^i_{\rho}(x) = \rho \, b^i(\rho(x-x_0) + x_0) \,, 
        \\
        &c_\rho(x) = \rho^2 c(\rho(x-x_0) + x_0) \,, \quad &&f_{\rho}(x) = f(\rho(x-x_0) + x_0) \,.
    \end{align*}
    Since $u \in W^{2,q}\bigl(B_{\rho}(x_0)\bigr)$, then $u_{\rho} \in W^{2,q}\bigl(B_1(x_0)\bigr)$. Similarly, if $u \in L^p\bigl(B_{\rho}(x_0)\bigr)$ for given $p > \tfrac{2q}{2q-1}$ in the case $(i)$, then $u_{\rho} \in L^p \bigl(B_1(x_0)\bigr)$. The same applies in the case $(ii)$ for any such $p$. Thus, we may use \cref{lemma: GT 9.11 adaptation} with $\Omega'' = B_{3/4}(x_0)$ and $\Omega = B_1(x_0)$ to obtain
    \begin{equation}
        \label{eq: elliptic estimate in Prop}
        \|u_{\rho}\|_{W^{2,q}(B_{3/4}(x_0))} \leq C_0 \Big(\|u_{\rho}\|_{L^{p}(B_1(x_0))} + \rho^2\|f_{\rho}\|_{L^{q}(B_1(x_0))} \Big)
    \end{equation}
    with a constant $C_0$ depending on $n$, $q$, $p$ and the constants $\lambda$ and $\Lambda$ from \cref{lemma: GT 9.11 adaptation} corresponding to the operator $\LL_{\rho}$. Observe that by
    \begin{equation*}
        \inf_{B_1(x_0)} a_{\rho}^{ij} = \inf_{B_{\rho}(x_0)} a^{ij} \geq \inf_\Omega a^{ij} \,,
    \end{equation*}
    the ellipticity constant $\lambda$ can be chosen independent of $\rho$. Similarly, using that $\rho < 1$ and $q > \tfrac{n}{2}$, one can check that
    {
    \allowdisplaybreaks
    \begin{align*}
        \|a^{ij}_\rho\|_{C^{0,2 - \frac{n}{q}}(B_1(x_0))} &= \sup_{B_{\rho}(x_0)} |a^{ij}| + \rho^{2 - \frac{n}{q}}\sup_{\underset{x\neq y}{x,y \in B_{\rho}(x_0)}}\frac{\bigl|a^{ij}(x) - a^{ij}(y)\bigr|}{|x-y|^{2 - \frac{n}{q}}} \leq \|a^{ij}\|_{C^{0,2-\frac{n}{q}}(\Omega)} \,,
        \\
        \|\partial_k a_{\rho}^{ij}\|_{L^{2q}(B_1(x_0))}
        &= \rho^{1 - \frac{n}{2q}} \, \| \partial_k a^{ij}\|_{L^{2q}(B_\rho(x_0))} \leq \| \partial_k a^{ij}\|_{L^{2q}(\Omega)} \,,
        \\
        \sup_{B_1(x_0)}|b^i_{\rho}| &= \rho \sup_{B_{\rho}(x_0)} |b^i| \leq \sup_{\Omega} |b^i| \,,
        \\
        \sup_{B_1(x_0)}|c_{\rho}| &= \rho^2 \sup_{B_{\rho}(x_0)} |c| \leq \sup_{\Omega} |c| \,.
    \end{align*}
    }
    It follows that also $\Lambda$, and consequently $C_0$, can be chosen independently of $\rho$. The second term in \eqref{eq: elliptic estimate in Prop} can be estimated as follows. By our choice of $\rho\leq \rho_0$, from \eqref{RadChoice.1} we know that
    \begin{align*}
        |f(x)|\leq C_{\rho_0}|x|^{-\gamma}
    \end{align*}
    for all $x\in B_{\rho}(x_0)$. Moreover, since $B_{\rho}(x_0)\subset B_{3\rho}(0)\backslash B_{\rho}(0)$, then $|x|\geq \rho$ for all $x\in B_{\rho}(x_0)$ and thus 
    \begin{align*}
        \sup_{x\in B_{\rho}(x_0)}|f(x)|\leq C_{\rho_0}\rho^{-\gamma} \,.
    \end{align*}
    Therefore, we may estimate
    \begin{equation}
        \label{eq: first f term estimate}
        \rho^2 \|f_\rho\|_{L^q(B_1(x_0))} = \rho^{2-\frac{n}{q}}\|f\|_{L^q(B_\rho(x_0))} \leq C_{\rho_0} \rho^{2-\frac{n}{q}-\gamma} |B_{\rho}(x_0)|^{\frac{1}{q}} \leq C_1 \rho^{2-\gamma}
    \end{equation} 
    with $C_1$ depending only on $n$, $q$ and $\rho_0$.
    
    In the case (\textit{i}), we estimate the first term of \eqref{eq: elliptic estimate in Prop} by
    \begin{equation*}
        \|u_{\rho}\|_{L^p(B_{1}(x_0))} = \rho^{-\frac{n}{p}}\|u\|_{L^{p}(B_{\rho}(x_0))} \leq \rho^{-\frac{n}{p}}\|u\|_{L^{p}(\Omega)}
    \end{equation*}
    and observe that if $\gamma \leq 2 + \tfrac{n}{p}$ the first term dominates over the second for small enough $\rho$. Thus, we establish
    \begin{equation*}
        \|u_{\rho}\|_{W^{2,q}(B_{3/4}(x_0))} \leq C_2 \, \rho^{-\frac{n}{p}}
    \end{equation*}
    for some $C_2$ depending on $n$, $q$, $p$, $\lambda$, $\Lambda$, $\rho_0$ and $\|u\|_{L^p(\Omega)}$. Using the Sobolev embedding $W^{2,q}(B_{3\rho/4}(x_0)) \hookrightarrow C^{0,2-\frac{n}{q}}(B_{3\rho/4}(x_0))$, we obtain
    \begin{align*}
        \sup_{B_{3\rho/4}(x_0)} |u| = \sup_{B_{3/4}(x_0)} |u_{\rho}| \leq \|u_{\rho}\|_{C^{0,2-\frac{n}{q}}(B_{3/4}(x_0))} \leq C(n, q) \|u_{\rho}\|_{W^{2,q}(B_{3/4}(x_0))} \leq C_3 \, \rho^{-\frac{n}{p}} \,.
    \end{align*}
    Clearly, this implies that $|u(x_0)|\leq  C_3 \rho^{-\frac{n}{p}}=2^{\frac{n}{p}}C_3|x_0|^{-\frac{n}{p}}$ and since $x_0 \in \partial B_{2\rho}(0)$ was arbitrary, it follows that $u(x) = \BO\bigl(|x|^{-\frac{n}{p}}\bigr)$. In addition if $q > n$, then we can use the $W^{2,q}(B_{3/4}(x_0)) \hookrightarrow C^{1,1-\frac{n}{q}}(B_{3/4}(x_0))$ embedding to additionally obtain
    \begin{align*}
        \rho \, \sup_{B_{3\rho/4}(x_0)} |\partial u| &=\sup_{B_{3/4}(x_0)} |\partial u_{\rho}|
        \\
        &\leq \|u_{\rho}\|_{C^{1,1-\frac{n}{q}}(B_{3/4}(x_0))} \leq C(n, q) \|u_{\rho}\|_{W^{2,q}(B_{3/4}(x_0))} \leq C_3 \, \rho^{-\frac{n}{p}} \,,
    \end{align*}
    which shows that $\partial u = \BO\bigl(|x|^{-\frac{n}{p}-1}\bigr)$ as desired.

    In the case (\textit{ii}), we can use the Hölder continuity of $u$ through the origin to expand
    \begin{equation*}
        u = u(0) + \BO\bigl(|x|^\beta\bigr) \quad \text{as} \quad |x| \to 0 \,.
    \end{equation*}
    Notice that without loss of generality, we can assume $u(0) = 0$. In fact, considering the function $v \coloneqq u - u(0)$, it solves $\LL v = f - c\, u(0)$ and the right hand side is then still in $L^q(\Omega)$ and $\BO(|x|^{-\gamma})$. We can estimate the first term of \eqref{eq: elliptic estimate in Prop} as
    \begin{align*}
        \|u_\rho\|_{L^p(B_1(x_0))} &= \rho^{-\frac{n}{p}} \left(\int_{B_\rho(x_0)} |u(x)|^p dx\right)^{1/p}
        \\
        &\leq \sup_{\underset{x \neq y}{x,y \in \Omega}}\frac{\bigl|u(x) - u(y)\bigr|}{|x-y|^{\beta}} \, 2^\beta \rho^{-\frac{n}{p}+\beta}|B_\rho|^{\frac{1}{p}} 
        \,\leq C_4 \, \rho^\beta 
        \,.
    \end{align*}
    Combined with \eqref{eq: first f term estimate} and the hypothesis $\beta \leq 2 - \gamma$, we obtain
    \begin{equation*}
        \|u_\rho\|_{W^{2,q}(B_{3/4}(x_0))} \leq C_5 \, \rho^\beta \,, 
    \end{equation*}
    where $C_5$ depends only on $n$, $q$, $p$, $\lambda$, $\Lambda$, $\rho_0$. Same as before, if $q > n$ we use the $W^{2,q}(B_{3/4}(x_0)) \hookrightarrow C^{1,1-\frac{n}{q}}(B_{3/4}(x_0))$ embedding to obtain
    \begin{align*}
        \rho \, \sup_{B_{3\rho/4}(x_0)} |\partial u| &=\sup_{B_{3/4}(x_0)} |\partial u_{\rho}| \leq \|u_{\rho}\|_{C^{1,1-\frac{n}{q}}(B_{3/4}(x_0))} 
        \\
        &\leq C(n, q) \|u_{\rho}\|_{W^{2,q}(B_{3/4}(x_0))} \leq C_4 \, \rho^{\beta},
    \end{align*}
    which grants that $u(x)-u(0)=O_1(|x|^{\beta})$.
\end{proof}

In view of the \emph{Step 3: Positivity and expansion} part of the proof of \cref{theorem C},
we recall that whenever $\tfrac{n}{2} < \tfrac{nq}{n+(n-2)q}$ the fundamental solution $|x|^{2-n}$ is the only singular part of the conformal Green's function and the error $h(x)$ has a H\"older expansion. This is only possible whenever $n = 3$ and $q > 3$; in particular, this also gives control on the first derivative of the error. In conclusion, we have established the following:
\begin{corollary}
    \label{cor: unnormalized Green 3dim}
    Let $M$ be a smooth, closed 3-manifold and consider a $W^{2,q}$-Riemannian metric $g$ on $M$ with $q > 3$ and $\lambda(M, g) > 0$. Then, for each point $p \in M$ there exists a positive function $\G \in W^{2,q}_{loc}\bigl(M \setminus \{\p\}\bigr)$ satisfying $\Lg\G = 0$ on $M \setminus \{p\}$ such that
    \begin{equation*}
        \G(x) = \frac{1}{|x|} + \h(x) \,, \qquad \h(x) = A + \BO_1\bigl(|x|^{2-\frac{3}{r}}\bigr) \quad \text{as} \quad |x| \to 0
    \end{equation*}
    in a normal coordinate chart $(U,x^i)$ around $p$ with $\h \in W^{2,r}(U)$ for all $r < \tfrac{3q}{3+q}$.
\end{corollary}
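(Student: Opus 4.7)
The statement is a direct specialization of \cref{theorem C} to $n=3$ with $q>3$, combined with a normalization by the leading coefficient. My plan is to apply the theorem verbatim and then divide through by $B$.

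First, I would verify that the hypotheses of \cref{theorem C} are met: $M$ is smooth and closed, $g \in W^{2,q}(T_2M)$ with $q > 3 > \tfrac{n}{2}$, and $\lambda(M,g) > 0$. This produces a unique positive Green function $G_p \in W^{2,q}_{loc}(M \setminus \{p\})$ satisfying, in a suitable coordinate chart $(U,x^i)$ around $p$,
\begin{equation*}
    G_p(x) = \frac{B}{|x|} + h(x), \qquad h \in W^{2,r}(U),
\end{equation*}
with $B>0$ and $h(x) = A + \BO(|x|^{2-\frac{3}{r}})$ as $|x|\to 0$ for every $1 \leq r < \tfrac{3q}{q+3}$. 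Moreover, since here $q > 3 = n$, the improved clause \eqref{h_decay_qnIntro} of \cref{theorem C} applies: the $\BO$-expansion of $h$ may be promoted to a $\BO_1$-expansion, and the harmonic chart may be replaced by a normal chart (the latter exists by \cref{NormalCoordsC1}, since $g \in W^{2,q} \hookrightarrow C^1$ for $q > 3$).

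Second, I would define $\G \doteq B^{-1}G_p$ and $\h \doteq B^{-1}h$. Because $B>0$, the function $\G$ inherits positivity and $W^{2,q}_{loc}(M \setminus \{p\})$-regularity from $G_p$, while $\h \in W^{2,r}(U)$ admits the required first-order expansion $\h(x) = A/B + \BO_1(|x|^{2-\frac{3}{r}})$ (so the constant appearing in the corollary is $A/B$, which I can still call $A$). The identity $\G(x) = |x|^{-1} + \h(x)$ in the normal chart is then just the scaled form of the expansion in \cref{theorem C}.

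Finally, I would verify $\Lg \G = 0$ classically on $M \setminus \{p\}$. By \cref{GreenFunctionDefn}, $\langle \Lg G_p, \phi\rangle_{(M,g)} = \phi(p) = 0$ for every $\phi \in C^{\infty}_0(M \setminus \{p\})$, so $\Lg G_p$ vanishes as a distribution on $M \setminus \{p\}$. Since $G_p \in W^{2,q}_{loc}(M \setminus \{p\})$, the equation $\Lg G_p = 0$ in fact holds almost everywhere (and even in $L^q_{loc}$) off the pole; dividing by $B$ yields the claim for $\G$. There is no real obstacle in this argument: the entire content has already been established in \cref{theorem C}, and only a renormalization of the leading coefficient is required to match the form of the statement.
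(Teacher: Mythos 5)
Your proposal is correct and is essentially the paper's own argument: specialize \cref{theorem C} to $n=3$, $q>3$ (so the $\BO_1$-clause and normal coordinates apply), divide the Green function by the leading coefficient $B$, and read off $\Lg\G=0$ on $M\setminus\{p\}$ from the distributional identity together with the $W^{2,q}_{loc}$-regularity away from the pole. For what it is worth, the remark following \cref{theorem C} in the paper records the normalization as $\G = b_nG_p$, which appears to be a typo for $\G = G_p/b_n$ (equal to $G_p/B$ in the good conformal gauge), so your choice $\G = B^{-1}G_p$ is the correct one and works uniformly across the conformal class.
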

Naturally, the function in \cref{cor: unnormalized Green 3dim} is given by $\G = b_nG_p$. Such normalization will make computations simpler later in \cref{Decompactification via Conformal Green's function}.

\vspace{0.5cm}
\section{The Yamabe Problem}
\label{The Yamabe Problem}
In this section we prove \cref{theorem A} following the classic approach: first, we extend the Aubin--Trudinger--Yamabe Theorem to our setting of rough metrics and then we appeal to Schoen's idea of using the conformal Green function to 
get an AE manifold and construct an appropriate test function with the help the Positive Mass Theorem of general relativity.

\subsection{The Aubin--Trudinger--Yamabe Theorem}\label{The Aubin--Trudinger--Yamabe Theorem}
For smooth Riemannian metrics, the Aubin--Trudinger--Yamabe Theorem asserts that the Yamabe problem can be solved provided that the Yamabe invariant is below that of the round sphere. In the following, we show the analogue result for $W^{k,q}$-Riemannian metrics with $k \geq 2$ and $q > \tfrac{n}{2}$ in \cref{thm: Aubin-Trudinger-Yamabe} below, which in particular establishes \cref{theorem B}. As explained in \cref{Introduction}, we remark that the same result for $W^{1,p}$-Riemannian manifolds with $q > n$ has been proven by H. Zhang \cite{ZhangW1pAubin}.

The strategy we follow is the same as in the smooth case: we perturb the variational problem in favour of better compactness properties, find a minimizer via direct methods and finally extract a limit solution of the original problem. The main difficulty one encounters in the low-regularity setting is that the coefficients of the variational PDE are rough, and consequently standard elliptic regularity does not apply. On the other hand, the following two well-known results from the analysis of the classic Yamabe problem for smooth metrics do extend directly to the case of metrics of $W^{2,q}$-regularity.
\begin{lemma}
    \label{lemma: continuity yamabe invariant}
    Let $M$ be a smooth, closed manifold of dimension $n \geq 3$ and consider a $W^{2,q}$-Riemannian metric $g$ on $M$ with $q > \tfrac{n}{2}$ and $\vol_g(M) = 1$. Then, the map $s \mapsto |\lambda_s(M,g)|$ non-increasing for $s \in [2,2^*]$. Moreover, if $\lambda(M,g) \geq 0$, it is continuous from the left. 
\end{lemma}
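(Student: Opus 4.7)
The plan is to exploit the unit volume normalization $\vol_g(M)=1$ to compare $L^s$-norms across different exponents, and then identify a dichotomy on the sign of the energy functional $E_g$ that reduces the monotonicity claim to two parallel calculations.

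My first step would be to invoke Jensen's inequality, which on the probability space $(M,d\mu_g)$ gives that $s \mapsto \|u\|_{L^s(M,d\mu_g)}$ is non-decreasing for $s \geq 1$. Consequently, for every $u \in W^{1,2}(M) \setminus \{0\}$ and $2 \leq s_1 \leq s_2 \leq 2^*$,
\[
Q_g^{s_1}(u) \geq Q_g^{s_2}(u) \;\; \text{if } E_g(u) \geq 0 \,, \qquad Q_g^{s_1}(u) \leq Q_g^{s_2}(u) \;\; \text{if } E_g(u) < 0 \,.
\]
Next I would record the dichotomy: either $E_g \geq 0$ on all of $W^{1,2}(M)$ (whence $\lambda_s(M,g) \geq 0$ for every $s$), or there exists $u_0$ with $E_g(u_0) < 0$ (whence $Q_g^s(u_0) < 0$ forces $\lambda_s(M,g) < 0$ for every $s$). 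In the first scenario the comparison above passes to the infimum and yields $0 \leq \lambda_{s_2}(M,g) \leq \lambda_{s_1}(M,g)$; in the second, since $\lambda_s(M,g) < 0$ the infimum is equivalently taken over $\{u : E_g(u) < 0\}$, and the opposite branch of the comparison gives $\lambda_{s_1}(M,g) \leq \lambda_{s_2}(M,g) < 0$. In either case $|\lambda_s(M,g)|$ is non-increasing, settling the first claim.

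For left-continuity under the hypothesis $\lambda(M,g) \geq 0$, I would fix $s_0 \in (2, 2^*]$. The monotonicity just established immediately gives $\liminf_{s \to s_0^-} \lambda_s(M,g) \geq \lambda_{s_0}(M,g)$. For the matching upper bound, given $\varepsilon > 0$ I would select a near-minimizer $u \in W^{1,2}(M)$ with $Q_g^{s_0}(u) \leq \lambda_{s_0}(M,g) + \varepsilon$. The Sobolev embedding $W^{1,2}(M) \hookrightarrow L^{2^*}(M)$ ensures $u \in L^{s_0}(M)$, and a routine dominated convergence argument, splitting $M$ into $\{|u| \leq 1\}$ and $\{|u| > 1\}$ and using $|u|^s \leq 1 + |u|^{s_0}$ for $2 \leq s \leq s_0$, shows that $s \mapsto \|u\|_{L^s(M,d\mu_g)}$ is continuous at $s_0$. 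Since $E_g(u)$ is independent of $s$ and $u \neq 0$, this gives $Q_g^s(u) \to Q_g^{s_0}(u)$ as $s \to s_0^-$, and evaluating the infimum against this fixed $u$ yields $\limsup_{s \to s_0^-} \lambda_s(M,g) \leq Q_g^{s_0}(u) \leq \lambda_{s_0}(M,g) + \varepsilon$. Letting $\varepsilon \to 0$ closes the argument. There is no real obstacle here: the only analytical input beyond the sign dichotomy is the continuity of the $L^s$-norm in $s$ for a fixed admissible test function, which is essentially a one-line use of dominated convergence.
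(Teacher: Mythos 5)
Your proof is correct and follows essentially the same route as Lee--Parker's Lemma~4.3, which is precisely what the paper cites in lieu of a written proof: Hölder/Jensen on the unit-volume probability space to get monotonicity of $s\mapsto\|u\|_{L^s}$, the sign dichotomy on $E_g$ to sort out which direction the comparison passes to the infimum, and a fixed near-minimizer plus dominated convergence of $\|u\|_{L^s}$ in $s$ for left-continuity. The only point worth being explicit about in the present rough setting is that $E_g(u)$ is finite on all of $W^{1,2}(M)$ --- this is guaranteed by \cref{lemma: Ru2 interpolation} --- but your argument uses nothing beyond that.
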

\begin{proof}
    The proof is identical to the one known for smooth metrics and can be consulted in \cite[Lemma 4.3]{Lee-Parker}.
\end{proof}
\begin{proposition}
    \label{prop: uniform Lr bound}
    Let $M$ be a smooth, closed manifold of dimension $n \geq 3$ and consider a $W^{2,q}$-Riemannian metric $g$ on $M$ with $q > \tfrac{n}{2}$ satisfying $\vol_g(M) = 1$, $\R_g\in C^{0}(M)$ and $\lambda(M,g) < \lambda(\nS^n)$. Let $u_s \in W^{2,q}(M)$ be positive minimizers of $Q_g^s$ normalized by $\|u_s\|_{L^s(M,d\mu_g)} = 1$.\footnote{The existence of such minimisers is granted by Proposition \ref{prop: subcritical existence} and their regularity by Corollary \ref{prop: subcritical regularity}.} Then, there are constants $s_0 < 2^*$, $\delta=\delta(s_0) > 0$ and $C$ such that $\|u_s\|_{L^{2^*(1+\delta)}(M)} \leq C$ for all $s \in [s_0,2^*)$.\footnote{It follows from the proof as given in \cite[Proposition 4.4]{Lee-Parker} that $\delta=\delta(s_0)$ only depends on $\lambda_{s_0}(M,g)$.}
\end{proposition}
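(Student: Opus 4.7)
The strategy is to extend the classical Moser-type argument of \cite[Proposition 4.4]{Lee-Parker} to the $W^{2,q}$ setting. Since each minimizer $u_s \in W^{2,q}(M)$ is positive and continuous (by Sobolev embedding), it is uniformly bounded from above and away from zero, and the Euler--Lagrange equation $\Lg u_s = \lambda_s(M,g) \, u_s^{s-1}$ may be tested against $u_s^{1+2\beta}\in W^{2,q}(M)$ for any $\beta \geq 0$. Integration by parts (justified by smooth approximation of $g$, in the same spirit as in the proof of \cref{lemma: kernel_improvement}) combined with the substitution $w \doteq u_s^{1+\beta}$ yields the identity
\begin{equation*}
    \frac{a_n(1+2\beta)}{(1+\beta)^2}\,\|\nabla w\|_{L^2(M,d\mu_g)}^2 + \int_M \R_g\, w^2 \, d\mu_g \;=\; \lambda_s(M,g)\,\int_M u_s^{s-2} w^2 \, d\mu_g.
\end{equation*}
Hölder's inequality with exponents $\tfrac{s}{s-2}$ and $\tfrac{s}{2}$, together with $\|u_s\|_{L^s(M,d\mu_g)}=1$, bounds the right-hand side by $\lambda_s(M,g)\,\|w\|_{L^s}^2 \leq \lambda_s(M,g)\,\|w\|_{L^{2^*}}^2$, where the last step uses the normalization $\vol_g(M)=1$.

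The second step invokes a \emph{sharp} Sobolev inequality on $(M,g)$: for every $\tilde\varepsilon>0$ there should exist a constant $A_{\tilde\varepsilon}>0$ such that
\begin{equation*}
    \bigl(\lambda(\nS^n) - \tilde\varepsilon\bigr)\,\|w\|_{L^{2^*}(M,d\mu_g)}^2 \;\leq\; a_n\,\|\nabla w\|_{L^2(M,d\mu_g)}^2 + A_{\tilde\varepsilon}\,\|w\|_{L^2(M,d\mu_g)}^2
\end{equation*}
holds for every $w \in W^{1,2}(M)$. Combining this with the previous identity and the trivial bound $\bigl|\int \R_g w^2\bigr| \leq \|\R_g\|_{C^0}\,\|w\|_{L^2}^2$ gives
\begin{equation*}
    \biggl[(\lambda(\nS^n)-\tilde\varepsilon) - \frac{(1+\beta)^2}{1+2\beta}\,\lambda_s(M,g)\biggr]\,\|w\|_{L^{2^*}}^2 \;\leq\; \bigl(A_{\tilde\varepsilon} + \|\R_g\|_{C^0}\bigr)\,\|w\|_{L^2}^2.
\end{equation*}
By \cref{lemma: continuity yamabe invariant} and the hypothesis $\lambda(M,g)<\lambda(\nS^n)$, one may fix some $\eta>0$ and $s_0\in(2,2^*)$ with $\lambda_s(M,g)\leq \lambda(\nS^n)-\eta$ for every $s\in[s_0,2^*)$; then taking $\beta>0$ and $\tilde\varepsilon>0$ sufficiently small (in terms of $\eta$ and $\lambda(\nS^n)$, hence independent of $s$) keeps the bracket uniformly positive. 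Hölder with $\vol_g(M)=1$ also yields $\|w\|_{L^2}^2 = \int u_s^{2+2\beta}\,d\mu_g \leq 1$ provided $2+2\beta\leq s_0$. Absorbing and setting $\delta \doteq \beta$ produces the uniform bound $\|u_s\|_{L^{2^*(1+\delta)}} = \|w\|_{L^{2^*}}^{1/(1+\delta)} \leq C$ for all $s\in[s_0,2^*)$. Note that $\delta$ depends only on $\eta$, which in turn depends on $\lambda_{s_0}(M,g)$, matching the assertion in the footnote.

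The main obstacle is justifying the sharp Sobolev inequality above for a Riemannian metric of only $W^{2,q}\hookrightarrow C^{0,2-n/q}$ regularity. In the smooth category this is the celebrated Hebey--Vaugon inequality, whose proof does not directly apply here. I would deduce it by approximation: exploiting $W^{2,q}(T_2M)\hookrightarrow C^0(T_2M)$, one fixes a smooth metric $\tilde g$ sufficiently close to $g$ in $C^0(T_2M)$ so that $(1+\eta)^{-1}\tilde g \leq g \leq (1+\eta)\tilde g$ for arbitrarily small $\eta>0$; the corresponding Lebesgue and gradient norms are then equivalent up to factors $(1+\eta)^{C(n)}$. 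Applying the Hebey--Vaugon inequality to $\tilde g$ and transferring it to $g$ with a small loss that can be absorbed into $\tilde\varepsilon$ yields the claimed inequality with constants depending only on $(M,g)$ and $\tilde\varepsilon$. Once this step is in place, the remainder of the argument is just careful bookkeeping of the estimates outlined above.
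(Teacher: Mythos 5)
Your proposal is correct and follows the same Moser-iteration template as the paper's proof, which is to defer to \cite[Proposition 4.4]{Lee-Parker} with two adaptations (justifying integration by parts via smooth approximation of $g$, and supplying Aubin's Sobolev inequality for $C^{0,\alpha}$-metrics). You spell out the iteration explicitly: test the Euler--Lagrange equation against $u_s^{1+2\beta}$, substitute $w=u_s^{1+\beta}$, invoke an $\tilde\varepsilon$-sharp Sobolev inequality, and absorb. The only genuine divergence is in how the Sobolev inequality with near-optimal constant is obtained for the rough metric. The paper obtains it by re-running Aubin's original coordinate-based argument after replacing normal coordinates with coordinates in which $g_{ij}(x)=\delta_{ij}+\BO(|x|^\alpha)$; you instead mollify $g$ to a smooth $\tilde g$ with $(1+\eta)^{-1}\tilde g\leq g\leq(1+\eta)\tilde g$, apply the smooth theory to $\tilde g$, and transfer to $g$ at the cost of a factor $(1+\eta)^{C(n)}$ absorbed into $\tilde\varepsilon$. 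Both routes are valid; your comparison argument is cleaner in that it avoids redoing any geometric estimates and uses only $C^0$-comparability of the metric coefficients and volume forms (and is robust to any continuous metric), while the paper's modification stays entirely within Aubin's framework and is more self-contained. Two cosmetic points: the coefficient on $\|\R_g\|_{C^0}\|w\|_{L^2}^2$ in your combined estimate should carry the factor $\tfrac{(1+\beta)^2}{1+2\beta}$, and the inequality you need is really Aubin's $\varepsilon$-version rather than the sharp Hebey--Vaugon case $\tilde\varepsilon=0$ — though since you apply Hebey--Vaugon to the smooth $\tilde g$ and only lose $\tilde\varepsilon$ in the transfer, nothing is wrong.
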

\begin{proof}
    Exploiting the fact that $\R_g\in C^0(M)$, the proof follows exactly in the same way as in \cite[Proposition 4.4]{Lee-Parker}, working with smooth approximations when integrating by parts, as explained in \cref{lemma: kernel_improvement}. Aubin's Sobolev inequality needed in the proof can easily be seen to hold for $C^{0,\alpha}$-metrics, for instance, by using coordinates such that $g_{ij}(x) = \delta_{ij} + O\bigl(|x|^\alpha\bigr)$ rather than normal coordinates in the proof of \cite[Theorem 9]{AubinIsoperim}.
\end{proof}
\begin{remark}
    \label{RemarkCompasionZhang.1}
    Let us highlight that the above proposition is also needed for even lower regularity cases --see \cite[Proposition 4.4]{ZhangW1pAubin}--. We again emphasize that the conformal invariance of the problem and \cref{cor: Yamabe classification}, which allow us to consider the case $\R_g\in C^{0}(M)$, essentially reduces the above proposition to the one known for smooth metrics.
\end{remark}
Using the above results, 
we can prove the Aubin--Trudinger--Yamabe theorem:
\begin{theorem}
    \label{thm: Aubin-Trudinger-Yamabe}
    Let $M$ be a smooth, closed manifold of dimension $n \geq 3$ and consider a $W^{2,q}$-Riemannian metric $g$ on $M$ with $q > \tfrac{n}{2}$. If $\lambda(M,g) < \lambda(\nS^n)$, there exists a Riemannian metric in the $W^{2,q}$ conformal class of $g$ of constant scalar curvature.
\end{theorem}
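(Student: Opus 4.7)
My plan is to implement the classical subcritical-approximation scheme of Yamabe--Trudinger--Aubin, substituting the smooth elliptic theory with the rough one developed in \cref{Analytical Results}. As a first reduction, by \cref{cor: Yamabe classification} I would replace $g$ by a conformal representative in $[\,g\,]_{W^{2,q}}$ whose scalar curvature lies in $W^{2,q}(M)$ and is in particular continuous. The Yamabe invariant being conformally invariant, the hypothesis $\lambda(M,g)<\lambda(\nS^n)$ is preserved, and after rescaling I may assume $\vol_g(M)=1$. For each $s\in[2,2^*)$, \cref{prop: subcritical existence} produces a non-negative minimizer $u_s\in W^{1,2}(M)$ of $Q_g^s$ with $\|u_s\|_{L^s(M,d\mu_g)}=1$, which by \cref{prop: subcritical regularity} is a strictly positive $W^{2,q}(M)$ solution of
\begin{equation*}
    \Lg u_s = \lambda_s \, u_s^{s-1}, \qquad \lambda_s \doteq \lambda_s(M,g).
\end{equation*}

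The core of the argument is to extract a non-trivial $W^{2,q}(M)$ limit as $s\nearrow 2^*$. \cref{prop: uniform Lr bound}, whose hypotheses are ensured by the conformal gauge above and by $\lambda(M,g)<\lambda(\nS^n)$, yields some $s_0<2^*$ and $\delta>0$ such that $\|u_s\|_{L^{2^*(1+\delta)}(M)}\leq C$ for $s\in[s_0,2^*)$. Starting from this integrability, I would iterate the Fredholm estimate of \cref{lemma: Lg is Fredholm} together with \cref{thm: conf Laplace elliptic regularity}: each step converts a uniform $L^r$ bound on $u_s$ into a uniform $W^{2,\min\{r/(s-1),q\}}$ bound on $u_s$ via the PDE above, after which the Sobolev embedding improves the Lebesgue exponent. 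Since the initial exponent $r_0=2^*(1+\delta)/(s-1)$ is bounded below by $2n(1+\delta)/(n+2)$ uniformly in $s\nearrow 2^*$, a finite induction produces a uniform $W^{2,q}(M)$ bound. Banach--Alaoglu then delivers a subsequence $u_s\rightharpoonup u$ weakly in $W^{2,q}(M)$, converging strongly in $L^{2^*}(M)$ by Rellich--Kondrachov. Since $\vol_g(M)=1$ gives $\|u_s\|_{L^{2^*}}\geq \|u_s\|_{L^s}=1$, while Hölder interpolation between $L^s$ and $L^{2^*(1+\delta)}$ forces $\|u_s\|_{L^{2^*}}\to 1$ as $s\to 2^*$, I would obtain $\|u\|_{L^{2^*}}=1$ and thereby rule out the trivial limit. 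Passing to the limit in the weak formulation of the PDE, using \cref{lemma: continuity yamabe invariant} for $\lambda_s\to\lambda(M,g)$ together with dominated convergence to handle $u_s^{s-1}\to u^{2^*-1}$ in $L^{2^*/(2^*-1)}(M)$, yields a non-negative weak $W^{2,q}(M)$ solution of the Yamabe equation. Trudinger's Harnack inequality, applied as in the positivity argument of \cref{prop: subcritical regularity}, then upgrades $u$ to a strictly positive $W^{2,q}(M)$ function, whence $u^{4/(n-2)}g$ is the sought metric of constant scalar curvature in $[\,g\,]_{W^{2,q}}$.

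The main obstacle I foresee is the uniform bootstrap in the rough setting: whereas in the classical smooth case a single application of $L^p$-elliptic theory suffices once compactness of $\{u_s\}$ is known, here the low regularity of $\R_g$ and of the coefficients of $\Lg$ forces an iteration in which the elliptic constants must be shown to depend only on $(M,g,n,q)$ and not on $u_s$ nor on $s\in[s_0,2^*)$. This uniformity is precisely the kind of estimate afforded by \cref{lemma: Lg is Fredholm,thm: conf Laplace elliptic regularity}, but tracking the exponents through the several Sobolev embedding steps is delicate. A secondary subtle point is verifying that $\|u\|_{L^{2^*}}=1$ in the limit, since the uniform $L^{2^*(1+\delta)}$ bound alone only rules out concentration; the identification of the limit norm requires the interpolation argument indicated above, which is the precise place where the hypothesis $\lambda(M,g)<\lambda(\nS^n)$, hidden in \cref{prop: uniform Lr bound}, is used.
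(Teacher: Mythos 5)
Your overall architecture coincides with the paper's: conformal gauge with continuous $\R_g$ via \cref{cor: Yamabe classification}, subcritical minimizers from \cref{prop: subcritical existence,prop: subcritical regularity}, the uniform $L^{2^*(1+\delta)}$ bound from \cref{prop: uniform Lr bound}, a uniform a priori estimate, passage to the limit, and Harnack positivity. The point of divergence is the uniform estimate. The paper fixes a single exponent $t\in\bigl(\tfrac{n}{2},\min\{q,\tfrac{n}{2}(1+\delta)\}\bigr)$ and extracts a uniform $W^{2,t}(M)$ bound in \emph{one} application of the Fredholm estimate of \cref{lemma: Lg is Fredholm}, handling the nonlinear term via the Hölder splitting $u_s^{s-1}=u_s^{s-2}u_s$ and an Ehrling absorption; this gives Hölder control, after which Arzelà--Ascoli and a \emph{second} direct elliptic estimate using the uniform $C^0$ bound yield the $W^{2,q}$ bound. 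Your proposal instead aims straight at $W^{2,q}$ by iterating the elliptic estimate and Sobolev embedding.

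Your iterative route can be made to close, but as written it has a gap. You justify only that the \emph{first} exponent $r_0/(s-1)$ is bounded below uniformly in $s$, and then assert that "a finite induction" terminates. What actually needs checking is that the Sobolev gain \emph{per step} is bounded away from zero uniformly in $s\in[s_0,2^*)$; with $\tfrac{1}{r_{k+1}}=\tfrac{s-1}{r_k}-\tfrac{2}{n}$ one finds the gain $\tfrac{1}{r_k}-\tfrac{1}{r_{k+1}}=\tfrac{2}{n}-\tfrac{s-2}{r_k}$, which for $r_k\geq 2^*(1+\delta)$ is $\geq\tfrac{2\delta}{n(1+\delta)}>0$ but would vanish at $\delta=0$ — so if you started only from the $L^{2^*}$ normalization the scheme would stall exactly at the critical exponent. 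This is precisely where \cref{prop: uniform Lr bound} enters essentially, and without spelling this out the claimed finite-step termination is unsupported. A related care: the Fredholm constant of \cref{lemma: Lg is Fredholm} depends on the integrability exponent, and your exponents depend on $s$; either argue uniformity over compact $p$-ranges or, as is cleaner, replace the $s$-dependent exponents at each step by their worst-case (i.e.\ $s=2^*$) values and invoke Hölder on the unit-volume manifold to reduce to those. Once these two points are addressed your proposal goes through; the paper's single-estimate-plus-Arzelà--Ascoli route avoids both issues and is the reason it is preferred there.
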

\begin{proof}
    Since $\lambda(M, g)$ is a conformal invariant, by \cref{cor: Yamabe classification} we can assume without loss of generality that $\R_g$ is continuous. Moreover, we may rescale the metric such that $\vol_g(M) = 1$. By \cref{prop: uniform Lr bound} we know that there are fixed numbers $s_0< 2^*$ and $\delta=\delta(s_0) > 0$, such that $\|u_s\|_{L^r(M)} \leq C_0$ for all $s\in (s_0,2^{*})$, where $r = 2^*(1 + \delta)$ and $u_s \in W^{2,q}(M)$ are positive minimizers of $Q_g^s$ normalized by $\|u_s\|_{L^s(M,d\mu_g)} = 1$. We may, without loss of generality, assume that $C_0 \geq 1$. Fix some $\tfrac{n}{2} < t < \min\{q, \tfrac{r}{2^*-2}\}$, which is always possible because
    \begin{equation*}
        \frac{r}{2^* - 2} = \frac{2^*(1 + \delta)}{2^* - 2} = \frac{n}{2}(1 + \delta)
    \end{equation*}
    and $\frac{n}{2} < q$ by hypothesis.
   
    \vspace{0.5cm}
    \textit{Step 1.} We first aim to obtain an estimate on $\|u_s\|_{W^{2,t}(M)}$ uniform in $s\in (s_0,2^{*})$, since the Sobolev inequality
    \begin{equation*}
        \|u_s\|_{C^{0,2-\frac{n}{t}}(M)} \leq C(M,n,t) \, \|u_s\|_{W^{2, t}(M)}
    \end{equation*}
    would imply a uniform Hölder bound ($t$ depends on $n$, $q$ and $\delta$, but not on $s$). By Lemma \ref{lemma: Lg is Fredholm} we know that
    \begin{equation}
        \label{eq: 2t elliptic estimate}
        \|u_s\|_{W^{2,t}(M)} \leq C(M,g,n,t,q)\bigl(\|u_s^{s-1}\|_{L^t(M)} + \|u_s\|_{L^t(M)}\bigr)
    \end{equation}
    holds for all $s \in (s_0, 2^*)$. If $t \leq r$ we use that $\mathrm{vol}_g(M)=1$ to directly estimate the second term with the $L^r(M)$ norm via Hölder. If $t > r$ instead, we have the sequence
    \begin{equation*}
        W^{2,t}(M) \hookrightarrow L^t(M) \hookrightarrow L^r(M)
    \end{equation*}
    with the first embedding being compact, and hence we can interpolate
    \begin{equation}
        \label{eq: Lt interpolation}
        \|u_s\|_{L^t(M)} \leq \varepsilon \|u_s\|_{W^{2,t}(M)} + C_\varepsilon(M,n,t,r) \,\|u_s\|_{L^r(M)} \,.
    \end{equation}
    Thus, we see that regardless of $t$ being above or below $r$, given any $\varepsilon>0$ there is a constant $C_\varepsilon(M,n,t,r)>0$ (independent of $s$) such that we can estimate the second term in \eqref{eq: 2t elliptic estimate} by \eqref{eq: Lt interpolation} for all $s \in (s_0, 2^*)$. On the other hand, we can manipulate the first term like
    \begin{align*}
        \|u_s^{s-1}\|_{L^t(M)} &= \|u_s^{s-2} u_s\|_{L^t(M)} 
        \\
        &\leq \|u_s^{s-2}\|_{L^{\frac{r}{s-2}}(M)} \|u_s\|_{L^p(M)} = \|u_s\|^{s-2}_{L^r(M)} \|u_s\|_{L^p(M)} \leq C_0^{2^* - 2}\|u_s\|_{L^p(M)} \,,
    \end{align*}
    where $p$ is the Hölder conjugate 
    \begin{equation*}
        \frac{1}{p} \doteq \frac{1}{t} - \frac{s - 2}{r}
    \end{equation*}
    and it is well-defined due to
    \begin{equation*}
        0 < \frac{1}{t} - \frac{2^* - 2}{r} < \frac{1}{t} - \frac{s - 2}{r} < \frac{1}{t} < 1 \,.
    \end{equation*}
    Now, if $p < r$, we simply use the Hölder inequality together with $\vol_g(M) = 1$ to estimate $\|u_s\|_{L^p(M)} \leq \|u_s\|_{L^r(M)}$. If $p \geq r$, define $l \doteq \tfrac{tr}{r - t(2^*-2)}$, which is well-defined, independent of $s$ and by
    \begin{equation*}
        \frac{1}{l} = \frac{1}{t} - \frac{2^* - 2}{r} < \frac{1}{t} - \frac{s - 2}{r} = \frac{1}{p}
    \end{equation*}
    it satisfies that $r \leq p < l$. Again, the compact-continuous embeddings
    \begin{equation*}
        W^{2,t}(M) \hookrightarrow L^{l}(M) \hookrightarrow L^r(M)
    \end{equation*}
    yield that for every $\eta > 0$ we have that
    \begin{equation*}
        \|u_s\|_{L^{l}(M)} \leq \eta \|u_s\|_{W^{2,t}(M)} + C_\eta(M,n,t,r) \|u_s\|_{L^r(M)} \,,
    \end{equation*}
    which combined with the Hölder inequality $\|u_s\|_{L^p(M)} \leq \|u_s\|_{L^l(M)}$ allows us to estimate the first term in \eqref{eq: 2t elliptic estimate} by
    \begin{equation*}
        \|u_s^{s-1}\|_{L^t(M)} \leq C_0^{2^*-2}\bigl(\eta \|u_s\|_{W^{2,t}(M)} + C_\eta(M,n,t,r) \|u_s\|_{L^r(M)}\bigr) \,.
    \end{equation*}
    Putting it all together and noticing that $r = r(n, \delta)$ and $t = t(n, q, \delta)$ we obtain
    \begin{align*}
        \|u_s\|_{W^{2,t}(M)} &\leq C(M,g,n,q,\delta) C_0^{2^*-2}\Big(\eta\|u_s\|_{W^{2,t}(M)} + C_\eta(M,n,q, \delta) \|u_s\|_{L^r(M)}\Big) 
        \\
        &\qquad + C(M,g,n,q,\delta) \Big(\varepsilon \|u_s\|_{W^{2,t}(M)} + C_\varepsilon(M,n,q,\delta) \,\|u_s\|_{L^r(M)}\Big)
        \\
        &\leq C(M,g,n,q,\delta) \bigl(C_0^{2^*-2}\eta + \varepsilon\bigr) \|u_s\|_{W^{2,t}(M)} + C(M,g,n,q,\delta,C_0,\varepsilon,\eta) \,.
    \end{align*}
    Therefore, we see that choosing $\eta = \varepsilon = C(M,g,n,q,\delta)^{-1}(2C_0^{2^*-2} + 2)^{-1}$ we absorb the first term in the right-hand side above into the left-hand side and establish
    \begin{equation*}
        \|u_s\|_{W^{2,t}(M)} \leq 2C(M,g,n,q,\delta,C_0)
    \end{equation*}
    for all $s \in (s_0,2^*)$, as desired.

    \vspace{0.5cm}
    \textit{Step 2.} By the previous step, we have a uniform Hölder bound $\|u_s\|_{C^{0,2-\frac{n}{t}}(M)}\leq C_1(M,g,n,q,\delta,C_0)$ for all $s\in (s_0,2^{*})$, which we may assume to be $C_1(M,n,q,\delta,C_0) \geq 1$. By Arzelà--Ascoli, taking a sequence $\{u_{s_j}\}_{j=1}^{\infty}$ with $s_0<s_j<2^{*}$ such that $s_j\to 2^{*}$, we can then extract a sub-sequence, which we still denote by $\{u_{s_j}\}_{j=1}^{\infty}$, converging uniformly to $u \in C^0(M)$. Since $W^{2,t}(M) \hookrightarrow W^{1,2}(M)$, then $u_s$ are also uniformly bounded in $W^{1,2}(M)$. Consequently, up to a further subsequence to which we restrict and still denote by $\{u_{s_j}\}_{j=1}^{\infty}$, $u_{s_j} \rightharpoonup u$ in $W^{1,2}(M)$. Moreover, by \cref{prop: subcritical regularity}, each such $u_{s_j}\in W^{2,q}(M)$ satisfies $\mathscr{L}_gu_{s_j}=\lambda_{s_j}u_{s_j}^{s_j-1}$ and by \cref{lemma: Lg is Fredholm},
    \begin{align*}
        \Vert u_{s_j}\Vert_{W^{2,q}(M)} &\leq C(M,g,q)\bigl(\Vert \lambda_{s_j}u_{s_j}^{s_j-1}\Vert_{L^q(M)} + \Vert u_{s_j}\Vert_{L^q(M)}\bigr)
        \\
        &\leq C(M,g,q)\bigl(|\lambda_{s_j}|\Vert u_{s_j}\Vert_{C^0(M)}^{s_j-1} + \Vert u_{s_j}\Vert_{C^0(M)}\bigr)
        \\
        &\leq C(M,g,q)\Bigl(|\lambda_{s_j}| C^{s_j-1}_1(M,g,n,q,\delta,C_0) + C_1(M,g,n,q,\delta,C_0)\Bigr) \,,
    \end{align*}
    where above we have appealed to the uniform Hölder bound and the volume normalization $\vol_g(M)=1$. Since $C_1\geq 1$, we can also estimate $C_1 \leq C^{s_j-1}_1 \leq C^{2^{*}-1}_1$, and in combination with \cref{lemma: continuity yamabe invariant}, which grants $|\lambda_{s_j}|\leq |\lambda_2|$, we obtain
    \begin{align*}
        \Vert u_{s_j}\Vert_{W^{2,q}(M)}&\leq C(M,g,n,q,\delta,C_0)|\lambda_{2}| \,.
    \end{align*}
    Since $W^{2,q}(M)\hookrightarrow W^{1,2}(M)$ is compact due to $q>\frac{n}{2}$, this implies that there is a further subsequence, to which we restrict and still denote by $\{u_{s_j}\}_{j=1}^{\infty}$, such that $u_{s_j} \to \hat{u}$ in $W^{1,2}(M)$. Clearly, by uniqueness of the weak $W^{1,2}$-limit, $\hat{u}=u\in C^{0}(M)$. 
    Moreover, since $\Vert u_{s_j}\Vert_{L^{s_j}(M,d\mu_g)}=1$ for all $j$, then 
    \begin{align*}
        \Vert u\Vert_{L^{2^{*}}(M,d\mu_g)}=\lim_{j\to\infty}\Vert u_{s_j}\Vert_{L^{2^{*}}(M,d\mu_g)}\geq \lim_{j\to\infty}\Vert u_{s_j}\Vert_{L^{s_j}(M,d\mu_g)}=1,
    \end{align*}
    by $L^{2^{*}}(M)\hookrightarrow L^{s_j}(M)$ and $\vol_g(M)=1$. Thus, we deduce that $u\not\equiv 0$ and $u\geq 0$, the latter inequality holding since $u$ is the uniform limit of positive functions $u_{s_j}>0$. Now, to extract a limit equation for $u\in W^{1,2}(M)\cap C^0(M)$, we start with the associated sequence of $\{u_{s_j}\}_{j=1}^{\infty}\subset W^{1,2}(M)$ which satisfies
    \begin{align}
        \label{AubinSeqMinim}
        \Lg u_{s_j} = \lambda_{s_j}u_{s_{j}}^{s_j-1}.
    \end{align}
    Since $\{\lambda_{s_j}\}_{j=1}^n$ is a bounded sequence, it converges possibly up to a subsequence to some $\bar{\lambda}\in \mathbb{R}$. Restricting to the corresponding subsequence of functions $\{u_{s_j}\}_{j=1}^\infty$, the continuity of $\Lg : W^{1,2}(M) \to W^{-1,2}(M)$ implies that $\Lg u_{s_j} \to \Lg u$ in $W^{-1,2}(M)$. Besides, the $C^{0}(M)$ convergence of $u_{s_j} \to u$ implies point-wise convergence $u^{s_j-1}_{s_j} \to u^{2^{*}-1}$, and since $\|u_{s_j}\|_{C^0(M)} \leq C_1$, by dominated convergence there holds
    \begin{align}
        \label{DCTConv}
        \int_{M}u^{s_j-1}_{s_{j}}v \, d\mu_g \to  \int_{M}u^{2^{*}-1}v \, d\mu_g
    \end{align}
    for all $v \in W^{1,2}(M)$. Thus, $u^{s_j-1}_{s_j} \to u^{2^{*}-1}$ in $W^{-1,2}(M)$ as well. We deduce that the limit function satisfies
    \begin{align}
        \label{AubinSeqMinim.2}
        \Lg u = \bar{\lambda} u^{2^{*}-1},
    \end{align}
    as an equation in $W^{-1,2}(M)$. On the other hand, we have that
    \begin{equation*}
        \int_{M}u^{s_j}_{s_{j}}d\mu_g\to \int_{M}u^{2^{*}} d\mu_g \,,
    \end{equation*}
    as $u_{s_j}^{s_j} \to u^{2^*}$ point-wise, and consequently $\|u\|_{L^{2^*}(M,d\mu_g)} = 1$. Testing \eqref{AubinSeqMinim.2} with $u$ itself implies that $\bar{\lambda}=Q_g(u)$. When $\lambda(M,g) \geq 0$, we know that $s \mapsto \lambda_s$ is continuous from the left by \cref{lemma: continuity yamabe invariant} and thus $\bar{\lambda}=\lim_{j\to \infty}\lambda_{s_j}=\lambda(M,g)$. On the other hand, when  $\lambda(M,g)< 0$, again by \cref{lemma: continuity yamabe invariant} we know that $-\lambda_s(M,g)$ is non-increasing in $s$. Thus, one has $\lambda_{s_j}(M,g)\leq \lambda(M,g)$ for all $s_j<2^{*}$, which implies that $Q_g(u)=\bar{\lambda}\leq \lambda(M,g)$, so it must be that $\bar{\lambda}=\lambda(M,g)$. We have therefore established that, regardless of the sign of $\lambda(M,g)$, the function  $u \in C^0(M)\cap W^{1,2}(M)$ is a $L^{2^*}(M,d\mu_g)$-normalized, non-negative weak solution to the Yamabe PDE
    \begin{align*}
        \Lg u=\lambda(M,g)u^{2^{*}-1}.
    \end{align*}
    Since $u^{2^*-1} \in L^p(M)$ for any $p \geq 1$, \cref{thm: conf Laplace elliptic regularity} implies that $u \in W^{2,2}(M)$. By the
    \begin{align*}
        W^{2,2}(M) \hookrightarrow W^{1,p_0}(M) \;, \qquad p_0 \doteq \frac{2n}{n-2} > 2
    \end{align*}
    Sobolev embedding, we get that $u \in W^{1,p_0}(M)$ and item $(ii)$ of \cref{thm: conf Laplace elliptic regularity} implies in turn that $u \in W^{2,p_0}(M)$. If $p_0 \geq q$ we are done. Also, if $p_0\geq n$, then in particular $u\in W^{1,q}(M)$ and again \cref{thm: conf Laplace elliptic regularity} implies in one step that $u\in W^{2,q}(M)$. Otherwise, if $p_0<\min\{q,n\}$, we may use
    \begin{align*}
        W^{2,p_0}(M) \hookrightarrow W^{1,p_1}(M) \;, \qquad p_1 \doteq \frac{p_0n}{n-p_0} > p_0
    \end{align*}
    again and bootstrap the process to obtain $u\in W^{2,p_k}(M)$ along a sequence $\{p_k\}_k$ as long as $p_{k-1}<\min\{q,n\}$. Since
    \begin{align*}
        \frac{1}{p_k} - \frac{1}{p_{k+1}} = \frac{1}{n} \,,
    \end{align*}
    the sequence will reach $\min\{q,n\}$ in finitely many steps, granting $u\in W^{2,q}(M)$. Finally, since $u\geq 0$, the same reasoning as in the last part of the proof of \cref{prop: subcritical regularity} applies to show that $u$ must be strictly positive. Therefore, the metric $u^{\frac{4}{n-2}}g\in W^{2,q}(M)$ has constant scalar curvature and the proof concludes.
\end{proof}
We notice that the above \cref{thm: Aubin-Trudinger-Yamabe} settles the Yamabe problem for the case of non-positive Yamabe invariant. In particular, in light of the regularity theory of \cref{thm: conf Laplace elliptic regularity}, a straightforward bootstrap argument yields, as an immediate corollary
\begin{reptheorem}{theorem B}
\label{Coro-Yamabenegative}
  \emph{  Let $M$ be a smooth, closed manifold of dimension $n \geq 3$ and consider a $W^{k,q}$-Riemannian metric $g$ with $k \geq 2$ and $q > \tfrac{n}{2}$. Suppose that $\lambda(M,g) \leq 0$. Then, there exists a $W^{k,q}$-Riemannian metric conformal to $g$ of constant scalar curvature.}
\end{reptheorem}

\subsection{Decompactification via conformal Green function}
\label{Decompactification via Conformal Green's function}
Let $M$ be a smooth, closed 3-manifold and $g$ be a $W^{2,q}$-Riemannian metric on $M$ with $q > 3$ and $\lambda(M, g) > 0$. Fix an arbitrary point $p \in M$ and consider the \textit{unnormalized Green function} $\G$ given in \cref{cor: unnormalized Green 3dim}. We define the non-compact Riemannian manifold
\begin{equation}
    \label{eq: decompactified metric}
    \hat{M} \doteq M\setminus \{p\} \,, \quad \hat g \doteq \G^4 g \,.
\end{equation}
In view of \cref{cor: unnormalized Green 3dim}, there exists a normal coordinate chart $(U,x^i)$ at $p$ such that
\begin{equation}
    \label{expansion_with_v}
    \G^4(x) = \frac{1}{|x|^4}\bigl(1 + |x|\h(x)\bigr)^4 \doteq \frac{1}{|x|^4}\bigl(1 + v(x)\bigr)
\end{equation}
for some function $\h \in W^{2,r}(U, d\mu_g)$, for any $r < \tfrac{3q}{q+3} < 3$. Since $|x| \in W^{2,r}(U, d\mu_g)$ and $W^{2,r}$ is an algebra under multiplication for $r\in (\frac{3}{2},\tfrac{3q}{q+3})$, it follows that $v \in W^{2,r}(U, d\mu_g)$. It is easy to see that $v(0) = 0$. On the other hand, by \cref{NormalCoordsC1}, in normal coordinates $g$ can be written as
\begin{equation*}
    g_{ij}(x) = \delta_{ij} + k_{ij}(x)
\end{equation*}
for some symmetric $k_{ij} \in W^{2,q}(U, d\mu_g)$ satisfying $k_{ij}(0) = 0$ and $\partial k_{ij}(0) =0$. Combining these expressions, we get that
\begin{equation}
    \label{def: k}
    \hat g_{ij}(x) = \frac{1}{|x|^4}\bigl(1 + v(x)\bigr)\bigl(\delta_{ij} + k_{ij}(x)\bigr) \,.
\end{equation}
Using the Kelvin transformed coordinates or \emph{asymptotic coordinates}
\begin{equation}
    \label{eq: asyCoord_z}
    z^i \doteq \frac{x^i}{|x|^2}
\end{equation}
on $\hat U \doteq U \setminus \{\p\} \subset \hat M$ we can write 
\begin{align}
    \label{eq: definition hat k}
        \hat g_{ab}(z) &= \frac{\partial x^i}{\partial z^a} \frac{\partial x^j}{\partial z^b} \, \hat g_{ij}(x) = \delta_{ab} + v(x)\delta_{ab} \nonumber
        \\
        &\quad + \left(k_{ab}(x) - \frac{2 \left( k_{aj}(x) z^jz^b + k_{bj}(x) z^j z^a \right)}{|z|^2} + 4k_{ij}(x) \frac{z^iz^jz^az^b}{|z|^4}\right)\bigl(1 + v(x)\bigr)
        \\
        &\doteq \delta_{ab} + \hat k_{ab}(z) \,. \nonumber
\end{align}

\begin{proposition}
    \label{prop: regularity_g_hat}
    Let $M$ be a smooth, closed 3-manifold and $g$ a $W^{2,q}$-Riemannian metric  with $q > 3$ and $\lambda(M, g) > 0$. Let $(\hat M, \hat g)$ be defined as in \eqref{eq: decompactified metric}. Then, 
    \begin{itemize}
        \item[(i)] $\hat g \in W^{2,q}_{loc}(\hat M)$ has zero scalar curvature,
        \item[(ii)] $(\hat M, \hat g)$ is $AS_\beta(1) \cap W^{2,r}_{-\tau}$-asymptotically Euclidean with respect to the structure at infinity given by the inverted coordinates $\{z^i\}_{i=1}^3$ introduced in \eqref{eq: asyCoord_z} for all $\tfrac{3}{2} < r < \frac{3q}{3+q}$ and $\tau = 2 - \tfrac{3}{r}$, where $\beta \doteq 2-\tfrac{3}{r}.$
    \end{itemize}
\end{proposition}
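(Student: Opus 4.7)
For part $(i)$, the first step is to apply the conformal covariance formula \eqref{eq: ConformalScalar} with $u = \G$: since $\Lg \G = 0$ on $\hat M$ by \cref{cor: unnormalized Green 3dim}, this gives $\R_{\hat g} = \G^{-5}\Lg\G \equiv 0$. The regularity $\hat g = \G^4 g \in W^{2,q}_{loc}(\hat M)$ then follows from the Sobolev multiplication properties (\cref{SobolevMultLocal} and \cref{prop: tensor_regularity}), together with $\G \in W^{2,q}_{loc}(\hat M)$ (\cref{cor: unnormalized Green 3dim}), $g \in W^{2,q}(T_2M)$, and the fact that $W^{2,q}$ is an algebra for $q > n/2 = 3/2$.

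For part $(ii)$, I would start from the expansion provided by \cref{cor: unnormalized Green 3dim}: $\h(x) = A + \BO_1(|x|^{2-3/r})$ combined with \eqref{expansion_with_v} gives
\[ v(x) = 4A|x| + \BO_1\bigl(|x|^{3-3/r}\bigr), \]
where the $\BO(|x|^2)$ terms coming from $(1+t)^4-1 = 4t + \BO(t^2)$ are absorbed into the weaker $\BO(|x|^{3-3/r})$ since $3-3/r \le 2$ for $r \le 3$. Normality of the coordinates combined with $W^{2,q}\hookrightarrow C^{1,1-3/q}$ (holding since $q>3$) yields $k_{ij}(x) = \BO_1(|x|^{2-3/q})$. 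Substituting both expansions into \eqref{eq: definition hat k} and transporting pointwise $\BO_1$-estimates from $x$ to $z$ via $|z| = |x|^{-1}$ and $\partial x/\partial z = \BO(|z|^{-2})$, the $v(x)\delta_{ab}$ term produces the Schwarzschildian piece $\tfrac{4A}{|z|}\delta_{ab}$ with error $\BO_1(|z|^{-(1+\beta)})$ for $\beta \doteq 2-3/r$, while the contributions from $k_{ij}$ and the products $v k_{ij}$ are at worst $\BO_1(|z|^{-(2-3/q)})$. The hypothesis $r < \tfrac{3q}{q+3}$ enters precisely here: it is equivalent to $2-3/q \ge 3-3/r = 1+\beta$, so these $k$-contributions are absorbed into the $AS_\beta(1)$ error, identifying the ADM mass as $m = A$.

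For the $W^{2,r}_{-\tau}$ claim with $\tau = \beta$, note that $h \in W^{2,r}(U)$ by \cref{cor: unnormalized Green 3dim} and $|x|^m \in W^{2,r}_{loc}$ for any integer $m \ge 1$ when $r<3$, while $W^{2,r}$ is a multiplication algebra for $r > 3/2$; both thresholds are satisfied in the range $3/2 < r < 3q/(q+3)$. Hence $v(x)$ and each tangential $k$-term in \eqref{eq: definition hat k} lies in $W^{2,r}_{loc}$ near $p$ in $x$-coordinates. I would then invoke \cref{prop: tensor_regularity} applied to the Kelvin diffeomorphism $x \leftrightarrow z = x/|x|^2$ on the punctured neighborhood to transport this regularity to $\hat g_{ab}-\delta_{ab} \in W^{2,r}_{loc}$ in the $z$-chart, away from $\infty$. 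The weighted integrability at $\infty$ is then verified by direct change of variable: using $\sigma(z) \sim |x|^{-1}$ and $dz \sim |x|^{-2n}\,dx = |x|^{-6}\,dx$, each weighted norm $\|\sigma^{-\tau - 3/r + |\alpha|}\partial_z^\alpha \hat k_{ab}\|_{L^r(\{|z|>R\})}$ is converted into an integral over $\{0<|x|<1/R\}$ bounded by the local $W^{2,r}$-norms of $h$ and of the $k_{ij}$.

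I expect the main obstacle to be this last step. The pointwise $AS_\beta(1)$ bounds control only zeroth and first derivatives, so $L^r$-integrability of the second $z$-derivatives of $\hat k_{ab}$ must be extracted from the $W^{2,r}$-regularity of $h$ near $p$ and threaded through the Kelvin inversion; careful bookkeeping of the resulting weights is precisely what pins down the admissible range of $r$ and fixes the decay exponent $\tau = 2-3/r$.
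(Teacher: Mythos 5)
Your plan for part $(i)$ and the first half of part $(ii)$ (the $AS_\beta(1)$ expansion) matches the paper's argument: the same expansion of $v$ from $\h$, the same $C^{1,1-3/q}$-bound for $k_{ij}$ in normal coordinates, the same identification $r < \tfrac{3q}{q+3} \Leftrightarrow 2-\tfrac{3}{q} > 1+\beta$ to absorb the $k$-contributions. Invoking \cref{prop: tensor_regularity} for the Kelvin map is a harmless detour — that map is smooth away from the puncture, so the lemma applies trivially — whereas the paper simply writes out the chain rule; the substance is the same.

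Where the proposal has a real gap is in the final reduction. You assert that each weighted norm $\|\sigma^{-\tau-3/r+|\alpha|}\partial_z^\alpha\hat k_{ab}\|_{L^r(\{|z|>R\})}$, once pulled back to $\{0<|x|<1/R\}$, is ``bounded by the local $W^{2,r}$-norms of $h$ and of the $k_{ij}$.'' That is not true as stated, and it is precisely the step the paper has to labor over. Differentiating $\hat k_{ab}$ twice in $z$ via the chain rule (see \eqref{eq: second_derivative_k_hat}) does not produce only $\BO(|x|^4)\,\partial_x^2(\cdot)$; the derivatives that fall on the Jacobian $\partial x/\partial z = \BO(|x|^2)$ generate terms of the schematic form $|x|^3\,\partial_x v$, $|x|^3\,\partial_x k$ and $|x|^2\, k$. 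After inserting the weight $|z|^{\tau r+2r-3/r\cdot r} = |z|^{4r-6}$ and the volume element $|z|^6\,dx$, these become the singular integrals
\begin{equation*}
\int_B \frac{|\partial v|^r}{|x|^r}\,dx,\qquad \int_B \frac{|\partial k|^r}{|x|^r}\,dx,\qquad \int_B \frac{|k|^r}{|x|^{2r}}\,dx,
\end{equation*}
and none of these is controlled by $\|v\|_{W^{2,r}}$ or $\|k\|_{W^{2,q}}$ alone — a generic $W^{2,r}$-function would make the first one diverge. Finiteness here relies on the pointwise $\BO_1$-expansions you derived earlier combined with $v(0)=0$, $k_{ij}(0)=\partial k_{ij}(0)=0$: from $v = 4A|x| + \BO_1(|x|^{1+\beta})$ one gets $\partial v=\BO(1)$, which yields $\int_B |x|^{-r}\,dx <\infty$ precisely because $r<3$; similarly $k=\BO(|x|^{2-3/q})$ and $\partial k = \BO(|x|^{1-3/q})$ render the other two integrals finite. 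This is the content of the estimates (5.8)--(5.9) from \cite[Lemma 5.2]{MaxDil} that the paper invokes, and it is where the two thresholds $\tfrac{3}{2}<r<3$ actually bite. So the ``careful bookkeeping'' you flag is not merely bookkeeping: it requires combining the $W^{2,r}$-regularity (for the unweighted $\int_B|\partial^2 v|^r$ terms) with the pointwise vanishing/decay of $v$ and $k_{ij}$ at the origin (for the singularly weighted terms), and your sketch only supplies the first ingredient explicitly.
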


\begin{proof}
    $(i)$ From \cref{cor: unnormalized Green 3dim} we know that $\G \in W^{2,q}_{loc}(\hat M)$, so the regularity statement for $\hat g$ follows since $W^{2,q}_{loc}(\hat M)$ is an algebra under multiplication. Moreover, $\Lg\G = 0$ in $\hat M$, so $\R_{\hat g} \equiv 0$ by \eqref{eq: ConformalScalar}.

    $(ii)$ Notice that the function $v$ defined in \eqref{expansion_with_v}, due to the expansion of $\h$ in \cref{cor: unnormalized Green 3dim}, takes the form
    \begin{equation}
        \label{eq: v_expansion}
        v(x) = 4A |x|+ \BO_1(|x|^{1 + \beta}) \qquad \text{as} \quad |x| \to 0 \,.
    \end{equation}
    In particular, $\partial v = \BO(1)$. Thus, combining \eqref{eq: definition hat k} with
    \begin{equation*}
        k_{ij}(z) = \BO_1(|z|^{-1-\alpha}), \quad v(z) = \frac{4A}{|z|} + \BO_1\left(|z|^{-1-\beta}\right)
    \end{equation*}
    we get the expansion
    \begin{equation}
        \label{AS(1)condition}
        \hat g_{ij}(z) = \left(1 + \frac{4A}{|z|}\right)\delta_{ij} + \BO_1\bigl(|z|^{-1-\beta}\bigr) \qquad \text{as} \quad |z| \to \infty \,,
    \end{equation}
    which establishes the $AS_\beta(1)$ condition in these coordinates.

    To further show that $(\hat M, \hat g)$ is $W^{2,r}_{-\tau}$-AE, we follow \cite[Lemma 5.2]{MaxDil}, where 
    the singular part of the Green function $|x|^{2-n}$ is used as a conformal factor, instead of $\G$. From the discussion above, we must show that the functions $\hat k_{ij}$ defined by \eqref{eq: definition hat k} are in $W^{2,r}_{-\tau}(\nR^3 \setminus \overline{B}, d\mu_{\hat g})$. To that purpose, using that $k_{ij} \in C^1(U)$ and suppressing the indices, we compute
{
\allowdisplaybreaks
    \begin{align}
        \label{eq: firstDerivative_k_hat}
        \nonumber
            \frac{\partial \hat k}{\partial z} &= \frac{\partial x}{\partial z} \cdot \frac{\partial \hat k}{\partial x} 
            \\
            \nonumber
            &= \BO_1(|x|^2)  \BO_1(\partial_x v) + \BO_1(|x|^2)\Big(\BO_1(\partial_x k) + \BO_1(k)\BO_1(|x|) \Big)\bigl(1 + v\bigr) 
            \\
            & \qquad \qquad + \BO_1(|x|^2)\,\BO_1(k)\BO_1(\partial_x v)
            \\ 
            \nonumber
            &= \BO_1(|x|^2)\BO_1(\partial_x v)\Big(1 + \BO_1(k)\Big) + \Big(\BO_1(\partial_x k)\BO_1(|x|^2) 
            \\ 
            \nonumber
            & \qquad \qquad+ \BO_1(k)\BO_1(|x|)\Big)\bigl(1 + v\bigr)\,,
    \end{align}
}

\noindent
    where we have used \eqref{eq: definition hat k} and $\tfrac{\partial x}{\partial z} = \BO_1(|x|^2)$.
    Similarly, the second derivatives read
{
\allowdisplaybreaks
    \begin{align}
        \label{eq: second_derivative_k_hat} 
            \frac{\partial^2\hat k}{\partial z^2} &= \frac{\partial}{\partial z}\left(\frac{\partial\hat k}{\partial z}\right) = \frac{\partial x}{\partial z} \cdot \frac{\partial }{\partial x}\left(\frac{\partial\hat k}{\partial z}\right) 
            \nonumber
            \\ 
            &=  \BO_1(|x|^2)\biggl(\BO(|x|)\BO_1(\partial_x v)\Big(1 + \BO_1(k)\Big) + \BO_1(|x|^2)\BO(\partial_x^2v)\Big(1 + \BO_1(k)\Big) 
            \nonumber
            \\
            &\qquad + \BO_1(|x|^2)\BO_1(\partial_x v)\BO(\partial_x k)
            + \Big(\BO(\partial_x^2k)\BO_1(|x|^2) + \BO_1(\partial_x k)\BO(|x|) 
            \nonumber
            \\ 
            &\qquad + \BO(\partial_x k)\BO_1(|x|) + \BO_1(k)\Big)\bigl(1+v)
            + \BO_1(\partial_x k)\BO_1(|x|^2)\BO(\partial_x v) 
            \nonumber
            \\ 
            & \qquad+ \BO_1(k)\BO_1(|x|)\BO(\partial_x v)\biggr) 
            \\ 
            &= \BO_1(|x|^4)\Big(\BO(\partial_x^2v)\bigl(1 + \BO_1(k)\bigr) + \BO(\partial_x v)\BO(\partial_x k) + \BO(\partial_x^2k)\bigl(1+v\bigr)\Big)
            \nonumber
            \\
            &\qquad + \BO(|x|^3)\Big(\BO_1(\partial_x v)\bigl(1 + \BO_1(k)\bigr) + \BO_1(\partial_x k)\bigl(1+v\bigr)\Big) 
            \nonumber
            \\ 
            &\qquad + \BO_1(|x|^2)\BO_1(k)\bigl(1+v\bigr) \,.
            \nonumber
    \end{align}
}

\noindent
    Since the integral estimate of the first derivative is a simpler version of the one for the second derivative, we only carry out the details of the latter.
    Using that
    \begin{equation*}
        d\mu_{\hat g}(z) = (1+v)^{3/2}|z|^{6} d\mu_g(x)
    \end{equation*}
    and setting $-3 + \tau r + 2r = 4r - 6$, we can use \eqref{eq: second_derivative_k_hat} to obtain 
    \begingroup
\allowdisplaybreaks
    \begin{align*}
        \int_{\nR^3 \setminus B} &\biggl|\frac{\partial^2\hat k}{\partial z^2}\biggr|^r  |z|^{4r-6} \,d\mu_{\hat g}(z) = \\
        & = 
        \int_{B} \Big|\BO(\partial^2v)\bigl(1 + \BO_1(k)\bigr) + \BO(\partial v)\BO(\partial k) + \BO(\partial^2k)\bigl(1+v\bigr)\Big|^r (1+v)^{3/2} d\mu_g(x)
        \\
        &\quad + \int_B |x|^{-r}\Big|\BO_1(\partial v)\bigl(1 + \BO_1(k)\bigr) + \BO_1(\partial k)\bigl(1+v\bigr)\Big|^r (1+v)^{3/2} d\mu_g(x)
        \\
        &\quad + \int_B |x|^{-2r}\big|\BO_1(k)\bigl(1+v\bigr)\big|^r (1+v)^{3/2} d\mu_g(x)
    \end{align*}
    \endgroup
    Recalling that $v \in W^{2,r}(U, d\mu_g) \subset C^{0,\beta}(U)$ and  $k \in W^{2,q}(U, d\mu_g) \subset C^{1,\alpha}(U)$, as well
    
    \noindent as the Sobolev multiplication
    \begin{equation*}
        W^{1,r}(U) \otimes W^{1,q}(U) \hookrightarrow W^{1,r}(U) \,,
    \end{equation*}
    it is not hard to see that the first integral is finite. For the second integral, we essentially have to estimate 
    \begin{equation*}
        \int_B \frac{|\partial v|^r}{|x|^{r}} d\mu_g \qquad \text{and} \qquad \int_B \frac{|\partial k|^r}{|x|^{r}} d\mu_g \,. 
    \end{equation*}
    This is precisely the estimate (5.9) in \cite[Lemma 5.2]{MaxDil}, which we can use since $\tfrac{3}{2} < r < 3$ and $v(0) = k_{ij}(0) = 0$. Our third integral can be estimated using the same argument via the estimate (5.8) in \cite[Lemma 5.2]{MaxDil}. For details on these estimates, see also \cite[Lemma B.1 and Proposition B.1]{avalos2024sobolev}. This concludes the proof.
\end{proof}
\cref{prop: regularity_g_hat} allows us to transfer the Yamabe problem from the compact manifold $(M, g)$ to the AE manifold $(\hat M, \hat g)$. Following \cite{MaxDil} we define the Yamabe invariant of a $W^{2,r}_{-\tau}$-AE manifold $(\hat M, \hat g)$, with $r > \tfrac{3}{2}$ and $\tau > 0$ to be
\begin{equation}
    \label{YamabeInvAE}
    \lambda(\hat M, \hat g) \doteq \inf\left\{Q_{\hat g}(u) \; : \; u \in W^{1,2}_{-1/2}(\hat M) \;, \; u \not\equiv 0 \right\} \,.
\end{equation}
By the results within \cite[Section 3]{MaxDil}, this is well-defined and in fact $W^{1,2}_{-1/2}(\hat M)$ is the largest sensible function space for this variational problem. In parallel to \cite[Proposition 5.3]{MaxDil}, one can easily check the following:
\begin{proposition}
    \label{prop: compac-noncompact yamabe correspondence}
    Let $M$ be a smooth, closed $3$-manifold and $g$ a $W^{2,q}$-Riemannian metric on $M$ with $q > 3$ and $\lambda(M,g)$. Then,
    \begin{equation*}
        \lambda(\hat M, \hat g) = \lambda(M, g) \,,
    \end{equation*}
    where $(\hat M, \hat g)$ is the asymptotically Euclidean Riemannian manifold defined in \eqref{eq: decompactified metric}.
\end{proposition}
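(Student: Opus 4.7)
The plan is to exploit the conformal covariance of the Yamabe functional under $\hat{g}=\G^4 g$ in dimension $n=3$, which (since $\tfrac{4}{n-2}=4$) formally yields $Q_{\hat g}(u)=Q_g(\G u)$ by the identity $\mathscr L_{\hat g}u=\G^{-5}\mathscr L_g(\G u)$ of \eqref{eq: conformal covariance Lg}, together with $d\mu_{\hat g}=\G^6 d\mu_g$ and $(\G u)^{2^*}=\G^6 u^{2^*}$ as well as the corresponding identity for the Dirichlet energy obtained by integration by parts. The claim will then reduce to showing that the change of variables $u\mapsto w\doteq\G u$ sets up a correspondence between (nearly all of) $W^{1,2}_{-1/2}(\hat M)$ and $W^{1,2}(M)$ that is compatible with the two infima.

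I would first prove the inequality $\lambda(\hat M,\hat g)\le\lambda(M,g)$. Given $w\in W^{1,2}(M)$ smooth with $w\ne 0$, set $u\doteq w/\G$ on $\hat M$. Since $\G^{-1}\in W^{2,q}_{\operatorname{loc}}(\hat M)$ with $\G^{-1}(x)\sim|x|$ near $p$, i.e.\ $\G^{-1}(z)\sim|z|^{-1}$ as $|z|\to\infty$, one checks from \eqref{CGF_Expansion} and the product rule that $u\in W^{1,2}_{-1/2}(\hat M)$, with the decay exactly matching Bartnik's weighted norm on the asymptotic end in view of \cref{prop: regularity_g_hat}. Then for smooth approximations the conformal identity $Q_{\hat g}(u)=Q_g(w)$ holds verbatim, and by density (using that $C^\infty(M)$ is dense in $W^{1,2}(M)$ and the continuity of both $Q_g$ on $W^{1,2}(M)$ from \cref{lemma: Ru2 interpolation} and of $Q_{\hat g}$ on $W^{1,2}_{-1/2}(\hat M)$) the identity extends; taking infima yields the first inequality.

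For the reverse inequality $\lambda(\hat M,\hat g)\ge\lambda(M,g)$, given $u\in C^\infty_c(\hat M)$ with $u\ne 0$ one defines $w\doteq\G u$, which is smooth on $\hat M$ and extends by zero across $p$ to a compactly supported function on $M$; the same formal computation gives $Q_g(w)=Q_{\hat g}(u)$. By the density of $C^\infty_c(\hat M)$ in $W^{1,2}_{-1/2}(\hat M)$ (a standard fact for weighted Sobolev spaces in the AE setting, see \cref{AESection}), we may restrict the infimum in \eqref{YamabeInvAE} to $C^\infty_c(\hat M)$, and thus obtain
\begin{equation*}
\lambda(\hat M,\hat g)=\inf_{u\in C^\infty_c(\hat M)}Q_{\hat g}(u)=\inf_{u\in C^\infty_c(\hat M)}Q_g(\G u)\ge\inf_{w\in W^{1,2}(M)}Q_g(w)=\lambda(M,g).
\end{equation*}

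The main obstacle is justifying the conformal identity $Q_{\hat g}(u)=Q_g(\G u)$ for the relevant rough $g$, since the argument of \cite[Proposition 5.3]{MaxDil} is for smooth metrics and involves distributing derivatives and integrating by parts against the singular weight $\G$. The cleanest route is to establish it first for $u\in C^\infty_c(\hat M)$ (where $\G u$ is smooth away from $p$ and vanishes in a neighbourhood of $p$, so all integrations by parts are on $M\setminus\operatorname{supp}(u)^c$ and involve only $W^{2,q}$-coefficients, which is legitimate by the approximation of $g$ by smooth metrics as in \cref{lemma: kernel_improvement}), and then pass to the limit on each side using the continuity of $Q_{\hat g}$ and $Q_g$ on their respective function spaces together with the boundedness of the multiplication/division by $\G$ between these spaces. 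The delicate point, which forces us to work in $W^{1,2}_{-1/2}(\hat M)$ rather than a smaller class, is precisely that $\G$ saturates the weight $-1/2$ at infinity; the energy estimates of \cref{prop: regularity_g_hat} and the fact that $\R_{\hat g}\equiv 0$ conveniently eliminate the potentially problematic curvature term in $Q_{\hat g}$, making the approximation argument go through.
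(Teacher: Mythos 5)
Your argument is correct and implements exactly what the paper is pointing to when it says the result follows ``in parallel to'' Maxwell's Proposition 5.3: the conformal covariance identity $Q_{\hat g}(u)=Q_g(\G u)$ from \eqref{eq: conformal covariance Lg}, combined with the observation that multiplication/division by $\G$ is a bounded bijection between $W^{1,2}(M)$ and $W^{1,2}_{-1/2}(\hat M)$ (a computation in the inverted coordinates, using $\G(z)\sim |z|$ and the Jacobian $|z|^{-2n}$, confirms this), a density argument with $C^\infty_c(\hat M)$ to legitimize the integration by parts against the singular weight $\G$, and the fact that $\R_{\hat g}\equiv 0$ to avoid a delicate curvature term in $E_{\hat g}$. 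The paper itself does not supply a written proof for this proposition, so there is nothing to compare against beyond Maxwell's smooth-metric template; your write-up correctly identifies the two points that genuinely require checking in the rough setting (the boundedness of $u\mapsto \G u$ between the two weighted spaces and the validity of the conformal energy identity for non-compactly supported test functions), and both go through as you describe. One small polish worth making explicit in a final version: in the first inequality you take $w$ smooth and verify the identity $E_{\hat g}(w/\G)=E_g(w)$ by a direct integration by parts on $M\setminus B_\varepsilon(p)$, where the boundary term is $\sim \int_{\partial B_\varepsilon} w^2 \,\partial_\nu\G/\G\, d\omega = \BO(\varepsilon)\to 0$ since $w$ is bounded near $p$ and $\G\sim|x|^{-1}$; stating this explicitly makes the ``holds verbatim'' claim self-contained without appeal to further approximation.
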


\subsection{The Positive Mass Theorem}
\label{The Positive Mass Theorem}
Just as in the case of smooth metrics, we intend to use the PMT in order to get a sign for the constant $A$ in the expansion of \cref{cor: unnormalized Green 3dim}. Standard versions of the PMT, such as that of \cite{Bartnik86}, apply to $W^{2,p}_{-\tau}$-AE manifolds with $p > n$ and $\tau \geq \tfrac{n-2}{2}$. From Proposition \ref{prop: regularity_g_hat} it can immediately be seen, however, that such hypotheses are generally not satisfied in our case. Nevertheless, we will appeal to a refined version of the PMT due to D. Lee and P. LeFloch, which states as follows:
\begin{theorem}[Theorem 1.1 \cite{LeeLeF}] 
    \label{thm: LeeLF}
    Let $\hat M$ be a
    smooth spin manifold of dimension $n \geq 3$ and consider a $C^0 \,\cap\, W^{1, n}_{-\gamma}$-asymptotically Euclidean Riemannian metric $\hat g$ on $\hat M$ with $\gamma \geq \tfrac{n-2}{2}$. If the distributional scalar curvature $\R_{\hat g}$ of $\hat g$ is non-negative, then its generalized ADM mass $\mathbf{m}(\hat M, \hat g)$ is non-negative. Moreover, $\mathbf{m}(\hat M, \hat g) = 0$ occurs only when $(\hat M, \hat g)$ is isometric to Euclidean space.
\end{theorem}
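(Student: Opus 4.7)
Since the authors cite Theorem \ref{thm: LeeLF} from Lee--LeFloch rather than reproving it, my plan is to sketch how the low regularity PMT could be established, combining a regularisation argument with Witten's spinorial proof. The overall strategy is to approximate $\hat g$ by smooth AE metrics of non-negative scalar curvature, apply the classical smooth PMT, and pass to the limit, treating rigidity separately via a compactness argument.

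First, I would mollify $\hat g$ to obtain a family $\{\hat g_\varepsilon\}_{\varepsilon>0}$ of smooth AE metrics with $\hat g_\varepsilon \to \hat g$ in $C^0_{-\gamma} \cap W^{1,n}_{-\gamma}$ as $\varepsilon \to 0$. The scalar curvatures $\R_{\hat g_\varepsilon}$ will generally not be non-negative, but the distributional non-negativity of $\R_{\hat g}$ means that their negative parts satisfy $(\R_{\hat g_\varepsilon})_- \to 0$ in a suitable integral sense $($e.g. $L^{n/2}_{-\tau}$ with $\tau > 2)$. I would then solve the conformal equation
\begin{equation*}
    -a_n \Delta_{\hat g_\varepsilon} u_\varepsilon + \R_{\hat g_\varepsilon} u_\varepsilon = 0 \,, \qquad u_\varepsilon \to 1 \text{ at infinity} \,,
\end{equation*}
on the asymptotically Euclidean background. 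For $\varepsilon$ small, the Schrödinger-type operator on the left is a small perturbation of $-a_n\Delta_{\hat g_\varepsilon}$ in weighted Sobolev spaces, hence invertible on $W^{2,p}_{-\tau'}(\hat M)$ for appropriate $p, \tau'$. The resulting conformal metric $\hat g_\varepsilon' \doteq u_\varepsilon^{4/(n-2)} \hat g_\varepsilon$ is smooth, AE and satisfies $\R_{\hat g_\varepsilon'} \equiv 0$. The ADM masses satisfy a conformal correction $\mathbf{m}(\hat g_\varepsilon') = \mathbf{m}(\hat g_\varepsilon) - c_n \int_{\hat M} (u_\varepsilon - 1) \R_{\hat g_\varepsilon} \, d\mu_{\hat g_\varepsilon}$, which one can show tends to $\mathbf{m}(\hat M, \hat g)$ as $\varepsilon \to 0$.

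Next, since $\hat M$ is spin, Witten's proof for the smooth metric $\hat g_\varepsilon'$ produces an asymptotically constant harmonic spinor $\psi_\varepsilon$ and, via the Lichnerowicz--Weitzenböck formula,
\begin{equation*}
    \mathbf{m}(\hat g_\varepsilon') = \frac{4}{\omega_{n-1}} \int_{\hat M} \Bigl(|\nabla^{\hat g_\varepsilon'} \psi_\varepsilon|^2 + \tfrac{1}{4}\R_{\hat g_\varepsilon'} |\psi_\varepsilon|^2 \Bigr) d\mu_{\hat g_\varepsilon'} \geq 0 \,.
\end{equation*}
Passing to the limit $\varepsilon \to 0$ yields $\mathbf{m}(\hat M, \hat g) \geq 0$. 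For the rigidity part, if $\mathbf{m}(\hat M, \hat g) = 0$, I would argue that $\mathbf{m}(\hat g_\varepsilon')$ is small, and the defect in Witten's identity forces $\nabla^{\hat g_\varepsilon'} \psi_\varepsilon$ to be small in $L^2$. Using a family of $n$ linearly independent asymptotically constant spinors and the Clifford action, this produces an almost-parallel frame, from which one constructs a nearly isometric map to $\mathbb{E}^n$. A compactness argument in weighted Hölder/Sobolev spaces then extracts a genuine isometry in the limit. The main obstacles are: $(i)$ maintaining quantitative control over the Witten spinor under low regularity of the metric (rather than relying on $C^2$ elliptic theory for the Dirac operator); $(ii)$ ensuring the mass converges correctly under the double approximation (mollification plus conformal correction), which requires careful bookkeeping of weighted norms; and $(iii)$ tracking the regularity of the limiting isometry, which in the rough setting is only guaranteed to be as regular as the metric itself --- exactly the point the present paper must sharpen for its application to the Yamabe problem.
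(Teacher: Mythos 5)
The paper does not prove this statement; it is imported verbatim from Lee--LeFloch \cite{LeeLeF} and used as a black box, supplemented only by the authors' own regularity analysis of the rigidity isometry in \cref{PropMassRigidity.1} and \cref{thm: A=0 case}. Your sketch is therefore not comparable to an argument in the text, but it can be compared to the actual proof in \cite{LeeLeF}, and there the routes diverge: Lee--LeFloch do \emph{not} mollify the metric. They define the distributional scalar curvature and the generalized ADM mass directly in the $C^0 \cap W^{1,n}_{loc}$ setting and then establish a Witten-type spinorial identity that is valid for the rough metric itself, avoiding any passage to smooth approximants with uncontrolled curvature. Your proposal, by contrast, regularizes the metric, conformally corrects to a scalar-flat smooth metric, invokes the smooth spinorial PMT, and passes to the limit.

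The central step of your sketch is not justified and is, in fact, the crux of the difficulty in the low-regularity setting. You assert that distributional non-negativity of $\R_{\hat g}$ forces $(\R_{\hat g_\varepsilon})_- \to 0$ in a weighted $L^{n/2}$ norm. This does not follow. Scalar curvature is a \emph{nonlinear} function of the metric and its first two derivatives, so $\R_{\hat g_\varepsilon}$ is not a mollification of $\R_{\hat g}$: schematically $\R_g = \partial(g^{-1}\partial g) + (g^{-1}\partial g)^2$, and convolution commutes with neither the inverse nor the quadratic term. What one \emph{can} prove is that $\R_{\hat g_\varepsilon} \rightharpoonup \R_{\hat g}$ in the sense of distributions, but distributional convergence to a non-negative limit says nothing about the negative parts of the approximants (compare $\sin(x/\varepsilon) \rightharpoonup 0$ with negative part of order one). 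Without control on $(\R_{\hat g_\varepsilon})_-$, your conformal deformation step collapses: the operator $-a_n\Delta_{\hat g_\varepsilon} + \R_{\hat g_\varepsilon}$ need not be invertible on $W^{2,p}_{-\tau'}$, indeed it may acquire negative eigenvalues, so no positive solution $u_\varepsilon \to 1$ is guaranteed to exist, and the mass-correction identity is vacuous. Known mollification-based PMTs (corners, Lipschitz hypersurface singularities, etc.) work precisely because the singular set is a lower-dimensional submanifold and the jump conditions give explicit control of the concentrated curvature; no analogue of this structure is available for a general $C^0\cap W^{1,n}$ metric. This is exactly why Lee--LeFloch's direct weak-spinorial approach is needed. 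Finally, your listed obstacle $(i)$, ``maintaining quantitative control over the Witten spinor'' for the rough metric, does not arise in a genuine mollification scheme, where the smooth PMT is applied as a black box to $\hat g_\varepsilon'$; this suggests the two proof strategies (direct spinorial vs. regularize-and-limit) are being conflated. The gap in the regularize-and-limit route is the one I described above, and it is not a matter of bookkeeping.
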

First, we observe that the decompactified manifold $(\hat M, \hat g)$ defined in \eqref{eq: decompactified metric} satisfies the regularity and decay hypothesis of \cref{thm: LeeLF} due to \cref{prop: regularity_g_hat}. Moreover, in dimension three, the spin assumption is equivalent to orientability. Before recalling the definition of the \textit{generalized ADM mass} and \textit{distributional scalar curvature}, let us remark that the rigidity statement, as written in \cref{thm: LeeLF}, is rather loose. Since the metric itself is rough, one would be interested in the regularity of the associated isometry to Euclidean space, which is not tracked down in the proof of \cite[Theorem 1.1]{LeeLeF}. Such regularity is key in our application to the Yamabe problem for the case where the original metric is conformal to $\nS^3$ and will be provided in \cref{PropMassRigidity.1} and \cref{thm: A=0 case}.

Under the hypotheses of \cref{thm: LeeLF}, the authors in \cite{LeeLeF} introduced the notion of generalized ADM mass 
\begin{align}
    \label{LeeleflochMass.2}
    \mathbf{m}(\hat{M},\hat{g}) \doteq \frac{1}{2(n-1) \omega_{n-1} } \, \inf_{\varepsilon > 0} \,\,\liminf_{\rho \to \infty} \,\frac{1}{\varepsilon} \,\int_{\{ \rho < \mathrm{r} < \rho + \varepsilon\}} \operatorname{V} \cdot \,\overline{\nabla} \mathrm{r}  \; d\mu_{\bar g} \,, 
\end{align}
where the vector field $\operatorname{V}$ is given by
\begin{equation*}
    \operatorname{V}^k \doteq \hat g^{ij} \hat g^{kl} (\overline{\nabla}_j \hat g_{il} - \overline{\nabla}_l \hat g_{ij}) \,,
\end{equation*}
$\bar g$ is a background smooth metric isometric to the Euclidean metric in the asymptotic chart, $\overline{\nabla}$ denotes the Levi--Civita connection of $\bar g$ and the function $\mathrm{r}$ is a smooth positive function on $\hat{M}$ which agrees with the radial coordinate in the asymptotic chart. Naturally, $\mathbf{m}$ agrees with classical notions of ADM mass whenever the metric is regular enough and the scalar curvature is integrable \cite[Corollary 2.5]{LeeLeF}. 

In \cref{prop: regularity_g_hat} we have shown that the decompactified manifold $(\hat M, \hat g)$ constructed in \eqref{eq: decompactified metric} is of class $AS(1)$, which implies that the decay conditions of \cref{thm: LeeLF} are fulfilled and hence $\mathbf m$ is well-defined. Moreover, it is scalar flat and thus the distributional scalar curvature is clearly non-negative. In fact, we have
\begin{proposition}
    \label{PropPMTforGp}
     Let $M$ be an orientable, smooth, closed 3-manifold and $g$ a $W^{2,q}$ Riemannian metric  with $q > 3$ and $\lambda(M, g) > 0$. Let $(\hat M, \hat g)$ be defined as in \eqref{eq: decompactified metric}. Then,
        \begin{align}
            \label{PropPMTforGp.1}
            0\leq \textbf{m}(\hat{M},\hat{g}) = 2A,
        \end{align}
        where $A$ is the constant appearing in the expansion of \cref{cor: unnormalized Green 3dim}.
\end{proposition}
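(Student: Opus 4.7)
The plan is to invoke the Lee--LeFloch positive mass theorem (\cref{thm: LeeLF}) on $(\hat M, \hat g)$ to obtain $\mathbf m(\hat M, \hat g) \geq 0$, and then to compute the mass directly from the $AS_\beta(1)$-expansion of \cref{prop: regularity_g_hat} to identify its value as $2A$. Combining these two facts gives \eqref{PropPMTforGp.1}.

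First, verify the hypotheses of \cref{thm: LeeLF}. Orientability of $M$ passes to $\hat M = M\setminus\{p\}$, which in dimension three is equivalent to the spin condition. The regularity $\hat g \in W^{2,q}_{loc}(T_2\hat M)$ from \cref{prop: regularity_g_hat}(i) together with $q > 3$ ensures continuity, while the pointwise $AS_\beta(1)$-expansion of \cref{prop: regularity_g_hat}(ii) gives $(\hat g - \delta)(z) = \BO(|z|^{-1})$ and $\partial\hat g(z) = \BO(|z|^{-2})$ in the asymptotic chart. A standard weighted integration in $L^3$ then shows $\hat g \in C^0 \cap W^{1,3}_{-1/2}(T_2\hat M)$, meeting the decay threshold $\gamma \geq (n-2)/2 = 1/2$ required by \cref{thm: LeeLF}. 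Finally, \cref{prop: regularity_g_hat}(i) provides $\R_{\hat g} \equiv 0$ pointwise, so the distributional scalar curvature vanishes identically and is in particular non-negative. \cref{thm: LeeLF} then yields $\mathbf m(\hat M, \hat g) \geq 0$.

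Next, compute $\mathbf m$ from its definition \eqref{LeeleflochMass.2} by taking $\bar g = \delta$ in the asymptotic chart $\{z^i\}_{i=1}^3$ and $\mathrm r(z) = |z|$. To leading order, the Lee--LeFloch vector field $V^k = \hat g^{ij}\hat g^{kl}(\partial_j\hat g_{il} - \partial_l \hat g_{ij})$ reduces to $V^k = \partial_i \hat g_{ik} - \partial_k \hat g_{ii} + \BO(|z|^{-3})$, where the error arises from the corrections $\hat g^{ij} - \delta^{ij} = \BO(|z|^{-1})$ multiplied by $\partial \hat g$. Feeding in the expansion
\[
    \hat g_{ij}(z) = \Bigl(1 + \frac{4A}{|z|}\Bigr)\delta_{ij} + \BO_1\bigl(|z|^{-1-\beta}\bigr)
\]
produces $V^k = 8A\, z^k/|z|^3 + \BO(|z|^{-2-\beta})$, whence $V \cdot \bar\nabla\mathrm r = V^k z^k/|z| = 8A/|z|^2 + \BO(|z|^{-2-\beta})$. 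Integrating over the annular shell $\{\rho < |z| < \rho + \varepsilon\}$ with flat volume element $|z|^2\, d|z|\, d\Omega$ yields $32\pi A\, \varepsilon + \BO(\varepsilon \rho^{-\beta}) + \BO(\varepsilon \rho^{-1})$; dividing by $\varepsilon$, letting $\rho \to \infty$, and dividing by $2(n-1)\omega_{n-1} = 16\pi$ finally gives $\mathbf m(\hat M, \hat g) = 2A$.

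The main technical point is that the $AS_\beta(1)$-expansion of \cref{prop: regularity_g_hat}, together with $W^{2,q}_{loc}$-regularity, is exactly what satisfies the $C^0 \cap W^{1,3}_{-1/2}$ decay requirement of \cref{thm: LeeLF}. The crucial ingredient is the first-order pointwise control encoded in the $\BO_1$ notation, which relies on the improved expansion \eqref{h_decay_qn} of the conformal Green function, available only in the $q > n$ regime and hence consistent with the $n = 3$, $q > 3$ restrictions of \cref{theorem A}. The mass computation itself is then a direct calculation from the Schwarzschildian leading term, with the higher-order corrections contributing terms that vanish in the relevant limits.
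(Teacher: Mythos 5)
Your proposal is correct and follows essentially the same two-step strategy as the paper: invoke \cref{thm: LeeLF} (after checking orientability $\Rightarrow$ spin in dimension three and the $C^0 \cap W^{1,3}_{-1/2}$ decay, which you verify a bit more explicitly than the paper does) to get $\mathbf m \geq 0$, and then compute $\mathbf m = 2A$ directly from the $AS_\beta(1)$-expansion of \eqref{AS(1)condition}. The leading-order evaluation of $\operatorname{V}$ and the annular-shell integration match the paper's, with the minor cosmetic difference that you reduce first to the flat-index expression $\partial_i\hat g_{ik} - \partial_k\hat g_{ii}$ before inserting the Schwarzschildean term, whereas the paper keeps the $\hat g^{ij}\hat g^{kl}$ factors and evaluates directly.
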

\begin{proof}
    If $M$ is orientable, so is $\hat M$ and combining this with \cref{prop: regularity_g_hat}, 
    $(\hat M, \hat{g})$ satisfies the hypotheses of Theorem \ref{thm: LeeLF}. Consequently, $\textbf{m}(\hat{M},\hat{g})\geq 0$. Let us now explicitly show that $\textbf{m}(\hat{M},\hat{g}) = 2A$. Given a background metric $\bar g$ such that in the asymptotic chart $\bar g_{ij}(z) = \delta_{ij}$, we get from the Schwarzschildean expansion \eqref{AS(1)condition} that 
    \begin{equation*}
        \overline{\nabla}_{j} \hat g_{il} = \partial_{j} \hat g_{il} = - \frac{4A}{|z|^2} \frac{z^j}{|z|} \delta_{il} + \BO(|z|^{-2-\beta})
    \end{equation*}
    and
    \begin{align*}
        \operatorname{V}^k
        & = \frac{4A}{|z|^2}\, \left( - \frac{z^k}{|z|}  + 3 \frac{z^k}{|z|} \right) + \BO(|z|^{-2-\beta}) = \frac{8A}{|z|^2} \,\frac{z^k}{|z|} + \BO(|z|^{-2-\beta}) \,.
    \end{align*}
    Hence, using for example $\mathrm r = |z|$, the generalized ADM mass reads
    \begin{align*}
        \mathbf{m}(\hat M, \hat g) & = \frac{1}{16\pi}\inf_{\varepsilon > 0} \,\,\liminf_{\rho \to \infty} \,\frac{1}{\varepsilon} \,\int_{\{ \rho < \mathrm{r} < \rho + \varepsilon\}} \left(\frac{8A}{|z|^2} + \BO(|z|^{-2-\beta})\right) \, dz 
        \\
        &= \frac{1}{4} \, \inf_{\varepsilon > 0} \,\,  \liminf_{\rho \to \infty} \, \frac{1}{\varepsilon} \,\int_{\rho}^{\rho + \varepsilon} \left( 8A + \BO(\mathrm{r} ^{-\beta}) \right) \, d\mathrm{r}
    \end{align*}
    Now, estimating $|\BO(\mathrm{r} ^{-\beta})|\leq C\mathrm{r} ^{-\beta}$ for a given constant $C>0$ independent of $\mathrm{r}>R_0$ for some large enough $R_0>0$, we can compute for any $\varepsilon>0$ small enough
    \begin{align*}
        \frac{1}{\varepsilon}\int_{\rho}^{\rho + \varepsilon} \left( 8A + \BO(\mathrm{r} ^{-\beta}) \right) \, d\mathrm{r}&=8A + \frac{C}{\varepsilon}\left( \rho^{1-\beta} + (1-\beta)\varepsilon \rho^{-\beta}  + \BO(\varepsilon^2) \BO(\rho^{-1 -\beta}) - \rho^{1-\beta} \right)
        \\
        &= 8A +  C((1-\beta)\rho^{-\beta}  + \BO(\varepsilon) \BO(\rho^{-1 -\beta}) ),
    \end{align*}
    which implies that for any such $\varepsilon>0$
    \begin{align*}
        \liminf_{\rho \to \infty} \frac{1}{\varepsilon}\int_{\rho}^{\rho + \varepsilon} \left( 8A + \BO(\mathrm{r} ^{-\beta}) \right) \, d\mathrm{r}=8A \,.
    \end{align*}
    We conclude that $\textbf{m}(\hat{M},\hat{g})=2A$.
\end{proof}
We now study the critical case $2A=\textbf{m}=0$ and examine the regularity of the isometry $(\hat{M},\hat{g})\cong\nR^3$ of \cref{thm: LeeLF}. To do so, one could carefully track down the proof of \cite[Theorem 1.1]{LeeLeF} using the extra properties $(\hat{M},\hat{g})$ inherits. In order to avoid introducing spinorial language and with the applicability to higher dimensions in mind, we shall instead follow the classic non-spinorial approach presented, for instance, in \cite[Lemma 10.7]{Lee-Parker}.
\begin{proposition}
    \label{PropMassRigidity.1}
     Let $M$ be an orientable, smooth, closed 3-manifold and $g$ a $W^{2,q}$-Riemannian metric  with $q > 3$ and $\lambda(M, g) > 0$. Let $(\hat M, \hat g)$ and $\{z^i\}_{i=1}^3$ be defined as in \eqref{eq: decompactified metric} and \eqref{eq: asyCoord_z}. If $ \mathbf{m}(\hat M, \hat g) = 0$, then there exists a $W^{3,q}_{loc}$-isometry
     \begin{equation*}
         y : (\hat M, \hat g) \to \nR^3
     \end{equation*}
     such that
     \begin{equation}
        \label{eq: hatM R3 isometry expansion}
         y^i = z^i + \BO_1\bigl(|z|^{-\varepsilon}\bigr) \qquad \text{as} \quad |z| \to \infty
     \end{equation}
     for some $\varepsilon \in (0,1)$.
\end{proposition}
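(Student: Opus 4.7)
The plan is to combine the rigidity statement of Theorem~\ref{thm: LeeLF} with the harmonic-coordinate construction of Section~\ref{subsec: normal coordinates} in order to upgrade the regularity of the isometry, and then to analyze its asymptotic behavior using the expansion of $\hat g$ in the structure at infinity. As a first step, Theorem~\ref{thm: LeeLF} applied to $(\hat M,\hat g)$, which by Proposition~\ref{prop: regularity_g_hat} is orientable, $W^{2,q}_{loc}$-regular, asymptotically Euclidean and scalar flat, yields under $\mathbf{m}(\hat M,\hat g)=0$ a bijective isometry $\phi:(\hat M,\hat g)\to(\mathbb{R}^3,\delta)$. This provides a global coordinate system $\{y^i\}_{i=1}^3$ on $\hat M$ in which $\hat g_{ab}=\delta_{ab}$. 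Although the regularity of $\phi$ is not tracked explicitly in \cite{LeeLeF}, the identity $\Delta_{\hat g}y^i=0$ holds weakly on $\hat M$, simply because isometries intertwine Laplace--Beltrami operators and the Euclidean coordinates on $\mathbb{R}^3$ are harmonic. Applying the local elliptic regularity of Theorem~\ref{thm: main local elliptic regularity} iteratively, starting from the $L^t_{loc}$-regularity of $y^i$ inherited from the homeomorphism property of $\phi$, one bootstraps $y^i\in W^{3,q}_{loc}(\hat M)$, exactly as in the proof of Proposition~\ref{prop: Existence and Regularity of harmonic coordinates}.

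For the asymptotic expansion of $y^i$ in terms of $z^i$, we use that Proposition~\ref{PropPMTforGp} together with the hypothesis $\mathbf{m}(\hat M,\hat g)=0$ forces $A=0$. Thus the expansion \eqref{AS(1)condition} sharpens to
\begin{equation*}
    \hat g_{ij}(z)=\delta_{ij}+\BO_1\bigl(|z|^{-1-\beta}\bigr)\qquad\text{as }|z|\to\infty,
\end{equation*}
where $\beta=2-\tfrac{3}{r}>0$ for $r$ close enough to $\tfrac{3q}{q+3}$. After absorbing a rigid motion of $\mathbb{R}^3$ that aligns $y$ with $z$ at infinity, we write $y^i=z^i+w^i$. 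The fact that $y$ is an isometry $\hat g\mapsto\delta$ translates into
\begin{equation*}
    \partial_aw^b+\partial_bw^a+\sum_k\partial_aw^k\partial_bw^k=\hat g_{ab}(z)-\delta_{ab}=\BO_1\bigl(|z|^{-1-\beta}\bigr),
\end{equation*}
while harmonicity of $y^i$ yields
\begin{equation*}
    \Delta_\delta w^i=(\Delta_\delta-\Delta_{\hat g})y^i=\BO\bigl(|z|^{-2-\beta}\bigr).
\end{equation*}
Regarding $w^i$ as a solution of the Poisson equation $\Delta_\delta w^i=F^i$ with $F^i$ decaying as above, the standard isomorphism properties of the Euclidean Laplacian between weighted Sobolev spaces $W^{2,r}_{-\varepsilon}\to L^{r}_{-2-\varepsilon}$ (see \cite{Bartnik86,McOwen}) together with the weighted embedding in Lemma~\ref{AEWeightedEmbeedings} yield $w^i\in C^1_{-\varepsilon}$ for some $\varepsilon\in(0,1)$, which is precisely the $C^1$ decay $w^i=\BO_1(|z|^{-\varepsilon})$ claimed in \eqref{eq: hatM R3 isometry expansion}.

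The main obstacle will be making the bootstrap argument of the first step rigorous starting from the a priori unknown regularity of the isometry provided by Theorem~\ref{thm: LeeLF}. Specifically, one must verify that $y^i$ is a distributional solution of $\Delta_{\hat g}y^i=0$ in a sense compatible with the duality pairing of Theorem~\ref{SobolevDualIso}, which in turn requires establishing $y^i\in L^t_{loc}(\hat M)$ for some $t\geq q'$; this can be deduced from the fact that $\phi$ is a homeomorphism with $y(\hat M)=\mathbb{R}^3$, so that $y^i$ is locally bounded away from infinity. A secondary delicate point is the correct choice of the weighted exponent $\varepsilon$, which must simultaneously ensure that $\Delta_\delta$ is an isomorphism between the relevant weighted spaces and that the nonlinear quadratic terms $\partial w^k\partial w^k$ can be absorbed into the right-hand side via a contraction or iteration argument; this amounts to a careful but standard application of the weighted elliptic machinery reviewed in Section~\ref{AESection}.
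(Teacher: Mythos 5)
Your plan inverts the logical flow of the paper, and the inversion creates two gaps that you flag but do not actually close.

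The paper explicitly declines to start from the rigidity statement of Theorem~\ref{thm: LeeLF}: "one could carefully track down the proof of \cite[Theorem 1.1]{LeeLeF}\ldots we shall instead follow the classic non-spinorial approach." Instead, Step~1 of the paper's proof first shows $\Ric_{\hat g}\equiv 0$ by a variational argument (conformally deform $\gamma_t=\hat g+th$ to scalar-flat metrics $g_t=\varphi_t^4\gamma_t$, read off $\tfrac{d}{dt}E_{ADM}(g_t)|_{t=0}=\int\langle\Ric_{\hat g},h\rangle\,d\mu_{\hat g}=0$ because the mass is minimized at zero), and Step~2 then \emph{constructs} the functions $y^i$ by solving $\Delta_{\hat g}y^i=0$ with $y^i-z^i\in W^{2,q}_{-\varepsilon}(\hat M)$ -- so the decay is built in by the existence theory -- and proves via Bochner's formula that $\{dy^i\}$ is a parallel orthonormal coframe, hence $y$ is a local and (by simple connectedness of $\mathbb R^3$) global isometry. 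Nothing from the rigidity of \cite{LeeLeF} is invoked beyond nonnegativity of the mass.

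Your approach has two concrete problems. First, to write $\Delta_{\hat g}y^i=0$ as a distributional equation and start the bootstrap, you need $y^i$ to have enough a priori regularity for the chain rule to hold. Knowing only that $\phi$ is a homeomorphism gives $y^i\in L^\infty_{loc}$, but that is not sufficient: the intertwining of Laplace--Beltrami operators under $\phi$ requires at least $C^1$ or $W^{1,2}_{loc}$ regularity of $\phi$. This is exactly the gap the paper says is "not tracked down" in \cite{LeeLeF}. It could in principle be filled by a low-regularity Myers--Steenrod theorem (as in \cite[Section 3]{Taylor_ConfFlat}, which the paper cites in a footnote after establishing flatness, and invokes in a different context in Theorem~\ref{thm: A=0 case}), but you do not propose this. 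Second, even granting $y^i\in W^{3,q}_{loc}$, the asymptotic step requires an a priori decay of $w^i=y^i-z^i$ to place it in a weighted space where the Euclidean Laplacian is an isomorphism. "Absorbing a rigid motion of $\mathbb R^3$ that aligns $y$ with $z$ at infinity" is precisely the statement that $w^i\to 0$, which is what needs proving, and the Poisson equation $\Delta_\delta w^i=F^i$ with decaying right-hand side does not determine the decay of $w^i$ unless one already knows $w^i$ lies in a weighted space (e.g. it could contain growing harmonic polynomials). The paper avoids this entirely by solving for $y^i-z^i$ directly in $W^{2,q}_{-\varepsilon}(\hat M)$, and then bootstrapping the decay of the metric coefficients in those harmonic coordinates through the Ricci equation $\hat g^{pq}\partial_p\partial_q h_{ij}=Q_{ij}(\hat g,\partial h)$ using Theorem~\ref{BartniksProp1.6}.
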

\begin{proof}
    The proof consists of two steps:

    \textit{Step 1.} First we show that if $\mathbf{m}(\hat M, \hat g)=0$, then $\Ric_{\hat g} \equiv 0$. Following \cite[Lemma 10.7]{Lee-Parker}, first consider a symmetric $h\in C^\infty_0(T_2\hat{M})$ and the one-parameter family of AE-metrics $\gamma_t=\hat{g}+th$. We look for a conformal deformation $g_t=\varphi_t^{4}\gamma_t$ with $\varphi_t \to 1$ near infinity such that $\R_{g_t} \equiv 0$, which is equivalent to finding positive solutions to
    \begin{align}
        \label{ConfDefRig.1}
        -8\Delta_{\gamma_t} \varphi_t + \R_{\gamma_t}\varphi_t=0 \,, \quad \varphi_t(z)-1=o(1) \,.
    \end{align}
    Since $\R_{\hat{g}} \equiv 0$, then by \cite[Theorem 5.1 and Lemma 4.3]{MaxDil} we have that $\lambda(\hat M, \hat g) > 0$ and there is a small enough $\varepsilon > 0$ such that $\lambda(\hat{M},\gamma_t)>0$ for all $t \in (-\varepsilon,\varepsilon)$. Again by \cite[Theorem 5.1]{MaxDil}, we are granted the existence of a unique positive solution to \eqref{ConfDefRig.1} such that $\psi_t\doteq 1-\varphi_t\in W^{2,r}_{-\tau}(\hat{M})$. Notice this implies that $\psi_t$ satisfies
    \begin{align}
        \label{ConfDefRig.2}
        -8\Delta_{\gamma_t} \psi_t + \R_{\gamma_t}\psi_t = -\R_{\gamma_t}.
    \end{align}
    By construction, $\R_{\gamma_t}$ is compactly supported and thus in $L^r_{\delta}(\hat{M})$ for any $\delta<0$, and then a standard procedure similar to that in \cite[Lemma 3]{Maxwell0} grants that $\psi_t\in W^{2,r}_{-\delta}(\hat{M})$ for any $\delta<1$.

    We can extract further information on both the regularity and decay properties of $\psi_t$ as follows. First of all, since $\hat{g} \in W^{2,q}_{loc}(T_2\hat{M})$, then $\gamma_t\in W^{2,q}_{loc}(T_2\hat{M})$ as well and thus $\R_{\gamma_t}\in L^q_{loc}(\hat{M})$. Actually, since $\R_{\gamma_t}$ is compactly supported, $\R_{\gamma_t}\in L^q_{\rho}(\hat{M})$ for any $\rho\in \mathbb{R}$. Also, since $\psi_t\in C^0_{-\tau}(\hat{M})$, then $\R_{\gamma_t}\psi_t\in L^q_{\rho}(\hat{M})$ for any $\rho\in \mathbb{R}$ for the same reason. At this point \cref{thm: 12 local elliptic regularity} implies that $\psi_t\in W^{2,q}_{loc}(\hat{M})$, and then, we find that $\psi_t\in W^{2,q}_{-\delta}(\hat{M})$ by \cref{BartniksProp1.6}. On the other hand, noticing that
    \begin{align*}
        \Delta_{\gamma_t}\psi_t(z)=0 \quad \text{ on } \; \nR^3\setminus\overline{B_R(0)}
    \end{align*}
    for $R > 0$ large enough, we get that
    \begin{align*}
        \Delta \psi_t(z) = -\bigl(\gamma_t^{ij} - \delta_{ij}\bigr)\partial_i\partial_j \psi_t(z) + \gamma_t^{ij}\Gamma_{ij}^l \partial_l\psi_t
    \end{align*}
    holds on $\nR^3\setminus\overline{B_R(0)}$. Recalling that $|\gamma_t^{ij}-\delta_{ij}|\leq A|z|^{-1}$ and $\gamma_t^{ij}\Gamma^l_{ij}\leq C(A)|z|^{-2}$, we see that $\Delta \psi_t(z) \in L^q_{-3-\delta}\bigl(\nR^3\setminus\overline{B_R(0)}\bigr)$ for all $\delta < 1$. Extending $\psi(z)$ to all $\nR^3$ by a smooth cut-off function and using that $\Delta: W^{2,q}_{-1-\delta}(\nR^3)\to L^q_{-3-\delta}(\nR^3)$ is Fredholm for any $\delta\in (0,1)$ due to \cite[Theorem 0]{McOwen},\footnote{In \cite[Theorem 0]{McOwen} the author uses a different labeling for the weight parameter associated with the spaces $W^{2,p}_{\delta}$ with norm introduced in \eqref{WeightedSobolevNormsDefs}. To translate notations from \eqref{WeightedSobolevNormsDefs} to those of \cite{McOwen}, one show make the change $\delta\to -(\delta+\frac{n}{p})$.} we grant the existence of some $\hat{\psi}_t \in L^q_{-1-\delta}(\nR^3)$ solving $\Delta\hat{\psi}_t = \Delta\psi_t \doteq f$ provided that $f \in \Ker^{\perp}\big(\Delta^{*}:L^{q'}_{\delta+3-3}(\nR^3)\to W^{-2,q'}_{\delta+1-3}(\nR^n)\big)$. Since $\Ker\big(\Delta^{*}:L^{q'}_{\delta}(\nR^3)\to W^{-2,q'}_{\delta-2}(\mathbb{R}^3)\big) = \Ker\big(\Delta:L^{q'}_{\delta}(\nR^3)\to W^{-2,q'}_{\delta-2}(\nR^3)\big)$, which equals the space of constant functions for $\delta\in (0,1)$, we see that (after modifying $f$ in $B_R(0)$ if necessary) there exists some $\hat\psi_t \in W^{2,q}_{-1-\delta}(\nR^3)$ such that
    \begin{align*}
        \Delta(\psi_t-\hat{\psi}_t)=0 \quad \text{ on } \; \nR^3\setminus\overline{B_R(0)}.
    \end{align*}
    Because $\psi_t - \hat{\psi}_t \to 0$ as $|z|\to \infty$, it admits an expansion in terms of inverted harmonic polynomials satisfying
    \begin{align*}
        \psi_t(z)-\hat{\psi}_t(z) = \frac{C_t}{|z|} + \BO_1(|z|^{-2}) \quad \text{ as } \; |z| \to \infty
    \end{align*}
    for some constant $C_t$, and in view of $W^{2,q}_{-1-\delta}(\mathbb{R}^3)\hookrightarrow C^1_{-1-\delta}(\mathbb{R}^3)$ due to $q>n$,
    \begin{align}
        \label{PMTRigidityConfFactorExpansion.1}
        \psi_t(z) = \frac{C_t}{|z|} + \BO_1(|z|^{-1-\delta}) \quad \text{ as } \; |z| \to \infty \,.
    \end{align}
    Therefore, the metric $g_t=\phi^4_t\gamma_t$ has the asymptotics
    \begin{align}
        \begin{split}
        (g_t)_{ij}(z) &=\left( 1 + \frac{4A_t}{|z|}\right)\delta_{ij} + \BO_1(|z|^{-1-\alpha}),
    \end{split}
    \end{align}
    where $\alpha = \min\{\beta,\delta\}>0$ and $A_t=A+C_t$. 

    From the above discussion, $g_t$ is a one-parameter family of metrics in $W^{2,q}_{loc}(T_2\hat M) \cap AS(1)$ with $g_0 = \hat g$ satisfying $\R_{g_t} \equiv 0$ for all $t \in (-\varepsilon,\varepsilon)$. It is easy to see that such a family is actually differentiable and standard variational formulas yield
    (see \cite[(8.9)-(8.11)]{Lee-Parker})
    \begin{align}
        \label{PMTRigidityVariationalArgument}
        \frac{d}{dt}\left(\int_{\hat{M}}\R_{g_t}d\mu_{g_t} + E_{ADM}(\hat{g}_t)\right)\Big\vert_{t=0} = \int_{\hat{M}}\Bigl\langle\Ric_{\hat{g}} - \frac{1}{2}\R_{\hat g} \hat g, h\Bigr\rangle_{\hat{g}}d\mu_{\hat{g}}.
    \end{align}
    Since $\R_{g_t} \equiv 0$ for all $t$, the first term in the left-hand-side and the second term in the right-hand-side vanish. Also, because $g_t$ are all $AS(1)$ with $\R_{g_t} \equiv 0$, the generalized ADM mass $\textbf{m}(g_t)$ is well-defined and non-negative for all $t$. Moreover, from \eqref{PropPMTforGp.1} we know that $2A_t = E_{ADM}(g_t) \geq 0$. Thus, if $g_{0} = \hat{g}$ satisfies $A_{0}=A=0$, then $E_{ADM}(g_{0})$ is a local minimum of $E_{ADM}(g_t)$ and consequently
    \begin{align*}
        0 = \frac{d}{dt}E_{ADM}(\hat{g}_t)\Big\vert_{t=0} = \int_{\hat{M}}\bigl\langle\Ric_{\hat{g}}, h\bigr\rangle_{\hat{g}}d\mu_{\hat{g}}
    \end{align*}
    for all $h \in C^\infty_0(T_2\hat M)$, showing that $\Ric_{\hat g} \equiv 0$. Since $(\hat M, \hat g)$ is 3 dimensional, it is flat.\footnote{At this point, one could use the ideas of M. Taylor in \cite[Section 3]{Taylor_ConfFlat} to show it is $W^{3,q}_{loc}$-locally isometric to $\nR^3$.}

    \vspace{0.5cm}
    \textit{Step 2.} Next, we aim to find a global orthonormal frame using Bochner's formula (c.f. \cite[Proposition 10.2]{Lee-Parker}). We begin by constructing harmonic coordinates at infinity following ideas of R. Bartnik \cite[Theorem 3.1]{Bartnik86}: extend $z^i$ to $C^{\infty}(\hat{M})$ functions and note asymptotically $\Delta_{\hat{g}}z^i=\hat{g}^{kl}(z)\Gamma^i_{kl}(z)$. If $A=0$, then $\hat{g}^{kl}(z)\Gamma^i_{kl}(z)=O(|z|^{-2-\beta})\in L^{p}_{-2-\varepsilon}(\mathbb{R}^3\backslash\overline{B_R(0)})$ for all $p\leq \infty$ and some $\varepsilon\in (0,1)$. Then, since $\hat g$ is $W^{2,r}_{-\tau}$-AE by \cref{prop: regularity_g_hat}, a simple adaptation of \cite[Proposition 1]{Maxwell0} to the case of AE manifolds without boundary grants the existence of a solution $v^i\in W^{2,r}_{-\varepsilon}(\hat{M})$ to $\Delta_{\hat{g}}v^i=\Delta_{\hat{g}}z^i$. By the local elliptic regularity of \cref{thm: 12 local elliptic regularity}, we also know that $v^i\in W^{2,q}_{loc}(\hat{M})\cap C^0_{-\varepsilon}(\hat{M})$ with $\Delta_{\hat{g}}v^i\in L^q_{-2-\varepsilon}(\hat{M})$ and thus $v^i\in W^{2,q}_{-\varepsilon}(\hat{M})\hookrightarrow C^1_{-\varepsilon}(\hat{M})$ by \cref{BartniksProp1.6}. Thus, the functions $y^i \doteq z^i-v^i \in W^{2,q}_{loc}(\hat M)$ satisfy
    \begin{equation*}
        \Delta_{\hat g}y^i = 0 \quad \text{in} \;\; \hat M \,, \qquad y^i-z^i\in W^{2,q}_{-\varepsilon}(\hat M) \,.
    \end{equation*}
    In particular, $\frac{\partial y^i}{\partial z^j}=\delta^i_j+O(|z|^{-1-\varepsilon})$ and thus $\{y^i\}_{i=1}^3$ define a set of asymptotic coordinates for $\hat{M}$ which are $C^{1,\alpha}$-compatible with $\{z^i\}_{i=1}^3$ and such that
    \begin{align}
        \label{AEHarmoncCoordExpansion}
        \hat{g}_{ij}(y)=\delta_{ij} + \BO(|z|^{-1-\varepsilon}) \qquad \text{as} \quad |z| \to \infty \,.
    \end{align}
    We would like to further show that $\hat g$ is also $AS(1)$ with respect to the $\{y^i\}_{i=1}^3$ coordinates. To do so, let $\phi : \nR^3 \setminus \Omega \to \nR^3\setminus\Omega'$ be the coordinate change $y=\phi(z)$ and fix $\Omega$ large enough so that $\Vert\phi-\Id\Vert_{C^1_{-\varepsilon}(\nR^3\backslash\Omega)}<\frac{1}{2}$. Then, \cref{LemmaInvFunctThmSobRegAsymptotic} implies that $\phi^{-1}-\mathrm{Id}\in W^{2,q}_{-\varepsilon}(\nR^3\backslash\Omega') \hookrightarrow C^1_{-\varepsilon}(\mathbb{R}^3\backslash\Omega')$ too, which grants that \eqref{AEHarmoncCoordExpansion} can actually be upgraded to
    \begin{align}
        \begin{split}
        \hat{g}_{ij}(y) &= \frac{\partial z^a}{\partial y^i} \frac{\partial z^b}{\partial y^b}\hat{g}_{ab}(z) = \delta_{ij} + \BO(|y|^{-1-\varepsilon}) \qquad \text{as} \quad |y| \to \infty \,.
        \end{split}
\end{align}


    Moreover, since $\phi : \nR^3\setminus\Omega\to \nR^3\setminus\Omega'$ is a $W^{3,q}_{loc}$-diffeomorphism, \cref{lemma: Adams diffeo lemma} implies that $(\hat g_{ab} \circ z)(y) \in W^{2,q}_{loc}(\nR^3\setminus\Omega')$ and the first equality above identities gives $\hat{g}_{ij}(y)\in W^{2,q}_{loc}(\nR^3\setminus\Omega')$ by Sovolev multiplications. Also, the Jacobian matrices $\frac{\partial z^a}{\partial y^i}-\delta^a_i\in W^{1,q}_{-1-\varepsilon}(\nR^3\setminus\Omega')$ by the previous analysis, which proves that
    \begin{align}
        \label{AEHarmoncCoordExpansion.3}
        h_{ij}(y) \doteq \hat{g}_{ij}(y) -\delta_{ij} \in W^{2,q}_{loc}(\nR^3\setminus\Omega')\cap W^{1,q}_{-1-\varepsilon}(\nR^3\setminus\Omega'),
    \end{align}
    where we have used the multiplication property of \cref{AEWeightedEmbeedings} and that $q>3$. Now, writing $\Ric_{\hat{g}} \equiv 0$ in the harmonic coordinates at infinity, we obtain (see e.g. \cite{DeTurck_Kazdan,Sabitov-Shefel})
    \begin{align}
        \label{RicciEquation}
        \Delta_{\hat g}\bigl(h_{ij}(y)\bigr) = \hat{g}^{pq}(y)\partial_{p}\partial
        _{q}h_{ij}(y) = Q_{ij}\bigl(\hat{g}(y),\partial h(y)\bigr).
    \end{align}
    where $Q_{ij}$ are quadratic in $\partial h$.  
    Noticing that $Q_{ij} \in L^{\frac{q}{2}}_{-4-2\varepsilon}(\nR^3\setminus\Omega')$, \cref{BartniksProp1.6} together with \eqref{AEHarmoncCoordExpansion.3} and \eqref{RicciEquation} implies that $h_{ij} \in W^{2,\frac{q}{2}}_{-1-\varepsilon}(\nR^3\setminus\Omega')$. From here, we start an iteration in order to establish that $h_{ij}\in C^1_{-1-\varepsilon}(\mathbb{R}^3\backslash\tilde{\mathcal{K}})$. If $q>6$, then we are done by the Sobolev embedding. Otherwise, we have $\partial h \in W^{1,\frac{q}{2}}_{-2-\varepsilon}(\nR^3\setminus\Omega')$ and we split this case into two sub-cases: if $3<q<6$,
    \begin{align}
        \label{RicciEquationIteration1}
        W^{1,\frac{q}{2}}_{-2-\varepsilon}(\nR^3\setminus\Omega')\hookrightarrow L^{\frac{3q}{6-q}}_{-2-\varepsilon}(\nR^3\setminus\Omega') 
    \end{align}
    and therefore $Q_{ij} \in L^{\frac{3}{6-q}\frac{q}{2}}_{-4-2\varepsilon}(\nR^3\setminus\Omega')$. We highlight that $\frac{3}{6-q}>1\Longleftrightarrow q>3$, which by the same reasoning above, via \cref{BartniksProp1.6}, grants $h_{ij}\in W^{2,\frac{t_0}{2}}_{-1-\varepsilon}(\nR^3\setminus\Omega')$ where $t_0\doteq \min\{\frac{3q}{6-q},2q\}$. If $t_0=2q$, then we are done, while if $t_0=\frac{3q}{6-q}$ we iterate. That is, given any $3<t<q<6$ such that $h_{ij}\in W^{1,t}_{-1-\varepsilon}(\nR^3\setminus\Omega')$, the analysis above grants that $h_{ij}\in W^{2,\frac{t^*}{2}}_{-1-\varepsilon}(\nR^3\setminus\Omega')$ where $t^*\doteq \min\{\frac{3t}{6-t},2q\}>t$. Thus, we can produce a sequence $t_k$ of the form
    \begin{align*}
        t_k\doteq \frac{3t_{k-1}}{6-t_{k-1}}
    \end{align*}
    such that $h_{ij}\in W^{2,\frac{t_k}{2}}_{-1-\varepsilon}(\nR^3\setminus\Omega')$, as long as $q<t_k<2q$. But since
    \begin{align*}
        \frac{1}{t_k}
        &=\frac{2}{t_{k-1}} - \frac{1}{3}\Longrightarrow \frac{1}{t_{k-1}}-\frac{1}{t_k}=\frac{1}{3}-\frac{1}{t_{k-1}}>\frac{1}{3}-\frac{1}{q},
    \end{align*}
    each step increases $t_k$ by a fixed minimum amount and eventually we will have $t_k>6$, at which point $h_{ij}\in W^{2,\frac{t_k}{2}}_{-1-\varepsilon}(\nR^3\setminus\Omega')\hookrightarrow C^1_{-1-\varepsilon}(\nR^3\setminus\Omega')$ and we reach our goal. If $q=6$, we replace the first step \eqref{RicciEquationIteration1} in the iteration by
    \begin{align}
        \label{RicciEquationIteration2}
        W^{1,\frac{q}{2}}_{-2-\varepsilon}(\nR^3\setminus\Omega')\hookrightarrow L^{t}_{-2-\varepsilon}(\nR^3\setminus\Omega')
    \end{align}
    to obtain $Q_{ij} \in L^{\frac{t}{2}}_{-4-2\varepsilon}(\nR^3\setminus\Omega')$ for any $t < \infty$ instead,
    and thus going back to (\ref{RicciEquation}), Theorem \ref{BartniksProp1.6} implies in one step that $h_{ij}\in W^{2,q}_{-1-\varepsilon}(\nR^3\setminus\Omega')$, which reaches the desired threshold. We conclude in any case, by the $W^{2,q}_{-1-\varepsilon}(\nR^3\setminus\Omega') \hookrightarrow C^1_{-1-\varepsilon}(\nR^3\setminus\Omega')$ embedding, that
    \begin{equation}
        \label{eq: hat g AS(1) harmonic asymptotics}
        \hat g_{ij}(y) = \delta_{ij} + \BO_1\bigl(|y|^{-1-\varepsilon}\bigr)  \qquad \text{as} \quad |y| \to \infty \,,
    \end{equation}
    showing that $\hat g$ is also $AS(1)$ of mass zero with respect to the harmonic coordinates at infinity.

    Finally, since $\{y^i\}_{i=1}^3$ are globally defined harmonic functions for $\hat{g}\in W^{2,q}_{loc}(\hat{M})$, then $y^i\in W^{3,q}_{loc}(\hat{M})$ due to \cref{thm: 12 local elliptic regularity} and we can make sense of the usual Bochner formula, which since $\mathrm{Ric}_{\hat{g}}=0$ and $\Delta_{\hat{g}}y^i=0$, reads
    \begin{align*}
        \Delta_{\hat{g}}|dy^i|_{\hat{g}}^2=|\hat{\nabla}^2y^i|^2_{\hat g}
    \end{align*}
    Integrating over a large bounded domain $D_R\subset\subset\hat{M}$ with boundary $\{|z|=R\}$
    \begin{align*}
        \int_{D_R}|\hat{\nabla}^2y^i|^2d\mu_{\hat{g}}= 2 \int_{\{|z|=R\}}\hat{\nabla}^2y^i(\nu_{\hat{g}},\mathrm{grad}_{\hat{g}}y^i)d\omega_{\hat{g}} \,,
    \end{align*}
    where $\nu_{\hat g} = \tfrac{z}{|z|}$ is the outer unit normal. Note that in harmonic coordinates at infinity the Hessian of $y^i$ satisfies
    \begin{align*}
        \hat{\nabla}^2y^i(\partial_{y^k},\partial_{y^l}) = -\hat\Gamma^i_{kl}(y) = \BO(|y|^{-2-\varepsilon}) \qquad \text{as} \quad |y| \to \infty
    \end{align*}
    due to \eqref{eq: hat g AS(1) harmonic asymptotics} and in view of $|y(z)|=|z| + \BO_1(|z|^{-\varepsilon})$, because $y^i - z^i \in W^{2,q}_{-\varepsilon}(\hat M)$, we find that
    \begin{align*}
        \hat{\nabla}^2y^i(\partial_{y^k},\partial_{y^l}) = \BO(|z|^{-2-\varepsilon}) \qquad \text{as} \quad |z| \to \infty \,.
    \end{align*}
    From here, it is easy to deduce that
    \begin{align*}
        \int_{D_R}|\hat{\nabla}^2y^i|^2d\mu_{\hat{g}} = o(1) \qquad \text{as} \quad R \to \infty
    \end{align*}
    and therefore
    \begin{align}
        \int_{M}|\hat{\nabla}^2y^i|^2d\mu_{\hat{g}}=0 \,,
    \end{align}
    showing that $\{dy^i\}_{i=1}^3$ are parallel 1-forms. Moreover, since they become orthonormal as $|z| \to \infty$, they must be orthonormal everywhere, hence forming a global orthonormal coframe. This implies that $y : (\hat M, \hat g) \to \nR^3$ is a local isometry and in light of $y^i = z^i + \BO_1\bigl(|z|^{-\varepsilon}\bigr)$, it is surjective. A standard argument appealing only to the completeness of $(\hat M, \hat g)$ then shows that it must then be a covering map (see the proof of \cite[Chapter 7, Lemma 3.3]{DoCarmo}), but since $\nR^3$ is simply connected, it is a global isometry.
\end{proof}
\begin{theorem}
    \label{thm: A=0 case}
    Let $M$ be an orientable, smooth, closed 3-manifold and $g$ a $W^{2,q}$-Riemannian metric  with $q > 3$ and $\lambda(M, g) > 0$. Let $(\hat M, \hat g)$ be defined as in \eqref{eq: decompactified metric}. If $ \mathbf{m}(\hat M, \hat g) = 0$, then there exists a $W^{3,q}$-conformal diffeomorphism
    \begin{equation*}
        \phi : (M,g) \to \nS^3 \,.
    \end{equation*}
    In particular, the conformal metric $\phi^*g_{\nS^3} \in [\,g\,]_{W^{2,q}}$ has constant scalar curvature.
\end{theorem}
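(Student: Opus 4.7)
The approach mirrors the classical smooth case by combining the rigidity package of \cref{PropMassRigidity.1} with the one-point conformal compactification of Euclidean space via stereographic projection. Under the hypothesis $\mathbf{m}(\hat M,\hat g)=0$, \cref{PropMassRigidity.1} supplies a $W^{3,q}_{loc}$-isometry $y:(\hat M,\hat g)\to \nR^3$ with $y^i = z^i + \BO_1(|z|^{-\varepsilon})$ as $|z|\to\infty$, where $\{z^i\}$ are the inverted coordinates \eqref{eq: asyCoord_z}. Let $\sigma:\nS^3\setminus\{N\}\to \nR^3$ denote stereographic projection from the north pole, a smooth diffeomorphism with $(\sigma^{-1})^{*}g_{\nS^3}= F(y)^{2} g_{\nR^3}$ and $F(y)\doteq 2/(1+|y|^2)$. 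I would define $\phi:M\to \nS^3$ by $\phi|_{\hat M}\doteq \sigma^{-1}\circ y$ and $\phi(p)\doteq N$, and show that $\phi$ is a $W^{3,q}$-diffeomorphism satisfying $\phi^{*}g_{\nS^3}\in [\,g\,]_{W^{2,q}}$; the constant scalar curvature claim then follows because $\R$ is diffeomorphism-invariant and $\R_{g_{\nS^3}}=6$.

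Away from $p$ the pullback is easy to read off:
\begin{equation*}
    \phi^{*}g_{\nS^3} = F(y)^{2}\,y^{*}g_{\nR^3} = F(y)^{2}\,\hat g = F(y)^{2}\,\G^{4} g \doteq u^{4}g,\qquad u\doteq F(y)^{1/2}\G,
\end{equation*}
so $\phi$ is a $W^{3,q}_{loc}$-conformal diffeomorphism between $\hat M$ and $\nS^3\setminus\{N\}$. To extend across $p$, I would work in normal coordinates $\{x^i\}$ at $p$ (so that $z^i = x^i/|x|^2$) and in the chart around $N$ obtained from stereographic projection from the south pole, which in the overlap is related to the north-pole chart by the inversion $w = y/|y|^2$. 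Writing $y^i(z) = z^i + v^i(z)$ with $v^i \in W^{3,q}_{loc}(\hat M)\cap W^{2,q}_{-\varepsilon}(\hat M)$ from the proof of \cref{PropMassRigidity.1}, a direct computation shows that $w^i(x) = x^i + \BO_1(|x|^{1+\varepsilon})$ as $|x|\to 0$, so $\phi$ extends continuously at $p$ with $dw|_0 = \mathrm{Id}$. Sobolev regularity of this extension then follows from the compositional structure $w = (y\circ z)/|y\circ z|^2$ via \cref{lemma: Adams diffeo lemma} and the multiplication properties of \cref{SobolevMultLocal}, yielding $w\in W^{3,q}$ in a neighbourhood of $0$ and hence $\phi$ a $W^{3,q}$-diffeomorphism $M\to \nS^3$.

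Once $\phi$ is known to be a $W^{3,q}$-diffeomorphism, \cref{prop: tensor_regularity} gives $\phi^{*}g_{\nS^3}\in W^{2,q}(T_2M)$, and the identity $\phi^{*}g_{\nS^3} = u^{4} g$ valid on $\hat M$ extends to all of $M$ with $u = (\det\phi^{*}g_{\nS^3}/\det g)^{1/12}$ positive and of class $W^{2,q}$, establishing $\phi^{*}g_{\nS^3}\in [\,g\,]_{W^{2,q}}$. As a consistency check on positivity at the pole, the vanishing $A=0$ (implied by $\mathbf{m}=2A=0$) together with the expansion $\G(x) = |x|^{-1}(1+\BO_1(|x|^{3-3/r}))$ from \cref{cor: unnormalized Green 3dim} and $|y(x)|^2 = |x|^{-2}(1+\BO(|x|^{1+\varepsilon}))$ yield $u(p)=\sqrt{2}>0$. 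The main technical obstacle in executing this outline is the regularity bookkeeping near $p$: several inversions must be composed with the weighted Sobolev map $y-z$, and the cancellation coming from $A=0$ must be exploited in order to upgrade the natural $C^1$-behaviour of $\phi$ to the $W^{3,q}$ class required for the theorem.
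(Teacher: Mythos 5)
Your proposal follows the same broad strategy as the paper's proof --- use \cref{PropMassRigidity.1} to get the $W^{3,q}_{loc}$-isometry $y:\hat M\to\nR^3$, push it to $\nS^3$ via stereographic projection, and extend across the pole --- and your expansion $w^i(x)=x^i+\BO_1(|x|^{1+\varepsilon})$ near $p$ is exactly the computation the paper makes, so the $C^{1,\alpha}$-extension of $\phi$ across $p$ is fine. The conformal identity $\phi^*g_{\nS^3}=u^4 g$ and the value $u(p)=\sqrt{2}$ are also correct.

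The genuine gap is precisely where you flag it: the upgrade from $C^{1,\alpha}$ to $W^{3,q}$ near $p$. The claim that this follows from ``the compositional structure $w=(y\circ z)/|y\circ z|^2$ via \cref{lemma: Adams diffeo lemma} and \cref{SobolevMultLocal}'' does not go through, for two reasons. First, $z=x/|x|^2$ is singular at $x=0$, so the intermediate chart is not a $W^{k,q}$-diffeomorphism on any neighbourhood of $p$ and \cref{lemma: Adams diffeo lemma} does not apply there. Second, the weighted data coming from \cref{PropMassRigidity.1}, namely $y-z\in W^{2,q}_{-\varepsilon}(\hat M)$, only give two (weighted) derivatives of $y-z$ near infinity, whereas $W^{3,q}$ regularity of $\phi$ across $p$ needs control of a third derivative; there is no Sobolev-multiplication mechanism that conjures a missing derivative out of the asymptotic decay and the $A=0$ cancellation alone. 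The paper resolves this by a different device, following Taylor's conformal-flatness regularity argument: since $\phi$ is a local isometry, any harmonic chart $(U,u^i)$ for $(\nS^3,g_{\nS^3})$ around $N$ pulls back to $C^{1,\alpha}$ weak solutions of $\Delta_g(u^i\circ\phi)=0$ on $\phi^{-1}(U)\ni p$, and elliptic regularity for the Laplace--Beltrami operator of a $W^{2,q}$-metric (as in \cref{prop: Existence and Regularity of harmonic coordinates}) then yields $u^i\circ\phi\in W^{3,q}_{loc}$. This is the key step that promotes $\phi$ to a $W^{3,q}$-diffeomorphism, and it is not recoverable from the composition/multiplication bookkeeping you propose. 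If you incorporate this elliptic regularity argument at the pole in place of the direct composition, the rest of your outline matches the paper.
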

\begin{proof}
    Let $N,S \in \nS^3$ be the north and south poles of the unit 3-sphere and let
    \begin{equation*}
        \sigma_N : \nS^3\setminus\{S\} \to \nR^3 \quad \text{and} \quad \sigma_S : \nS^3\setminus\{N\} \to \nR^3
    \end{equation*}
    be the corresponding stereographic coordinates 
    around them --see e.g. \cite[Section 3]{Lee-Parker}--. Notice that
    \begin{equation}
        \label{eq: stereographic coords inversion}
        (\sigma_N\circ\sigma_S^{-1})(x) = (\sigma_S\circ\sigma_N^{-1})(x) = \frac{x}{|x|^2}
    \end{equation}
    holds for every $x \in \nR^3$. Let $y : (\hat M, \hat g) \to \nR^3$ be the $W^{3,q}_{loc}$-isometry given in \cref{PropMassRigidity.1} and consider the bijective mapping $\phi : M \to \nS^3$ defined by

    \begin{equation}
        \label{eq: hatM to S3 isometry}
        \phi(q) =
        \begin{dcases}
            (\sigma_S^{-1} \circ y \circ \pi)(q) \,, \quad &q \neq p
            \\
            N \,, &q = p
        \end{dcases}
    \end{equation}
    where $\pi : M \to \hat M$ denotes the natural restriction. Note that both $\rho$ and $\pi$ are smooth maps with respect to the maximal differentiable structure inherited by $\hat M$ compatible with the asymptotic coordinates $z^i = \frac{x^i}{|x|^2}$, where $\{x^i\}_{i=1}^3$ are normal coordinates of $(M,g)$ around $p$ as constructed in \cref{subsec: normal coordinates}. Consequently, $\phi \in W^{3,q}_{loc}(M\setminus\{p\};S^3)$. In order to study the regularity of $\phi$ around $p \in M$, we use the normal coordinates $\{x^i\}_{i=1}^3$ as well as stereographic coordinates $\{\sigma_N^i\}_{i=1}^3$ around $N \in \nS^3$. Using that \eqref{eq: stereographic coords inversion} and \eqref{eq: hatM R3 isometry expansion} we get
    \begin{align*}
        \sigma_N(\phi(x)) = (\sigma_N\circ\sigma_S^{-1})(y(x)) = \frac{y(x)}{|y(x)|^2} = \frac{z(x) + \BO_1\bigl(|z(x)|^{-\varepsilon}\bigr)}{|z(x)|^2+\BO_1\bigl(|z(x)|^{1-\varepsilon}\bigr)} = x + \BO_1\bigl(|x|^{1+\varepsilon}\bigr)
    \end{align*}
    for $x \neq 0$, showing that $\phi$ is $C^{1,\alpha}$ around $p$. In order to see that it is in fact $W^{3,q}_{loc}$, following the ideas of \cite[Theorem 2.1]{Taylor_ConfFlat}, consider now a harmonic chart $(U, u^i)$ around $N \in \nS^3$, which in this case is smooth. That is, it belongs to the canonical maximal differentiable structure of $\nS^3$. Then, using that $\phi$ is an isometry, one can check that the maps $u^i\circ\phi : M \supset \phi^{-1}(U) \to \mathbb{R}^3$ are $C^{1,\alpha}$ weak solutions to\footnote{For the associated computations, see \cite[Equations (2.6) to (2.8)]{Taylor_ConfFlat} .}
    \begin{align*}
        \Delta_g(u^i\circ\phi) = 0 \,.
    \end{align*}
    Since $g\in W^{2,q}(M)$, it follows from the same analysis as in the proof of \cref{prop: Existence and Regularity of harmonic coordinates} that $u\circ\phi \in W^{3,q}_{loc}(\phi^{-1}(U))$ and therefore, the same regularity claim holds for any other coordinate system smoothly compatible to $u$. This establishes $\phi\in W^{3,q}(M;S^3)$.

    Finally, recalling that $(\sigma_S)_*g_{\nS^3} = 4u_1^{-4}g_{\nR^3}$ where $u_a \in C^\infty(\nR^n)$ is an Aubin bubble (see \eqref{eq: AubinBubbles} below or \cite[Section 3]{Lee-Parker}), we obtain
    \begin{align*}
        \phi^*g_{\nS^3} &= 4 \, y^*\bigl(u_1^{-4}g_{\nR^3}\bigr) = 4 \, (u_1 \circ y)^{-4} \hat g = 4 \, (u_1 \circ y)^{-4} \G^{-4} g \,,
    \end{align*}
    showing that $\phi$ is indeed a conformal diffeomorphism. Since $\phi\in W^{3,q}(M;S^3)$, tracing both sides by $g \in W^{2,q}(T_2M)$ we see that the conformal factor on the right-hand-side must be in $W^{2,q}(M)$, showing that $\phi^*g_{\nS^3} \in [\,g\,]_{W^{2,q}}$, as desired.
\end{proof}

\subsection{3-manifolds of positive Yamabe invariant}
\label{SectionProofMainThm}

We are now ready to solve the Yamabe problem for closed 3-manifolds by constructing a test function $\psi \in W^{1,2}_{-1/2}(\hat M)$ such that $Q_{\hat g}(\psi) < \lambda(\nS^3)$ and appealing to \cref{thm: Aubin-Trudinger-Yamabe} and \cref{prop: compac-noncompact yamabe correspondence}. For this, we follow the strategy adopted in the case of smooth metrics as presented in \cite[Section 7]{Lee-Parker}. The basic intuition is that, since $\hat{g}$ is close to the Euclidean metric near infinity, one may use so called \textit{Aubin bubbles}
\begin{equation}
    \label{eq: AubinBubbles}
    u_a(z) \doteq \left(\frac{|z|^2 + a^2}{a}\right)^{-\frac{1}{2}} \qquad \text{with} \qquad a > 0 
\end{equation}
to construct the desired test function. These are smooth solutions to
\begin{equation}
    \label{AubinBubble.1}
    -\Delta u_a = 3\,u_a^{2^*-1} \qquad \text{in} \;\; \nR^3 \,,
\end{equation}
which therefore saturate the optimal Sobolev inequality in $\nR^3$ and consequently satisfy --see \cite[Theorem 3.3]{Lee-Parker} or \cite{Talenti}--
\begin{align}
    \label{AubinBubble.2}
    24\|u_a\|^{2^{*}-2}_{L^{2^{*}}(\nR^3)} = 8\frac{\|\nabla u_a\|^2_{L^2(\nR^3)}}{\|u_a\|^{2^{*}}_{L^{2^{*}}(\nR^3)}} = \lambda(\nS^3) \,.
\end{align}
Observe that for large values of the parameter $a$, the functions $u_a$ become approximately constant in a ball $B_R(0)$ with $a>>R$. Thus, we fix some large radius $R > 0$ and let $\hat M_\infty \doteq \{|z| > R\}\subset \hat{M}$. Below, we shall use the notation $\rho(z) = |z|$ for the asymptotic radial coordinate. We define
    \begin{equation}
        \label{eq: test function}
        \psi_a(z) \doteq \begin{cases}
            u_a(z) \,, \quad \rho(z) \geq R
            \\
            u_a(R) \,, \quad \rho(z) \leq R
        \end{cases}
    \end{equation}
with $a >> R$ to be chosen latter. Clearly, $\psi_a \in W^{1,2}_{-1/2}(\hat M)$ and it is thus a candidate test function. Before going to the main proof, let us show the following, based on \cite[Lemma 3.5]{Lee-Parker}.
\begin{lemma}
    \label{PropIntegralEstimateYamabe}
    Let $a>R>0$ and $-1<k<3$ be real numbers. Then, there exists a constant $C=C(k)>0$ such that
    \begin{align}
        \label{IntegralEstimateYamabe}
        \int_{R}^{\infty}\rho^{-k}\frac{a}{(a^2+\rho^2)^2} \rho^{2}d\rho\leq Ca^{-k} 
    \end{align} 
\end{lemma}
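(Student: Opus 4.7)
My plan is a straightforward scaling argument. The natural substitution for an integral involving $(a^2+\rho^2)^{-2}$ is $\rho = at$, which will transform the integrand into a universal shape times a power of $a$. Concretely, with $d\rho = a\,dt$, the integrand becomes
\[
\rho^{-k}\frac{a}{(a^2+\rho^2)^2}\rho^{2}\,d\rho = (at)^{-k}\frac{a}{a^4(1+t^2)^2}(at)^{2}\cdot a\,dt = a^{-k}\,\frac{t^{2-k}}{(1+t^2)^2}\,dt,
\]
so that
\[
\int_{R}^{\infty}\rho^{-k}\frac{a}{(a^2+\rho^2)^2}\rho^{2}\,d\rho = a^{-k}\int_{R/a}^{\infty}\frac{t^{2-k}}{(1+t^2)^2}\,dt.
\]

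Since by hypothesis $a>R$, we have $R/a<1$, and monotonicity of the integrand gives
\[
\int_{R/a}^{\infty}\frac{t^{2-k}}{(1+t^2)^2}\,dt \;\leq\; \int_{0}^{\infty}\frac{t^{2-k}}{(1+t^2)^2}\,dt \;\eqqcolon\; C(k).
\]
It remains only to verify that the latter integral is finite for the prescribed range of $k$. Near $t=0$, the integrand behaves like $t^{2-k}$, which is integrable iff $2-k>-1$, i.e.\ $k<3$. Near $t=\infty$, it behaves like $t^{-2-k}$, which is integrable iff $-2-k<-1$, i.e.\ $k>-1$. Both conditions are precisely the assumption $-1<k<3$, so $C(k)<\infty$, and combining the two displays yields the claimed bound
\[
\int_{R}^{\infty}\rho^{-k}\frac{a}{(a^2+\rho^2)^2}\rho^{2}\,d\rho \;\leq\; C(k)\,a^{-k}.
\]

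There is no real obstacle here: the entire content lies in noticing that the family $\{u_a\}$ is a scaling family, so integrals of this type factor into a power of $a$ times a convergent dimensionless integral. The only thing to keep an eye on is making sure one uses $a>R$ (and not the other way around) when passing from the lower limit $R/a$ to $0$; without this the monotone extension would go the wrong way.
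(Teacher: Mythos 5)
Your proof is correct and follows essentially the same route as the paper: both use the substitution $\rho = a\sigma$ to extract the factor $a^{-k}$ and reduce to a dimensionless integral. The paper then splits that integral at $\sigma = 1$ to bound each piece explicitly (yielding $C(k) = \tfrac{1}{1+k} + \tfrac{1}{3-k}$), whereas you simply extend the lower limit to $0$ and check integrability at both endpoints; this is a cosmetic difference only.
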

\begin{proof}
    First, let us rewrite introduce the change of variables $\sigma\doteq \frac{\rho}{a}$ so that
    \begin{align*}
        \int_{R}^{\infty}\rho^{2-k}\frac{a}{(a^2+\rho^2)^2}d\rho
        &= a^{-k}\int_{\frac{R}{a}}^{\infty}\sigma^{2-k}(1+\sigma^2)^{-2}d\sigma
        \\
        &= a^{-k}\left(\int_{\frac{R}{a}}^1\sigma^{2-k}(1+\sigma^2)^{-2}d\sigma + \int_{1}^{\infty}\sigma^{2-k}(1+\sigma^2)^{-2}d\sigma\right) \,.
    \end{align*}
    To estimate the second integral, we notice that $(1+\sigma^2)^{-2} \leq \sigma^{-4}$ for all $\sigma\in [1,\infty)$, so using that $k>-1$, we have
    \begin{align*}
        \int_{1}^{\infty}\sigma^{2-k}(1+\sigma^2)^{-2}d\sigma\leq \int_{1}^{\infty}\sigma^{-2-k}d\sigma = \frac{1}{1+k}.
    \end{align*}
    For the first integral, we now notice that if $\frac{R}{a} \leq \sigma \leq 1$, then $1+\left(\frac{R}{a}\right)^2 \leq 1+\sigma^2\leq 2$, and thus $(1+\sigma^2)^{-2}\leq \left( 1+\left(\frac{R}{a}\right)^2 \right)^{-2}$. Then, using that $k<3$ and $a > R$,
    \begin{align*}
        \int_{\frac{R}{a}}^1\sigma^{2-k}(1+\sigma^2)^{-2}&d\sigma \leq \left( 1+\left(\frac{R}{a}\right)^2 \right)^{-2}\int_{\frac{R}{a}}^1\sigma^{2-k}d\sigma
        \\
        &= \left( 1+\left(\frac{R}{a}\right)^2\right)^{-2}\frac{1}{3-k}\left(1 - \left(\frac{R}{a}\right)^{3-k} \right) \leq \frac{1}{3-k}
    \end{align*}
    and the assertion follows.
\end{proof}
We are now ready to establish the main result of the section. We stress that the orientability hypothesis below is only required to apply \cref{PropPMTforGp} and were \cref{thm: LeeLF} to hold true, either without the spin assumption or allowing for multiple ends, one could immediately drop it.
\begin{theorem}
    \label{thm: Yamabe Problem 3d}
    Let $M$ be an orientable, smooth, closed 3-manifold and $g$ a $W^{2,q}$-Riemannian metric  with $q > 3$ and $\lambda(M, g) > 0$. Then, there exists a Riemannian metric in the conformal class $[\,g\,]_{W^{2,q}}$ of positive constant scalar curvature.
\end{theorem}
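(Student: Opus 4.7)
The plan is to invoke \cref{thm: Aubin-Trudinger-Yamabe} once we establish $\lambda(M,g)<\lambda(\nS^3)$, dichotomizing according to the sign of the constant $A$ in the expansion of the conformal Green function from \cref{cor: unnormalized Green 3dim}. By \cref{PropPMTforGp}, one has $2A=\mathbf{m}(\hat M,\hat g)\geq 0$. In the rigid case $A=0$, \cref{thm: A=0 case} directly provides a $W^{3,q}$-conformal diffeomorphism $\phi\colon (M,g)\to \nS^3$, and $\phi^{*}g_{\nS^3}\in [\,g\,]_{W^{2,q}}$ is a constant positive scalar curvature metric, settling the theorem in this case.

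In the non-rigid case $A>0$, the plan is to use the truncated Aubin bubble $\psi_a\in W^{1,2}_{-1/2}(\hat M)$ defined in \eqref{eq: test function} as a test function for $Q_{\hat g}$ and show that, for $R$ fixed large enough and $a\to\infty$,
\begin{equation*}
Q_{\hat g}(\psi_a) \;=\; \lambda(\nS^3) \;-\; c\,A\,a^{-1} \;+\; o(a^{-1})
\end{equation*}
for some universal constant $c>0$. Together with \cref{prop: compac-noncompact yamabe correspondence}, picking $a$ sufficiently large then gives $\lambda(M,g)=\lambda(\hat M,\hat g)\leq Q_{\hat g}(\psi_a)<\lambda(\nS^3)$, and \cref{thm: Aubin-Trudinger-Yamabe} supplies the desired conformal metric.

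The core computation crucially exploits that $\R_{\hat g}\equiv 0$ by \cref{prop: regularity_g_hat}, so that the potential term in $Q_{\hat g}$ drops out entirely, and that the Schwarzschildean $AS_\beta(1)$ expansion \eqref{AS(1)condition} is $C^1$-controlled thanks to \cref{cor: unnormalized Green 3dim}. In the asymptotic chart one expands
\begin{equation*}
\sqrt{\det \hat g} = 1 + \frac{6A}{|z|} + \BO(|z|^{-1-\beta})\,, \qquad \hat g^{ij} = \Big(1 - \frac{4A}{|z|}\Big)\delta^{ij} + \BO(|z|^{-1-\beta})\,,
\end{equation*}
and plugs these into the numerator $8\int |\nabla \psi_a|^2_{\hat g}\,d\mu_{\hat g}$ and denominator $(\int \psi_a^{6}\,d\mu_{\hat g})^{1/3}$. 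The Euclidean leading parts assemble into $\lambda(\nS^3)$ via \eqref{AubinBubble.2}, while the characteristic mass-type integrals $\int_{\nR^3}|\nabla u_a|^2|z|^{-1}\,dz$ and $\int_{\nR^3}u_a^{6}|z|^{-1}\,dz$, both of order $a^{-1}$, produce the explicit $-cA a^{-1}$ correction after the usual bookkeeping. The contribution of the truncation region $\{|z|\leq R\}$ is negligible, since $\nabla \psi_a\equiv 0$ there and $\psi_a\equiv u_a(R)=\BO(a^{-1/2})$, yielding only $\BO(a^{-3})$ in the denominator.

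The main obstacle is the bookkeeping of the error terms produced by the $\BO(|z|^{-1-\beta})$ remainder in $\hat g$: the rough part of $\hat g$ only enjoys $W^{2,r}_{-\tau}$-regularity, well below the $C^2$-decay used classically. Crucially, \cref{cor: unnormalized Green 3dim} ensures the \emph{leading} Schwarzschildean term is $\BO_1$-controlled, which is exactly what is needed to make the mass-type integral converge and deliver the negative first-order correction; the subleading terms only enter the $o(a^{-1})$ remainder, whose estimation reduces via \cref{PropIntegralEstimateYamabe} to the auxiliary integrals $\int_R^\infty \rho^{2-k}(a^2+\rho^2)^{-2}a\,d\rho = \BO(a^{-k})$ for appropriate $k\in(-1,3)$, closing the argument.
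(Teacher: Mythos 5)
Your proposal takes essentially the same route as the paper: truncated Aubin bubbles $\psi_a$, the decompactified scalar-flat metric $\hat g$, the $AS_\beta(1)$ expansion with $C^1$-control from \cref{cor: unnormalized Green 3dim}, the PMT dichotomy via \cref{PropPMTforGp} and \cref{thm: A=0 case}, the error control through \cref{PropIntegralEstimateYamabe}, and the reduction to \cref{thm: Aubin-Trudinger-Yamabe} via \cref{prop: compac-noncompact yamabe correspondence}. The only organizational difference is that the paper first integrates the Dirichlet energy by parts (following \cite[Proposition 7.1]{Lee-Parker}) so that the mass $A$ is isolated in a single boundary-free term $I_2$ involving $\partial_\rho\bigl(\hat g^{\rho\rho}\sqrt{\det\hat g}\bigr) = -2A\rho^{-2}+\BO(\rho^{-2-\beta})$, while the saturation identity \eqref{AubinBubble.2} absorbs $I_1$ into $\lambda(\nS^3)\|\psi_a\|_{L^{2^*}}^2$ via Hölder; you instead propose to expand $\sqrt{\det\hat g}$ and $\hat g^{ij}$ directly inside the raw energy and $L^{2^*}$-norm, which leads to the same $a^{-1}$ mass correction but requires checking cancellations between numerator and denominator rather than a single clean inequality on $E_{\hat g}(\psi_a)$.
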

\begin{proof}
    Let $\bigl(\hat M, \hat g\bigr)$ be the AE manifold constructed in \cref{Decompactification via Conformal Green's function}. In view of \cref{thm: Aubin-Trudinger-Yamabe} and Proposition \ref{prop: compac-noncompact yamabe correspondence}, all we need to do is to verify that the test function $\psi_a \in W^{1,2}_{-1/2}(\hat M)$ defined in \eqref{eq: test function} satisfies $Q_{\hat g}(\psi_a) < \lambda(\nS^3)$ for some $a > R$ large enough. Using that $\R_{\hat g} \equiv 0$ and $\psi_a$ is constant in $\hat M \setminus \hat M_\infty$, we get
    \begin{align*}
        E_{\hat g}(\psi_a) = 8\int_{\hat M} |\nabla\psi_a|^2_{\hat g} \, d\mu_{\hat g} = 8\int_{\hat M_{\infty}} |\nabla u_a|^2_{\hat g} \, d\mu_{\hat g} \,.
    \end{align*}
    Following \cite[Proposition 7.1]{Lee-Parker}, 
    \begin{align}
        \label{YamabeEnergyAubinBubble}
            E_{\hat g}(\psi_a)
            &= 8\lim_{L \to \infty} \int_{\{R < \rho < L\}} \hat g^{\rho\rho} \partial_\rho u_a \partial_\rho u_a \, \sqrt{\det\hat g} \, \rho^{2} d\omega d\rho \nonumber
            \\
            &= -8 \lim_{L \to \infty}\int_{\{R < \rho < L\}} u_a \partial_\rho\bigl(\rho^{2} \partial_\rho u_a\bigr) \, \hat g^{\rho\rho} \sqrt{\det\hat g}\, d\omega d\rho \nonumber
            \\
            &\quad - 8 \lim_{L \to \infty} \int_{\{R < \rho < L\}} u_a \partial_\rho u_a \partial_\rho \bigl(\hat g^{\rho\rho}\sqrt{\det\hat g}\bigr) \, \rho^{2} d\omega d\rho
            \\
            &\quad + 8 \lim_{L \to \infty} \int_{\partial B_R \cup \partial B_L} u_a \partial_\rho u_a \, \hat g^{\rho\rho} \sqrt{\det\hat g}\; \rho^{2} d\omega \,. \nonumber
    \end{align}
    Let us start computing the first integral. Exploiting that $u_a$ is radial, the Euclidean Laplacian reads $\Delta u_a = \rho^{-2}\partial_\rho(\rho^{2}\partial_\rho u_a)$, so we get
    \begin{align*}
        -8\int_{\{R < \rho < L\}} &u_a \partial_\rho\bigl(\rho^{2} \partial_\rho u_a\bigr) \, \hat g^{\rho\rho} \sqrt{\det\hat g}\, d\omega d\rho
        \\
        &= -8\int_{\{R < \rho < L\}} u_a \Delta u_a \, \hat g^{\rho\rho} \sqrt{\det\hat g}\, dz
        \\
        &=24\int_{\{R < \rho < L\}} u_a^{2^{*}} \hat{g}^{\rho\rho} \sqrt{\det\hat g}\, dz
        \\
        &=24\left(\int_{\{R < \rho < L\}} u_a^{2^{*}} dz\right)^{\frac{2^{*}-2}{2^{*}}}  \left(\int_{\{R < \rho < L\}} u_a^{2^{*}} \Bigl(\hat{g}^{\rho\rho} \sqrt{\det\hat g}\Bigr)^{\frac{2^{*}}{2}} dz\right)^{\frac{2}{2^{*}}}
        \\
        &\leq \lambda(\nS^3) \left(\int_{\{R < \rho < L\}} u_a^{2^{*}} \Bigl(\hat{g}^{\rho\rho} \sqrt{\det\hat g}\Bigr)^{\frac{2^{*}}{2}} dz\right)^{\frac{2}{2^{*}}} \,,
    \end{align*}
    where the second equality is due to \eqref{AubinBubble.1} and the fourth inequality due to \eqref{AubinBubble.2}. Since $\hat g \in AS_\beta(1)$ by \cref{prop: regularity_g_hat}, it follows that
    \begin{align}
        \label{DetExpansion}
        \sqrt{\det\hat{g}} = 1 + \frac{6A}{\rho} + \BO_1(\rho^{-1-\beta}) \qquad \text{and} \qquad \hat{g}^{\rho\rho} = 1 - \frac{4A}{\rho} + \BO_1(\rho^{-1-\beta})
    \end{align}
    as $\rho \to \infty$. This implies, in particular, that
    \begin{align*}
        \hat{g}^{\rho\rho} \sqrt{\det\hat{g}} = (\det\hat{g})^{\frac{1}{6}} + \BO_1(\rho^{-1-\beta}) \,,
    \end{align*}
    and noting that in 3 dimensions $2^{*}=6$, we estimate the first term in \eqref{YamabeEnergyAubinBubble} by
    \begin{align}
        \label{YamabeQuotientIntegral1}
        \lambda(\nS^3)\left(\int_{\hat M_\infty} u_a^{2^{*}} d\mu_{\hat{g}} + \int_{\hat M_\infty} u_a^{2^{*}}O(\rho^{-1-\beta}) dz \right)^{\frac{2}{2^{*}}} \,.
    \end{align}
    Now, set $C_1=C_1(R)$ to be a constant such that $|\BO(\rho^{-1-\beta})|\leq C_1\rho^{-1-\beta}$ and thus
    \begin{align*}
        \Bigl| \int_{\hat M_\infty} &u_a^{2^{*}} \BO(\rho^{-1-\beta}) dz\Bigr| \leq C_1\int_{\hat M_\infty} u_a^{2^{*}}\rho^{-1-\beta} dz 
        \\
        &= 4\pi C_1\int_R^\infty \rho^{1-\beta}\frac{a^3}{(a^2+\rho^2)^3}d\rho = 4\pi C_1 \int_R^L\rho^{1-\beta}\frac{a^2}{(a^2+\rho^2)}\frac{a}{(a^2+\rho^2)^2}d\rho 
        \\
        &\leq 4\pi C_1 \int_R^{\infty}\rho^{-(1+\beta)}\frac{a}{(a^2+\rho^2)^2}\rho^2 d\rho
    \end{align*}
    Observe that $-1 < 1+\beta < 3$ due to $\beta\in (0,1)$, so \cref{PropIntegralEstimateYamabe} implies that
    \begin{align*}
         \left| \int_{\hat M_\infty} u_a^{2^{*}} \BO(\rho^{-1-\beta}) dz\right| \leq C(R,\beta) a^{-1-\beta} \,.
    \end{align*}
    We conclude that the first term in \eqref{YamabeEnergyAubinBubble} can be estimated by
    \begin{align}
        \label{eq: I_1 estimate}
        \begin{split}
            I_1 &\leq \lambda(\nS^3)\Bigl(\|\psi_a\|^{2^{*}}_{L^{2^{*}}(\hat{M},d\mu_{\hat{g}})} + C(R,\beta)a^{-1-\beta} \Bigr)^{\frac{2}{2^{*}}}
            \\
            &\leq \lambda(\nS^3)\|\psi_a\|^{2}_{L^{2^{*}}(\hat{M},d\mu_{\hat{g}})} + \BO(a^{-1-\beta})
        \end{split}
    \end{align}
    as $a \to \infty$.

    Let us go back to \eqref{YamabeEnergyAubinBubble} and estimate the second term on the right-hand side. Appealing again to \eqref{DetExpansion}, we have that
    \begin{equation*}
        \partial_\rho\left(\hat g^{\rho\rho} \sqrt{\det\hat g}\right) = -\frac{2A}{\rho^2} + \BO\bigl(\rho^{-2-\beta}\bigr) \qquad \text{as} \; \rho \to \infty \,.
    \end{equation*}
    Then, using the co-area formula, we get
    \begin{align*}
        -\int_{\hat M_\infty} u_a \partial_\rho u_a \partial_\rho \bigl(\hat g^{\rho\rho}\sqrt{\det\hat g}\bigr) \, &\rho^{2} d\omega d\rho = -\int_R^\infty u_a \partial_\rho u_a\int_{\partial B_\rho} \partial_\rho \bigl(\hat g^{\rho\rho}\sqrt{\det\hat g}\bigr) \rho^2 d\rho \, d\omega
        \\
        &= -4\pi\int_R^\infty u_a \partial_\rho u_a \left(-\frac{2A}{\rho^2} + \BO\bigl(\rho^{-2-\beta}\bigr)\right) \rho^2 d\rho
        \\
        &= 4\pi  \int_R^\infty \rho\frac{a}{(\rho^2 + a^2)^2} \bigl(-2A + \BO(\rho^{-\beta})\bigr) d\rho \,.
    \end{align*}
    The first integral can be computed explicitly via the coordinate change $\sigma = \tfrac{\rho}{a}$ as
    \begin{align*}
        -8\pi A\int_R^\infty \frac{a}{(\rho^2 + a^2)^2} \rho \, d\rho = \frac{-8\pi A}{a}\int_{R/a}^\infty \frac{\sigma}{\bigl(1 + \sigma^2)^{2}} d\sigma = \frac{-4\pi a}{a^2 + R^2} \, A \,,
    \end{align*}
    whereas the second one may be estimated by
    \begin{align*}
        4\pi \int_R^\infty \rho\frac{a}{(\rho^2 + a^2)^2} \BO(\rho^{-\beta}) d\rho \leq 4\pi C_1 \int_R^\infty \rho^{-(1+\beta)}\frac{a}{(\rho^2 + a^2)^2} \rho^2 d\rho \leq C(R,\beta) a^{-1-\beta}
    \end{align*}
    again using \cref{PropIntegralEstimateYamabe}. We conclude that the second term in \eqref{YamabeEnergyAubinBubble} can be estimated by
    \begin{align}
        \label{YamabeQuotientIntegral2.3}
        I_2 \leq -C(R)A a^{-1} + \BO\bigl(a^{-1-\beta}\bigr) \qquad \text{as} \quad a \to \infty \,.
    \end{align}

    For the last term in the right-hand-side of \eqref{YamabeEnergyAubinBubble}, observe first that the $\partial B_L$ boundary term vanishes as $L \to \infty$, as $u_a\partial_\rho u_a = \BO(\rho^{-3})$ as $\rho \to \infty$ for each fixed $a$. The interior boundary instead can be estimated, using that $u_a\partial_\rho u_a = \BO(a^{-3})$ as $a \to \infty$ for fixed $\rho$, by
    \begin{align}
        \label{eq: estimate I_3}
        I_3 \leq 8 \sup_{\hat{M}_{\infty}}\left|\hat g^{\rho\rho}\sqrt{\det\hat g}\right| \int_{\partial B_R} u_a \partial_\rho u_a \, \rho^2 d\omega = \BO\bigl(a^{-3}\bigr)
    \end{align}
    as $a \to \infty$. Hence, coming back to \ref{YamabeEnergyAubinBubble} and combining \eqref{eq: I_1 estimate}, \eqref{YamabeQuotientIntegral2.3} and \eqref{eq: estimate I_3}, we establish
    \begin{align}
        \label{YamabeEnergyAubinBubbleFinal}
        E_{\hat g}(\psi_a) \leq  \Lambda \|\psi_a\|_{L^{2^*}(\hat M)} - C(R) A a^{-1} + \BO\bigl(a^{-1-\beta}\bigr) \qquad \text{as} \quad a \to \infty
    \end{align}
    In light of \cref{PropPMTforGp} and  \cref{thm: A=0 case}, we know that $A>0$ unless $(M,g)$ is $W^{3,q}$-conformal to $\nS^3$. In latter case we are done, while if $A>0$, the inequality \eqref{YamabeEnergyAubinBubbleFinal} implies that $Q_{\hat g}(\psi_a) < \Lambda$ for $a$ large enough. In this case, \cref{thm: Aubin-Trudinger-Yamabe} grants the assertion.
\end{proof}
Using the regularity theory developed in \cref{Analytical Results}, we finally demonstrate the main result of the work \cref{theorem A}:
\begin{proof}[Proof of \cref{theorem A}]
    We prove the Theorem by induction in $k$. The case $k = 2$ is proven in \cref{thm: Yamabe Problem 3d}, so assume that the statement holds for some integer $k \geq 2$. In particular this implies the existence of a conformal factor $u \in W^{k,q}(M)$ satisfying
    \begin{equation*}
        -8\Delta_g u = \Lg u - \R_g u = \lambda u^{2^*-1} - \R_g u \,.
    \end{equation*}
    Now, if $g \in W^{k+1,q}(T_2M)$, then $\R_g \in W^{k-1,q}(M)$ and consequently $\Delta_g u \in W^{k-1,q}(M)$. But then \cref{thm: main global elliptic regularity} implies that $u \in W^{k+1,q}(M)$ and the conformal metric $\bar g = u^{2^*-2} g$ is of class $W^{k+1,q}(T_2M)$.
\end{proof}

\vspace{0.5cm}
\begin{appendix}

\section{Sobolev Multiplication Properties}
In this section, we collect some multiplication properties of Sobolev functions, which are repeatedly employed throughout the text. Closely related multiplication properties can be found, for instance in \cite[Chapter 9]{PalaisBook} and \cite{HolstBehzadanMult} (see also \cite[Corollary 3]{HolstBehzadanSobSpaces}), and for the case of positive integer Sobolev spaces also in \cite[Chapter VI]{ChoquetDeWitt} for example. The case treated below does not seem to be fully treated by any of the previous references, but it is almost covered by the first-named author in \cite[Theorem 2.2]{avalos2024sobolev}. The only difference is that the alternative conditions \eqref{MultiplicationConditions.2} and \eqref{MultiplicationConditionsDuals.2} are not considered. 
Since the proof is essentially the same, we skip it and we just refer the reader to \cite[Theorem 2.2]{avalos2024sobolev}.
\begin{theorem}
    \label{SobolevMultLocal}
    Consider a smooth, bounded domain $\Omega \subset \nR^n$ and fix $k_1,k_2,p_1,p_2,p \in \nR$ and $k \in \nZ$ such that $k_1 + k_2 \geq 0$, $k_1,k_2 \geq k$ and $1 < p_1,p_2,p \leq \infty$. If $k_1,k_2,k\geq 0$, then the continuous embeddings
    \begin{align}                   \label{LocalMultiPropPositive}
        H^{k_1,p_1}(\Omega)\otimes H^{k_2,p_2}(\Omega)\hookrightarrow H^{k,p}(\Omega) \,,
    \end{align}
    \begin{align}               \label{LocalMultiPropPositive0}
        H^{k_1,p_1}(\Omega)\otimes H_0^{k_2,p_2}(\Omega)\hookrightarrow H_0^{k,p}(\Omega)
    \end{align}
    hold provided that
    \begin{subequations}
        \begin{align}
            k_i-k&\geq n\left(\frac{1}{p_i} - \frac{1}{p}\right) \quad \text{and} \quad k_1+k_2-k> n\left(\frac{1}{p_1} + \frac{1}{p_2} - \frac{1}{p} \right) \,, \quad \text{or} \label{MultiplicationConditions}
            \\
            k_i-k&> n\left(\frac{1}{p_i} - \frac{1}{p}\right) \quad \text{and} \quad k_1+k_2-k\geq n\left(\frac{1}{p_1} + \frac{1}{p_2} - \frac{1}{p} \right) \,. \label{MultiplicationConditions.2}
        \end{align}
        \label{eqn:all-lines}
    \end{subequations}
    If $\min\{k_1,k_2\}<0$ instead, then \eqref{LocalMultiPropPositive} holds if additionally one imposes $p_i<\infty$ and
    \begin{subequations}
        \begin{align} 
            k_1+k_2&\geq n\left( \frac{1}{p_1}+\frac{1}{p_2} - 1 \right) \quad \text{whenever \eqref{MultiplicationConditions} is satisfied} , \text{ or }\label{MultiplicationConditionsDuals}
            \\
            k_1+k_2&> n\left( \frac{1}{p_1}+\frac{1}{p_2} - 1 \right) \quad \text{whenever \eqref{MultiplicationConditions.2} is satisfied} \,.\label{MultiplicationConditionsDuals.2}
        \end{align}
        \label{eqn:all-lines}
    \end{subequations}
    \end{theorem}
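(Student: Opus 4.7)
My plan is to follow the scheme already employed by the first-named author in \cite[Theorem 2.2]{avalos2024sobolev}, paying particular attention to the two endpoint scenarios encoded in \eqref{MultiplicationConditions} vs \eqref{MultiplicationConditions.2}, which correspond to shifting whether the Sobolev loss is absorbed in the individual embeddings or in the combined scaling.

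The first step is the case $k_1,k_2,k\geq 0$ with $k\in\mathbb{N}_0$, which I would treat by direct differentiation. For a product $uv$ and a multi-index $|\alpha|\leq k$, one expands $\partial^{\alpha}(uv)$ via Leibniz into a sum of terms $\partial^{\beta}u\,\partial^{\alpha-\beta}v$ and estimates each by Hölder:
\begin{equation*}
    \|\partial^{\beta}u\,\partial^{\alpha-\beta}v\|_{L^p(\Omega)}\leq \|\partial^{\beta}u\|_{L^{r_1}(\Omega)}\|\partial^{\alpha-\beta}v\|_{L^{r_2}(\Omega)},
    \qquad \tfrac{1}{r_1}+\tfrac{1}{r_2}=\tfrac{1}{p}.
\end{equation*}
The Sobolev embeddings $W^{k_i-|\gamma_i|,p_i}\hookrightarrow L^{r_i}$ hold whenever $\tfrac{1}{r_i}\geq \tfrac{1}{p_i}-\tfrac{k_i-|\gamma_i|}{n}$, with strict inequality required at the $L^{\infty}$ endpoint. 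Choosing $r_1,r_2$ compatibly is possible precisely under \eqref{MultiplicationConditions} (where the strict inequality on the sum leaves room to avoid the $L^\infty$-endpoint) or \eqref{MultiplicationConditions.2} (where the strict inequalities on each factor give the slack, and the equality on the sum is the borderline Hölder). The only subtlety is when one of the Sobolev embeddings is borderline $kp=n$: one then trades a bit of integrability as in the standard Moser/Gagliardo–Nirenberg argument, using $p_i<\infty$ in \eqref{MultiplicationConditionsDuals} / \eqref{MultiplicationConditionsDuals.2}.

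The second step extends to non-integer $k_1,k_2,k\geq 0$ by the complex interpolation description \eqref{BesselPotentials.3}. Given $(k_1,k_2,k;p_1,p_2,p)$ satisfying \eqref{MultiplicationConditions} or \eqref{MultiplicationConditions.2}, pick integer endpoints $(k_1^{(j)},k_2^{(j)},k^{(j)})$ with $j=0,1$ such that each endpoint lies in the integer-$k$ region already established, and so that the fixed tuple arises at some $\theta\in(0,1)$. The bilinear complex interpolation theorem (see \cite[Chapter 4, Section 2]{Taylor1}) then yields the continuous bilinear map $H^{k_1,p_1}\otimes H^{k_2,p_2}\to H^{k,p}$. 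For the zero-trace version \eqref{LocalMultiPropPositive0}, I would argue by density: if $v\in H^{k_2,p_2}_0$ is approximated by $v_j\in C^\infty_0(\Omega)$, then $uv_j\in C^\infty_0(\Omega)$, and the uniform bound just obtained passes $uv\in H^{k,p}_0$ to the limit.

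For the case $\min\{k_1,k_2\}<0$, with $k_2<0$ say, I would use duality through \eqref{NegativeSobolevLocal}: it suffices to show that for $w\in H^{-k,p'}_0(\Omega)$,
\begin{equation*}
    \bigl|\langle uv,w\rangle\bigr|=\bigl|\langle v,uw\rangle\bigr|\leq \|v\|_{H^{k_2,p_2}}\,\|uw\|_{H^{-k_2,p_2'}},
\end{equation*}
so the estimate reduces to a product bound for $(u,w)\in H^{k_1,p_1}\times H^{-k,p'}\to H^{-k_2,p_2'}$, with all regularities now $\geq 0$ in a reshuffled triple $(k_1,-k,-k_2;p_1,p',p_2')$. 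One checks that the hypotheses \eqref{MultiplicationConditions}–\eqref{MultiplicationConditionsDuals} on the original data translate into the positive-regularity conditions on the dual data, where the additional bound \eqref{MultiplicationConditionsDuals} / \eqref{MultiplicationConditionsDuals.2} ensures $-k_2\geq 0$ fits into the range of the first step. The main obstacle throughout is this careful bookkeeping of strict vs non-strict inequalities under duality and interpolation: the two alternative condition sets \eqref{MultiplicationConditions}/\eqref{MultiplicationConditionsDuals} and \eqref{MultiplicationConditions.2}/\eqref{MultiplicationConditionsDuals.2} are \emph{exchanged} by duality, which is why one must prove both simultaneously rather than just one, and this is the only new ingredient beyond \cite[Theorem 2.2]{avalos2024sobolev}.
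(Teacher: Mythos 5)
Your outline follows the same scheme as the paper intends---the paper itself skips the proof and defers to \cite[Theorem 2.2]{avalos2024sobolev}, indicating exactly the Leibniz--H\"older--Sobolev, complex-interpolation, duality route you sketch---so the broad strategy is right. The duality reduction is also set up correctly: with $k_2<0$ and $k_1+k_2\geq 0$ forcing $k_1>0$, and $k\leq k_2<0$, the reshuffled triple $(k_1,-k,-k_2;\,p_1,p',p_2')$ has all nonnegative regularities and the pairing $\langle uv,w\rangle=\langle v,uw\rangle$ reduces \eqref{LocalMultiPropPositive} for the mixed-sign case to \eqref{LocalMultiPropPositive0} for the nonnegative case, which is why the theorem states both embeddings.

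However, the closing claim that the two alternative condition sets \eqref{MultiplicationConditions}/\eqref{MultiplicationConditionsDuals} and \eqref{MultiplicationConditions.2}/\eqref{MultiplicationConditionsDuals.2} are \emph{exchanged} by duality is wrong: they are each \emph{preserved}. A direct check on the dual triple gives, for the $i=1$ condition,
\begin{equation*}
    k_1-(-k_2)\geq n\Bigl(\tfrac{1}{p_1}-\tfrac{1}{p_2'}\Bigr)\ \Longleftrightarrow\ k_1+k_2\geq n\Bigl(\tfrac{1}{p_1}+\tfrac{1}{p_2}-1\Bigr),
\end{equation*}
which is precisely \eqref{MultiplicationConditionsDuals}; for $i=2$ one recovers the original $i=2$ condition verbatim; and the dual sum condition reduces to the original sum condition, all with strict/non-strict inequalities unchanged. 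Hence $\{\eqref{MultiplicationConditions},\eqref{MultiplicationConditionsDuals}\}$ maps to itself, as does the other pair, and duality swaps the $i=1$ condition with the auxiliary condition \eqref{MultiplicationConditionsDuals} (resp.\ \eqref{MultiplicationConditionsDuals.2}) while fixing the $i=2$ and sum conditions. This self-duality is cleaner than an exchange and it is what makes the reduction close; you do not need to "prove both simultaneously," just observe that duality keeps you inside the same alternative. Two smaller points worth flagging: in the interpolation step you must sandwich the target tuple between integer endpoints lying in the \emph{same} alternative (interpolating across the two alternatives can land you on a boundary case satisfying neither), which at the boundary of the admissible region takes a line or two to arrange; and in the duality pairing the codomain of the auxiliary multiplication must be the zero-trace space $H^{-k_2,p_2'}_0(\Omega)$, since $H^{k_2,p_2}(\Omega)=(H^{-k_2,p_2'}_0(\Omega))'$ by \eqref{BesselPotentials.4}---a subscript $0$ is silently dropped in your display.
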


\section{Sobolev diffeomorphisms}
\label{IFTSobolevAppendix}
It is a consequence of the \emph{inverse function theorem} that the inverse of an invertible $C^k$-map is also $C^k$.
The analogue statement for Sobolev regularity does not seem to be addressed in the literature. The main purpose of this appendix is to fill that gap.
\begin{theorem}
    \label{LemmaInvFunctThmSobReg}
    Let $\phi : \Omega \to \Omega'$ be a $C^1$-diffeomorphism between bounded domains of $\nR^n$ with smooth boundary. Suppose that $\phi \in W^{k,q}(\Omega,\Omega')$ for $k \geq 3$ and $q > \tfrac{n}{2}$. Then, $\phi^{-1} \in W^{k,q}_{loc}(\Omega',\Omega)$. 
\end{theorem}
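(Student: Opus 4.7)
The plan is to argue by induction on $k \geq 3$, combining the classical formula for derivatives of an inverse map with the Sobolev multiplication properties of \cref{SobolevMultLocal}. Set $\psi \doteq \phi^{-1}$. By the Sobolev embedding $W^{k,q}(\Omega) \hookrightarrow C^{k-2,\alpha}(\Omega)$ for some $\alpha > 0$ (which holds since $q > \tfrac{n}{2}$), the classical inverse function theorem promotes $\psi$ to $C^{k-2,\alpha}_{loc}(\Omega')$. The first step is to establish, by iterated differentiation of the identity $\phi \circ \psi = \Id$, the pointwise/a.e.\ formula
\[
D^m\psi\bigl(\phi(x)\bigr) = \mathcal{P}_m\bigl((D\phi)^{-1}(x),\, D^2\phi(x),\, \ldots,\, D^m\phi(x)\bigr),
\]
where $\mathcal{P}_m$ is a universal polynomial each of whose monomials consists of factors of $(D\phi)^{-1}$ multiplied by a product $\prod_{j=1}^{r} D^{n_j}\phi$ with $n_j \geq 2$ and $\sum_{j}(n_j - 1) = m - 1$. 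This identity is verified in the smooth case by direct computation and extended to our regularity class by approximating $\phi$ with smooth diffeomorphisms (using $W^{k,q}$-density of $C^\infty$ and stability of $C^1$-diffeomorphisms under small $C^1$-perturbations), together with the inductive hypothesis $\psi \in W^{k-1,q}_{loc}$ which justifies the identification of $D^m\psi$ as the weak derivative for $m = k$.

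The second step is a change of variables. For any smooth $K \subset\subset \Omega'$, its image $\psi(K)$ is a compact subset of $\Omega$ and
\[
\int_K |D^m\psi(y)|^q \, dy \;=\; \int_{\psi(K)} \bigl|\mathcal{P}_m\bigl((D\phi)^{-1}(x),\ldots,D^m\phi(x)\bigr)\bigr|^q \, |\det D\phi(x)| \, dx,
\]
where $|\det D\phi|$ and the entries of $(D\phi)^{-1}$ are uniformly bounded on $\psi(K)$ by the $C^1$-diffeomorphism hypothesis. It therefore suffices to show that each monomial $\prod_{j=1}^{r} D^{n_j}\phi$ lies in $L^q\bigl(\psi(K)\bigr)$ for $m \leq k$. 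Writing $s_j \doteq k - n_j \geq 0$, each factor satisfies $D^{n_j}\phi \in W^{s_j, q}(\Omega)$ and the combinatorial constraint gives $\sum_{j}s_j = (r-1)(k-1) + (k-m) \geq (r-1)(k-1)$. Since $k-1 \geq 2 > \tfrac{n}{q}$ (from $q > \tfrac{n}{2}$ and $k \geq 3$), one has $\sum_{j} s_j > (r-1)\, \tfrac{n}{q}$; a Hölder estimate with Sobolev exponents $p_j$ chosen to satisfy $\tfrac{1}{p_j} \geq \tfrac{1}{q} - \tfrac{s_j}{n}$ and $\sum_{j}\tfrac{1}{p_j} = \tfrac{1}{q}$ then produces the desired $L^q$ bound through the embeddings $W^{s_j,q} \hookrightarrow L^{p_j}$, or, equivalently, through an iterated application of \cref{SobolevMultLocal}. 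This closes the induction; the base case $k=3$ is handled by the same argument, the only genuinely new term being $(D^2\phi)^2$, which lies in $L^q_{loc}$ because $W^{1,q} \hookrightarrow L^{2q}$ for $q > \tfrac{n}{2}$ (since the critical exponent $q^* = \tfrac{nq}{n-q}$ satisfies $q^* \geq 2q$ precisely for $q \geq \tfrac{n}{2}$, with the cases $q \geq n$ absorbed by the $L^\infty$ embedding).

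The main technical obstacle is the Sobolev multiplication in Step 3 in the borderline regime $\tfrac{n}{2} < q \leq n$, where intermediate derivatives $D^{n_j}\phi$ are not in $L^\infty$ and one cannot simply pull them out of the integral. The combinatorial balance $\sum_j (n_j-1) = m-1$—which encodes the fact that every extra differentiation of $\psi$ must be paid for by a derivative of $\phi$ of order at least two—is precisely what makes the total regularity sum $(r-1)(k-1)$ exceed the multiplication threshold $(r-1)\tfrac{n}{q}$, and this is why the hypothesis $k \geq 3$ cannot be relaxed by this method. A secondary obstacle is the rigorous identification of the polynomial formula with the weak derivatives of $\psi$, which is resolved via the smooth approximation outlined in Step 1 together with the inductive control $\psi \in W^{k-1,q}_{loc}$.
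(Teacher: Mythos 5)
Your proposal takes a genuinely different route from the paper's proof, which instead factors $D\phi_x = D\phi_p\bigl(\mathrm{Id} - \Phi(x)\bigr)$ with $\Phi$ small on a small ball, writes $D\psi = [A(\mathrm{Id}-\Phi)]^{-1}D\phi_p^{-1}$ for the composition operator $Au \doteq u\circ\psi$, and shows the Neumann series $\sum_l (A\Phi)^l$ converges in the relevant Sobolev norms, bootstrapping the regularity level. You instead use the Faà-di-Bruno-type polynomial expression for $D^m\psi$ in terms of derivatives of $\phi$ together with a change of variables. Your combinatorics is correct: from $\sum_j(n_j-1)=m-1$ one gets $\sum_j s_j = r(k-1) - (m-1) \geq (r-1)(k-1)$, and $k-1 \geq 2 > n/q$ is exactly what is needed for the Hölder--Sobolev balance, including the borderline $k=3$, $q \leq n$ case via $W^{1,q}\hookrightarrow L^{2q}$. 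Where the two routes differ most is the step you leave sketchy: identifying the pointwise formula with the weak derivatives of $\psi$. You cannot simply iterate the chain rule, because pushing the $W^{m,q}$-regularity of $(D\phi)^{-1}$ through composition with $\psi$ is precisely the content of \cref{lemma: Adams diffeo lemma}, which itself invokes the result you are proving. Your proposed remedy by smooth approximation (mollify $\phi$, observe $\psi_\varepsilon\to\psi$ in $C^1_{loc}$ by stability of the $C^1$-inverse, bound $D^m\psi_\varepsilon$ uniformly in $L^q_{loc}$ via the formula and Sobolev multiplication, conclude by weak-$L^q$ compactness and distributional uniqueness) does work, but you mention it only in passing and tie it to an ``inductive hypothesis $\psi\in W^{k-1,q}_{loc}$'' that is not actually what makes the limit argument close; the weak compactness step is self-contained given the uniform bound and $C^1$-convergence. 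So the approach is valid and arguably more classical; the paper's Neumann series is more self-contained because the operator $A$ only ever acts at a regularity level already established in the previous step, so there is no circularity to break via approximation.
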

In the proof, we will employ the following Morrey inequality. It is essentially contained in the proof of \cite[Lemma 4.28]{AdamsFournierBook}, but since we need to keep track of the explicit dependence of the constant, we spell it out here.
\begin{lemma}
    \label{MorreyInequalThm}
    For each $p > n$ there is a positive constant $C=C(n,p)$ such that
    \begin{equation*}
        |u(x)-u(y)| \leq C(n,p) \|\partial u\|_{L^p\bigl(B_{4\sqrt{n}r}(0)\bigr)} |x-y|^{1-\frac{n}{p}}
    \end{equation*}
    holds all $x,y \in B_{r}(0)$ and all $u \in W^{1,p}(B_{4\sqrt{n}r}(0))$\footnote{As it is evident in the proof, the dependence on the dimension $n$ in the radius can be removed by considering the cube $Q_{4r} \subset B_{4 \sqrt{n}r}$ instead of the ball, along the lines of \cite[Lemma 4.28]{AdamsFournierBook}.}.
\end{lemma}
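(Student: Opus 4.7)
The plan is to carry out the classical Morrey argument, paying attention that every ingredient is scale-covariant so that the resulting constant depends only on $n$ and $p$. By density I first reduce to $u \in C^\infty(\overline{B_{4\sqrt n r}(0)})$ and recover the general case by approximation at the end.

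Fix $x, y \in B_r(0)$, set $h \doteq |x-y|$, and introduce the open cube $Q$ of side $2h$ centred at $(x+y)/2$, so that both $x$ and $y$ lie in $\overline Q$. A direct check of the geometry (using $h \leq 2r$ and the elementary bound $1 + 2\sqrt n \leq 4\sqrt n$ for $n \geq 1$) gives the inclusion $Q \subset B_{4\sqrt n r}(0)$, which is the only place where the $4\sqrt n$ factor in the statement enters. Writing $u_Q$ for the mean value of $u$ on $Q$, the triangle inequality reduces matters to estimating $|u(z) - u_Q|$ for $z \in \{x, y\}$.

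For each such $z$, I would represent $u(\xi) - u(z) = \int_0^{|\xi-z|} \partial_s u(z + s\omega) \, ds$ with $\omega = (\xi-z)/|\xi - z|$, average over $\xi \in Q$, swap the order of integration, and pass to polar coordinates centred at $z$. Since $Q$ is contained in the ball $B_{2\sqrt n h}(z)$, this produces a Riesz-potential-type bound of the form
\begin{equation*}
|u(z) - u_Q| \;\leq\; C(n) \int_{B_{2\sqrt n h}(z)} \frac{|\nabla u(\eta)|}{|\eta - z|^{n-1}} \, d\eta,
\end{equation*}
where the constant $C(n) = n^{n/2 - 1}$ arises as the purely dimensional factor $(2\sqrt n h)^n/|Q|$ divided by $n$; no $r$-dependence is introduced here. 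Hölder's inequality with exponents $p$ and $p' = p/(p-1)$ together with an explicit polar computation of $\||\eta - z|^{-(n-1)}\|_{L^{p'}(B_{2\sqrt n h}(z))}$, which is finite precisely because the condition $p > n$ translates to $(n-1)/(p-1) < 1$, then yields
\begin{equation*}
|u(z) - u_Q| \;\leq\; C(n,p)\, h^{1 - n/p} \, \|\nabla u\|_{L^p(B_{2\sqrt n h}(z))}.
\end{equation*}

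Since $B_{2\sqrt n h}(z) \subset B_{4\sqrt n r}(0)$ for both $z = x$ and $z = y$, combining the two estimates via the triangle inequality gives the claim with a constant depending only on $n$ and $p$. No step genuinely obstructs the argument: the only points requiring care are (i) the cube/ball inclusion $Q \subset B_{4\sqrt n r}(0)$ that justifies the $4\sqrt n$ factor in the statement, and (ii) the integrability of $|\eta - z|^{-(n-1)}$ in $L^{p'}$, which is the reason the threshold $p > n$ appears; both are elementary but must be tracked explicitly if one wants the constant to be honestly independent of $r$.
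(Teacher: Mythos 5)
Your argument is essentially the paper's: both proofs pass through the mean value $u_Q$ over a cube of side comparable to $|x-y|$, estimate $|u(z)-u_Q|$ for $z\in\{x,y\}$, and finish by the triangle inequality. The paper changes variables $\zeta = x + t(z-x)$ so the averaging becomes an integral over scaled cubes $Q_{t\sigma}$ and then applies H\"older directly; you instead pass to polar coordinates about $z$ and reduce to a Riesz-potential bound. These are the same computation written in different parametrisations, and the integrability threshold $p>n$ enters equivalently in both (as $\int_0^1 t^{-n/p}\,dt<\infty$ in the paper, as $|\eta|^{-(n-1)}\in L^{p'}_{loc}$ for you).

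There is, however, one radius that needs tightening. The inclusion $B_{2\sqrt{n}\,h}(z)\subset B_{4\sqrt{n}\,r}(0)$ that you invoke to place the $L^p$ norm over $B_{4\sqrt{n}\,r}(0)$ is false: with $z\in B_r(0)$ and $h<2r$ the best one gets is $B_{2\sqrt{n}\,h}(z)\subset B_{(4\sqrt{n}+1)r}(0)$, and for $n=1$ that is $B_{5r}$, not $B_{4r}$. The fix is to observe that your cube $Q$ already sits in the smaller ball $B_{(\sqrt{n}+\frac{1}{2})h}(z)$, because $Q$ has half-diagonal $\sqrt{n}\,h$ and its centre $(x+y)/2$ is at distance $h/2$ from $z$; and for that ball the inclusion does hold, since any $\eta$ with $|\eta-z|<(\sqrt{n}+\tfrac12)h$ satisfies $|\eta|<(\sqrt{n}+\tfrac12)\cdot 2r + r = (2\sqrt{n}+2)r\le 4\sqrt{n}\,r$ for all $n\ge 1$. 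Running your Riesz-potential and H\"older steps over $B_{(\sqrt{n}+\frac{1}{2})h}(z)$ instead of $B_{2\sqrt{n}\,h}(z)$ yields the statement exactly as written, with a constant that still depends only on $n$ and $p$.
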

\begin{proof}
    By density, it suffices to show the statement for $u \in C^1(B_{4\sqrt{n}r}(0))$. Following the proof of \cite[Lemma 4.28]{AdamsFournierBook}, fix points $x,y \in B_r(0)$  
    and a closed cube $Q_\sigma(\xi)$ of edge-length $\sigma \doteq |x-y|$ and center $\xi \in B_r(0)$ containing $x$ and $y$. There holds 
    \begin{align*}
        |u(x)-u(z)| &\leq \sqrt{n}\sigma \int_0^1|\partial u(x+t(z-x))|dt
    \end{align*}
    for any point $z \in Q_\sigma(\xi)$, and therefore
    \begin{align*}
        \left|u(x) - \frac{1}{\sigma^n}\int_{Q_\sigma(\xi)}u(z)dz\right| &\leq \frac{1}{\sigma^n}\int_{Q_\sigma(\xi)}|u(x)-u(z)|dz
        \\
        &\leq \frac{\sqrt{n}}{\sigma^{n-1}}\int_0^1\int_{Q_\sigma(\xi)}|\partial u(x+t(z-x))|dz \, dt
        \\
        &= \frac{\sqrt{n}}{\sigma^{n-1}}\int_0^1\left(\int_{Q_{t\sigma}(x+t(\xi-x))}|\partial u(\zeta)| d\zeta\right) t^{-n} dt \,.
    \end{align*}
    Notice that $Q_{t\sigma}(x+t(\xi-x)) \subset Q_{4r}(0) \subset B_{4\sqrt{n}r}(0)$ for any fixed $t\in [0,1]$ and thereby using Hölder's inequality we estimate
    \begin{align*}
        \int_{Q_{t\sigma}(x+t(\xi-x))}|\partial u(\zeta)|d\zeta &\leq \|\partial u\|_{L^p\left(B_{4\sqrt{n}r}(0)\right)}(t\sigma)^{\frac{n}{p'}}
    \end{align*}
    and find
    \begin{align*}
        \left|u(x) - \frac{1}{\sigma^n}\int_{Q_\sigma(\xi)}u(z)dz\right| &\leq C(n,p) \sigma^{1-\frac{n}{p}}\|\partial u\|_{L^p(B_{4\sqrt{n}r}(0))} \,.
    \end{align*}
    The same computations hold with $x$ replaced with $y$, so the assertion follows from the triangle inequality.
\end{proof}
\begin{proof}[Proof of \cref{LemmaInvFunctThmSobReg}]
    It suffices to show that around any $p' \in \Omega'$, we can find a neighborhood $U' \subset \subset \Omega'$ such that $\psi \doteq \phi^{-1} \in W^{k,q}(U')$. Given such $p'$, let $p \doteq \psi(p') \in \Omega$ and define $\Omega_p \doteq \{x \in \Omega \;:\; 2d(x,\partial\Omega) > d(p,\partial\Omega)\}$ as well as $\Omega_p' \doteq \phi(\Omega_p)$. 
    Note that $B_r(p) \subset B_{16\sqrt{n} r}(p) \subset\subset \Omega_p$ for $r > 0$ small enough by the openness of $\Omega_p$. We show that indeed $\psi \in W^{k,q}(\phi(B_r(p)))$ for $r$ sufficiently small. 
    
    For any $x \in B_r(p)$ we may write the linear map $D\phi_x : T\nR^n_x \to T\nR^n_{\phi(x)}$ like
    \begin{equation}
        \label{eq: Phi def}
        D\phi_x = D\phi_p \cdot \bigl(\Id_p - D\phi^{-1}_p \cdot (D\phi_p - D\phi_x)\bigr) \,.
    \end{equation}
    Set now $\Phi(x) \doteq D\phi^{-1}_p \cdot (D\phi_p-D\phi_x)$ and note that $\Phi\in W^{2,q}(\Omega,\GL(n))$ by the hypothesis $\phi \in W^{3,q}(\Omega,\Omega')$. Due to the invertibility of $D\phi_x$ for every $x \in \Omega$, by \eqref{eq: Phi def}, we have that $\Id_p - \Phi(x)$ is invertible as well at each point  $x \in \Omega$. This gives
    \begin{align*}
        D\psi_y&=D(\phi^{-1})_y = [D\phi_{\psi(y)}]^{-1}=[\mathrm{Id}_{\psi(y)}-\Phi_{\psi(y)}]^{-1}\cdot D\phi_p^{-1}
    \end{align*}
    for any $y \in \phi(B_r(p))$. Using the operator $Au \doteq u \circ \psi$, we rewrite it simply as
    \begin{align}
        \label{NeumannSeries.0}
        D\psi &= [A(\mathrm{Id} - \Phi)]^{-1}\cdot D\phi_p^{-1} \,.
    \end{align}

\medskip
    \textit{Step 1.} Let us show that there exists some $t > n$ and $r > 0$ small enough such that the sequence of partial sums $\{\Theta_j\doteq \sum_{l=0}^{j}(A\Phi)^l\}_{j=0}^{\infty}$ converges 
    in $W^{1,t}(\phi(B_r(p)))$ and moreover
    \begin{equation}
        \label{eq: claim 1}
        \sum_{l=0}^\infty(A\Phi)^l = [A(\mathrm{Id} - \Phi)]^{-1}
    \end{equation}
    holds on $\phi(B_r(p))$. Fix some $t > n$ such that the embedding $W^{2,q}(\Omega) \hookrightarrow W^{1,t}(\Omega)$ holds and for a fixed $r> 0$, let $0 \leq \eta_r \leq 1$ be a smooth cut-off function satisfying

    \begin{equation*}
        \eta_r|_{B_r(p)} \equiv 1 \,, \qquad \supp\eta_r \subset\subset B_{2r}(p) \,, \qquad |\partial\eta_r| \leq \frac{c_n}{r} \,. 
    \end{equation*}
    with fixed constant $c_n > 0$. For each $l \in \nN$, we estimate
    \begin{align}
        \label{estimate_APhi}
        \begin{split}
        \|(A\Phi)^l\|_{W^{1,t}(\phi(B_r(p))} & \leq \| (A(\eta_r\Phi))^l\|_{W^{1,t}(\Omega_p')} \leq C_0^{l-1}\|A(\eta_r\Phi)\|^{l}_{W^{1,t}(\Omega_p')} 
        \\
        &\leq C_0^{l-1}C_1^l \|\eta_r\Phi\|^{l}_{W^{1,t}(\Omega_p)} 
        = C_0^{l-1}C_1^l \|\eta_r\Phi\|^{l}_{W^{1,t}(B_{2r}(p))}
        \end{split}
    \end{align}
    where $C_0 = C_0(n,t,\Omega_p')$ is the Sobolev constant of the multiplication $W^{1,t}(\Omega_p') \otimes W^{1,t}(\Omega_p') \hookrightarrow W^{1,t}(\Omega_p')$ and $C_1 = C_1(n,t,\Omega_p,\Omega_p')$ is the continuity constant of the map $A : W^{1,t}(\Omega_p) \to W^{1,t}(\Omega_p')$ given in \cite[Theorem 3.41]{AdamsFournierBook}. Since neither of them depends on $r$, we aim to make the right-hand side arbitrarily small by choosing $r$ sufficiently small. We first estimate
    \begin{align}
        \label{InvFThmLocal.1}
        \begin{split}
        \|\eta_r\Phi\|^t_{W^{1,t}(B_{2r}(p))}& = \|\eta_r\Phi\Vert^t_{L^{t}(B_{2r}(p))} + \|\eta_r\partial\Phi + \partial\eta_r\Phi \|^t_{L^{t}(B_{2r}(p))} 
        \\
        &\leq 2^{t-1}\left(\Vert \Phi\Vert^t_{W^{1,t}(B_{2r}(p))} + \Vert \partial\eta_r\Phi\Vert^t_{L^{t}(B_{2r}(p))} \right) \,,
        \end{split}
    \end{align}
    where we used that for any $a, b >0$, it holds $(a + b)^t \leq 2^{t-1} (a^t + b^t)$ and that $\vert \eta_r \vert \leq 1$. Now, to estimate the second term, we can use the Morrey inequality from \cref{MorreyInequalThm} and the fact that $\Phi(p)=0$, as follows 
    \begin{align}
        \label{MorreyEstimate.0}
        \nonumber
        \|\partial\eta_r\Phi\|^t_{L^{t}(B_{2r}(p))} &\leq c^t_n \int_{B_{2r}(p)}r^{-t}|\Phi(x)|^t dx
        \\
        \nonumber
        &= c^t_n r^{-t}\int_{B_{2r}(p)}\left(\frac{|\Phi(x)-\Phi(p)|}{|x-p|^{1-\frac{n}{t}}}\right)^t|x-p|^{t(1-\frac{n}{t})}dx
        \\
        &\leq c^t_n r^{-t}\Big(\sup_{x,y\in B_{2r}(p)}\frac{|\Phi(x)-\Phi(y)|}{|x-y|^{1-\frac{n}{t}}}\Big)^t\int_{B_{2r}(0)}|x|^{t-n}dx
        \\
        \nonumber
        &= c^t_n\frac{2^t\omega_{n-1}}{t}\Big(\sup_{x,y\in B_{2r}(p)}\frac{|\Phi(x)-\Phi(y|}{|x-y|^{1-\frac{n}{t}}}\Big)^t
        \\
        \nonumber
        &\leq C(n,t) \|\partial\Phi\|_{L^t\left( B_{8\sqrt{n} r}(p)\right)}^t \leq C(n,t) \|\Phi\|_{W^{1,t}\left( B_{8\sqrt{n} r}(p)\right)}^t \,.
    \end{align}
    Combining \eqref{estimate_APhi}, \eqref{InvFThmLocal.1} and \eqref{MorreyEstimate.0}, and assuming without loss of generality that $C_0 \geq 1$, we find that for each $l \in \nN$ there holds
    \begin{equation}
        \label{estimate_APhi_Phi}
         \|(A\Phi)^l\|_{W^{1,t}(\phi(B_r(p))} \leq \left( C_0 \,C_1 \, C_2 \| \Phi \|_{W^{1, t}\left( B_{8\sqrt{n} r}(p)\right)} \right)^l \,.
    \end{equation}
    Since $C_0, C_1$ and $C_2$ only depend on $n, t, \Omega_p$ and $\Omega_p'$, we can fix
    $r = r(n, t, \Omega_p, \Omega_p') > 0$ small enough so that $b \doteq C_0 \,C_1 \, C_2 \|\Phi\|_{W^{1,t}\left( B_{8\sqrt{n} r}(p)\right)} < 1$. This implies that 
    \begin{equation*}
        \lim_{i, j \to \infty} \|\Theta_j-\Theta_i\|_{W^{1,t}(\phi(B_r(p))} = 0 \,,
    \end{equation*}
    being it controlled by the tail of a geometric series
    \begin{align*}
        \|\Theta_j-\Theta_i\|_{W^{1,t}(\phi(B_r(p))} \leq \sum_{l=i}^j \|(A\Phi)^l\|_{W^{1,t}(\phi(B_r(p))} \leq \sum_{l=i}^j  b^l\,.
    \end{align*}
    We conclude that $\{\Theta_j\}_{j=0}^\infty \subset W^{1,t}(\phi(B_r(p)))$ is a Cauchy sequence and thus the Neumann series on the left-hand side of \eqref{eq: claim 1} is convergent. In particular, it converges pointwise and we have 
    
    \begin{equation}
        \begin{split}
        \label{estimate_almost_claim1}
        (\Theta_j)(y) \cdot (\mathrm{Id}_{\psi(y)} - (A\Phi)_y) &=(\mathrm{Id}_{\psi(y)} + (A\Phi)_y + \cdots + (A\phi)_y^j)(\mathrm{Id}_{\psi(y)}-(A\Phi)_y)
        \\
        &=\mathrm{Id}_{\psi(y)} - (A\Phi)^{j+1}_y,
        \end{split}
    \end{equation}
    To show that $\lim_{j\to\infty}(A\Phi)^{j+1}_y = 0$, we use the same cut-off trick as in \eqref{estimate_APhi} to obtain
    \begin{align*}
        \|(A\Phi)^{j+1}\|_{C^0(\phi(B_{r}(p)))} & \leq C_3\|(A\eta_{r}\Phi)^{j+1}\|_{W^{1,t}(\phi(B_{2r}(p)))}\\
        &\leq C_3 \left( C_0 \,C_1 \, C_2 \|\Phi\|_{W^{1, t}\left( B_{16\sqrt{n} r}(p)\right)} \right)^{j+1}
    \end{align*}
    with $C_3=C_3(n,t,\Omega_p')$ being the Sobolev embedding constant $W^{1,t}(\Omega_p') \hookrightarrow C^0(\Omega_p')$ and the last inequality holds due to \eqref{estimate_APhi_Phi}. Letting $r$ be so small that $\|\Phi\|_{W^{1,t}\left( B_{16\sqrt{n} r}(p)\right)} < \tfrac{1}{2C_0C_1C_2}$, we obtain
    \begin{align*}
        \|(A\Phi)^{j+1}\|_{C^0(\phi(B_{r}(p)))} \leq \frac{C_3}{2^{j+1}} \to 0
    \end{align*}
    as $j \to \infty$. Combining this with \eqref{estimate_almost_claim1} concludes the proof of the identity \eqref{eq: claim 1}.

     \medskip
    \textit{Step 2.}
    In view of \eqref{NeumannSeries.0}, the previous step implies that $\psi \in W^{2,t}(\phi(B_r(p)))$ for some $t > n$. Next, we promote it to $\psi \in W^{3,q}(\phi(B_r(p)))$. By the chain rule,
    \begin{align}
        \label{NeumannSeries.3}
        D\Theta_j &= \sum_{l=0}^{j}\sum_{a=1}^lA\Phi\cdots A(D\Phi)\cdot_a D\psi\cdots A\Phi \,,
    \end{align}
    where above the notation $A(D\Phi)\cdot_a D\psi$ denotes that, at some given $y\in \phi(B_r(p))$, the linear map $A(D\Phi)\cdot_a D\psi$ appears in the ``$a$-th factor" in the composition of $l$-linear maps $(A\Phi)_y\cdots A(D\Phi)\cdot_a D\psi\cdots (A\Phi)_y$ with $(l-1)$ factors given by $(A\Phi)_y$. From the $W^{1,t}(\phi(B_r)) \otimes W^{1,q}(\phi(B_r)) \hookrightarrow W^{1,q}(\phi(B_r))$ multiplication and \eqref{estimate_APhi_Phi}, we see that
    \begin{align*}
        &\|D\Theta_j - D\Theta_i\|_{W^{1,q}(\phi(B_r(p)))} \leq \sum_{l=i}^{j}\sum_{a=1}^l\| A\Phi\cdots A(D\Phi)\cdot_a D\psi\cdots A\Phi\|_{W^{1,q}(\phi(B_r(p)))}
        \\
        &\leq C(n,q,r) \|A(D\Phi)\|_{W^{1,q}(\phi(B_{r}(p)))} \|D\psi\|_{W^{1,t}(\phi(B_{r}(p)))} \sum_{l=i}^{j} l \|(A\Phi)^{l-1}\|_{W^{1,t}(\phi(B_{r}(p)))} 
        \\
        &\leq C(n,q,r) \|A(D\Phi)\|_{W^{1,q}(\phi(B_{r}(p)))} \|D\psi\|_{W^{1,t}(\phi(B_{r}(p)))} \sum_{l=i}^{j} l \, b^{l-1} \,.
    \end{align*}
    Since $\phi \in W^{3,q}(B_r(p)) \subset W^{2,t}(B_r(p)) \subset C^{1}(B_r(p))$ by hypothesis and having shown that $\psi \in W^{2,t}(\phi(B_r(p))) \subset C^1(\phi(B_r(p)))$ in \textit{Step 1}, it follows that $D\Phi \in W^{1,t}(B_r(p))$ and thus $A(D\Phi) \in W^{1,t}(\phi(B_r(p))$ by \cite[Theorem 3.41]{AdamsFournierBook}. We deduce that the terms outside the sum above are finite for $r>0$ small enough, as chosen in the previous step.
    Setting $b_l \doteq l \, b^{l-1}$ we observe that due to $b < 1$,
    \begin{equation}
        \label{eq: series convergence criteria}
        \frac{b_{l+1}}{b_l} = \frac{l+1}{l}b < 1
    \end{equation}
    for $l$ large enough, which implies that $\lim_{i,j\to\infty} \sum_{l=i}^jb_l = 0$. We conclude that $\{\Theta_j\}_{j=0}^\infty$ 
    is Cauchy and thus convergent in $W^{2,q}(\phi(B_r))$. In light of \eqref{NeumannSeries.0} and \eqref{eq: claim 1}, we find

    \noindent
    that $\psi \in W^{3,q}(\phi(B_r(p)))$.

    Having established the $k=3$ case, we proceed by induction: assume $\phi \in W^{k,q}(\Omega)$ implies that $\{\Theta_j\}_{j=0}^\infty$ converges in $W^{k-1,q}(\phi(B_r))$ and $\psi \in W^{k,q}(\phi(B_r))$, and let us show the same holds for $k+1$.
    Since $\phi \in W^{k+1,q}(\Omega)$ by assumption, the induction hypothesis implies that $\psi \in W^{k,q}_{loc}(\Omega')$. Along the lines of \textit{Step 1} above, we first estimate
    \begin{align*}
        \|D^{k-2}&\Theta_i - D^{k-2}\Theta_j\|_{W^{1,t}(\phi(B_r(p))} \leq \sum_{l=i}^j \sum_{|\alpha|=k-2} \|D^{\alpha_1}(A\Phi) \cdots D^{\alpha_l}(A\Phi)\|_{W^{1,t}(\phi(B_r(p))} \,,
    \end{align*}
    where $\alpha = (\alpha_1,\ldots,\alpha_l)$ is a multi-index and $i >> k$. Observe that
    \begin{align*}
        &\|D^{\alpha_1}(A\Phi) \cdots D^{\alpha_l}(A\Phi)\|_{W^{1,t}(\phi(B_r(p))} 
        \\
        &\leq C^{k-2}_{ttt}\|(A\Phi)^{l-(k-2)}\|_{W^{1,t}(\phi(B_r(p))}\|D^{\alpha_{m_1}}(A\Phi)\|_{W^{1,t}(\phi(B_r(p))} \cdots \|D^{\alpha_{m_{k-2}}}(A\Phi)\|_{W^{1,t}(\phi(B_r(p))}
        \\
        &\leq C^{k-2}_{ttt} \|(A\Phi)^{l-(k-2)}\|_{W^{1,t}(\phi(B_r(p))} \|A\Phi\|_{W^{k-1,t}(\phi(B_r(p))}^{k-2}
    \end{align*}
    with $C_{ttt}=C_{ttt}(n,t,r)$ being the $W^{1,t}(\phi(B_r)) \otimes W^{1,t}(\phi(B_r)) \hookrightarrow W^{1,t}(\phi(B_r))$ multiplication constant and thereby\footnote{The binomial $\binom{l-1+p}{l-1}$ equals to the number of solutions to $\alpha_1+\ldots+\alpha_l=p$ with $\alpha_i \in \nN_0$.}
    \begin{align*}
        \|D^{k-2}&\Theta_i - D^{k-2}\Theta_j\|_{W^{1,t}(\phi(B_r(p))} 
        \\
        &\leq C_{ttt}^{k-2}\|A\Phi\|_{W^{k-1,t}(\phi(B_r(p))}^{k-2} \sum_{l=i}^j \sum_{|\alpha|=k-2} \|(A\Phi)^{l-(k-2)}\|_{W^{1,t}(\phi(B_r(p))}
        \\
        &= C_{ttt}^{k-2}\|A\Phi\|_{W^{k-1,t}(\phi(B_r(p))}^{k-2} \sum_{l=i}^j \binom{l+k-3}{l-1} \|(A\Phi)^{l-(k-2)}\|_{W^{1,t}(\phi(B_r(p))} \,.
    \end{align*}
    A simple calculation shows that $\binom{l+k-3}{l-1} \leq (l+k-3)^{k-2}$, which combined with \eqref{estimate_APhi_Phi} yields, for $r$ sufficiently small,
    \begin{align*}
        \|D^{k-2}&\Theta_i - D^{k-2}\Theta_j\|_{W^{1,t}(\phi(B_r(p))} \leq C_{ttt}^{k-2}\|A\Phi\|_{W^{k-1,t}(\phi(B_r(p))}^{k-2} \sum_{l=i}^j (l+k-3)^{k-2} b^l
    \end{align*}
    where $b<1$. Now, since $\phi \in W^{k+1,q}(\Omega_p) \subset W^{k,t}(\Omega_p) \subset C^{k-1}(\Omega_p)$ by hypothesis, $\Phi \in W^{k-1,t}(\Omega_p)$ and thus $A\Phi \in W^{k-1,t}(\Omega_p')$ by \cite[Theorem 3.41]{AdamsFournierBook}. We deduce that $C_{ttt}\|A\Phi\|_{W^{k-1,t}(\phi(B_r(p))}$ is finite for fixed small $r>0$. On the other hand, the same reasoning as in \eqref{eq: series convergence criteria} shows that the above is the tail of a convergent series, allowing us to conclude that 
    \begin{align*}
        \lim_{i,j\to\infty}\|D^{k-2}&\Theta_i - D^{k-2}\Theta_j\|_{W^{1,t}(\phi(B_r(p))} = 0 \,.
    \end{align*}
    This, combined with the inductive hypothesis, establishes the $W^{k-1,t}$-convergence of $\{\Theta_j\}_{j=0}^\infty$ and hence $\psi \in W^{k,t}(\phi(B_r(p)))$. In order to promote $\psi \in W^{k+1,q}(\phi(B_r(p)))$, similar to \textit{Step 2} above, we now estimate
    \begin{align*}
        \|D^{k-1}&\Theta_i - D^{k-1}\Theta_j\|_{W^{1,q}(\phi(B_r(p))} \leq \sum_{l=i}^j \|D^{k-1}(A\Phi)^l\|_{W^{1,q}(\phi(B_r(p))}
        \\
        &\leq C_{tqq} \|D^{k-1}(A\Phi)\|_{W^{1,q}(\phi(B_r(p))} \sum_{l=i}^j l \|(A\Phi)^{l-1}\|_{W^{1,t}(\phi(B_r(p))}
        \\
        &\qquad + \sum_{l=i}^j\sum_{\substack{|\alpha|=k-1 \\ \alpha_m \neq k-1}} \|D^{\alpha_1}(A\Phi) \cdots D^{\alpha_l}(A\Phi)\|_{W^{1,q}(\phi(B_r(p))} \,,
    \end{align*}
    where $C_{tqq}=C_{tqq}(n,q,t,r)$ is the $W^{1,t}(\phi(B_r)) \otimes W^{1,q}(\phi(B_r)) \hookrightarrow W^{1,q}(\phi(B_r))$ multiplication constant.
    Now, we observe that
    \begin{align*}
        \|D^{k-1}(A\Phi)\|_{W^{1,q}(\phi(B_r(p))} &\leq \|D(A\Phi)\|_{W^{k-1,q}(\phi(B_r(p))}
        \\
        &\leq C_{tqq}\|A(D\Phi)\|_{W^{k-1,q}(\phi(B_r(p))} \|D\psi\|_{W^{k-1,t}(\phi(B_r(p))}
    \end{align*}
    where we used the induction hypothesis combined with \cite[Theorem 3.41]{AdamsFournierBook} giving $A(D\Phi) \in W^{1-1, q}(\phi(B_r(p)))$ and  the promotion $\psi \in W^{k,t}(\phi(B_r(p)))$ performed in the previous step.
    In addition, we estimate for $\alpha_m < k-1$
    \begin{align*}
        &\|D^{\alpha_1}(A\Phi) \cdots D^{\alpha_l}(A\Phi)\|_{W^{1,q}(\phi(B_r(p))} 
        \\
        &\leq C^{k-1}_{ttq}\|(A\Phi)^{l-(k-1)}\|_{W^{1,t}(\phi(B_r(p))}\|D^{\alpha_{m_1}}(A\Phi)\|_{W^{1,t}(\phi(B_r(p))} \cdots \|D^{\alpha_{m_{k-1}}}(A\Phi)\|_{W^{1,t}(\phi(B_r(p))}
        \\
        &\leq C^{k-1}_{ttq} \|(A\Phi)^{l-(k-1)}\|_{W^{1,t}(\phi(B_r(p))} \|A\Phi\|_{W^{k-1,t}(\phi(B_r(p))}^{k-1}
    \end{align*}
    where $C_{ttq}=C_{ttq}(n,q,t,r)$ is the $W^{1,t}(\phi(B_r)) \otimes W^{1,t}(\phi(B_r)) \hookrightarrow W^{1,q}(\phi(B_r))$ multiplication constant.
     We then obtain
    \begin{align*}
        \|D^{k-1}&\Theta_i - D^{k-1}\Theta_j\|_{W^{1,t}(\phi(B_r(p))} 
        \\
        &\leq C_{tqq}^2 \|A(D\Phi)\|_{W^{k-1,q}(\phi(B_r(p))} \|D\psi\|_{W^{k-1,t}(\phi(B_r(p))} \sum_{l=i}^j l \|(A\Phi)^{l-1}\|_{W^{1,t}(\phi(B_r(p))}
        \\
        &\quad + C^{k-1}_{ttq}\|A\Phi\|_{W^{k-1,t}(\phi(B_r(p))}^{k-1} \sum_{l=i}^j \binom{l+k-2}{l-1} \|(A\Phi)^{l-(k-1)}\|_{W^{1,t}(\phi(B_r(p))} \,.
    \end{align*}
    By the same reasoning as above, knowing now that $\psi \in W^{k,t}(\phi(B_r(p)))$, one concludes that $\{\Theta_j\}_{j=0}^\infty$ converges in $W^{k,q}(\phi(B_r(p)))$ and hence $\psi \in W^{k+1,q}(\phi(B_r(p)))$.
\end{proof}

Likewise, one can show an analogue of \cref{LemmaInvFunctThmSobReg} for coordinates at infinity using weighted Sobolev spaces:
\begin{theorem}
    \label{LemmaInvFunctThmSobRegAsymptotic}
    Let $\phi : \nR^n \setminus \Omega \to \nR^n \setminus \Omega'$ be a $C^1$-diffeomorphism, where $\Omega,\Omega' \subset \nR^3$ are bounded domains. Suppose that $\phi-\Id \in W^{k,q}_{-\varepsilon}(\nR^n\setminus\Omega)$ for $k \geq 2$, $q > n$ and $\varepsilon > 0$. Then, $\phi^{-1} - \Id \in W^{k,q}_{-\varepsilon}(\nR^n\setminus\Omega'')$ for some $\Omega''\supset\supset\Omega'$.
\end{theorem}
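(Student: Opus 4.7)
The strategy is to combine the interior regularity provided by \cref{LemmaInvFunctThmSobReg} (which, applied on any bounded piece of $\nR^n \setminus \Omega'$, gives $\psi \doteq \phi^{-1} \in W^{k,q}_{loc}(\nR^n \setminus \Omega')$) with an explicit Neumann-series analysis of $D\psi$ near infinity. Writing $\phi = \Id + u$ with $u \in W^{k,q}_{-\varepsilon}(\nR^n \setminus \Omega)$, the weighted Sobolev embedding $W^{2,q}_{-\varepsilon} \hookrightarrow C^1_{-\varepsilon}$ from \cref{AEWeightedEmbeedings}$(iv)$ (valid since $q > n$) yields $|u(x)| + |x||Du(x)| \leq C|x|^{-\varepsilon}$. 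In particular, there exists $R_0$ such that $\|Du\|_{L^\infty(\{|x| > R_0\})} \leq \tfrac{1}{2}$. From the identity $\psi(y) - y = -u(\psi(y))$ and a short elementary argument, one first shows $\tfrac{1}{2}|y| \leq |\psi(y)| \leq 2|y|$ for all $|y| \geq R_1$ with some $R_1 \geq R_0$, from which $\psi - \Id = \BO_1(|y|^{-\varepsilon})$ follows at once.

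On $\{|y| > R_1\}$ the derivative $D\phi = \Id + Du$ is uniformly invertible, so one has the convergent pointwise Neumann expansion
\begin{equation*}
    D\psi(y) - \Id = \sum_{l \geq 1}(-1)^l \, (Du)^l(\psi(y)).
\end{equation*}
Iterated differentiation, exactly as in \eqref{NeumannSeries.3}, expresses each $\partial^\beta(\psi - \Id)$ as a finite polynomial in terms of the form $(\partial^\alpha u) \circ \psi$ for $|\alpha| \leq |\beta|$, multiplied by products of lower-order derivatives of $\psi$. To convert these pointwise identities into weighted Sobolev bounds, we choose $\Omega''$ slightly larger than $\Omega' \cup B_{R_1}$. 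The bi-Lipschitz estimate $|\psi(y)| \sim |y|$ from the previous step guarantees that the change of variables $x = \psi(y)$ has Jacobian uniformly bounded above and below, and that $\sigma(\psi(y)) \sim \sigma(y)$, so that for every multi-index $\alpha$ the weighted $L^q$ norm of $\partial^\alpha u \circ \psi$ on $\{|y| > R_1\}$ is comparable to that of $\partial^\alpha u$ on $\{|x| > R_0\}$.

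Combining this with the weighted Sobolev multiplication property \cref{AEWeightedEmbeedings}$(v)$ allows us to bound each term in the polynomial expansion of $\partial^\beta(\psi - \Id)$ in $L^q_{-\varepsilon - n/q + |\beta|}(\nR^n \setminus \Omega'')$. The leading ($l = 1$) term $Du \circ \psi$ carries the exact decay $|y|^{-1-\varepsilon}$ matching the target weight, while higher-order terms in the Neumann series decay strictly faster and are absorbed. The argument is organized as an induction on $k$: the base case $k = 2$ follows from the pointwise estimate of $D\psi$ and a direct change of variables, and the inductive step uses the closed chain-rule formulas for $D^k\psi$ together with the inductive hypothesis $\psi - \Id \in W^{k-1,q}_{-\varepsilon}(\nR^n \setminus \Omega'')$ and the input $u \in W^{k,q}_{-\varepsilon}$.

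The main obstacle is essentially a bookkeeping one: keeping track of the correct accumulation of weight exponents through repeated composition with $\psi$ and through the products appearing in the iterated chain rule. The key inputs that make this tractable are (a) the bi-Lipschitz near-identity behavior of $\psi$ at infinity, which makes composition with $\psi$ behave, at the level of weights, essentially like composition with the identity, and (b) the weighted multiplication property of \cref{AEWeightedEmbeedings}$(v)$, which ensures that once $R$ is chosen large enough so that $\|u\|_{W^{k,q}_{-\varepsilon}(\{|x|>R\})}$ is small, the Neumann series converges in the appropriate weighted Sobolev norm and the linear term dictates the final decay rate.
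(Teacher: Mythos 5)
Your proposal is correct and takes essentially the same route as the paper: both express $D\phi^{-1}-\Id$ as the Neumann series in $(D u)\circ\phi^{-1}$ once $\|D u\|_{L^\infty}$ is made small by cutting off near infinity, both exploit that composition with $\phi^{-1}$ is a bounded isomorphism on the weighted $L^q$ spaces (because $\sigma\circ\phi^{-1}\sim\sigma$ and the Jacobian is uniformly controlled), and both control higher derivatives via the iterated chain rule, by induction on $k$, reusing the estimates of \cref{LemmaInvFunctThmSobReg}. The only cosmetic differences are the sign convention ($v=\Id-\phi$ in the paper versus $u=\phi-\Id$), and that you make explicit the reduction of the local piece to \cref{LemmaInvFunctThmSobReg}, which the paper leaves implicit in the enlargement of $\Omega'$ to contain $\phi(B_R)$.
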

\begin{proof}
    Name $v \doteq \Id - \phi$, $\psi \doteq \phi^{-1}$ and note that since $v \in W^{k,q}_{-\varepsilon}(\nR^n\setminus\Omega)$ by hypothesis, for any $\delta > 0$ we can find $R > 0$ large enough so that $\|v\|_{C^1_{-\varepsilon}(\nR^n \setminus B_R)} < \delta$. In particular, we can ensure that
    \begin{equation*}
        |\phi(x)| \leq 2|x| \qquad \text{and} \qquad |Dv_x| < 1
    \end{equation*}
    hold for any $x \in \nR^n \setminus B_R$. In view of $D\phi_x = \Id - Dv_x$, we may then write for each $y \in \nR^n \setminus \phi(B_R)$
    
    \begin{equation*}
        D\psi_y = (D\phi_{\psi(y)})^{-1} = \sum_{l=0}^\infty (Dv_x)^{l} = \Id + \sum_{l=1}^\infty (ADv)_y^{l} \,,
    \end{equation*}
    where $Au \doteq u \circ \psi$. Denote $\Phi \doteq Dv$ and observe that 
    \begin{align*}
        \sum_{l=1}^j&\|(A\Phi)^l\|_{C^0_{-1-\varepsilon}(\nR^n \setminus \phi(B_R))} \leq \sum_{l=1}^j \|A\Phi\|^l_{C^0_{-1-\varepsilon}(\nR^n \setminus \phi(B_R))} 
        \\
        &= \sum_{l=1}^j \sup_{y \in \nR^n \setminus \phi(B_R)}|\Phi(\psi(y))|^l|y|^{l(1+\varepsilon)} \leq \sum_{l=1}^j \sup_{x \in \nR^n \setminus B_R}|\Phi(x)|^l|\phi(x)|^{l(1+\varepsilon)}
        \\
        &\leq \sum_{l=1}^j \Bigl(2^{1+\varepsilon} \|\Phi\|_{C^0_{-1-\varepsilon}(\nR^n \setminus B_R)}\Bigr)^l \,,
    \end{align*}
    which shows that, up to making $R>0$ larger, the sequence of partial sums $\{\Theta_j \doteq \sum_{l=1}^j(A\Phi)^l\}_{j=1}^{\infty}$ converges in $C^0_{-1-\varepsilon}(\nR^n \setminus\phi(B_R))$. In turn, this grants $L^q_{-\varepsilon}$-convergence, perhaps for a smaller $\varepsilon>0$ to which we restrict. Now, concerning $W^{1,q}_{-1-\varepsilon}$ convergence, along the same lines as in the proof of \cref{LemmaInvFunctThmSobReg}, we have
    for $j>i$
    \begin{align*}
        \|D\Theta_j - D\Theta_i&\|_{L^{q}_{-2-\varepsilon}(\nR^n\setminus\phi(B_R))} \leq \sum_{l=i}^{j}\sum_{a=1}^l\|A\Phi\cdots A(D\Phi)\cdot_a D\psi\cdots A\Phi\|_{L^{q}_{-2-\varepsilon}(\nR^n\setminus\phi(B_R))}
        \\
        &\leq \|A(D\Phi)\|_{L^{q}_{-2-\varepsilon}(\nR^n\setminus\phi(B_R))}\|D\psi\|_{C^{0}(\nR^n\setminus\phi(B_R))}\sum_{l=i}^{j}l\|A\Phi\|^{l-1}_{C^{0}(\nR^n\setminus\phi(B_R))}.
    \end{align*}
    Notice that, because the diffeomorphism $\phi: \nR^n \setminus B_R \to \nR^n \setminus \phi(B_R)$ satisfies $\phi - \Id \in C^1_{-\varepsilon}(\nR^n \setminus B_R)$ and $\phi^{-1} - \Id \in C^1_{-\varepsilon}(\nR^n \setminus \phi(B_R))$, then $A:L^q_{\delta}(\nR^n \setminus B_R)\to L^q_{\delta}(\nR^n \setminus \phi(B_R))$ is a bounded isomorphism for any $\delta\in \mathbb{R}$. Therefore, we see that $\|A(D\Phi)\|_{L^{q}_{-2-\varepsilon}(\nR^n\setminus\phi(B_R))}$ is finite and the same analysis as in \cref{LemmaInvFunctThmSobReg} shows the desired $W^{1,q}_{-1-\varepsilon}$ convergence. This settles $\phi^{-1} - \Id \in W^{2,q}_{-\varepsilon}(\nR^n\setminus\phi(B_R)$. The $k > 2$ cases follow along the same lines of \cref{LemmaInvFunctThmSobReg}.
\end{proof}

\vspace{0.4cm}
\section{Weighted estimates revisited}
\label{Weighted estimates revisited}
This appendix is intended to provide a decay bootstrap for solutions to 
\begin{align*}
    \label{LaplaceEqAppendix}
    \Delta_gu = f
\end{align*}
on AE manifolds, very close to that presented in \cite[Theoren A.3]{avalos2024sobolev}. The main difference in our case is that we only assume the manifold to be $W^{1,q}_{\tau}$-AE with $q>n$, rather than $W^{2,q}_{\tau}$-AE with $q>\tfrac{n}{2}$, but assume on the other hand a better local regularity of the metric $g\in W^{2,q}_{loc}(T_2M)$ and the a-priori solution $u\in W^{2,p}_{loc}(M)\cap L^{p}_{\delta}(M)$. It should be mentioned that a stronger statement than ours is claimed in \cite[Proposition 1.6]{Bartnik86}, although there are reasons to be skeptical about its validity in the generality stated there. We refer the reader to \cite[Appendix A]{avalos2024sobolev} for a discussion on this matter. 
\begin{theorem}
    \label{BartniksProp1.6}
    Let $(M,g)$ be a $W^{1,q}_{\tau}$-AE manifold such that $g\in W^{2,q}_{loc}(M)$ with $q>n$ and $\tau<0$. If $u\in W^{2,p}_{loc}(M)\cap L^{p}_{\delta}(M)$ and $\Delta_gu\in L^p_{\delta-2}(M)$ for some $1<p\leq q$ and $\delta\in\nR$, then $u\in W^{2,p}_{\delta}(M)$ and 
    \begin{align*}
        \Vert u\Vert_{W^{2,p}_{\delta}(M)}\leq C\left(\Vert \Delta_gu\Vert_{L^p_{\delta-2}(M)} + \Vert u\Vert_{L^p_{\delta}(M)} \right).
    \end{align*}
\end{theorem}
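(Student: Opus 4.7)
The strategy is a by-now-classical dyadic scaling argument in the spirit of \cite[Theorem 1.10]{Bartnik86} adapted to our regularity threshold: localize on dyadic annuli at infinity, rescale each annulus to a fixed one, apply an interior elliptic estimate with a constant uniform in the scale, and then sum back. The main novelty over Bartnik's statement is purely analytic: we must guarantee uniform interior estimates with the regularity we have, which is precisely what \cref{thm: 12 local elliptic regularity} (and the fact that $g\in W^{2,q}_{loc}$ is continuous with $q>n$) provides.

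The plan is as follows. First, split $u=\chi_{0}u+\chi_{\infty}u$ using a cutoff adapted to a Euclidean structure at infinity $\Phi$, where $\chi_{0}$ has compact support in $M$ and $\chi_{\infty}$ is supported on the end $E\simeq \nR^n\setminus\overline{B}_{R_0}$. For the compact part, \cref{thm: 12 local elliptic regularity}(iii) (together with an overlapping-chart argument as in \cref{thm: main global elliptic regularity}) already yields
\[
\|\chi_0 u\|_{W^{2,p}(M)}\leq C\bigl(\|\Delta_g u\|_{L^p_{\delta-2}(M)}+\|u\|_{L^p_{\delta}(M)}\bigr).
\]
Transferred commutator terms $-2g(\nabla\chi_0,\nabla u)-(\Delta_g\chi_0)u$ are controlled by lower-order weighted norms of $u$ through the Sobolev embeddings and a further application of interior estimates on a slightly larger set. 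The decisive step is therefore on the end.

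On $E$, cover the end by dyadic annuli $A_k\doteq\{2^{k-1}<|x|<2^{k+1}\}$, $k\geq k_0$, with finite overlap, and for each $k$ set $u_k(y)\doteq u(2^k y)$ and $g_k(y)\doteq g(2^k y)$ on the fixed annulus $\mathcal A\doteq\{\tfrac12<|y|<2\}$. Direct computation with the weight $\sigma\sim 2^k$ on $A_k$ yields
\[
\|u_k\|_{W^{2,p}(\mathcal A)}\simeq 2^{k\delta}\|u\|_{W^{2,p}_\delta(A_k)},\qquad \|\Delta_{g_k}u_k\|_{L^p(\mathcal A)}\simeq 2^{k\delta}\|\Delta_g u\|_{L^p_{\delta-2}(A_k)}.
\]
Because $(M,g)$ is $W^{1,q}_\tau$-AE with $\tau<0$, one verifies that $g_k-\delta\to 0$ in $C^0(\mathcal A)$ and that $\|g_k-\delta\|_{W^{1,q}(\mathcal A)}\to 0$ as $k\to\infty$; combined with $g\in W^{2,q}_{loc}$ this gives $\{g_k\}_{k\ge k_0}$ uniformly elliptic on $\mathcal A$ with coefficients uniformly bounded in $C^{0,1-n/q}\cap W^{1,q}(\mathcal A)$. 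Choosing slightly smaller $\mathcal A'\subset\subset\mathcal A$, \cref{thm: 12 local elliptic regularity}(iii) (more precisely, its proof, which produces a constant depending only on the $C^{0,\alpha}$ and $W^{1,q}$-bounds of the coefficients and on $\mathcal A,\mathcal A'$) yields
\[
\|u_k\|_{W^{2,p}(\mathcal A')}\leq C\bigl(\|\Delta_{g_k}u_k\|_{L^p(\mathcal A)}+\|u_k\|_{L^p(\mathcal A)}\bigr)
\]
with $C$ independent of $k$ for all $k\geq k_1$ sufficiently large.

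Rescaling back and using that finitely many dyadic shells tile the end with finite overlap (say $A_k'\doteq\{2^{k}\cdot\tfrac{3}{4}<|x|<2^{k}\cdot\tfrac{3}{2}\}$ cover $\nR^n\setminus\overline{B}_{R_1}$ disjointly after refinement),
\[
\|u\|_{W^{2,p}_\delta(A_k')}\leq C\bigl(\|\Delta_g u\|_{L^p_{\delta-2}(A_k)}+\|u\|_{L^p_\delta(A_k)}\bigr),\qquad k\ge k_1,
\]
and raising to the $p$-th power and summing in $k$ (using the finite overlap of the $\{A_k\}$) yields the end estimate
\[
\|\chi_\infty u\|_{W^{2,p}_\delta(E)}\leq C\bigl(\|\Delta_g u\|_{L^p_{\delta-2}(M)}+\|u\|_{L^p_\delta(M)}\bigr).
\]
Combining the compact and end contributions produces the global estimate, and in particular places $u\in W^{2,p}_\delta(M)$.

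The technical heart of the argument is the uniform interior estimate on $\mathcal A$: one must track how the constant in \cref{thm: 12 local elliptic regularity}(iii) depends on the metric, and show it remains bounded under the scaling. The key observation is that, because $\tau<0$, the relevant Hölder and Sobolev norms of $g_k-\delta$ on the fixed annulus tend to zero; consequently the freezing-coefficients/Neumann-series argument in \cite[Theorem 9.11]{GT} (generalized as in the proof of \cref{lemma: Avalos 3.2 adaptation}) can be carried out with a constant that stabilizes to its Euclidean value for large $k$. The finitely many remaining scales $k_0\le k\le k_1$ are absorbed into the compact-part estimate. No other step presents genuine difficulty.
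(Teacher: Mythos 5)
Your approach is genuinely different from the paper's. The paper proves this in two stages: first \cref{WeightedEstimatesPreliminarThm}, an a priori estimate on $\nR^n$ obtained by splitting $u=\chi u+(1-\chi)u$, handling the compact piece locally and the tail by treating $\Delta_g$ as a perturbation of the Euclidean Laplacian and invoking the Nirenberg--Walker/McOwen weighted estimate $\|u\|_{W^{2,p}_\delta}\lesssim\|\Delta u\|_{L^p_{\delta-2}}$; the lower-order Christoffel error is absorbed via the weighted multiplication $L^q_{\gamma-1}\otimes W^{1,p}_{\delta-1}\hookrightarrow L^p_{\delta-2}$ with $q>n$ and by taking $R$ large. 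Second, a cutoff iteration $\chi_R^2 v$ feeds the preliminary estimate back into itself and lets $R\to\infty$ to upgrade $W^{2,p}_{loc}\cap L^p_\delta$ to $W^{2,p}_\delta$. Your proposal replaces both steps with a dyadic rescaling on the end.

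The dyadic strategy is classical and plausible, but as written there is a genuine gap in the ``decisive step''. You claim that the proof of \cref{thm: 12 local elliptic regularity}(iii) produces a constant depending only on the $C^{0,\alpha}$- and $W^{1,q}$-bounds of the coefficients of $g_k$ on $\mathcal A$. That is not what the paper establishes: \cref{thm: 12 local elliptic regularity}(iii) is routed through the torus extension and \cref{thm: Rodri Global W1p W2p regularity}, whose constants depend on the full $W^{2,q}$-norm of the metric, and \cref{lemma: GT 9.11 adaptation} requires a bound on $\|\partial_k a^{ij}\|_{L^{2q}(\mathcal A)}$. Neither of these quantities is uniformly controlled under your rescaling. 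Indeed, $\partial^2_y g_k(y)=2^{2k}(\partial^2_x g)(2^k y)$, and the hypothesis $g\in W^{2,q}_{loc}$ gives \emph{no decay whatsoever} on $\partial^2 g$ at infinity, so $\|g_k\|_{W^{2,q}(\mathcal A)}$ can blow up with $k$; likewise $\|\partial g_k\|_{L^{2q}(\mathcal A)}$ is not controlled by the $W^{1,q}_\tau$-AE hypothesis alone (which only gives $\|\partial g_k\|_{L^q(\mathcal A)}\lesssim 2^{k\tau}$, and passing from $L^q$ to $L^{2q}$ on a fixed domain uses second derivatives that you do not have). So the quoted interior estimates cannot be applied ``with a constant that stabilizes''.

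What you actually need, and would have to prove, is an interior $W^{2,p}$-estimate whose constant depends only on the modulus of continuity of the leading coefficient and on $\|\partial a^{ij}\|_{L^{q}}$ and $\|b^i\|_{L^q}$ with $q>n$, treated as a genuine perturbation of the constant-coefficient operator. Your scaling does give that $\|g_k-\delta\|_{C^{0,1-n/q}(\mathcal A)}\lesssim 2^{k\tau}\to 0$ and $\|\partial g_k\|_{L^q(\mathcal A)}\lesssim 2^{k\tau}\to 0$, so for large $k$ the error terms in a freezing-coefficients argument are small in the \emph{right} norms and can be absorbed \emph{without} the small-ball gain that forces the $L^{2q}$ exponent in \cref{lemma: Avalos 3.2 adaptation}. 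Supplying that perturbation estimate (rather than citing \cref{thm: 12 local elliptic regularity}(iii)) would close the argument, and at that point your route and the paper's route become two implementations of the same idea: perturb away from the flat Laplacian using $q>n$ and $\tau<0$. The paper's route does this once, globally at the level of weighted spaces; yours does it on each annulus after rescaling.

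Two minor points worth fixing as well: (i) the compact-part estimate is not quite an application of \cref{thm: 12 local elliptic regularity}(iii), because that theorem assumes $u\in W^{1,p}(\Omega)$; you should instead invoke it together with the a priori hypothesis $u\in W^{2,p}_{loc}(M)$ and an explicit finite cover of $K$, tracking how the $L^p_\delta$-norm of $u$ controls the unweighted $L^p$-norm on the compact core. (ii) When summing the dyadic estimates, be careful that $\|\Delta_{g_k}u_k\|_{L^p(\mathcal A)}$ involves the full $\Delta_{g_k}$ on a slightly larger annulus than $\mathcal A'$; the finite-overlap argument is fine, but one must sum $\|\Delta_g u\|^p_{L^p_{\delta-2}(A_k)}$ over overlapping $A_k$ and bound it by a fixed multiple of $\|\Delta_g u\|^p_{L^p_{\delta-2}(M)}$, which you have implicitly assumed.
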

The proof of \cref{BartniksProp1.6} is the same in spirit to that of \cite[Theorem A.3]{avalos2024sobolev}, with Sobolev multiplications applied differently in certain estimates. For the convenience of the reader, we outline the proof and point out the main differences. The first step is to establish the following result, which is a version of of \cite[Theorem A.2]{avalos2024sobolev} adapted to the hypothesis of \cref{BartniksProp1.6}.
\begin{theorem}
    \label{WeightedEstimatesPreliminarThm}
    Let $g\in W^{2,q}_{loc}(T_2\nR^n)$ be a $W_{\tau}^{1,q}$-AE Riemannian metric on $\nR^n$ for some $q > n$ and $\tau<0$. For every $1<p\leq q$ and $\delta\in\nR$ there is a constant $C>0$ such that
    \begin{align*}
        \|u\|_{W^{2,p}_{\delta}(\nR^n)} \leq C\left(\|\Delta_g u\|_{L^p_{\delta-2}(\nR^n)} + \|u\|_{L^p_{\delta}(\nR^n)}\right)
    \end{align*}
    holds for all $u\in W^{2,p}_{\rho}(\nR^n)$.
\end{theorem}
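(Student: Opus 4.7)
The plan is to implement a classical dyadic annular scaling argument, adapted to the present setting of $W^{2,q}_{loc}$ metrics with only $W^{1,q}_\tau$-decay at infinity. The key simplification relative to \cite[Theorem A.3]{avalos2024sobolev} is that $q>n$ makes $g$ locally Hölder continuous via $W^{1,q}\hookrightarrow C^{0,1-n/q}$, which allows one to close the argument without leveraging $W^{2,q}$-decay of the metric.

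The core technical step is a \emph{uniform local elliptic estimate on a fixed unit annulus}. Setting $A = \{1/2<|y|<2\}$ and $\widetilde A = \{1/4<|y|<4\}$, I would show the existence of $\varepsilon_0 > 0$ and $C_0 = C_0(n,p,q) > 0$ such that every Riemannian metric $h\in W^{2,q}(T_2\widetilde A)$ satisfying $\|h-\delta\|_{W^{1,q}(\widetilde A)} \leq \varepsilon_0$ also satisfies
\begin{equation*}
\|v\|_{W^{2,p}(A)} \leq C_0\bigl(\|\Delta_h v\|_{L^p(\widetilde A)} + \|v\|_{L^p(\widetilde A)}\bigr) \qquad \forall\, v\in W^{2,p}(\widetilde A).
\end{equation*}
This is obtained by freezing the principal coefficients at an interior point and absorbing the error as in the proof of \cref{lemma: Avalos 3.2 adaptation}: the perturbation $(h^{ij}-h^{ij}(x_0))\partial_i\partial_j v$ is bounded in $L^p$ by $\|h-\delta\|_{C^{0,1-n/q}(\widetilde A)} \|v\|_{W^{2,p}(\widetilde A)}$, where the Hölder seminorm is uniformly small thanks to the $W^{1,q}\hookrightarrow C^{0,1-n/q}$ embedding. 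The remaining constants depend only on a uniform $W^{1,q}$-norm and on the uniform ellipticity of $h$, and are hence controlled independently of $h$ within the $\varepsilon_0$-neighborhood.

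Next I would use a dyadic decomposition $A_k = \{2^{k-1}<|x|<2^{k+1}\}$ for $k\geq 1$ with slight enlargements $\widetilde A_k$, together with a fixed ball $B_0$ around the origin. On each $\widetilde A_k$, introduce the rescaled metric $h_k(y)\doteq g(2^k y)$ and function $v_k(y)\doteq u(2^k y)$ on $\widetilde A$. A direct change-of-variables computation yields $\Delta_{h_k}v_k(y) = 2^{2k}(\Delta_g u)(2^k y)$ together with the norm correspondences
\begin{equation*}
\|u\|^p_{W^{2,p}_\delta(A_k)}\sim 2^{-k\delta p}\|v_k\|^p_{W^{2,p}(A)}, \quad \|\Delta_g u\|^p_{L^p_{\delta-2}(A_k)}\sim 2^{-k\delta p}\|\Delta_{h_k}v_k\|^p_{L^p(A)},
\end{equation*}
and similarly $\|u\|^p_{L^p_\delta(A_k)}\sim 2^{-k\delta p}\|v_k\|^p_{L^p(A)}$. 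Testing the $W^{1,q}_\tau$-AE hypothesis against the same scaling produces $\|h_k-\delta\|_{W^{1,q}(\widetilde A)}\lesssim 2^{k\tau}\to 0$, so for $k$ larger than some finite $k_0$ the uniform estimate of the previous step applies to $h_k, v_k$. The finitely many remaining annuli and the ball $B_0$ are handled by covering them with finitely many coordinate balls and invoking \cref{thm: main local elliptic regularity}, which supplies a constant depending only on $g$ and the fixed cover. Summing the per-annulus estimates, weighted by $2^{-k\delta p}$, and using finite overlap of the $\widetilde A_k$, yields the desired global weighted estimate.

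The principal obstacle is the uniform local estimate of the first step: \cref{thm: main local elliptic regularity} provides a constant depending implicitly on the metric through a smooth approximation, and this constant is not manifestly uniform over a $W^{1,q}$-ball of metrics. Extracting such uniformity requires carefully re-running the Calderón--Zygmund freezing argument while tracking the explicit dependence on the ellipticity, the $C^0$-bound, and the $C^{0,1-n/q}$-seminorm of $h$ (rather than on $\|h\|_{W^{2,q}}$, which blows up under the rescaling), in direct parallel with the strategy of \cref{lemma: Avalos 3.2 adaptation}.
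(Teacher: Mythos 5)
Your proposal is correct and takes a genuinely different route from the paper. The paper's proof splits $u = \chi u + (1-\chi)u$ into an interior piece supported in $B_{2R}$ and an exterior piece supported outside $B_R$; for the exterior piece it compares $\Delta_g$ to the flat Laplacian $\Delta$ \emph{over the whole exterior region at once}, invokes the Nirenberg--Walker weighted a~priori estimate for $\Delta$ on $\nR^n$, and absorbs the error terms $\|(g^{ij}-\delta^{ij})\partial^2 u_2\|_{L^p_{\delta-2}}\lesssim R^{\tau}\|u_2\|_{W^{2,p}_\delta}$ and $\|g^{ij}\Gamma^l_{ij}\partial_l u_2\|_{L^p_{\delta-2}}\lesssim R^{-(\gamma-\tau)}\|u_2\|_{W^{2,p}_\delta}$ by taking $R$ large. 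You instead decompose dyadically, rescale each annulus to the fixed reference annulus, and use only a local elliptic estimate — this is the classical Bartnik/Lockhart--McOwen scaling pattern, and the scale-invariance computation $\|h_k-\delta\|_{W^{1,q}(\widetilde A)}\lesssim 2^{k\tau}$ together with the $2^{-k\delta p}$ cancellation and bounded overlap of $\{\widetilde A_k\}$ correctly closes the sum. The trade-off is worth noting: the paper's approach is more economical because it never needs a local estimate whose constant is \emph{uniform over a $W^{1,q}$-ball of metrics} — the interior piece can absorb a $g$-dependent constant, and the exterior estimate imports the fixed Nirenberg--Walker constant — whereas your route substitutes a more elementary structure (no weighted flat estimate at all) at the cost of proving exactly the kind of uniform local freezing estimate you flagged as the principal obstacle. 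Your identification of that obstacle is accurate, and the plan for resolving it is sound: the principal error is controlled by the $C^{0,1-\frac{n}{q}}$ modulus of continuity coming from $W^{1,q}\hookrightarrow C^{0,1-\frac{n}{q}}$ (uniform within the $\varepsilon_0$-ball), and the Christoffel error is controlled by $\|\Gamma\|_{L^q}\lesssim\|h-\delta\|_{W^{1,q}}\leq\varepsilon_0$ via the multiplication $L^q\otimes W^{1,p}\hookrightarrow L^p$, valid precisely because $q>n$. One minor imprecision: for the finitely many interior pieces you cite \cref{thm: main local elliptic regularity}, which is a \emph{regularity} statement; the a~priori \emph{estimate} you need there is the one the paper itself invokes for its interior piece via \cite[Theorem A.2]{avalos2024sobolev}, or equivalently the local estimate underlying \cref{lemma: Lg is Fredholm}.
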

\begin{proof}
    Fix $R > 0$ and let $\chi$ be a smooth cut-off function, with $0 \leq \chi \leq 1$, satisfying
    \begin{equation*}
        \chi|_{\overline{B_R(0)}} \equiv 1 \,, \quad \supp\chi \subset\subset B_{2R}(0) \,, \quad |\partial^k\chi(x)|\leq C(n,k)R^{-k}
    \end{equation*}
    for all $k \in \nN$. Decompose $u = \chi u + (1-\chi) u \doteq u_1 + u_2$ and by the same reasoning as in \cite[Theorem A.2]{avalos2024sobolev} one may estimate
    \begin{equation}
        \label{eq: u1 estimate}
        \|u_1\|_{W^{2,p}_{\delta}(\nR^n)}\leq C(g,n,p,R)\left(\|\Delta_g u\|_{L^p_{\delta}(\nR^n)} + \|u\Vert_{W^{1,p}_{\delta}(\nR^n)}\right).
    \end{equation}
    In order to estimate $u_2$, we write
    \begin{align*}
        \Delta_g u_2 &= \Delta u_2 + (g^{ij}-\delta_{ij})\partial_{i}\partial_{j} u_2 + g^{ij}\Gamma^l_{ij}\partial_lu_2
    \end{align*}
    and apply the weighted elliptic estimate for the Euclidean Laplacian $\Delta$ to obtain (see \cite[Lemma A.1]{avalos2024sobolev}, which is a mild adaptation of \cite[Inequality (2.7)]{NirenbergWalker})
    \begin{align*}
        \|u_2\|_{W^{2,p}_{\delta}(\mathbb{R}^n)}
        &\leq C\|\Delta_g u_2\|_{L^p_{\delta-2}(\nR^n)}
        \\
        &\quad + C\Bigl(\|(g^{ij}-\delta_{ij})\partial_{ij}u_2 \|_{L^{p}_{\delta-2}(\nR^n)} + \|g^{ij}\Gamma^l_{ij}\partial_lu_2 \|_{L^{p}_{\delta-2}(\nR^n)} + \|u_2\|_{L^p_{\delta}(\nR^n)}\Bigr).
    \end{align*}
    Since $W^{1,q}_{\tau}(\mathbb{R}^n)\hookrightarrow C^{0}_{\tau}(\mathbb{R}^n)$ for $q > n$, the same argument as in \cite[Theorem A.2]{avalos2024sobolev} allows us to estimate the higher order error term like
    \begin{align*}
        \|(g^{ij}-\delta_{ij})\partial_{ij}u_2 \|_{L^{p}_{\delta-2}(\nR^n)} &\leq C(g)R^{\tau}\Vert u_2 \Vert_{W^{2,p}_{\delta}(\nR^n\backslash\overline{B_R(0)})}.
    \end{align*}
    Concerning the lower order term, first notice that $g^{ij}\Gamma^k_{ij} \in L^q_{\tau-1}(\nR^n) \hookrightarrow L^q_{\gamma-1}(\nR^n)$ for any $\tau < \gamma < 0$. As $\partial u_2$ is supported away from $B_R(0)$, let us fix a smooth cut-off function $\eta$ such that $\eta \equiv 1$ on $\nR^{n}\backslash \overline{B_{R}(0)}$ and it is zero in $B_{R/2}(0)$, and appeal to the multiplication property
    \begin{align*}
        L^{q}_{\gamma-1}(\nR^n)\otimes W^{1,p}_{\delta-1}(\nR^n)\hookrightarrow L^p_{\delta-2}(\nR^n)
    \end{align*}
    of \cref{AEWeightedEmbeedings} to estimate
    \begin{align*}
        \|g^{ij}\Gamma^l_{ij} \partial_lu_2\|_{L^{p}_{\delta-2}(\nR^n)} &= \| \eta g^{ij}\Gamma^l_{ij} \partial_lu_2\|_{L^{p}_{\delta-2}(\nR^n)}
        \\
        &\leq C(n,q,p) \|\eta g^{ij}\Gamma^l_{ij}\|_{L^{q}_{\gamma-1}(\nR^n)}\Vert \partial_lu_2\Vert_{W^{1,p}_{\delta - 1}(\nR^n)}
        \\
        &\leq C(n,q,p,g) \|\eta\Gamma^l_{ij}\|_{L^{q}_{\gamma-1}(\nR^n)}\| u_2\Vert_{W^{2,p}_{\delta}(\nR^n)} \,.
    \end{align*}
    Now using that $\tau<\gamma<0$, we have
    \begin{align*}
        \|\eta\Gamma^l_{ij} \|_{L^{q}_{\gamma-1}(\nR^n)} &\leq 2\|  \Gamma^l_{ij}\|_{L^{q}_{\gamma-1}(\nR^n\backslash\overline{B_{R/2}(0)})} \leq CR^{-(\gamma-\tau)}\| \Gamma^l_{ij}\|_{L^{q}_{\tau-1}(\nR^n\backslash\overline{B_{R/2}(0)})}
    \end{align*}
    and thereby
    \begin{align*}
        \|g^{ij}\Gamma^l_{ij} \partial_{l}u_2\|_{L^{p}_{\delta-2}(\nR^n\backslash\overline{B_R(0)})}&\leq C(n,p,q,g)R^{-(\gamma-\tau)}\|u_2\|_{W^{2,p}_{\delta}(\nR^n)}.
    \end{align*}
    We conclude that there is a constant $C>0$ independent $R$ such that
    \begin{align*}
        \|u_2\|_{W^{2,p}_{\delta}(\nR^n)}
        &\leq C\| \Delta_gu_2\|_{L^{p}_{\delta-2}(\nR^n)} + C\bigl(R^{\tau}+R^{-(\gamma-\tau)}\bigr)\|u_2 \|_{W^{2,p}_{\delta}(\nR^n)} + C\| u_2\|_{L^p_{\delta}(\nR^n)} \,,
    \end{align*}
    so we may pick $R$ large enough depending only on $n$, $p$, $q$ and $g$ such that
    \begin{equation}
        \label{eq: u2 estimate}
        \|u_2\|_{W^{2,p}_{\delta}(\nR^n)}
        \leq C(n,p,q,g)\big(\| \Delta_gu_2\|_{L^{p}_{\delta-2}(\nR^n)} + \|u_2\|_{L^p_{\delta}(\nR^n)}\big).
    \end{equation}
    Once we have \eqref{eq: u1 estimate} and \eqref{eq: u2 estimate}, one concludes the proof exactly as in \cite[Theorem A.2]{avalos2024sobolev}.
\end{proof}
Next, we use a partition of unity argument to establish the desired decay estimate for general AE manifolds.
\begin{proof}[Proof of \cref{BartniksProp1.6}]
    Notice that as $u \in W^{2,p}_{loc}(M)$ by hypothesis, all we need to show is that $v \doteq \eta u \in W^{2,p}_{\delta}(M)$ for some smooth cut-off function $\eta$ supported in the asymptotic end $M \setminus K$. For simplicity, we regard $v$ as a compactly supported function in $\nR^n \setminus \overline{B_1(0)}$, which is in $W^{2,p}_{loc}(\nR^n) \cup L^p_\delta(\nR^n)$ by hypothesis. Define for large $R>1$ the rescaled cut-off function $\chi_R(x) = \chi(\tfrac{x}{R})$, where $\chi \in C^\infty_0(B_2(0))$ is a standard non-negative a cut-off function with $\chi \equiv 1$ on $B_1(0)$. A lengthy computation carried out in the proof of \cite[Theorem A.3]{avalos2024sobolev} yields
    \begin{align*}
        \sum_{|\alpha|\leq2}\|\chi_R^2 \sigma^{|\alpha|+\delta-\frac{n}{p}} \partial^\alpha v\|_{L^p(\nR^n)} &\leq C\Bigl(\|\chi^2_Rv\|_{W^{2,p}_\delta(\nR^n)} + \|\chi_R \sigma^{1+\delta-\frac{n}{p}} \partial v\|_{L^p(B_{2R}(0))} 
        \\
        &\qquad + \|\chi_R \sigma^{\delta-\frac{n}{p}} v\|_{L^p(B_{2R}(0))} + \|\sigma^{\delta-\frac{n}{p}} v\|_{L^p(B_{2R}(0))}\Bigr) \,,
    \end{align*}
    and applying \cref{WeightedEstimatesPreliminarThm} to $\chi_R^2v$ we estimate the first term in the right-hand-side by
    \begin{align*}
        \|\chi_R^2v\|_{W^{2,p}_{\delta}(\nR^n)} &\leq C\left(\|\Delta_g \bigl(\chi_R^2v\bigr)\|_{L^p_{\delta-2}(\nR^n)} + \|\chi_R^2v\|_{L^p_{\delta}(\nR^n)}\right)
        \\
        &\leq C\left(\|\chi_R^2\Delta_g v\|_{L^p_{\delta-2}(\nR^n)} + \|[\Delta_g,\chi_R^2] v\|_{L^p_{\delta-2}(\nR^n)} + \|\chi_R^2v\|_{L^p_{\delta}(\nR^n)}\right) \,.
    \end{align*}
    In order to deal with the commutator term, one can follow the exact same steps as in the proof of \cite[Theorem A.3]{avalos2024sobolev} with the exception of a Christoffel symbol term, which can simply be estimated like
    \begin{align*}
        \|v\Gamma\partial \chi_R^2&\|_{L^p_{\delta-2}(B_{2R}\setminus\overline{B_R})} \leq R^{-1}\|\partial\chi(R^{-1}\cdot)\Gamma\chi_R v\|_{L^p_{\delta-2}(\nR^n)} 
        \\
        &\leq C(n, p)R^{-1}\|\partial\chi(R^{-1}\cdot)\Gamma\|_{L^q_{\tau-1}(\nR^n)} \|\chi_R v\|_{W^{1,p}_{\delta-1}(\nR^n)}
        \\
        &\leq C(n,p,g)\Bigl(R^{-1}\|\chi_R v\|_{L^p_{\delta-1}(\nR^n)} + R^{-2}\|v\|_{L^p_{\delta-2}(B_{2R}\setminus\overline{B_R})} + R^{-1}\|\chi_R \partial v\|_{L^p_{\delta-2}(\nR^n)}\Bigr)
    \end{align*}
    thanks to the Sobolev multiplication
    \begin{equation*}
        L^q_{\tau-1}(\nR^n) \otimes W^{1,p}_{\delta-1}(\nR^n) \hookrightarrow L^p_{\delta-2}(\nR^n)
    \end{equation*}
    for $q > n$ and the hypothesis that $g$ is $W^{1,q}_{\tau}$-AE. All the rest remains unchanged and one is able to reduce the initial estimate to
    \small
    \begin{align*}
        \sum_{|\alpha|\leq2}\|\chi_R^2 \sigma^{|\alpha|+\delta-\frac{n}{p}} \partial^\alpha v\|_{L^p(\nR^n)} &\leq C\Bigl(\|\chi^2_R\Delta_gv\|_{L^{p}_{\delta-2}(\nR^n)} + \|\chi_R \sigma^{1+\delta-\frac{n}{p}} \partial v\|_{L^p(\nR^n)} + \|v\|_{L^p_\delta(\nR^n)}\Bigr).
    \end{align*}
    \normalsize
    At this point, one can use a weighted interpolation inequality \cite[Lemma A.2]{avalos2024sobolev} to get rid of the first-order term on the right-hand side and let $R \to \infty$ to obtain the desired estimate.    
\end{proof}

\end{appendix}

\vspace{1.5cm}
\printbibliography[]

\end{document}